\newtheorem{theorem}{Theorem}[section]
\newtheorem{lemma}[theorem]{Lemma}
\newtheorem{proposition}[theorem]{Proposition}
\newtheorem{definition}[theorem]{Definition}
\numberwithin{equation}{section}
\newtheorem{Theorem}{Theorem}[section]
\theoremstyle{definition}
\newtheorem{remark}[Theorem]{Remark}
\newcounter{RomanNumber}
\def\dive{\hbox{\rm div}\,}
\def\be{\begin{equation}}
\def\en{\end{equation}}
\def\bs{\begin{split}}
\def\es{\end{split}}
\newcommand{\var}{\varepsilon}
\title[Non-conservative compressible two-fluid model]{The non-conservative compressible two-fluid system with common pressure: Global existence and sharp time asymptotics}
\author{Ling-Yun Shou}
\address{Ling-Yun Shou \newline School of Mathematical Sciences and Mathematical Institute, Nanjing Normal University, Nanjing 210023, China}
\email{shoulingyun11@gmail.com }
\author{Jiayan Wu}
\address{Jiayan Wu \newline School of Mathematics, South China University of Technology, Guangzhou 510641, China}
\email{wujiayan@scut.edu.cn }
\author{Lei Yao}
\address{Lei Yao \newline  School of Mathematics and Statistics, Northwestern Polytechnical University, Xi'an 710129, P.R. China.}
\email{yaolei1056@hotmail.com}
\author{Yinghui Zhang*}
\address{Yinghui Zhang \newline School of Mathematics and Statistics, Guangxi Normal University, Guilin, Guangxi 541004, P.R.
China} \email{yinghuizhang@mailbox.gxnu.edu.cn}
\subjclass[2020]{76T10;\, 76N10.}
\thanks{Corresponding author: Yinghui Zhang}
\keywords{Non-conservative compressible two-fluid;\, Partial dissipation;\, Global existence;\, Optimal decay;\, Littewood-Paley decomposition.}\bigbreak
\date{\today}
\begin{document}
\begin{abstract}
This paper concerns the global-in-time evolution of a generic compressible two-fluid model in $\mathbb{R}^d$ ($d\geq3$) with the common pressure law. Due to the non-dissipative 
properties for densities and two different particle paths caused by velocities, the system lacks the usual symmetry structure and is partially dissipative in the sense that the 
Shizuta-Kawashima condition is violated, which makes it challenging to study its large-time stability. By developing a pure energy method in the framework of Besov spaces, we succeed in constructing a 
unique global classical solution to the Cauchy problem when the initial data are close to their constant equilibria. Compared to the previous related works, the main novelty lies in that our method is independent
of the spectral analysis and does not rely on the $L^1$ smallness of the initial data. 
Furthermore, if additionally the initial perturbation is bounded in 
$\dot{B}^{\sigma_0}_{2,\infty}$ type spaces with lower regularity, the optimal time convergence rates are also obtained. In particular, the asymptotic convergence of the non-dissipative 
components toward their equilibrium states is first characterized.

\end{abstract}

\maketitle

\section{\texorpdfstring{\leftline {\bf{Introduction.}}}{Introduction}}
\setcounter{equation}{0}
\subsection{Background and motivation}
Multi-fluids are not only prevalent in nature but also play an important role in various industrial applications, such as nuclear power, chemical processing, oil and gas manufacturing. 
Although multi-fluid systems share certain similarities with single-fluid models, they exhibit strong interactions and possess more complex structures, leading to rich and distinct 
phenomena and posing significant challenges to the study of their dynamics.

In this paper, we consider the following two-fluid system modeling compressible
fluids in $\mathbb{R}^{d}$ ($d\geq3$):
\begin{equation}\label{system1}
	\left\{
	\begin{array}{ll}
		\partial_t (\alpha^{+}\rho^{+}) +\dive (\alpha^{+}\rho^{+}u^{+})=0,\\
		\partial_t (\alpha^{+}\rho^{+}u^{+}) +\dive (\alpha^{+}\rho^{+}u^{+}\otimes u^{+})+\alpha^{+}\nabla P^{+}(\rho^{+})=\dive (\alpha^{+}\tau^{+}),\\
        	\partial_t (\alpha^{-}\rho^{-}) +\dive (\alpha^{-}\rho^{-}u^{-})=0,\\
		\partial_t (\alpha^{-}\rho^{-}u^{-}) +\dive (\alpha^{-}\rho^{-}u^{-}\otimes u^{-})+\alpha^{-}\nabla P^{-}(\rho^{-})=\dive (\alpha^{-}\tau^{-}),
	\end{array}
	\right.
\end{equation}
where $\alpha^{+}\in [0,1] $ and $\alpha^{-}\in [0,1] $ are the volume fractions of the fluids $+$ and $-$, respectively, satisfying $\alpha^++\alpha^-=1$; $\rho^{\pm}(x,t)\geq 0$, $u^{\pm}(x,t)$ and $P^{\pm}(\rho^{\pm})$
denote the density, velocity and pressure function of each phase, respectively. 
Moreover, $\tau^{\pm}$ is the viscous stress tensor
\begin{align}
	\tau^{\pm}:=\mu^{\pm}\left(\nabla u^{\pm}+\nabla^t u^{\pm}\right)+\lambda^{\pm}\dive u^{\pm} {\rm Id},\label{system2}
\end{align}
where the constants $\mu^{\pm}$ and $\lambda^{\pm}$ are shear and bulk viscosity coefficients satisfying the physical restrictions
\begin{align}
	\mu^{\pm}>0,\quad \quad 2\mu^{\pm}+d\lambda^{\pm}\geq 0.\label{system2-2}
\end{align}
The pressure function $P^{\pm}$ is assumed to be the general term
\begin{align}
	P^{\pm}(s)\in C^{\infty}(\mathbb{R}^{+}),\quad \partial_{s} P^{\pm}(s)>0\quad\text{for}\quad s>0.\label{system3}
\end{align}
subject to the condition
\begin{align}
	P=P^{+}(\rho^{+})=P^{-}(\rho^{-}).\label{pressure}
\end{align}

We are concerned with the Cauchy problem of the system \eqref{system1} subject to the initial data
\begin{align}
	&(\alpha^{\pm},\rho^{\pm}, u^{\pm})(x,0)=(\alpha^{\pm}_0,\rho^{\pm}_0, u^{\pm}_0)(x)\rightarrow (\bar{\alpha}^{\pm},\bar{\rho}^{\pm},0),\quad |x|\rightarrow \infty,\label{system4}
\end{align}
We also give the initial value of $P$:
\begin{align}
P(x,0)=P_0(x)\quad\text{with}\quad P_0:=P^+(\rho_0^+)=P^-(\rho_0^-).\label{system6}
\end{align}
Here, $\bar{\alpha}^{\pm}$ and $\bar{\rho}^{\pm}$ are positive constants fulfilling
\begin{align}
	\bar{P}=\bar{P}^+=\bar{P}^{-}\quad \quad\text{with}\quad \quad \bar{P}^\pm:=P^{\pm}(\bar{\rho}^{\pm}).\label{system5}
\end{align}

The system \eqref{system1} is known as a
two-fluid flow model with algebraic closure. For more information
about this model, we refer to \cite{Bresch1, Bresch2, Evje9, Ishii1,
Prosperetti} and references therein. Particularly, a nice summary of the equations  \eqref{system1} was given in the introduction of Bresch--Desjardins
--Ghidaglia--Grenier \cite{Bresch1}, where they considered the case of density--dependent viscosities and capillary effects. More specifically, they made the following assumptions:
\begin{itemize}
\item inclusion of density--dependent viscosities of the form \eqref{system2-2} where $\mu^{\pm}(\rho)=\nu^{\pm}\rho^{\pm}$
and $\lambda^{\pm}(\rho^\pm)=0$;

\item inclusion of a third order derivative of $\alpha^{\pm}
\rho^{\pm}$, which are so-called  internal capillary forces
represented by the well-known Korteweg model on each phase.
\end{itemize}
When $P^\pm(\rho^\pm)=A^\pm (\rho^\pm)^{\overline{\gamma}^{\pm}}$ with $ A^\pm >0$ and $1<\overline{\gamma}^{\pm}< 6$, they established global existence of weak solutions in the 3D periodic domain.
Later, Cui--Wang--Yao--Zhu \cite{c1} combined a detailed analysis
of the Green function to the linearized system with delicate
energy estimates to the nonlinear system
to prove time-decay rates of classical solutions for
the three-dimensional Cauchy problem when the initial data are near their
equilibria. Recently, Wu-Yao-Zhang \cite{WYZ1} investigated the stability and instability of the constant equilibrium state for the three-dimensional Cauchy problem.
It should be remarked that the internal
capillary forces played an essential role in the analysis of
\cite{Bresch1, c1, WYZ1}. On the other hand, Evje--Wang--Wen \cite{Evje9} studied the
two--fluid model \eqref{system1} with unequal pressures. More precisely, the following assumptions on pressures are made:
\begin{equation}P^+(\rho^+)-P^-(\rho^-)=(\rho^+)^{\bar{\gamma}^+}-(\rho^-)^{\bar{\gamma}^-}=f(\alpha_-\rho_-),\label{1.3}
\end{equation}
where $f$ is the so--called capillary pressure which belongs to $C^3([0,
\infty))$, and is a strictly decreasing function near the
equilibrium satisfying
\begin{equation}-\frac{s_{-}^{2}(1,1)}{\alpha^{-}(1,1)}<f^{\prime}(1)<\frac{\bar{\eta}-s_{-}^{2}(1,1)}{\alpha^{-}(1,1)}<0,\label{1.4}
\end{equation}
where $\bar{\eta}$ is a positive, small fixed constant, and
$s_{\pm}^{2}:=\frac{\mathrm{d} P^{\pm}}{\mathrm{d}
\rho^{\pm}}\left(\rho^{+}\right)$  represents
sound speed of each phase. Under the assumptions
\eqref{1.3} and \eqref{1.4} on pressures, they obtained global
existence and decay rates of strong solutions when the initial data are close to their
equilibria.  Based on results of \cite{Evje9}, Wang--Wen--Yao \cite{WWY} proved global well--posedness and exponential stability for the initial boundary value problem. However, as indicated by
the authors in \cite{Evje9} and \cite{WWY}, assumptions \eqref{1.3} and
\eqref{1.4} played a key role in their analysis and appeared to
have an essential stabilization effect on the model in question. For the case that the capillary pressure is a
strictly increasing function near equilibrium, namely $f^{\prime}(1)>0$, the authors in \cite{WYZ}
showed that the constant equilibrium state of the two--fluid model \eqref{system1} with unequal pressures
is globally linearly unstable and locally nonlinearly unstable in the sense of
Hadamard.
For the
two--fluid model \eqref{system1} with common pressure (i.e. $f\equiv0$) and without capillary effects,
Bresch--Huang--Li
\cite{Bresch2} obtained global existence of weak solutions and studied the vanishing of vacuum states for general initial data in
the spatial real line when
$\overline{\gamma}^{\pm}>1$ by taking advantage of the fine embedding properties in  the one space dimension. However, it is difficult to study the global large-data weak solutions for the 
multi-dimensional problem due to the lack of integrability of densities.
Recently, by exploiting the dissipation structure of the
model and making full use of several key observations, the authors in \cite{WYZ-MA} first proved
global existence and large-time behavior of classical solutions to
the 3D compressible two--fluid model with common pressure and without capillary effects when
$L^1\cap H^3$-norm of the initial perturbation is sufficiently small.
Very recently, based on this work, the authors in \cite{TWYZ} established the validity
of the vanishing capillary limit.

In conclusion, due to the partially dissipation structure and the non-symmetric property of the system \eqref{system1}, up to now, 
the global well-posedness result of \cite{WYZ-MA} is the only one work on the system \eqref{system1}.
However, it should be mentioned that the analysis of \cite{WYZ-MA} relies heavily on the $L^1$ smallness of the initial perturbation.
Indeed, the $L^1$ smallness assumption
ensures sufficiently fast decay rates of the
velocities, which play a key role in controlling the nonlinear terms involving non-dissipative components, such as $\alpha^{\pm}\rho^\pm$. 
To achieve these decay rates and enclose uniform {\rm a priori} estimates, the authors \cite{WYZ-MA} combine energy methods with spectral analysis. Therefore,  a natural and important
problem is to study what will happen about the global existence and large-time stability of solutions for the system \eqref{system1}
when the initial data belong only to $L^p$ with some $1<p\leq 2$.
The main purpose of this paper is to
give a positive answer to this issue.
More precisely, our main goal is to show the global existence and large-time stability of solutions for the system \eqref{system1} in $\mathbb{R}^d$ ($d\geq3$) supplemented with initial
data close to a constant equilibrium in a “minimal” regularity space based on the Littlewood-Paley theorem. Our method is {\emph{independent of the spectral analysis and does not rely on 
the $L^1$ smallness assumption}}. Particularly, our main results can be summarized as follows:

\begin{itemize}

\item When the initial perturbation is close to the equilibrium in $\dot{B}^{d/2-2}_{2,1}\cap\dot{B}^{d/2+1}_{2,1}$ (in the case $d=3$, this is weaker than $L^p(\mathbb{R}^3)\cap 
    H^s(\mathbb{R}^3)$ with $1<p<3/2$ and $s=5/2$), we construct a unique global solution to the Cauchy problem for \eqref{system1} and derive some qualitative $L^1$ time integrability 
    estimates of the dissipative components $(P-\bar{P},u^+,u^-)$ via a pure energy argument.

\item Under the additional assumption that the initial perturbation is bounded in $\dot{B}^{\sigma_0}_{2,1}$ with $-d/2\leq \sigma_0<d/2-2$, which embeds into $L^p(\mathbb{R}^3)$ ($1\leq 
    p<3/2$) for $d=3$, we establish optimal time convergence rates of the global solution to its equilibrium.

\item When the $\dot{B}^{\sigma_0}_{2,\infty}$-norm of the initial perturbation is sufficiently small, we derive higher-order optimal time convergence rates. In addition, we prove the 
    asymptotic stability of the profile for the non-dissipative component $\alpha^\pm\rho^\pm$, which is totally new as compared to \cite{WYZ-MA}.

\end{itemize}

\subsection{Main results}

We define the energy functional
\begin{equation}\label{energy-functional1}
	\begin{aligned}		
\mathcal{E}(t)=&\|\alpha^{+}\rho^+-\bar{\alpha}^{+}\bar{\rho}^+\|_{\widetilde{L}^{\infty}_t(\dot{B}_{2,1}^{\frac{d}{2}-1}\cap\dot{B}_{2,1}^{\frac{d}{2}+1})}+\|P-\bar{P}\|_{\widetilde{L}^{\infty}_t(\dot{B}_{2,1}^{\frac{d}{2}-2}\cap\dot{B}^{\frac{d}{2}+1}_{2,1})}\\
&\quad\quad\quad\quad\quad\quad\quad\quad\quad\quad\quad\quad\quad\quad\quad\quad+\|(u^{+},u^-)\|_{\widetilde{L}^{\infty}_t(\dot{B}_{2,1}^{\frac{d}{2}-2}\cap\dot{B}^{\frac{d}{2}+1}_{2,1})},
	\end{aligned}
\end{equation}
and the corresponding dissipation functional
\begin{align}\label{energy-functional2}
	\mathcal{D}(t)=&\|P-\bar{P}\|_{L^1_t(\dot{B}_{2,1}^{\frac{d}{2}})}+\|(u^{+},u^-)\|_{L^1_t(\dot{B}^{\frac{d}{2}}_{2,1}\cap \dot{B}^{\frac{d}{2}+1}_{2,1})\cap 
\widetilde{L}^2_t(\dot{B}^{\frac{d}{2}+2}_{2,1})}+\|\dive v\|_{L^1_t(\dot{B}^{\frac{d}{2}+1}_{2,1})},
\end{align}
where $v$ is the two-phase effective velocity
\begin{align}
	v=(2\mu^++\lambda^+)u^+-(2\mu^{-}+\lambda^-)u^-.\label{effective}
\end{align}
Accordingly, the initial energy functional $\mathcal{E}_0$ is given by
\begin{equation}\label{initialenergy}
	\begin{aligned}
\mathcal{E}_0=&\|\alpha_0^{+}\rho_0^+-\bar{\alpha}^{+}\bar{\rho}^+\|_{\dot{B}_{2,1}^{\frac{d}{2}-1}\cap\dot{B}^{\frac{d}{2}+1}_{2,1}}+\|P_0-\bar{P}\|_{\dot{B}_{2,1}^{\frac{d}{2}-2}\cap\dot{B}^{\frac{d}{2}}_{2,1}}+\|(u_0^{+},u_0^-)\|_{\dot{B}_{2,1}^{\frac{d}{2}-2}\cap\dot{B}^{\frac{d}{2}+1}_{2,1}}.
	\end{aligned}
\end{equation}
Since $\rho^\pm$ is determined by the common pressure $P$ and $\alpha^\pm$ depends only on $\alpha^+\rho^+$ and $P$, one can recover the information of $(\alpha^\pm,\rho^\pm)$ from $\mathcal{E}(t)$.
Now, we are in a position to state our main results.

\begin{theorem}\label{Thm1.1}
	For any $d\geq 3$, there exists a constant $\varepsilon_0>0$ such that if the initial data fulfill
	\begin{align}\label{a1}
		\mathcal{E}_0\leq \varepsilon_0,
	\end{align}
	then the Cauchy problem \eqref{system1}--\eqref{system5} admits a unique global strong solution $(\alpha^+,\alpha^-, \rho^+,\rho^-, u^+,u^-)$  satisfying
	\begin{equation}\label{r1}
		\left\{
		\begin{aligned}
			&\alpha^{\pm}-\bar{\alpha}^{\pm}\in \mathcal{C}(\mathbb{R}^{+};\dot{B}^{\frac{d}{2}-1}_{2,1}\cap \dot{B}^{\frac{d}{2}+1}_{2,1}),\\
			& (\rho^\pm-\bar{\rho}^\pm,P-\bar{P})\in \mathcal{C}(\mathbb{R}^{+};\dot{B}^{\frac{d}{2}-2}_{2,1}\cap\dot{B}^{\frac{d}{2}+1}_{2,1})\cap L^1(\mathbb{R}^{+};\dot{B}^{\frac{d}{2}}_{2,1}),\\
			& u^{\pm}\in \mathcal{C}(\mathbb{R}^{+};\dot{B}^{\frac{d}{2}-2}_{2,1}\cap\dot{B}^{\frac{d}{2}+1}_{2,1})\cap 
L^1(\mathbb{R}^{+};\dot{B}^{\frac{d}{2}}_{2,1}\cap\dot{B}^{\frac{d}{2}+1}_{2,1})\cap L^2(\mathbb{R}^{+};\dot{B}^{\frac{d}{2}+2}_{2,1}),\\
			&\dive v\in L^1(\mathbb{R}^{+};\dot{B}^{\frac{d}{2}+1}_{2,1}),
		\end{aligned}
		\right.
	\end{equation}
	and
	\begin{align}
		\mathcal{E}(t)+\int_{0}^{t}D(\tau)d\tau\leq C\mathcal{E}_{0}, \quad t\in\mathbb{R}^{+},\label{e1}
	\end{align}
	for $C>0$ a  constant independent of time.
\end{theorem}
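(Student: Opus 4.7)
The plan is to combine a Friedrichs-type regularization giving local existence with uniform \emph{a priori} estimates in Besov spaces, then close the argument by continuity. Before any estimate, I would reformulate \eqref{system1} in the unknowns $(R,P-\bar P,u^+,u^-)$ with $R:=\alpha^+\rho^+-\bar\alpha^+\bar\rho^+$. The constraints $\alpha^++\alpha^-=1$ and $P^+(\rho^+)=P^-(\rho^-)=P$ together with the monotonicity assumption \eqref{system3} allow one to recover $\alpha^\pm$ and $\rho^\pm$ smoothly from $(R,P)$ near equilibrium by the implicit function theorem. Subtracting linear combinations of the two mass equations from \eqref{system1} and using $P_t=\sum_\pm \partial_{\rho^\pm}P^\pm\,\partial_t\rho^\pm$, one obtains a closed system in which $P-\bar P$ satisfies a single transport equation whose principal part looks like $\partial_t(P-\bar P)+\mathcal{A}\,\dive(\alpha^+u^++\alpha^-u^-)\approx 0$, while $R$ is driven by a pure transport equation along $u^+$ (no dissipation), and $u^\pm$ satisfy parabolic equations with common-pressure forcing $\nabla P$.

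The linear analysis is the crux. The two-velocity coupling is resolved by the \emph{effective velocity} $v$ defined in \eqref{effective}: taking a suitable linear combination of the two momentum equations reveals that $\dive v$ satisfies a damped equation, so $\dive v$ enjoys an extra $L^1_t(\dot B^{d/2+1}_{2,1})$ gain, which is exactly the last term in \eqref{energy-functional2}. Combined with standard parabolic smoothing on $u^\pm$ (giving $L^1_t(\dot B^{d/2+1}_{2,1})\cap \widetilde L^2_t(\dot B^{d/2+2}_{2,1})$ at the top level) and the hyperbolic structure for $P-\bar P$, this produces full dissipation for $(P-\bar P,u^+,u^-)$ at every Littlewood-Paley block. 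The component $R$ never enters the dissipation functional; rather, its $\widetilde L^\infty_t$ control is supplied by transport estimates \`a la Danchin along the $C^1$ flow of $u^+$.

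For the nonlinear \emph{a priori} estimates I would split frequencies with the Littlewood-Paley operators $\dot\Delta_j$. In the \emph{high-frequency} regime I use parabolic maximal regularity for $u^\pm$ at level $\dot B^{d/2+1}_{2,1}$, absorb the pressure via the damped equation for $P-\bar P$, and exploit the smoothing of $\dive v$ to cancel the worst nonlinearity $\alpha^+ \nabla P$ through the identity $\alpha^+\nabla P=\nabla(\alpha^+P)-P\nabla\alpha^+$ rewritten in terms of $\dive v$ and lower-order remainders. In the \emph{low-frequency} regime (down to $\dot B^{d/2-2}_{2,1}$) the damped mode of $P-\bar B$ and the effective mode $\dive v$ are estimated by a direct Lyapunov-type energy on each dyadic block $\dot\Delta_j$; the Shizuta-Kawashima obstruction caused by $R$ is bypassed because $R$ does not need any dissipation, only its transport bound which propagates the Besov regularity $\dot B^{d/2-1}_{2,1}\cap \dot B^{d/2+1}_{2,1}$. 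Nonlinear terms are controlled by the standard Besov product laws, composition estimates (since $\alpha^\pm,\rho^\pm$ are smooth functions of $R,P-\bar P$ near equilibrium), and commutator estimates for the transport part; all of them are of the form $\mathcal{E}(t)\,\mathcal{D}(t)$ so they are absorbed under the smallness \eqref{a1}.

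Summing the dyadic estimates yields a closed inequality $\mathcal{E}(t)+\int_0^t \mathcal{D}(\tau)\,d\tau\le C\mathcal{E}_0+C\mathcal{E}(t)\int_0^t \mathcal{D}(\tau)\,d\tau$, which gives \eqref{e1} through a standard continuity argument once $\varepsilon_0$ is chosen small enough. Local well-posedness in the same functional framework follows from a Friedrichs/iterative scheme combined with the a priori bounds; uniqueness is obtained by estimating the difference of two solutions at one regularity level below (e.g.\ in $\dot B^{d/2-1}_{2,1}$ for the velocities), where the lost derivative in $R$ is compensated by transport. The main obstacle I anticipate is the low-frequency coupling: because the Shizuta-Kawashima condition fails, one cannot obtain dissipation for every mode from a single Lyapunov functional, and only the clever combination $v$ (whose divergence part decouples) together with the dissipative equation satisfied by $P-\bar P$ yields the right low-frequency decay without invoking a spectral decomposition or an $L^1$ assumption.
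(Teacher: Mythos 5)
Your overall architecture (frequency-localized Lyapunov functionals for $(P-\bar P,u^+,u^-)$, transport for the non-dissipative fraction densities, the effective velocity $v$, a closed inequality of the form $\mathcal{E}+\int\mathcal{D}\le C\mathcal{E}_0+C\mathcal{E}\int\mathcal{D}$, Friedrichs approximation and continuity) matches the paper's, but there are two genuine gaps at exactly the hardest points. First, you claim the $\widetilde L^\infty_t(\dot B^{d/2-1}_{2,1}\cap\dot B^{d/2+1}_{2,1})$ bound on $R$ is ``supplied by transport estimates along the flow of $u^+$.'' At the top order this fails: propagating $\dot B^{d/2+1}_{2,1}$ regularity through $\partial_t n^++u^+\cdot\nabla n^++(1+n^+)\dive u^+=0$ requires $\dive u^+\in L^1_t(\dot B^{d/2+1}_{2,1})$, i.e.\ $u^+\in L^1_t(\dot B^{d/2+2}_{2,1})$, which is one derivative beyond the available dissipation ($u^\pm\in L^1_t(\dot B^{d/2+1}_{2,1})\cap\widetilde L^2_t(\dot B^{d/2+2}_{2,1})$ does not give an $L^1$-in-time bound on $\mathbb{R}^+$ at the higher index). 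The paper instead runs a coupled quasilinear weighted $L^2$ energy estimate on the full system for $(n^+,n^-,u^+,u^-)$ (Lemma \ref{lemma4}); the non-symmetric cross terms $\int\frac{M_1}{\rho^-}\dive\dot\Delta_j u^-\,\dot\Delta_j n^+\,dx$ that then appear are cancelled by rewriting the $n^\pm$-equations in terms of $v$, and this cancellation is precisely what creates the need for the $L^1_t(\dot B^{d/2+1}_{2,1})$ bound on $\dive v$.

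Second, your source for that bound is wrong: you assert $\dive v$ ``satisfies a damped equation'' from which the $L^1_t(\dot B^{d/2+1}_{2,1})$ gain follows. The linear combination defining $v$ does not remove the pressure from the evolution equation (the coefficient of $\nabla\theta$ is $(\nu_1^++\nu_2^+)-(\nu_1^-+\nu_2^-)\neq0$ in general), so no such damped decoupled equation is available. What the paper uses is the elliptic identity $\Delta\dive v=\dive(\bar\rho^+\partial_t u^+-\bar\rho^-\partial_t u^-)+\text{nonlinear terms}$ (the combination $\bar\rho^+(\text{eq.\ for }u^+)-\bar\rho^-(\text{eq.\ for }u^-)$ kills $\nabla\theta$ because $\bar\rho^\pm\beta_2^\pm=1$), which reduces matters to an $L^1_t(\dot B^{d/2}_{2,1})$ bound on $\partial_t u^\pm$; that in turn is obtained by introducing the effective viscous fluxes $w^\pm=\beta_2^\pm\nabla\theta-\nu_1^\pm\Delta u^\pm-\nu_2^\pm\nabla\dive u^\pm$, which solve Lam\'e systems amenable to maximal regularity (Lemma \ref{lemma3}). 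Your proposal never addresses how $\partial_t u^\pm$ or $\dive v$ is actually controlled, so the top-order estimate—and with it the closure of the a priori bounds—does not go through as written. (Your identity $\alpha^+\nabla P=\nabla(\alpha^+P)-P\nabla\alpha^+$ is also not the mechanism that handles the worst nonlinearities; the dangerous terms are the convection terms $\mathcal{C}\rho^\mp u^\pm\cdot\nabla n^\pm$ in the pressure equation and the cross terms above.)
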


\begin{remark}
	The partially  dissipative structure and the non-symmetric property of the system \eqref{system1} lead to main difficulties in establishing uniform {\emph{a priori}} estimates. In order 
to overcome these difficulties, the authors in \cite{WYZ-MA} used the Green functional 
analysis of the linearized system and the two-phase effective velocity $v$ given in \eqref{effective} by resorting to 
the smallness of the perturbation in $L^1(\mathbb{R}^3)\cap H^3(\mathbb{R}^3)$.
Particularly, as mentioned before, the $L^1$ smallness assumption
 played a key role in the analysis of \cite{WYZ-MA}.
 However, since the $L^1$ smallness assumption is absent in our case, it requires us to develop new thoughts. More precisely, 
in Theorem \ref{Thm1.1}, we prove a new global existence result for \eqref{system1} and establish quantitative regularity estimates of solutions
by developing a pure energy method in the framework of Besov spaces.
More specifically, based on the Littlewood-Paley theorem and the use of $\dot{B}^{\sigma}_{2,1}$ type norms, the $L^1$ time integrability {\rm(}dissipation{\rm)} estimates of $u^\pm$, 
$P-\bar{P}$ and $v$ are obtained. 
Therefore, we succeed in removing the $L^1$ smallness assumption
to close the energy estimates by a pure energy method. In fact, in the case $d=3$, one
observes the following chain of embeddings:
	$$
	L^1(\mathbb{R}^3)\cap H^3(\mathbb{R}^3)\hookrightarrow L^{p}(\mathbb{R}^3)\cap H^s(\mathbb{R}^3)(1<p<\frac{3}{2}, ~s>\frac{5}{2})\hookrightarrow \dot{B}^{-\frac{1}{2}}_{2,1}\cap 
\dot{B}^{\frac{5}{2}}_{2,1}.
	$$ 
\end{remark}

\begin{remark}
	Note that the two-phase effective velocity $v$ processes a  dissipation regularity: $\dive v \in L^1(\mathbb{R}_+;\dot{B}^{d/2+1}_{2,1})$, which is higher than the 
$L^1(\mathbb{R}_+;\dot{B}^{d/2-1}_{2,1}\cap\dot{B}^{d/2}_{2,1})$ regularity of $\dive u^\pm$ and plays a key role in our analysis. In addition, we observe that  a {\emph{regularity-loss}} 
phenomenon appears in the dissipative structure, i.e. the $\dot{B}^{d/2+1}_{2,1}$ regularity of the pressure $P-\bar{P}$ can be propagated globally in time; however, its dissipation 
regularity $L^1(\mathbb{R}_+;\dot{B}^{d/2}_{2,1})$ exhibits a lower order index $d/2$.
\end{remark}

Moreover, we have the optimal decay rates of the global solution to the Cauchy problem \eqref{equation_R_P_u_pm} if the low-frequency part of the initial data is further bounded in $\dot{B}^{\sigma_0}_{2,\infty}$ for $\sigma_0\in[-d/2,d/2-2)$ as follows:
\begin{theorem}\label{Thm1.3}
	For any $d\geq 3$, let the assumptions of Theorem \ref{Thm1.1} hold, and $(\alpha^+,\alpha^-, \rho^+,\rho^-, u^+,u^-)$ be the corresponding global strong solution to the Cauchy problem 
\eqref{equation_R_P_u_pm} given by Theorem \ref{Thm1.1}. If the initial data further satisfy its low frequency part
	\begin{align}
		(P_0-\bar{P}, u_0^{+},u_0^-)^{\ell}\in \dot{B}^{\sigma_0}_{2,\infty}, \quad\mbox{ for }\quad \sigma_0\in\left[-\frac{d}{2},\frac{d}{2}-2\right),\label{a2}
	\end{align}
	then it holds for any $t>1$ that
	\begin{align}
		\|(P-\bar{P},u^{+},u^-)(t)\|_{ \dot{B}_{2,1}^{\sigma }}&\leq C\delta_0 (1+t)^{-\frac{1}{2}(\sigma-\sigma_0)},\quad\quad \sigma\in\left(\sigma_0,\frac{d}{2}-2\right],\label{decay1}\\
		\|(P-\bar{P})(t)\|_{ \dot{B}_{2,1}^{\sigma }}&\leq C\delta_0 (1+t)^{-\frac{1}{2}(\frac{d}{2}-2-\sigma_0)},\quad \sigma\in\left(\frac{d}{2}-2,\frac{d}{2}\right],\label{decay2}\\
		\|(u^{+},u^-)(t)\|_{ \dot{B}_{2,1}^{\sigma }}&\leq C\delta_0 (1+t)^{-\frac{1}{2}(\frac{d}{2}-2-\sigma_0)},\quad \sigma\in\left(\frac{d}{2}-2,\frac{d}{2}+1\right]\label{decay3}
	\end{align}
	with a constant $C>0$ independent of time and $\delta_0:=\|(P_0-\bar{P}, u_0^{+},u_0^-)^{\ell}\|_{\dot{B}^{\sigma_0}_{2,\infty}}+ \mathcal{E}_0$.
	
\end{theorem}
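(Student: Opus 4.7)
The plan is to upgrade the uniform-in-time bound of Theorem \ref{Thm1.1} to polynomial decay by combining a low/high-frequency Littlewood--Paley splitting with a time-weighted bootstrap. The first step is to propagate uniformly in time the weaker negative low-frequency Besov norm, namely to show
\[
\|(P-\bar{P}, u^+, u^-)^\ell\|_{\widetilde{L}^\infty_t(\dot{B}^{\sigma_0}_{2,\infty})}\lesssim \delta_0.
\]
To this end I would apply $\dot\Delta_j$ to the perturbation system rewritten in the variables $(P-\bar P, v, u^+-u^-)$, so that the low-frequency principal part behaves like a parabolic system; a Duhamel formula against the associated semigroup, followed by an $L^2$ estimate on each dyadic block, multiplication by $2^{j\sigma_0}$ and $\sup_{j\leq j_0}$, would reduce matters to estimating the nonlinearities in $\dot{B}^{\sigma_0}_{2,\infty}$. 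These would be controlled by paraproduct and product rules together with the $L^1$-in-time dissipation information furnished by $\int_0^t\mathcal{D}(\tau)\,d\tau\lesssim\mathcal{E}_0$.

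Once that bound is in place, the parabolic smoothing factor $e^{-c\,2^{2j}\tau}$ on each low dyadic block $j\leq j_0$, summed with weight $2^{j\sigma}$ after a standard Duhamel/convolution estimate, yields
\[
\|(P-\bar{P}, u^+, u^-)^\ell(t)\|_{\dot{B}^\sigma_{2,1}}\lesssim (1+t)^{-\frac{1}{2}(\sigma-\sigma_0)}\delta_0,\qquad \sigma\in(\sigma_0, d/2-2].
\]
Because the high-frequency part is already uniformly bounded in far higher regularity by Theorem \ref{Thm1.1} (and decays faster through standard high-frequency damping), this is enough to close \eqref{decay1}. To reach \eqref{decay2}--\eqref{decay3}, I would introduce time-weighted functionals such as
\[
\mathcal{X}(t):=\sup_{0\leq \tau\leq t}(1+\tau)^{\frac{1}{2}(\frac{d}{2}-2-\sigma_0)}\bigl\|(P-\bar P, u^+,u^-)(\tau)\bigr\|_{\dot{B}^{d/2-2}_{2,1}},
\]
together with analogous weighted norms at higher regularity indices up to $d/2+1$. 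Bootstrapping using the interpolation $\|f\|_{\dot{B}^\sigma_{2,1}}\lesssim \|f\|_{\dot{B}^{\sigma_0}_{2,\infty}}^\theta\|f\|_{\dot{B}^s_{2,1}}^{1-\theta}$ together with the strong velocity dissipation in $L^1_t(\dot{B}^{d/2+1}_{2,1})$, one should propagate the rate $(1+t)^{-(d/2-2-\sigma_0)/2}$ all the way up to $\dot{B}^{d/2+1}_{2,1}$ for the velocities, while for the pressure the regularity-loss phenomenon forces saturation at the same rate but only up to $\dot{B}^{d/2}_{2,1}$.

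The main obstacle is the high-frequency decay of the pressure: the one-order gap between its propagated regularity $\dot{B}^{d/2+1}_{2,1}$ and its dissipation regularity $\dot{B}^{d/2}_{2,1}$ rules out a direct bootstrap. To get around this I would exploit the enhanced dissipation $\dive v\in L^1_t(\dot{B}^{d/2+1}_{2,1})$ of the effective velocity in order to borrow one derivative of smoothing from the coupling of $u^+$ and $u^-$, and would handle the delicate nonlinear terms $\dive(\alpha^\pm\rho^\pm u^\pm)$ and $\alpha^\pm\nabla P^\pm$ by commutator estimates at the borderline level $\dot{B}^{d/2}_{2,1}$, splitting each term into its low- and high-frequency parts so that the missing $L^1_t$-integrability of the high-frequency pressure is compensated by the time weights carried by the bootstrap functionals.
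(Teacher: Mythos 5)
Your overall architecture (propagate the $\dot{B}^{\sigma_0}_{2,\infty}$ norm, then convert it into decay, then push the rate up to higher regularity) is the right family of ideas, and your first step corresponds to the paper's Lemma \ref{lemma_for_E_L_sigma_0}. However, the mechanism you choose for extracting the decay rate --- a Duhamel representation against the low-frequency parabolic semigroup plus a time-weighted bootstrap functional $\mathcal{X}(t)$ --- has a genuine gap under the hypotheses of Theorem \ref{Thm1.3}. In the Duhamel integral you must estimate $\int_0^t(1+t-\tau)^{-\frac12(\sigma-\sigma_0)}\|(F_1,F_2^\pm)(\tau)\|^{\ell}_{\dot B^{\sigma_0}_{2,\infty}}\,d\tau$, and the product law $\dot B^{\sigma_0+1}_{2,\infty}\times\dot B^{d/2}_{2,1}\hookrightarrow\dot B^{\sigma_0}_{2,\infty}$ forces the appearance of $\|n^+\|_{\dot B^{\sigma_0+1}_{2,\infty}}$ as a prefactor (this is exactly \eqref{5.9} in the paper). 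For $\sigma_0<d/2-2$ one has $\sigma_0+1<d/2-1$, so this norm of the non-dissipative component is not controlled by $\mathcal{E}_0$ at all, let alone small; consequently the bootstrap inequality cannot be closed. This is precisely why the paper states Theorem \ref{Thm1.4} separately, with the \emph{additional} smallness assumption $\|\alpha_0^+\rho_0^+-\bar\alpha^+\bar\rho_0^+\|_{\dot B^{\sigma_0+1}_{2,\infty}}\le\varepsilon_1$: your proposal is essentially a proof sketch of Theorem \ref{Thm1.4}, not of Theorem \ref{Thm1.3}.

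The route the paper actually takes for Theorem \ref{Thm1.3} avoids Duhamel entirely. One multiplies the localized Lyapunov inequalities \eqref{Lyapunovlow} and \eqref{Lyapunovhigh} by $t^{A}$ with $A$ large, so that the dangerous term is $\int_0^t\tau^{A-1}\|(\theta,u^\pm)\|_{\dot B^{d/2-2}_{2,1}}\,d\tau$; this is interpolated between the uniformly bounded $\dot B^{\sigma_0}_{2,\infty}$ norm and the \emph{weighted $L^1_t$ dissipation norms that sit on the left-hand side}, and the latter contribution is absorbed by Young's inequality with a small parameter. The nonlinearities are then estimated in the weighted $L^1_t(\dot B^{d/2-2}_{2,1})$ and $L^1_t(\dot B^{d/2-1}_{2,1})$ norms, where $n^+$ enters only through norms already furnished by Theorem \ref{Thm1.1} and the smallness needed is only that of $\mathcal{X}(t)\lesssim\mathcal{X}_0$. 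This yields \eqref{decay2}--\eqref{decay3} directly (the $\dot B^{d/2+1}_{2,1}$ rate for $u^\pm$ coming from Lam\'e maximal regularity applied to the weighted equation), and \eqref{decay1} follows by interpolating with the propagated $\dot B^{\sigma_0}_{2,\infty}$ bound. Also note that, contrary to your last paragraph, the enhanced dissipation of $\dive v$ plays no role in the decay argument; it is needed only for the global existence, and the high-frequency pressure decay is read off directly from the damping of the functional $\mathcal{E}_{2,j}\sim\|\dot\Delta_j(\theta,\nabla\theta,u^+,u^-)\|_{L^2}^2$.
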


When the smallness of the low-frequency part of the initial data in $\dot{B}^{\sigma_0}_{2,\infty}$ is proposed, we can also get the optimal time-decay rates of the global solution to the Cauchy problem.
\begin{theorem}\label{Thm1.4}
	For any $d\geq 3$, let the assumptions of Theorem \ref{Thm1.1} hold, and $(\alpha^+,\alpha^-, \rho^+,\rho^-, u^+,u^-)$ be the corresponding global strong solution to the Cauchy problem 
\eqref{equation_R_P_u_pm} given by Theorem \ref{Thm1.1}.  There exists a constant $\var_1$ such that if 
	\begin{align}
		\delta_1:=\| \alpha_0^{+}\rho_0^+-\bar{\alpha}^+\bar{\rho}_0^+\|_{\dot{B}^{\sigma_0+1}_{2,\infty}}+ \|(P_0-\bar{P}, u_0^{+},u_0^-)^{\ell}\|_{\dot{B}^{\sigma_0}_{2,\infty}}\leq\var_1, \quad\mbox{for 
}\quad \sigma_0\in\left[-\frac{d}{2},\frac{d}{2}\right),\label{smalla2}
	\end{align}
	then it holds for any $t>1$ and $0<\var<<\frac{d}{2}-2-\sigma_0$ that
	\begin{align}
		\|(P-\bar{P},u^{+},u^-)(t)\|_{ \dot{B}_{2,1}^{\sigma }}&\leq C\delta_1 (1+t)^{-\frac{1}{2}(\sigma-\sigma_0)},\quad \sigma\in\left(\sigma_0,\frac{d}{2}-\var \right],\label{decay4}\\
		\|(P-\bar{P})(t)\|_{ \dot{B}_{2,1}^{\frac{d}{2} }}+\|(u^{+},u^-)\|_{\dot{B}_{2,1}^{\frac{d}{2}} \cap \dot{B}_{2,1}^{\frac{d}{2}+1 }}&\leq C\delta_1 (1+t)^{-\frac{1}{2}(\frac{d}{2} 
-\sigma_0-\var)},\label{decay5}\\
		\|(\alpha^{+}\rho^{+}-R_{\infty}^+,\alpha^{-}\rho^{-}-R_{\infty}^-)(t)\|_{\dot{B}^{\frac{d}{2}}_{2,1}}&\leq C\delta_1(1+t)^{-\frac{1}{2}(\frac{d}{2}-2 -\sigma_0-\var)},\label{decay6}
	\end{align}
	with a constant $C>0$ independent of time and the profile $R_{\infty}^\pm=R_{\infty}^\pm(x)$ is given by
	\begin{align}\label{barrho}
		R_{\infty}^\pm(x):=\alpha_0^{\pm}\rho_0^{\pm}(x)-\int_0^{\infty} \dive (\alpha^{\pm}\rho^{\pm} u^{\pm})(t,x)\,dt.
	\end{align}
\end{theorem}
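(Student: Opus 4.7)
The plan is to bootstrap the conclusions of Theorem~\ref{Thm1.3} by upgrading the boundedness hypothesis on the low-frequency data to smallness and adding the new low-frequency control on the non-dissipative density component. The proof naturally falls into three steps.

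\emph{Step 1 (Low-frequency propagation).} Writing the linearization of \eqref{system1} around the constant equilibrium in the unknowns $(P-\bar P,u^{\pm})$ together with the effective velocity $v$ from \eqref{effective}, I would establish
\begin{align*}
\|(P-\bar P,u^{\pm})^{\ell}\|_{L^{\infty}_t(\dot{B}^{\sigma_0}_{2,\infty})} + \|(\alpha^+\rho^+-\bar\alpha^+\bar\rho^+)^{\ell}\|_{L^{\infty}_t(\dot{B}^{\sigma_0+1}_{2,\infty})}\lesssim \delta_1.
\end{align*}
The shift to $\sigma_0+1$ on $\alpha^+\rho^+$ reflects the one-derivative cost paid when propagating a bound through the continuity equation without any dissipation mechanism, while nonlinear contributions are absorbed by the smallness of $\delta_1$ combined with the global dissipation $\mathcal{D}(t)$ from Theorem~\ref{Thm1.1}.

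\emph{Step 2 (Decay of the dissipative quantities).} Combining the Littlewood--Paley interpolation
\begin{align*}
\|f\|_{\dot{B}^{\sigma}_{2,1}}\lesssim \|f^{\ell}\|_{\dot{B}^{\sigma_0}_{2,\infty}}^{\theta}\,\|f^{h}\|_{\dot{B}^{s}_{2,1}}^{1-\theta}
\end{align*}
with a Lyapunov-type differential inequality for a suitable time-weighted energy functional yields \eqref{decay4} at the rate $(1+t)^{-\frac{1}{2}(\sigma-\sigma_0)}$. For the higher-regularity bound \eqref{decay5}, the decisive input is the extra dissipation $\dive v\in L^{1}_t(\dot{B}^{d/2+1}_{2,1})$ highlighted in the remark following Theorem~\ref{Thm1.1}, which permits extending the decay analysis up to the regularity $\dot{B}^{d/2+1}_{2,1}$ for the velocities. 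The small $\varepsilon$ loss in the exponent arises from handling quasilinear products at the extremal regularity where one cannot afford the full decay rate.

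\emph{Step 3 (Profile and its convergence rate).} Integrating the mass equations $\partial_t(\alpha^{\pm}\rho^{\pm})+\dive(\alpha^{\pm}\rho^{\pm}u^{\pm})=0$ in time gives, provided the improper integral converges in $\dot{B}^{d/2}_{2,1}$,
\begin{align*}
(\alpha^{\pm}\rho^{\pm})(t,x)-R_{\infty}^{\pm}(x)=\int_t^{\infty}\dive(\alpha^{\pm}\rho^{\pm}u^{\pm})(\tau,x)\,d\tau.
\end{align*}
The product estimate
\begin{align*}
\|\dive(\alpha^{\pm}\rho^{\pm}u^{\pm})\|_{\dot{B}^{d/2}_{2,1}}\lesssim \|\alpha^{\pm}\rho^{\pm}\|_{\dot{B}^{d/2+1}_{2,1}}\|u^{\pm}\|_{\dot{B}^{d/2}_{2,1}}+\|\alpha^{\pm}\rho^{\pm}\|_{\dot{B}^{d/2}_{2,1}}\|u^{\pm}\|_{\dot{B}^{d/2+1}_{2,1}},
\end{align*}
combined with the global bound on $\alpha^{\pm}\rho^{\pm}$ from Theorem~\ref{Thm1.1} and the Step~2 decay of $u^{\pm}$ at rate $(1+t)^{-\frac{1}{2}(d/2-\sigma_0-\varepsilon)}$, makes the integrand time-integrable under the condition $\sigma_0<d/2-2-\varepsilon$; integrating the tail from $t$ to infinity then produces exactly the rate asserted in \eqref{decay6}.

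The main obstacle is Step~1: propagating a $\dot{B}^{\sigma_0+1}_{2,\infty}$ low-frequency bound on the genuinely non-dissipative quantity $\alpha^+\rho^+-\bar\alpha^+\bar\rho^+$. Since no dissipation is directly available for this mode and the nonlinearity couples it to the velocities at comparable regularity, closing the estimate under the smallness of $\delta_1$ alone will require a careful use of the effective velocity $v$ to transfer integrability from $u^+$ and $u^-$, together with commutator and paraproduct estimates adapted to the $\dot{B}^{\cdot}_{2,\infty}$ low-frequency index.
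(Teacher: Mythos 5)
Your proposal follows essentially the same route as the paper: a time-weighted energy functional closed by interpolating the propagated low-frequency $\dot{B}^{\sigma_0}_{2,\infty}$ bound against the high-frequency damping, the smallness of $\|n^+\|_{L^{\infty}_t(\dot{B}^{\sigma_0+1}_{2,\infty})}$ obtained by propagating the transport equation with the $\widetilde{L}^1_t(\dot{B}^{\sigma_0+2}_{2,\infty})$ integrability of $u^{+}$ already established for Theorem \ref{Thm1.3}, and the profile $R_{\infty}^{\pm}$ obtained by integrating the mass equation in time and estimating the tail exactly as in your Step 3. The only cosmetic discrepancies are that the paper derives the high-frequency $\dot{B}^{d/2+1}_{2,1}$ decay of $u^{\pm}$ from the maximal regularity of the Lam\'e system rather than from the $\dive v$ dissipation, and the $\dot{B}^{\sigma_0+1}_{2,\infty}$ propagation of the non-dissipative component does not in fact require the effective velocity $v$ --- the direct transport estimate suffices.
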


\begin{remark}
	Due to the embedding $\dot{B}^{\sigma_0}_{2,\infty} \hookrightarrow L^q(\mathbb{R}^d)$ with $\sigma_0\in [-d/2,d/2-2) $ and $q=2d/(d-2\sigma_{0})\in [1,d/2)$ Theorems \ref{Thm1.3}--\ref{Thm1.4} provide optimal time convergence rates for more general initial data. In fact, the assumption $\dot{B}^{\sigma_0}_{2,\infty}$ seems to be sharp {\rm(}see 
\cite{brandolese1,brandolese2}{\rm)}. In particular, in the case $\sigma_0=-d/2$, by \eqref{decay4}--\eqref{decay6} and an interpolation argument, we can recover the classical optimal time 
convergence estimates in Sobolev spaces subject to the $L^1$ assumption:
	\begin{equation*}
		\begin{aligned}
			\|\Lambda^{\sigma}(P-\bar{P},u^+,u^-)(t)\|_{L^p}\lesssim (1+t)^{-\frac{1}{2}(\sigma+d-\frac{d}{p})},\quad\quad -\frac{d}{2}<\sigma+\frac{d}{2}-\frac{d}{p}<\frac{d}{2},\quad 
p\geq2,
		\end{aligned}
	\end{equation*}
	and the almost optimal time convergence rate in the $L^{\infty}$ sense:
	\begin{equation*}
		\begin{aligned}
			\|(P-\bar{P},u^+,u^-)(t)\|_{L^{\infty}}&\lesssim (1+t)^{-\frac{d}{2}+\var},\\
			\|(\alpha^{\pm}\rho^{\pm}-R_{\infty}^\pm)(t)\|_{L^{\infty}}&\lesssim  (1+t)^{-\frac{d}{2}+1+\var}
		\end{aligned}
	\end{equation*}
	for any $0<\var<<1$.
\end{remark}

\begin{remark}
	Although $\alpha^{\pm}\rho^{\pm}$ has no dissipation effect around the constant equilibrium state $1$, inspired by the work of Danchin \cite{danchinpressure} on the large-time behavior of viscous pressureless flows, we establish the convergence rate $(1+t)^{-\frac{1}{2}(d/2-2 -\sigma_0-\var)}$ of $\alpha^{\pm}\rho^{\pm}$ to the profile $R_{\infty}^\pm$.
	The shape and asymptotic stability of the profiles $R^{\pm}_{\infty}$ are new in the study of the two-fluid model \eqref{system1}.
\end{remark}

\subsection{Reformulation}

Owing to the relation between the pressures of $(\ref{system1})_3,$ one has
\begin{align}
	{\rm d}P=s_+^2 {\rm d}\rho^+=s_-^2{\rm d} \rho^-,\label{1:1}
\end{align}
where $s_{+}^2=\frac{dP^{+}}{d\rho^{+}}(\rho^{+})>0$ and $s_{-}^2=\frac{dP^{-}}{d\rho^{-}}(\rho^{-})>0$, respectively, represent the sound speed of the phases $+$ and $-$. Following \cite{Bresch1}, we use the notation of fraction density $R^\pm$, which is given by
\begin{align}
R^{\pm}:=\alpha^{\pm}\rho^{\pm}.\label{1:2}
\end{align}
Together with $\alpha^++\alpha^-=1$, this implies that
\begin{align}
{\rm d} \rho^+=\frac{1}{\alpha^+} ({\rm d} R^+-\rho^+ {\rm d} \alpha^+),\quad  {\rm d} \rho^-=\frac{1}{\alpha^-} ({\rm d} R^-+\rho^- {\rm d} \alpha^-).\label{1:3}
\end{align}
In view of \eqref{1:1} and \eqref{1:2}, one also has
\begin{align}
{\rm d} \alpha^+=\frac{\alpha^- s_+^2}{\alpha^-\rho^+ s_+^2+\alpha^+\rho^- s_-^2} {\rm d} R^+-\frac{\alpha^+ s_-^2}{\alpha^- \rho^- s_+^2+\alpha^+ \rho^- s_-^2} {\rm d} R^-.\label{1:4}
\end{align}
Consequently, substituting \eqref{1:4} into \eqref{1:1} leads to
\begin{align}
{\rm d} \rho^\pm=\frac{\rho^+\rho^- s_{\pm}^2}{R^-(\rho^+)^2 s_+^2+ R^+(\rho^-)^2 s_-^2} ( \rho^-{\rm d} R^++ \rho^+ {\rm d} R^-).\label{1:5}
\end{align}
In addition, owing to \eqref{1:1} and \eqref{1:5}, one can recover the common pressure differential ${\rm d} P$ as
\begin{align}
	{\rm d} P=\mathcal{C}(\rho^- {\rm d}\,R^++\rho^+ {\rm d}\, R^-),\label{1:6}
\end{align}
where 
\begin{equation}
	\begin{aligned}
		\mathcal{C}:=\frac{s_-^2s_+^2}{\alpha^-\rho^+s_+^2+\alpha^+\rho^-s_-^2}.\label{1:7}
	\end{aligned}
\end{equation}
Since $\alpha^{+}+\alpha^{-}=1$, we know that
\begin{align}
	\frac{R^+}{\rho^+}+\frac{R^-}{\rho^-}=1,\label{1:8}
\end{align}
which implies
\begin{align}
	R^{-}=\frac{\rho^-(\rho^+-R^+)}{\rho^+}=\frac{\pi^-(P)(\pi^+(P)-R^+)}{\pi^+(P)}.\label{1:9}
\end{align}
Due to the inverse function theorem, $\pi^{\pm}(P)$ exists for all $P>0$ such that $\rho^{\pm}=\pi^{\pm}(P)$. Using \eqref{pressure} and \eqref{1:2}, we can represent $\alpha^+$ as 
\begin{equation}\label{equation_R_P_u_pm00}
	\begin{array}{ll}
		\alpha^+=\alpha^+(R^+,P)=\frac{R^+}{\pi^{+}(P)},\quad \alpha^-=\alpha^-(R^+,P)= 1-\frac{R^+}{\pi^{+}(P)}.
	\end{array}
\end{equation}
Therefore, one can recover the desired bounds for $\alpha^\pm$, $\rho^\pm$ and $R^-$ once we establish the estimates of $R^+$ and $P$. In particular, we know $\mathcal{C}=\mathcal{C}(R^+,P)$.

On the other hand, one also can represent  $\alpha^\pm$, $\rho^\pm$ and $P$ in terms of $(R^+,R^-)$. Indeed, note that \eqref{pressure} and \eqref{1:9} ensure that
\begin{align}
\Psi(\rho^+, R^+,R^-):=P^+(\rho^+)-P^-\Big(\frac{R_-\rho^-}{\rho^+-R^+}\Big)=0.\label{1.1.7}
\end{align}
Differentiating the above equation with respect to $\rho^+$ gives
\begin{align*}
\partial_{\rho_{+}}\Psi(\rho^+, R^+,R^-):=s_+^2+s_-^2 \frac{R_-R_+}{(\rho^+-R^+})^2>0.
\end{align*}
As a consequence of the implicit function theorem, $\rho^+=\rho^+(R^+,R^-)$ is the unique solution of the ODE \eqref{1.1.7}. Thus, it holds that
\begin{equation}\label{given}
\left\{
\begin{aligned}
&\rho^+=\rho^+(R^+,R^-),\\
&\rho^-=\rho^-(R^+,R^-)=\frac{R^-\rho^+(R^+,R^-)}{\rho^+(R^+,R^-)},\\
&P=P(R^+,R^-)=P^+(\rho^+(R^+,R^-)),\\
&\alpha^+=\alpha^+(R^+,R^-)=\frac{R^+}{\rho^+(R^+,R^-)},\\
&\alpha^-=\alpha^-(R^+,R^-)=1-\alpha^-(R^+,R^-).
\end{aligned}
\right.
\end{equation}
Thus, we can also recover the bounds for $\alpha^\pm$, $\rho^\pm$ and $P$ once the estimates of $(R^+,\theta)$ have been obtained.

Next, we give some reformulations of the original system \eqref{system1}. For brevity, in what follows we set the background doping profile
$$
\bar{\alpha}^{\pm} \bar{\rho}^{\pm}=1.
$$
Denote
\begin{align}
	n^{\pm}=R^{\pm}-1, \quad  \theta=P-\bar{P},
\end{align}
and its initial values
\begin{align}
n_0^{\pm}=\alpha_0^\pm\rho_0^\pm-1 , \quad  \theta_0=P_0-\bar{P}.
\end{align}
Then, the system \eqref{system1} can be written as 
\begin{equation}\label{equation-for-new-term}
	\left\{
	\begin{array}{ll}
		\partial_t n^++ u^+\cdot \nabla n^+ +(1+n^+)\dive u^+=0,\\
		\partial_t n^- +u^-\cdot \nabla n^- +(1+n^-)\dive u^-=0,\\
		\partial_t u^+ + u^+\cdot\nabla u^++\tfrac{M_1}{\rho^+}\nabla n^++\tfrac{M_2}{\rho^+}\nabla n^- - \nu_1^+\Delta u^+-\nu_2^+ \nabla\dive u^+=H^+,\\
		\partial_t u^- + u^-\cdot\nabla u^-+\tfrac{M_1}{\rho^-}\nabla n^++\tfrac{M_2}{\rho^-}\nabla n^-- \nu_1^-\Delta u^--\nu_2^-\nabla\dive u^-=H^-,\\
		( n^+,n^-, u^+,u^-)|_{t=0}=(n_0^+,n^-,u_0^+,u_0^-),
	\end{array}
	\right.
\end{equation}
where we used the notations
\begin{align}
\nu_1^\pm=\frac{\mu^\pm}{\bar{\rho}^\pm},\quad\quad \nu_2^\pm=\frac{\mu^\pm+\lambda^\pm}{\bar{\rho}^\pm},
\end{align}
and
\begin{equation}\label{1.43}
	\left\{
	\begin{aligned}
    &M_1=\mathcal{C}\rho^-,\quad\quad\quad \quad\quad\quad\quad  M_2=\mathcal{C}\rho^+,\\
	&	H^\pm=  g_1^\pm(\theta)\Delta u^\pm+g_2^{\pm}(\theta)\dive u^\pm+ \frac{\mu^\pm(\nabla u^\pm+\nabla^t u^\pm) \cdot\nabla \alpha^\pm}{R^+}+\frac{\lambda^\pm\dive u^\pm\nabla 
\alpha^\pm}{R^\pm},\\
    & g_1^\pm(\theta)= \frac{\mu^\pm}{\rho^\pm}-\nu_1^\pm,\quad\quad\quad\quad g_2^\pm(\theta)= \frac{\mu^\pm+\lambda^\pm}{\rho^\pm}-\nu_2^
   \pm.
	\end{aligned}
	\right.
\end{equation}

Note that both $n^+$ and $n^-$ have no dissipation; however, $\theta=P-\bar{P}$ can be viewed as a dissipative variable. To capture the dissipation of $(\theta,u^+,u^-)$, we use \eqref{1:6} and rewrite the system \eqref{system1} as follows:
\begin{equation}\label{equation_R_P_u_pm0}
	\left\{
	\begin{array}{ll}
		\partial_t R^+ +\dive (R^+u^+)=0,\\
		\partial_t P +\mathcal{C}\rho^{-}\dive (R^+u^+)+\mathcal{C}\rho^{+}\dive (R^-u^-)=0,\\
		\partial_t (R^+u^+) +\dive (R^+u^+\otimes u^+)+\alpha^+\nabla P\\
		\quad\quad\quad\quad\quad\quad= \dive \Big(\alpha^+\big(\mu^+(\nabla u^++\nabla^{\top} u^+)+\lambda^+\dive u^+ {\rm Id}\big) \Big),\\
		\partial_t (R^-u^-) +\dive (R^-u^-\otimes u^-)+\alpha^-\nabla P\\
		\quad\quad\quad\quad\quad\quad= \dive \Big( \alpha^-\big(\mu^-(\nabla u^-+\nabla^{\top} u^-)+\lambda^-\dive u^- {\rm Id} \big) \Big).\ &
	\end{array}
	\right.
\end{equation}
The linearization of \eqref{equation_R_P_u_pm0} reads
\begin{equation}\label{equation_R_P_u_pm}
	\left\{
	\begin{array}{ll}
		\partial_t n^+ + \dive u^+ =-\dive (n^+u^+),\\
		\partial_t \theta +\beta_1^+\dive u^++\beta_1^-\dive u^-=F_1,\\
		\partial_t u^++\beta_2^+ \nabla \theta- \nu_1^{+}\Delta u^+-\nu_2^+\nabla\dive u^+=F_2^+,\\
		\partial_t u^-+\beta_2^- \nabla \theta- \nu_1^{-}\Delta u^--\nu_2^-\nabla\dive u^-=F_2^-,\\
		( n^+,\theta, u^+, u^-)|_{t=0}=(n_0^+, n_0^-,u_0^+,u_0^-),
	\end{array}
	\right.
\end{equation}
where the terms $F_1$ and $F_2^\pm$ are given by
\begin{equation}\label{1.46}
	\left\{
	\begin{aligned}
		F_{1}&=-g_3^+(n^+,\theta)\dive u^+-g_3^-(n^+,\theta)\dive u^-\\
        &\quad \quad -\mathcal{C}(n^++1,\theta+\bar{P})\rho^- u^+\cdot\nabla n^+-\mathcal{C}(n^++1,\theta+\bar{P})\rho^+ u^-\cdot\nabla n^-,\\
		F_2^\pm&=-u^{\pm}\cdot\nabla u^{\pm}-g_4^{\pm}(\theta)\nabla \theta+g_1^{\pm}(\theta)\Delta u^{\pm}+g_2^{\pm}(\theta)\nabla\dive  u^{\pm} , \\
		&\quad\quad+ \frac{\mu^\pm(\nabla u^\pm+\nabla^{\top} u^\pm) \cdot\nabla \alpha^\pm}{R^\pm}+\frac{\lambda^\pm\dive u^\pm\nabla \alpha^\pm}{R^\pm},
	\end{aligned}
	\right.
\end{equation}
where we defined the constants
\begin{equation}
	\begin{aligned}
    &\beta_1^\pm=\mathcal{C}(1,\bar{P})\bar{\rho}^\pm ,\quad    \beta_2^\pm= \frac{1}{\bar{\rho^\pm}}, 
	\end{aligned}
\end{equation}
and the nonlinear functions $g_i^{\pm}(1\leq i\leq 4)$ 
\begin{equation}
	\left\{
	\begin{aligned}
		& g_3^+(n^+,\theta)=\mathcal{C}(n^++1,\theta+\bar{P})\rho^- R^+-\beta_1^+, \\
		&g_3^-(n^+,\theta)=\mathcal{C}(n^++1,\theta+\bar{P})\rho^+ R^--\beta_1^-, \\
		& g_4^\pm(\theta)=\frac{1}{\rho^\pm}-\beta_{2}^\pm.
	\end{aligned}
	\right.
\end{equation}


\subsection{Difficulties and strategies}

To investigate the global stability of solutions to the two-fluid system \eqref{system1}, one employs techniques derived from partially dissipative hyperbolic-parabolic systems
\begin{align}
	\partial_{t}W+\sum_{i=1}^{d}A^{i}(W)\partial_{x_{i}}W-\sum_{i,j=1}^d\partial_{x_j}\big(B_{i,j}(W)\partial_{x_i}W\big)=Q(W),
	\label{entropeq}
\end{align}
where $W\in \mathbb{R}^n$ ($n\geq2$) is the unknown. These models describe the dynamics of physical systems out of thermodynamic equilibrium, a common scenario in gas dynamics, where the 
dissipative effect from the parabolic term does not affect all components of the unknown  $W\in \mathbb{R}^n$. To promise its global stability, Shizuta and Kawashima \cite{shi1} introduced
a celebrated stability condition, called the Shizuta-Kawashima (SK) condition, to characterize the interactions from the coupling of the matrices $A^i$ and $B_{i,j}$. Yong \cite{yong1} 
formulated the notion of entropy, which provides a symmetrization of $A^i$ and extends these results to more general quasilinear systems. 
Qu and Wang \cite{qu1} established a
global existence result for partially dissipative  symmetric systems in Sobolev spaces, where exactly one eigen-family is allowed to violate the (SK) condition.

However, for the reformulated system \eqref{equation_R_P_u_pm}, the key difficulty lies in the nonlinear terms in the equation $\eqref{equation_R_P_u_pm}_2$ for $\theta$:
\begin{align}
	-\mathcal{C}\rho^- u^+\cdot\nabla n^+\quad\text{and}\quad-\mathcal{C}\rho^+ u^-\cdot\nabla n^-.\label{1.38}
\end{align}
In fact, such nonlinear terms break the symmetry of \eqref{equation_R_P_u_pm} and lead to a one-order regularity loss in high frequencies which cannot be handled by usual commutator 
estimates. Moreover, finding an entropy variable appears to be particularly challenging. Therefore, even though the linear part of the reformulated system \eqref{equation_R_P_u_pm} seems 
to have a similar structure to that in \cite{qu1}, it remains difficult to close the {\emph{a priori}} estimates and establish global existence.

To solve the nonlinear problem globally in time, as emphasized in many classical works \cite{bahouri1,danchinhand}, a key challenge arises from the essential \emph{Lipschitz} requirement 
of the $L^1$ time integrability for the velocity $u^\pm$, i.e.
\begin{align}
	\int_0^{\infty}\|\nabla u^\pm\|_{L^{\infty}}\,dt<\infty.\label{1.39}
\end{align}
This condition represents the minimal regularity control of the convective terms in the equation for $u^\pm$. On the other hand, to derive the upper bound of $\theta$, which is necessary 
for typical nonlinear terms, one has to address the $L^1(\mathbb{R}^{+};L^{\infty})$-bound of the challenging source terms \eqref{1.38}. Consequently, the following key conditions must 
also be satisfied:
\begin{align}
	\int_0^{\infty}\|u^\pm\|_{L^{\infty}}\,dt<\infty,\quad\quad \| R^\pm\|_{L^{\infty}(\mathbb{R}^{+};L^{\infty})}+\| \nabla 
R^\pm\|_{L^{\infty}(\mathbb{R}^{+};L^{\infty})}<\infty.\label{1.40}
\end{align}

\vspace{2mm}

We now outline the main strategies for proving Theorems \ref{Thm1.1}--\ref{Thm1.4} concerning the global existence and large-time asymptotic behavior of strong solutions to 
\eqref{system1}--\eqref{system5} within the Besov regularity framework. In the spirit of hypocoercivity (see \cite{danchinhand}), we construct a localized Lyapunov functional 
$\mathcal{L}_j(t)\sim \|\dot{\Delta}_j(\theta,\nabla\theta,u^{+},u^{-})\|_{L^2}^2$ for $j\in\mathbb{Z}$ such that
\begin{equation}\label{Lj1}
	\begin{aligned}
		\frac{d}{dt}\mathcal{L}_j(t)+\min\{1,2^{2j}\}\mathcal{L}_j(t)+2^{2j}\|\dot{\Delta}_j(u^+,u^-)\|_{L^2}^2\lesssim {\rm nonlinear\, terms}\,\sqrt{\mathcal{L}_j(t)}.
	\end{aligned}
\end{equation}
This reveals the sharp behaviors in different frequency regimes:
\begin{itemize}
	
	\item In the low frequencies $j\leq 0$, $(\theta,\nabla\theta,u^{+},u^{-})$ exhibit the effect of heat diffusion, as expected in spectral analysis (see \cite{WYZ-MA}). To derive the 
bound of $u^\pm$ in \eqref{1.40}, we shall establish the $L^{1}(\mathbb{R}^+;\dot{B}^{d/2}_{2,1})$-regularity of $(\theta,u^+,u^-)$. Consequently, the maximal regularity estimates of the 
heat equation indicate that the initial data have to belong to  $\dot{B}^{d/2-2}_{2,1}$ (see Lemma \ref{lemma2}).
	
	\item In the high frequencies $j\geq -1$,  $(\theta,\nabla\theta,u^{+},u^{-})$ behaves like a damping, while $u^\pm$ obeys a heat diffusion. Thus, the dissipative structure exhibited in 
\eqref{Lj1} enables us to obtain the $ L^{1}(\mathbb{R}^+;\dot{B}^{d/2}_{2,1})$-estimate for $\theta$ and $ L^{1}(\mathbb{R}^+;\dot{B}^{d/2+1}_{2,1})$-estimate of $u^\pm$ required in 
\eqref{1.38} -\eqref{1.39}  (see Lemma \ref{lemma2}).

\end{itemize}

Note that the linear term $\dive u^\pm$ in the equation of $n^\pm$ leads to a zero eigenvalue for the linear system associated with $\eqref{equation_R_P_u_pm}$, which implies that the classical (SK) condition does not hold. The main observation here is that the equation of $u^+$ can be viewed as a transport equation with the source $\dive u^\pm$, so at the $\dot{B}^{d/2-1}_{2,1}\cap \dot{B}^{d/2}_{2,1}$ 
regularity level for $n^\pm$, the above linear term $\dive u^\pm$ can be controlled based on the $L^1(\mathbb{R}^{+};\dot{B}^{d/2}_{2,1}\cap\dot{B}^{d/2+1}_{2,1}))$ bound of $u^\pm$ 
obtained in the above low-frequency and high-frequency analysis. This allows us to establish low-order estimates without using the method of Green's function as in  \cite{WYZ-MA} (see 
Lemma \ref{lemman+}).

Yet, the aforementioned low-frequency and high-frequency estimates are insufficient to derive the Lipschitz bounds of $R^\pm$ in \eqref{1.40}, or stronger $\dot{B}^{d/2+1}_{2,1}$-bound. To 
overcome this difficulty, we perform the nonlinear energy estimates for the system \eqref{equation-for-new-term} (see Lemma \ref{lemma4}). More precisely, we apply $\dot{\Delta}_j$ to 
\eqref{equation-for-new-term} and rewrite the system as a quasilinear system \eqref{equation-for-new-term-local} with some commutator terms, and take advantage of $L^2$ energy estimates 
for $\dot{\Delta}_{j}(n^+,n^-,u^+,u^-)$ with suitably nonlinear weights inspired by techniques from quasilinear symmetric hyperbolic systems (see \cite{burtea1, CB1,CB3, XuKawashima1}). 
However, due to the nonsymmetric part of \eqref{equation-for-new-term}, there are some challenging cross terms
\begin{align}
	\int_{\mathbb{R}^d} \tfrac{M_1}{\rho^-}\dive \dot{\Delta}_ju^-\dot{\Delta}_jn^+\,dx\quad\text{and}\quad \int_{\mathbb{R}^d} \tfrac{M_2}{\rho^+}\dive 
\dot{\Delta}_ju^+\dot{\Delta}_jn^-\,dx,\label{crossd}
\end{align}
arising from the heterogenization of two velocities. To overcome this difficulty, we rewrite the equations $\eqref{equation-for-new-term}_1$--$\eqref{equation-for-new-term}_2$ for $n^+$ and 
$n^-$ in terms of the two-phase effective velocity $v$ defined in \eqref{effective} as follows:
\begin{equation}\label{n+n-ddd}
	\left\{
	\begin{array}{ll}
		\partial_t n^++ u^+\cdot \nabla n^+ +\tfrac{1+n^+}{\nu_1^++\nu_2^+}\dive v+ \tfrac{\nu_1^-+\nu_2^-}{\nu_1^++\nu_2^+}(1+n^+)\dive u^-=0,\\
		\partial_t n^- +u^-\cdot \nabla n^- -\frac{1+n^-}{\nu_1^-+\nu_2^-}\dive v+ \tfrac{\nu_1^++\nu_2^+}{\nu_1^-+\nu_2^-}(1+n^-)\dive u^-=0.
	\end{array}
	\right.
\end{equation}
The above term involving $n^+\dive u^-$ and $n^-\dive u^+$ can be used to cancel the challenging terms \eqref{crossd}. Consequently, we must control the additional terms involving $\dive v$ 
in \eqref{n+n-ddd}, which require higher-order $L^1(\mathbb{R}_+;\dot{B}^{d/2+2}_{2,1})$ estimates. To achieve it, the key observation here is that $v$ satisfies a fine elliptic structure:
\begin{align}
	 -\Delta \dive v=-\dive(\bar{\rho}^+ \partial_t u^++\bar{\rho}^-\partial_t u^-)+{\text{good nonlinear terms}}.
\end{align}
Thus, it suffices to establish the $L^1(\mathbb{R}_+;\dot{B}^{d/2}_{2,1})$-estimate for the time derivatives $u^\pm_t$. To this end, we introduce the effective variables
$$
w^{\pm}=\beta_2^\pm\nabla \theta-\nu_1^{\pm}\Delta u^{\pm}-\nu_2^{\pm}\nabla\dive u^{\pm},
$$
which allows us to express $\partial_t u^\pm= w^\pm$ up to some nonlinear terms. Compared with \cite{WYZ-MA}, this approach allows us to avoid the direct analysis of the initial data for 
$u_t^{\pm}$ which may require compatibility conditions. Since 
 $w^\pm$ satisfy two Lem\'e systems \eqref{equationforw} with sources in $L^1(\mathbb{R}^{+};\dot{B}^{d/2-2}_{2,1})$, we can apply the maximal regularity estimates to derive the 
 $L^1(\mathbb{R}^{+};\dot{B}^{d/2}_{2,1})$-bound for $w^\pm$, which in particular implies the desired estimate for $u_t^{\pm}$ (see Lemma \ref{lemma3}). With these key observations in hand, we can 
 establish the global {\emph{a priori}} estimates and prove the global existence of solutions to the Cauchy problem \eqref{system1}. Based on the global existence result and the localized 
 Lyapunov functional inequality \eqref{Lj1}, we establish some new time-weighted energy estimates (see \eqref{priorestimate-forEtheta} and \eqref{zbound}), which allow us to derive decay 
 rates in Theorems \ref{Thm1.3}--\ref{Thm1.4} provided that the initial perturbation additionally belongs to $\dot{B}^{\sigma_0}_{2,\infty}$ type spaces with lower regularity.

\vspace{1mm}

\vspace{2mm}

The rest of the paper unfolds as follows. In Section \ref{section2}, we explain the notations and briefly recall the Littlewood-Paley decomposition, Besov spaces and Chemin-Lerner spaces. 
Section \ref{section3} is devoted to the global existence and {\emph{a priori}} estimates of solutions for \eqref{system1}--\eqref{system5}. In Sections \ref{sectiondecay1}, we establish
the large-time behavior of global solutions under the additional condition \eqref{a2}, that is, Theorem \ref{Thm1.3}, while the time convergence rates are improved in Section \ref{sectiondecay2} under the stronger condition \eqref{smalla2}. Appendix A collects some useful lemmas for non-standard product laws and the composition of functions that will be used throughout the text.

\section{Preliminaries}\label{section2}

We explain the notations used throughout this paper. $C>0$ and $c>0$ denote two constants independent of time. $A\lesssim B(A\gtrsim B)$ means $A\leq CB$ $(A\geq CB)$, and $A\sim B$ stands for $A\lesssim B$ and $A\gtrsim B$. For any Banach space $X$ and the functions $g,h\in X$, let $\|(g,h)\|_{X}:=\|g\|_{X}+\|h\|_{X}$. For any $T>0$ and $1\leq \varrho\leq\infty$, we denote
by $L^{\varrho}(0,T;X)$ the set of measurable functions $g:[0,T]\rightarrow X$ such that $t\mapsto \|g(t)\|_{X}$ is in $L^{\varrho}(0,T)$ and write $\|\cdot\|_{L^{\varrho}(0,T;X)}:=\|\cdot\|_{L^{\varrho}_{T}(X)}$.

We recall the Littlewood-Paley decomposition, Besov spaces and related analysis tools. The reader can refer to Chapters 2-3 in \cite{bahouri1} for the details. Choose a non-increasing smooth radial function $\chi(\xi)$ compactly supported in $B(0,4/3)$ and satisfying $\chi(\xi)=1$ in $B(0,3/4)$. Then $\varphi(\xi):=\chi(\frac{\xi}{2})-\chi(\xi)$ satisfies
$$
\sum_{j\in \mathbb{Z}}\varphi(2^{-j}\cdot)=1,\quad \text{{\rm{Supp}}}~ \varphi\subset \{\xi\in \mathbb{R}^{d}~|~\frac{3}{4}\leq |\xi|\leq \frac{8}{3}\}.
$$
For any $j\in \mathbb{Z}$, define the homogeneous dyadic blocks $\dot{\Delta}_{j}$ by
$$
\dot{\Delta}_{j}u:=\mathcal{F}^{-1}\big{(} \varphi(2^{-j}\cdot )\mathcal{F}(u) \big{)}=2^{jd}h(2^{j}\cdot)\star u,\quad\quad h:=\mathcal{F}^{-1}\varphi,
$$
where $\mathcal{F}$ and $\mathcal{F}^{-1}$ are the Fourier transform and its inverse. Let $\mathcal{P}$ be the class of all polynomials in $\mathbb{R}^{d}$ and $\mathcal{S}_{h}'=\mathcal{S}'/\mathcal{P}$ stand for the tempered distributions in $\mathbb{R}^{d}$ modulo polynomials. One can get
\begin{equation}\nonumber
	\begin{aligned}
		&u=\sum_{j\in \mathbb{Z}}\dot{\Delta}_{j}u\quad\text{in}~\mathcal{S}_h',\quad \quad \quad \dot{\Delta}_{j}\dot{\Delta}_{l}u=0,\quad\text{if}\quad|j-l|\geq2.
	\end{aligned}
\end{equation}

With the help of those dyadic blocks, we give the definition of homogeneous Besov spaces as follows.

\begin{definition}\label{defnbesov}
	For $d\geq1$, $s\in \mathbb{R}$ and $1\leq p,r\leq \infty$, the  homogeneous Besov norm $\|\cdot\|_{\dot{B}^{s}_{p,r}}$ on $\mathbb{R}^d$ is defined by
	$$
	\|u\|_{\dot{B}^{s}_{p,r}}:=\|\{2^{js}\|\dot{\Delta}_{j}u\|_{L^{p}}\}_{j\in\mathbb{Z}}\|_{l^{r}} .
	$$
	\begin{itemize}
		
		\item $(i)$ If $s<\frac{d}{p}$ {\rm(}or $s
		=\frac{d}{p}, r=1${\rm)}, we denote
		$$
		\dot{B}_{p,r}^{s}:{=}\Big\{u\in\mathcal{S}_{h}^{\prime}(\mathbb{R}^{3}):\|u\|_{\dot{B}_{p,r}^{s}}<\infty\Big\},
		$$
		which is a Banach space.
		
		\item $(ii)$ If $K\in \mathbb{N}$ and if $\frac{d}{p}+k\leq\frac{d}{p}+k+1$ {\rm(}or $s=\frac{p}{3}+k+1, r=1${\rm)}, we denote $\dot{B}_{p,r}^{s}$ as the subset
of $u\in\mathcal{S}_{h}^{\prime}$ such that $\partial^{\beta} u$ belongs to $\dot{B}_{p,r}^{s-k}$ while $|\beta|=k$.
		
		\item $(iii)$ In particular, the Besov space $\dot{B}_{2,2}^{s}$ is consistent with the homogeneous Sobolev space $\dot{H}^{s}$.
		
	\end{itemize}
	
\end{definition}

Next, we state a class of mixed space-time Besov spaces introduced by Chemin-Lerner \cite{chemin1}.
\begin{definition}\label{defntimespace}
	For $T>0$, $s\in\mathbb{R}$ and $1\leq \varrho,r, q \leq \infty$, the space $\widetilde{L}^{\varrho}(0,T;\dot{B}^{s}_{p,r})$ is defined as
	$$
	\widetilde{L}^{\varrho}(0,T;\dot{B}^{s}_{p,r}):= \big{\{} u\in L^{\varrho}(0,T;\mathcal{S}'_{h})~|~ 
\|u\|_{\widetilde{L}^{\varrho}_{T}(\dot{B}^{s}_{p,r})}:=\|\{2^{js}\|\dot{\Delta}_{j}u\|_{L^{\varrho}_{T}(L^{p})}\}_{j\in\mathbb{Z}}\|_{l^{r}}<\infty \big{\}}.
	$$
	 Moreover, we denote
	\begin{equation}\nonumber
		\begin{aligned}
			&\mathcal{C}_{b}(\mathbb{R}^{+};\dot{B}^{s}_{p,r}):=\big{\{} 
u\in\mathcal{C}(\mathbb{R}^{+};\dot{B}^{s}_{p,r})~|~\|f\|_{\widetilde{L}^{\infty}(\mathbb{R}^{+};\dot{B}^{s}_{p,r})}<\infty \big{\}}.
		\end{aligned}
	\end{equation}
\end{definition}

Note that due to Minkowski's inequality, one can compare Chemin-Lerner norms with usual Lebesgue-Besov norms $\|\cdot\|_{L^{\varrho}_{T}(\dot{B}^{s}_{p,r})}$:
	\begin{equation}\nonumber
		\begin{aligned}
			&\|u\|_{L^{\varrho}_{T}(\dot{B}^{s}_{p,r})}\leq \|u\|_{\widetilde{L}^{\varrho}_{T}(\dot{B}^{s}_{p,r})},\quad \|u\|_{L^{2}_{T}(\dot{B}^{s}_{p,r})}\leq \|u\|_{\widetilde{L}^{2}_{T}(\dot{B}^{s}_{p,r})},\quad \|u\|_{L^{1}_{T}(\dot{B}^{s}_{p,r})}= \|u\|_{\widetilde{L}^{1}_{T}(\dot{B}^{s}_{p,r})}.
		\end{aligned}
	\end{equation}
    Furthermore, in order to restrict Besov norms to the low frequency part and the high-frequency part associated with different dissipative structures, we often use the following notations for any $s\in\mathbb{R}$ and $p\in[1,\infty]$:
\begin{equation}\nonumber
	\left\{
	\begin{aligned}
		&\|u\|_{\dot{B}^{s}_{p,r}}^{\ell}:=\|\{2^{js}\|\dot{\Delta}_{j}u\|_{L^{p}}\}_{j\leq 0}\|_{l^r},\quad 
\quad\quad\quad~\|u\|_{\dot{B}^{s}_{p,r}}^{h}:=\|\{2^{js}\|\dot{\Delta}_{j}u\|_{L^{p}}\}_{j\geq-1}\|_{l^r},\\
		&\|u\|_{\widetilde{L}^{\varrho}_{T}(\dot{B}^{s}_{p,r})}^{\ell}:=\|\{2^{js}\|\dot{\Delta}_{j}u\|_{L^{\varrho}_{T}(L^{p})}\}_{j\leq 0}\|_{l^r},\quad 
\|u\|_{\widetilde{L}^{\varrho}_{T}(\dot{B}^{s}_{p,r})}^{h}:=\|\{2^{js}\|\dot{\Delta}_{j}u\|_{L_{T}^{\varrho}(L^{p})}\}_{j\geq-1}\|_{l^r}.
	\end{aligned}
	\right.
\end{equation}
Define
$$
u^{\ell}:=\sum_{j\leq -1}\dot{\Delta}_{j}u,\quad\quad u^{h}:=u-u^{\ell}=\sum_{j\geq0}\dot{\Delta}_{j}u.
$$
It is easy to check for any $s'>0$ that
\begin{equation}\label{lh}
	\left\{
	\begin{aligned}
		&\|u^{\ell}\|_{\dot{B}^{s}_{p,r}}\lesssim \|u\|_{\dot{B}^{s}_{p,r}}^{\ell}\lesssim \|u\|_{\dot{B}^{s-s'}_{p,r}}^{\ell},\quad\quad\quad\quad\quad\quad 
\|u^{h}\|_{\dot{B}^{s}_{p,1}}\lesssim \|u\|_{\dot{B}^{s}_{p,r}}^{h}\lesssim \|u\|_{\dot{B}^{s+s'}_{p,r}}^{h},\\
		&\|u^{\ell}\|_{\widetilde{L}^{\varrho}_{T}(\dot{B}^{s}_{p,r})}\lesssim \|u\|_{\widetilde{L}^{\varrho}_{T}(\dot{B}^{s}_{p,r})}^{\ell}\lesssim 
\|u\|_{\widetilde{L}^{\varrho}_{T}(\dot{B}^{s-s'}_{p,r})}^{\ell},\quad\|u^{h}\|_{\widetilde{L}^{\varrho}_{T}(\dot{B}^{s}_{p,r})}\lesssim 
\|u\|_{\widetilde{L}^{\varrho}_{T}(\dot{B}^{s}_{p,r})}^{h}\lesssim \|u\|_{\widetilde{L}^{\varrho}_{T}(\dot{B}^{s+s'}_{p,r})}^{h}.
	\end{aligned}
	\right.
\end{equation}
Furthermore, one has
\begin{equation}\label{lhl}
	\left\{
	\begin{aligned}
		&\|u\|_{\dot{B}^{s}_{p,r}\cap\dot{B}^{s'}_{p,r}}=\|u\|_{\dot{B}^{s}_{p,r}}^{\ell}+\|u\|_{\dot{B}^{s'}_{p,r}}^{h},\quad s<s',\\
		&\|u\|_{\dot{B}^{s}_{p,r}+\dot{B}^{s'}_{p,r}}=\|u\|_{\dot{B}^{s}_{p,r}}^{\ell}+\|u\|_{\dot{B}^{s'}_{p,r}}^{h},\quad s>s'.
	\end{aligned}
	\right.
\end{equation}

\section{Global existence: Proof of Theorem \ref{Thm1.1}}\label{section3}
In this section, we prove Theorem \ref{Thm1.1} pertaining to the global existence of the solution to the Cauchy problem \eqref{system1}--\eqref{system5}. We will first work on the 
reformulated system \eqref{equation_R_P_u_pm}. Define the energy functional

\begin{equation}\label{Et}
	\begin{aligned}
		\mathcal{X}(t)=&\|n^{+}\|_{\widetilde{L}^{\infty}_t(\dot{B}_{2,1}^{\frac{d}{2}-1 }\cap\dot{B}_{2,1}^{\frac{d}{2}+1 } 
)}+\|\theta\|_{\widetilde{L}^{\infty}_t(\dot{B}_{2,1}^{\frac{d}{2}-2}\cap\dot{B}_{2,1}^{\frac{d}{2}})}+\|(u^{+},u^-)\|_{\widetilde{L}^{\infty}_t(\dot{B}_{2,1}^{\frac{d}{2}-2}\cap\dot{B}_{2,1}^{\frac{d}{2}+1 
} )}\\
		&+\|\theta\|_{L^{1}_t(\dot{B}_{2,1}^{\frac{d}{2}}\cap\dot{B}_{2,1}^{\frac{d}{2}})}+\|(u^{+},u^-)\|_{L^{1}_t(\dot{B}_{2,1}^{\frac{d}{2}}\cap\dot{B}_{2,1}^{\frac{d}{2}+1})\cap \widetilde{L}^{2}_t(\dot{B}_{2,1}^{\frac{d}{2}+2})}+\| \dive v\|_{L^{1}_t(\dot{B}_{2,1}^{\frac{d}{2}+1})}, \\
	\end{aligned}
\end{equation}
and the corresponding initial energy functional
\begin{equation}\label{E0}
	\begin{aligned}
		\mathcal{X}_0=&\|n_0^{+}\|_{ \dot{B}_{2,1}^{\frac{d}{2}-1 }\cap\dot{B}_{2,1}^{\frac{d}{2}+1 } } +\|\theta_0\|_{\dot{B}_{2,1}^{\frac{d}{2}-2}\cap\dot{B}_{2,1}^{\frac{d}{2}} 
}+\|(u_0^{+},u_0^-)\|_{\dot{B}_{2,1}^{\frac{d}{2}-2}\cap\dot{B}_{2,1}^{\frac{d}{2}+1 } }.
	\end{aligned}
\end{equation}
Note that \eqref{Et} and \eqref{E0} are, respectively, equivalent to $\mathcal{E}(t)+D(t)$ and $\mathcal{E}_0$ defined in \eqref{energy-functional1}--\eqref{initialenergy}.

We establish the uniform-in-time {\emph{a priori}} estimates as follows.

\begin{proposition}\label{propapriori}
	For the given time $T>0$, suppose that  the strong solution $(n^+, \theta,u^+,u^-)$  to the Cauchy problem \eqref{equation_R_P_u_pm} defined in $[0,T)$ satisfies
	\begin{align}
		\|(n^+,\theta)\|_{L^{\infty}_{t}(L^{\infty})}\leq \delta_0, \quad t\in(0,T),\label{priori1}
	\end{align}
	for some uniform constant $\delta_0>0$. Then, it holds that
	\begin{align}
		\mathcal{X}(t)\leq C_0\Big(\mathcal{X}_{0}(t)+(1+\mathcal{X}(t)^2)\mathcal{X}(t)^2 \Big), \quad t\in(0,T),
	\end{align}
	where $C_0>0$ is a constant independent of the time $T>0$.
\end{proposition}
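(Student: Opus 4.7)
The plan is to derive the bound frequency-block by frequency-block, assembling all four components of $\mathcal{X}(t)$ from separate but interlocking arguments built around the linearized system \eqref{equation_R_P_u_pm}, the transport-form system \eqref{equation-for-new-term}, and the elliptic identity for $\dive v$. Throughout, the smallness hypothesis \eqref{priori1} on $\|(n^+,\theta)\|_{L^\infty_t(L^\infty)}$ will be used to make the coefficients $\rho^\pm$, $\mathcal{C}$, $\alpha^\pm$ and the composition functions $g_i^\pm$ in \eqref{1.43}--\eqref{1.46} well-defined and close to their equilibrium values, so that the composition lemmas from Appendix A apply.

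\textbf{Step 1: Localized Lyapunov functional for $(\theta,u^+,u^-)$.} I apply $\dot{\Delta}_j$ to equations $\eqref{equation_R_P_u_pm}_2$--$\eqref{equation_R_P_u_pm}_4$ and compute the standard $L^2$ energy. The symmetric part gives dissipation of $\dive u^\pm$ and $\nabla u^\pm$; to recover dissipation of $\theta$ itself I add a cross-term of the form $\eta\int \dot{\Delta}_j u^\pm \cdot \nabla \dot{\Delta}_j\theta\, dx$ for a small $\eta>0$, weighted by $\min\{1,2^{2j}\}$, in the spirit of hypocoercivity. This produces a functional $\mathcal{L}_j(t)\sim \|\dot{\Delta}_j(\theta,\nabla\theta,u^+,u^-)\|_{L^2}^2$ satisfying \eqref{Lj1} with nonlinear source terms $F_1,F_2^\pm$. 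Multiplying by $2^{j\sigma}$ and summing in $\ell^1(\mathbb{Z})$ separately on the low-frequency regime $j\leq 0$ (choosing $\sigma=d/2-2$ so heat maximal regularity applies) and on the high-frequency regime $j\geq -1$ (choosing $\sigma=d/2$ for $\theta$ and $\sigma=d/2+1$ for $u^\pm$, where damping dominates), I will get every $(\theta,u^\pm)$-piece of $\mathcal{X}(t)$ up to nonlinear products. The nonlinear source terms $F_1$, $F_2^\pm$ are estimated by the product laws and composition lemmas of Appendix A, yielding contributions bounded by $(1+\mathcal{X}(t))\mathcal{X}(t)^2$.

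\textbf{Step 2: Transport estimates for $n^+$ via the effective velocity.} The pointwise multiplication $(1+n^+)\dive u^+$ in $\eqref{equation-for-new-term}_1$ would cause a loss of one derivative if treated naively, since the needed $\dot{B}^{d/2}_{2,1}\cap\dot{B}^{d/2+1}_{2,1}$ control of $\dive u^+$ alone is not enough to close at the top regularity $\dot{B}^{d/2+1}_{2,1}$. To overcome this I use the rewriting \eqref{n+n-ddd}, which trades $\dive u^+$ for $\dive v$ plus a lower-order $\dive u^-$ term. Applying the standard transport estimate in $\dot{B}^{d/2-1}_{2,1}\cap\dot{B}^{d/2+1}_{2,1}$ (Lemma on transport in Besov spaces) with velocity $u^+$ whose $L^1_t(\dot{B}^{d/2+1}_{2,1})$ norm is already included in $\mathcal{X}(t)$, and using the higher-regularity $L^1_t(\dot{B}^{d/2+1}_{2,1})$ bound on $\dive v$ and $L^1_t(\dot{B}^{d/2}_{2,1}\cap\dot{B}^{d/2+1}_{2,1})$ bound on $\dive u^-$, yields the $n^+$ contribution to $\mathcal{X}(t)$. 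An identical argument handles $n^-$ since it can be recovered from $(n^+,\theta)$ via \eqref{equation_R_P_u_pm00}.

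\textbf{Step 3: Maximal regularity $\widetilde{L}^2_t(\dot{B}^{d/2+2}_{2,1})$ for $u^\pm$ and the Lipschitz-type bound on $\dive v$.} For the $\widetilde{L}^2_t(\dot{B}^{d/2+2}_{2,1})$ piece of $u^\pm$, the parabolic maximal regularity applied to $\eqref{equation_R_P_u_pm}_{3,4}$ suffices once $\nabla\theta$ and the nonlinear sources are controlled in $\widetilde{L}^2_t(\dot{B}^{d/2}_{2,1})$, which follows by interpolation between the pieces of $\mathcal{X}(t)$ already obtained. For $\dive v \in L^1_t(\dot{B}^{d/2+1}_{2,1})$, I introduce $w^\pm := \beta_2^\pm\nabla\theta - \nu_1^\pm\Delta u^\pm - \nu_2^\pm\nabla\dive u^\pm = \partial_t u^\pm - F_2^\pm$, which satisfies a Lamé system with source in $L^1_t(\dot{B}^{d/2-2}_{2,1})\cap L^1_t(\dot{B}^{d/2-1}_{2,1})$ after differentiating \eqref{equation_R_P_u_pm}$_{3,4}$ in time and using the structure of $F_1,F_2^\pm$. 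Maximal regularity gives $w^\pm\in L^1_t(\dot{B}^{d/2}_{2,1})$, whence $\partial_t u^\pm\in L^1_t(\dot{B}^{d/2}_{2,1})$. Applying $-\Delta\dive$ to $\bar{\rho}^+ u^+ + \bar{\rho}^- u^-$ and using the momentum equations gives the elliptic identity for $-\Delta\dive v$ announced in the introduction, from which the bound $\dive v \in L^1_t(\dot{B}^{d/2+1}_{2,1})$ follows, with remainder bounded by $(1+\mathcal{X}(t))\mathcal{X}(t)^2$.

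\textbf{Main obstacle.} The genuine difficulty, as highlighted after \eqref{crossd}, is closing the top-regularity estimate $\dot{B}^{d/2+1}_{2,1}$ for $n^\pm$. A direct $L^2$ energy estimate on $\dot{\Delta}_j$ applied to \eqref{equation-for-new-term} produces the non-symmetric cross terms $\int (M_1/\rho^-)\,\dive\dot{\Delta}_j u^-\,\dot{\Delta}_j n^+\,dx$ and its $+/-$ mirror, which one cannot absorb by damping of $n^\pm$ (there is none) nor by dissipation of $u^\pm$ without losing a derivative. The substitution \eqref{n+n-ddd} is what makes the scheme work: it produces precisely the compensating products $n^\pm\dive u^\mp$ that cancel the cross terms, at the price of introducing the term $\dive v$, which is, however, one derivative better than $\dive u^\pm$ by the elliptic argument of Step 3. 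The whole argument is therefore circular in appearance but linear in reality: Steps 1--2 provide the inputs for Step 3, and Step 3 returns the $\dive v$ bound needed to close Step 2. Summing all contributions and absorbing the small factors from \eqref{priori1} yields the claimed inequality $\mathcal{X}(t)\leq C_0\bigl(\mathcal{X}_0 + (1+\mathcal{X}(t)^2)\mathcal{X}(t)^2\bigr)$.
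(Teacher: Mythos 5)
Your overall architecture coincides with the paper's: hypocoercive Lyapunov functionals for $(\theta,u^+,u^-)$ in low and high frequencies (Lemmas \ref{lemma1}--\ref{lemma2}), the effective velocity $v$ to handle the non-symmetric coupling, and the effective flux $w^{\pm}$ plus the elliptic identity \eqref{v:eq} to recover $\dive v\in L^1_t(\dot{B}^{d/2+1}_{2,1})$ (Lemma \ref{lemma3}). Your Steps 1 and 3 are essentially the paper's argument.

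The gap is in Step 2. You propose to close the top-order bound $n^+\in\widetilde{L}^{\infty}_t(\dot{B}^{d/2+1}_{2,1})$ by a \emph{decoupled} transport estimate on \eqref{n+n-ddd}, treating $\tfrac{\nu_1^-+\nu_2^-}{\nu_1^++\nu_2^+}(1+n^+)\dive u^-$ as a source and invoking an ``$L^1_t(\dot{B}^{d/2}_{2,1}\cap\dot{B}^{d/2+1}_{2,1})$ bound on $\dive u^-$.'' No such bound is part of $\mathcal{X}(t)$: the dissipation only gives $u^-\in L^1_t(\dot{B}^{d/2}_{2,1}\cap\dot{B}^{d/2+1}_{2,1})\cap\widetilde{L}^2_t(\dot{B}^{d/2+2}_{2,1})$, hence $\dive u^-\in L^1_t(\dot{B}^{d/2}_{2,1})$ only — one derivative short of what a transport estimate at level $\dot{B}^{d/2+1}_{2,1}$ requires — and converting the $\widetilde{L}^2_t(\dot{B}^{d/2+2}_{2,1})$ control into $L^1_t$ costs a factor $\sqrt{T}$, destroying uniformity in time. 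Your closing paragraph actually describes the correct mechanism (the products $n^{\pm}\dive u^{\mp}$ \emph{cancel} the cross terms \eqref{crossd}), but that cancellation is invisible to a transport estimate: it only materializes in a \emph{coupled} weighted $L^2$ energy estimate in which $\eqref{equation-for-v-local}_1$ is paired against $\tfrac{\nu_1^++\nu_2^+}{\nu_1^-+\nu_2^-}\tfrac{M_1}{(1+n^+)\rho^-}\dot{\Delta}_j n^+$ while the $u^-$ momentum equation is simultaneously paired against $\dot{\Delta}_j u^-$, so that $\int \tfrac{M_1}{\rho^-}\dive\dot{\Delta}_j u^-\,\dot{\Delta}_j n^+\,dx$ appears with opposite signs and drops out identically (this is the paper's Lemma \ref{lemma4}, estimates \eqref{inequ-foru+u-n+n-} and \eqref{inequforv}). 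In that scheme the term $\dive u^{\mp}$ is never estimated at regularity $d/2+1$; only $\dive v$ is, which is exactly why the elliptic gain of Step 3 suffices. As written, Step 2 and your final paragraph are mutually inconsistent, and the version you actually commit to in Step 2 does not close. The same remark applies to the $\widetilde{L}^{\infty}_t(\dot{B}^{d/2+1}_{2,1})$ bounds for $\theta$ and $u^{\pm}$, which in the paper also come out of this coupled quasilinear estimate rather than from the linear Lyapunov functional of Step 1.
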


The proof of Proposition \ref{propapriori} consists of Lemmas \ref{lemma1}--\ref{lemma3} which will be proved in Subsections \ref{sub1}--\ref{sub3.5}.

\subsection{Low-frequency estimates for the dissipative components}\label{sub1}
We have the following uniform estimates of $(\theta,u^{+},u^{-})$ at low frequencies. As emphasized before, we shall perform $d/2-2$ order energy estimates to derive the $L^1_t(B^{d/2}_{2,1})$ 
bound of $u^{\pm}$.

\begin{lemma}\label{lemma1}
	Let $T>0$ be any given time, and $(n^+, \theta,u^+,u^-)$ be any strong solution to the Cauchy problem \eqref{equation_R_P_u_pm} defined on $t\in(0,T)$ satisfying \eqref{priori1}. Then, 
we have
	\begin{align}
		\|(\theta, u^{+},u^{-})\|^{\ell}_{\widetilde{L}^{\infty}_t(\dot{B}_{2,1}^{\frac{d}{2}-2})}+\|(\theta, u^{+},u^{-})\|^{\ell}_{L^{1}_t (\dot{B}_{2,1}^{\frac{d}{2}})}\lesssim  \|(\theta_0, 
u_0^{+},u_0^{-})\|^{\ell}_{\dot{B}_{2,1}^{\frac{d}{2}-2}}+ (1+\mathcal{E}(t)^2)\mathcal{E}(t)^2,\label{low}
	\end{align}
	where $\mathcal{E}(t)$ is defined through \eqref{Et}.	
\end{lemma}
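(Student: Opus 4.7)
The plan is to perform a localized hypocoercivity-type energy estimate on the linearized dissipative block $(\theta,u^+,u^-)$ of \eqref{equation_R_P_u_pm}, and then close the source terms via product and composition laws in Besov spaces. I would first apply $\dot{\Delta}_j$ to the equations for $\theta$, $u^+$, and $u^-$, test the resulting equations against $\dot{\Delta}_j\theta$, $(\beta_1^+/\beta_2^+)\dot{\Delta}_ju^+$, and $(\beta_1^-/\beta_2^-)\dot{\Delta}_ju^-$, and exploit the cancellation between the pressure term $\beta_2^\pm\nabla\theta$ and the divergence terms $\beta_1^\pm\dive u^\pm$ to produce a basic energy identity of the form
\begin{equation*}
\tfrac{1}{2}\tfrac{d}{dt}\widetilde{E}_j + c\,2^{2j}\|\dot{\Delta}_j(u^+,u^-)\|_{L^2}^2 \lesssim \|\dot{\Delta}_j(F_1,F_2^+,F_2^-)\|_{L^2}\sqrt{\widetilde{E}_j},
\end{equation*}
where $\widetilde{E}_j\sim\|\dot{\Delta}_j(\theta,u^+,u^-)\|_{L^2}^2$. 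This identity carries parabolic dissipation only in the velocity components.

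To recover the missing dissipation of $\theta$ at low frequencies, I would add to $\widetilde{E}_j$ the cross correction
\begin{equation*}
\mathcal{L}_j := \widetilde{E}_j + \eta\int_{\mathbb{R}^d}\dot{\Delta}_j(u^++u^-)\cdot\nabla\dot{\Delta}_j\theta\,dx
\end{equation*}
with a small parameter $\eta$ to be fixed. Using the $u^\pm$-equations to substitute $-\beta_2^\pm\nabla\theta$ produces the coercive term $-\eta(\beta_2^++\beta_2^-)\|\nabla\dot{\Delta}_j\theta\|_{L^2}^2\sim-\eta\,2^{2j}\|\dot{\Delta}_j\theta\|_{L^2}^2$, while the viscous terms that appear are of higher order in $2^j$ and can be absorbed into the parabolic dissipation of $u^\pm$ when $j\leq 0$. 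For $\eta$ small, Bernstein's inequality gives $\mathcal{L}_j\sim\|\dot{\Delta}_j(\theta,u^+,u^-)\|_{L^2}^2$ at low frequencies, and we obtain
\begin{equation*}
\tfrac{d}{dt}\sqrt{\mathcal{L}_j} + c\,2^{2j}\sqrt{\mathcal{L}_j}\lesssim\|\dot{\Delta}_j(F_1,F_2^+,F_2^-)\|_{L^2},\qquad j\leq 0.
\end{equation*}
Integrating in time, multiplying by $2^{j(d/2-2)}$, and summing over $j\leq 0$ will yield
\begin{equation*}
\|(\theta,u^+,u^-)\|^{\ell}_{\widetilde{L}^\infty_t(\dot{B}^{d/2-2}_{2,1})} + \|(\theta,u^+,u^-)\|^{\ell}_{L^1_t(\dot{B}^{d/2}_{2,1})} \lesssim \|(\theta_0,u_0^+,u_0^-)\|^{\ell}_{\dot{B}^{d/2-2}_{2,1}} + \|(F_1,F_2^+,F_2^-)\|^{\ell}_{L^1_t(\dot{B}^{d/2-2}_{2,1})},
\end{equation*}
which is the linear part of the target estimate.

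The main obstacle is the nonlinear bound $\|(F_1,F_2^+,F_2^-)\|^{\ell}_{L^1_t(\dot{B}^{d/2-2}_{2,1})}\lesssim(1+\mathcal{E}(t)^2)\mathcal{E}(t)^2$. The delicate piece is $F_1$, which contains the non-symmetric convective couplings $\mathcal{C}\rho^\mp u^\pm\cdot\nabla n^\pm$ carrying $n^\pm$ (non-dissipative) at regularity $d/2\pm 1$ only. I would handle these via the product laws in Appendix A, writing $u^\pm\cdot\nabla n^\pm$ as a product of a function in $L^1_t(\dot{B}^{d/2}_{2,1}\cap\dot{B}^{d/2+1}_{2,1})$ with a function in $\widetilde{L}^\infty_t(\dot{B}^{d/2-1}_{2,1}\cap\dot{B}^{d/2+1}_{2,1})$, and splitting the estimate according to the low- and high-regularity ends via \eqref{lh}--\eqref{lhl} so that the index $d/2-2$ is attained. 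The composition factor $\mathcal{C}-\mathcal{C}(1,\bar{P})$ and the $g_i^\pm$ functions will be controlled by the standard Besov composition lemma under the $L^\infty$-smallness assumption \eqref{priori1}, which produces the $(1+\mathcal{E}(t)^2)$ prefactor; the convection terms $u^\pm\cdot\nabla u^\pm$ and $g_i^\pm(\theta)\nabla^2 u^\pm$ in $F_2^\pm$ are then treated identically, giving the claimed quadratic-in-$\mathcal{E}(t)$ bound and closing \eqref{low}.
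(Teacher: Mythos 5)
Your proposal is correct and follows essentially the same route as the paper: the same multipliers $(\dot{\Delta}_j\theta,\tfrac{\beta_1^+}{\beta_2^+}\dot{\Delta}_ju^+,\tfrac{\beta_1^-}{\beta_2^-}\dot{\Delta}_ju^-)$, the same cross correction $\eta\,(\dot{\Delta}_ju^\pm|\nabla\dot{\Delta}_j\theta)_{L^2}$ to recover the $\theta$-dissipation, the same passage from the squared Lyapunov inequality to the $L^2$-norm inequality before summing with weight $2^{j(d/2-2)}$, and the same treatment of $F_1,F_2^\pm$ by the product law $\dot{B}^{d/2-2}_{2,1}\times\dot{B}^{d/2}_{2,1}\hookrightarrow\dot{B}^{d/2-2}_{2,1}$ and the composition lemma under \eqref{priori1}. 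No gaps beyond a harmless sign slip in describing the coercive contribution $+\eta(\beta_2^++\beta_2^-)\|\nabla\dot{\Delta}_j\theta\|_{L^2}^2$ to the dissipation.
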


\begin{proof}
	First, we construct a suitable localized Lyapunov functional to capture the coupled dissipation structure of $\eqref{equation_R_P_u_pm}_2$--$\eqref{equation_R_P_u_pm}_4$.  To this end, 
applying the operator $\dot{\Delta}_j$ to $(\ref{equation_R_P_u_pm})_2$--$(\ref{equation_R_P_u_pm})_4$ yields
	\begin{equation}\label{Localization equation}
		\left\{
		\begin{aligned}
			& \partial_t \dot{\Delta}_j\theta +\beta_1^+\dive \dot{\Delta}_ju^++\beta_1^-\dive \dot{\Delta}_ju^-=\dot{\Delta}_jF_1,\\
			&  \partial_t \dot{\Delta}_ju^+ +\beta_2^+\nabla\dot{\Delta}_j\theta- \nu_1^{+}\Delta \dot{\Delta}_ju^+-\nu_2^+\nabla\dive \dot{\Delta}_ju^+=\dot{\Delta}_jF_2^+,\\
			&  \partial_t\dot{\Delta}_j u^- +\beta_2^-\nabla\dot{\Delta}_j\theta- \nu_1^{-}\Delta \dot{\Delta}_ju^--\nu_2^-\nabla\dive \dot{\Delta}_ju^-=\dot{\Delta}_jF_2^-.
		\end{aligned}
		\right.
	\end{equation}
	Taking the $L^2$ inner product of  $(\ref{Localization equation})_{2}$, $(\ref{Localization equation})_{3}$ and $(\ref{Localization equation})_{4}$ with $\dot{\Delta}_j \theta$, 
$\frac{\beta_1^+}{\beta_2^+}\dot{\Delta}_j u^+ $ and $\frac{\beta_1^-}{\beta_2^-}\dot{\Delta}_j u^-$,  respectively, we have
	\begin{equation}\label{basicj}
		\begin{aligned}			
\frac12&\frac{d}{dt}\left(\|\dot{\Delta}_j\theta\|^2_{L^2}+\frac{\beta_1^+}{\beta_2^+}\|\dot{\Delta}_ju^{+}\|^2_{L^2}+\frac{\beta_1^-}{\beta_2^-}\|\dot{\Delta}_ju^{-}\|^2_{L^2}\right)\\
			&\quad+\frac{\beta_1^+}{\beta_2^+}\left(\nu_1^{+}\|\nabla\dot{\Delta}_ju^{+}\|^2_{L^2}+\nu_2^{+}\|\dive \dot{\Delta}_ju^{+}\|^2_{L^2}\right)\\
			&\quad+\frac{\beta_1^-}{\beta_2^-}\left(\nu_1^{-}\|\nabla\dot{\Delta}_ju^{-}\|^2_{L^2}+\nu_2^{-}\|\dive \dot{\Delta}_ju^{-}\|^2_{L^2}\right)\\
			&=(\dot{\Delta}_jF_1|\dot{\Delta}_j \theta)_{L^2}+ \frac{\beta_1^+}{\beta_2^+}(\dot{\Delta}_jF_2^+|\dot{\Delta}_j 
u^+)_{L^2}+\frac{\beta_1^-}{\beta_2^-}(\dot{\Delta}_jF_2^-|\dot{\Delta}_j u^-)_{L^2}.
		\end{aligned}
	\end{equation}
	To obtain the dissipation of $\theta$, we deduce from $(\ref{Localization equation})$ that
	\begin{equation}\label{crossj}
		\begin{aligned}
			&\frac{d}{dt}\left(( \dot{\Delta}_ju^+|\nabla\dot{\Delta}_j\theta )_{L^2}+( \dot{\Delta}_ju^-|\nabla\dot{\Delta}_j\theta )_{L^2}\right)\\
			&\quad+(\beta_2^++\beta_2^-)\|\nabla\dot{\Delta}_j\theta\|^2_{L^2}-\beta_1^+\|\dive \dot{\Delta}_ju^{+}\|^2_{L^2}-\beta_1^-\|\dive \dot{\Delta}_ju^{-}\|^2_{L^2}\\
		&\quad	-\beta_1^+(\dive \dot{\Delta}_ju^{+}|\dive \dot{\Delta}_ju^{-})_{L^2}-\beta_1^-(\dive \dot{\Delta}_ju^{-}|\dive \dot{\Delta}_ju^{+})_{L^2}\\
            &\quad-\nu_1^{+}(\Delta \dot{\Delta}_ju^+|\nabla\dot{\Delta}_j\theta)_{L^2}-\nu_2^+(\nabla\dive \dot{\Delta}_ju^+|\nabla\dot{\Delta}_j\theta)_{L^2}\\
			&\quad-\nu_1^{-}(\Delta \dot{\Delta}_ju^-|\nabla\dot{\Delta}_j\theta)_{L^2}-\nu_2^-(\nabla\dive \dot{\Delta}_ju^-|\nabla\dot{\Delta}_j\theta)_{L^2}\\
			&=(\nabla\dot{\Delta}_jF_1|\dot{\Delta}_ju^{+} )_{L^2}+(\nabla\dot{\Delta}_jF_1|\dot{\Delta}_ju^{+} )_{L^2}\\
			&\quad+(\dot{\Delta}_jF_2^+|\nabla\dot{\Delta}_j\theta)_{L^2}+(\dot{\Delta}_jF_2^-|\nabla\dot{\Delta}_j\theta)_{L^2}.
		\end{aligned}
	\end{equation}
It follows from \eqref{basicj} and \eqref{crossj} that
	\begin{equation}\label{Lyapunovlow}
		\begin{aligned}
			& \frac{d}{dt} \mathcal{E}_{1,j}(t)+ \mathcal{D}_{1,j}(t)\lesssim \|\dot{\Delta}_j( F_1, F_2^+, F_2^-)\|_{L^2}\sqrt{\mathcal{E}_{1,j}(t)},\quad j\leq 0,
		\end{aligned}
	\end{equation}
	where for $\eta_1\in(0,1)$, we defined the Lyapunov functional
	\begin{equation}\label{E1l}
		\begin{aligned}			
\mathcal{E}_{1,j}(t)&=\frac{1}{2}\Big(\|\dot{\Delta}_j\theta\|^2_{L^2}+\frac{\beta_1^+}{\beta_2^+}\|\dot{\Delta}_ju^{+}\|^2_{L^2}+\frac{\beta_1^-}{\beta_2^-}\|\dot{\Delta}_ju^{-}\|^2_{L^2} 
\Big)\\
			&\quad+\eta_1 \Big( ( \dot{\Delta}_ju^+|\nabla\dot{\Delta}_j\theta )_{L^2}+( \dot{\Delta}_ju^-|\nabla\dot{\Delta}_j\theta )_{L^2} \Big),
		\end{aligned}
	\end{equation}
	and the dissipation term
	\begin{equation}\label{D1l}
		\begin{aligned}
			\mathcal{D}_{1,j}(t)&=\frac{\beta_1^+}{\beta_2^+}\left(\nu_1^{+}\|\nabla\dot{\Delta}_ju^{+}\|^2_{L^2}+\nu_2^{+}\|\dive \dot{\Delta}_ju^{+}\|^2_{L^2}\right)\\
			&\quad+\frac{\beta_1^-}{\beta_2^-}\Big(\nu_1^{-}\|\nabla\dot{\Delta}_ju^{-}\|^2_{L^2}+\nu_2^{-}\|\dive \dot{\Delta}_ju^{-}\|^2_{L^2}\\
			&\quad+\eta_1(\beta_2^++\beta_2^-)\|\nabla\dot{\Delta}_j\theta\|^2_{L^2}-\beta_1^+\|\dive \dot{\Delta}_ju^{+}\|^2_{L^2}-\beta_1^-\|\dive \dot{\Delta}_ju^{-}\|^2_{L^2}\\
			&\quad-\beta_1^+(\dive \dot{\Delta}_ju^{+}|\dive \dot{\Delta}_ju^{-})_{L^2}-\beta_1^-(\dive \dot{\Delta}_ju^{-}|\dive \dot{\Delta}_ju^{+})_{L^2}\\
			&\quad-\nu_1^{+}(\Delta \dot{\Delta}_ju^+|\nabla\dot{\Delta}_j\theta)_{L^2}-\nu_2^+(\nabla\dive \dot{\Delta}_ju^+|\nabla\dot{\Delta}_j\theta)_{L^2}\nonumber\\
			&\quad-\nu_1^{-}(\Delta \dot{\Delta}_ju^-|\nabla\dot{\Delta}_j\theta)_{L^2}-\nu_2^-(\nabla\dive \dot{\Delta}_ju^-|\nabla\dot{\Delta}_j\theta)_{L^2} \Big).
		\end{aligned}
	\end{equation}
    Note that the condition \eqref{system2-2} ensures
    \begin{align}\label{nablaDeltau}
    &\nu_1^{\pm}\|\nabla\dot{\Delta}_ju^{\pm}\|^2_{L^2}+\nu_2^{\pm}\|\dive \dot{\Delta}_ju^{\pm}\|^2_{L^2}\geq \frac{1}{\bar{\rho}^\pm} \min\{\mu^+,2\mu^++\lambda^+\} 
    \|\nabla\dot{\Delta}_ju^{\pm}\|_{L^2}^2.
    \end{align}
	Choosing $\eta_1$ small enough and taking advantage of Bernstein's lemma and the low-frequency cutoff property, we can easily deduce by \eqref{E1l} and \eqref{nablaDeltau} that
	\begin{align}
		\mathcal{E}_{1,j}(t)\sim \|\dot{\Delta}_j(\theta, u^{+},u^{-})\|^2_{L^2},\quad j\leq 0,\label{E1sim}
	\end{align}
	and
	\begin{align}
		\mathcal{D}_{1,j}(t)\gtrsim 2^{2j}\|\dot{\Delta}_j(\theta, u^{+},u^{-}) \|^2_{L^2}\quad j\leq 0.\label{D1sim}
	\end{align}
	Thence, in view of \eqref{E1sim} and \eqref{D1sim}, we are able to divide \eqref{Lyapunovlow} by $(\mathcal{E}_{1,j}+\var^2_*)^{\frac12}$ for $ \var_*>0$, integrate the resulting 
inequality over $[0,t],$ and then take the limit as  $ \var_*\to 0$ to obtain
	\begin{equation}\label{low-frequency1}
		\begin{aligned}
			\|\dot{\Delta}_j(\theta, u^{+},u^{-})\|_{L^2}&+2^{2j}\int_0^t\|\dot{\Delta}_j(\theta, u^{+},u^{-})\|_{L^2}d\tau\\
			&\lesssim \|\dot{\Delta}_j(\theta_0, u_0^{+},u_0^{-})\|_{L^2}+\int_0^t \|\dot{\Delta}_j( F_1, F_2^+, F_2^-)\|_{L^2}d\tau,
		\end{aligned}
	\end{equation}
	which implies
	\begin{equation}\label{low:es}
		\begin{aligned}
			& \|(\theta, u^{+},u^{-})\|^{\ell}_{\widetilde{L}^{\infty}_t(\dot{B}_{2,1}^{\frac{d}{2}-2})}+\|(\theta, u^{+},u^{-})\|^{\ell}_{L^{1}_t (\dot{B}_{2,1}^{\frac{d}{2}})}\\
			&\quad \lesssim \|(\theta_0, u_0^{+},u_0^{-})\|^{\ell}_{\dot{B}_{2,1}^{\frac{d}{2}-2}}+\|( F_1, F_2^+, F_2^-)\|^{\ell}_{L^{1}_t(\dot{B}_{2,1}^{\frac{d}{2}-2})}.
		\end{aligned}
	\end{equation}
	
	Next, we analyze the nonlinear terms associated with $F_1$, $F_2$ and $F_3$ as follows. Note that $n^\pm$ do not exhibit dissipation. As a result, when estimating the nonlinear terms, we need to transfer the time-integrability burden to $\theta$ or $u^\pm$. Due to $d/2-2>-d/2$ with $d\geq3$, we know that the product law $\dot{B}^{d/2-2}_{2,1}\times 
\dot{B}^{d/2}_{2,1}\hookrightarrow\dot{B}^{d/2-2}_{2,1} $ holds. This, together with the estimate \eqref{F1} for the smooth composite function $g_3^\pm(n^{+},\theta)$ with $g_3^\pm(0,0)=0$ 
and the condition \eqref{priori1}, implies
	\begin{equation}\nonumber
		\begin{aligned}
			\|g_3^\pm(n^{+},\theta)\dive u^{\pm}\|^{\ell}_{L^1_t (\dot{B}_{2,1}^{\frac{d}{2}-2})}&\lesssim 
\|g_3^\pm(n^{+},\theta)\|_{\widetilde{L}^{\infty}_t(\dot{B}_{2,1}^{\frac{d}{2}-1})} \|\dive u^{\pm}\|_{L^1_t (\dot{B}_{2,1}^{\frac{d}{2}-1})}\\
			&\lesssim \|(n^{+},\theta)\|_{\widetilde{L}^{\infty}_t(\dot{B}_{2,1}^{\frac{d}{2}-1})}\| u^{\pm}\|_{L_t^{1}(\dot{B}^{\frac{d}{2}}_{2,1})}.
		\end{aligned}
	\end{equation}
	As $\rho^{\pm}=\pi^{\pm}(\theta+\bar{P})$ with $\pi^{\pm}(\bar{P})=\bar{\rho}$, one has
	\begin{equation}\nonumber
		\begin{aligned}
			&\|\rho^- u^+\cdot\nabla n^+\|^{\ell}_{L^1_t (\dot{B}_{2,1}^{\frac{d}{2}-2})}\\&\lesssim (1+\|\rho^--\bar{\rho}\|_{\widetilde{L}^{\infty}_{t}(\dot{B}^{\frac{d}{2}}_{2,1})})(\| 
u^+\cdot\nabla n^+\|_{L^1_t (\dot{B}_{2,1}^{\frac{d}{2}-2})}+\| u^+\cdot\nabla n^+\|_{L^1_t (\dot{B}_{2,1}^{\frac{d}{2}-2})})\\
			&\lesssim (1+\|\theta\|_{\widetilde{L}^{\infty}_{t}(\dot{B}^{\frac{d}{2}}_{2,1})})\|u^+\|_{L^1_t (\dot{B}_{2,1}^{\frac{d}{2}})}\| n^+\|_{\widetilde{L}^{\infty}_t 
(\dot{B}_{2,1}^{\frac{d}{2}-1})},
		\end{aligned}
	\end{equation}
	and
	\begin{equation*}
		\begin{aligned}
			&\|\rho^+ u^-\cdot\nabla n^-\|^{\ell}_{L^1_t (\dot{B}_{2,1}^{\frac{d}{2}-2})}\\
			&\lesssim (1+\|\theta\|_{\widetilde{L}^{\infty}_{t}(\dot{B}^{\frac{d}{2}}_{2,1})})\|u^-\|_{L^1_t (\dot{B}_{2,1}^{\frac{d}{2}})}\| 
n^-\|_{\widetilde{L}^{\infty}_t(\dot{B}_{2,1}^{\frac{d}{2}-1})}.
		\end{aligned}
	\end{equation*}
	Since $n^-=n^-(n^++1,\theta+\bar{P})$, similar computations also give
	\begin{equation*}
		\begin{aligned}
			&\|(\mathcal{C}(n^++1,\theta+\bar{P})-\mathcal{C}( 1, \bar{P}))\rho^- u^+\cdot\nabla n^+\|^{\ell}_{L^1_t (\dot{B}_{2,1}^{\frac{d}{2}-2})}\\
			&\lesssim \|\mathcal{C}(n^++1,\theta+\bar{P})-\mathcal{C}( 1, \bar{P})\|_{\widetilde{L}^{\infty}_t(\dot{B}_{2,1}^{\frac{d}{2}})} 
(1+\|\rho^--1\|_{\widetilde{L}^{\infty}_{t}(\dot{B}^{\frac{d}{2}}_{2,1})})\| u^+\cdot\nabla n^+\|_{L^1_t (\dot{B}_{2,1}^{\frac{d}{2}-2})}\\
			&\lesssim \|(n^{+},\theta)\|_{\widetilde{L}^{\infty}_t(\dot{B}_{2,1}^{\frac{d}{2}})}  (1+\|\theta\|_{\widetilde{L}^{\infty}_{t}(\dot{B}^{\frac{d}{2}}_{2,1})})\|u^+\|_{L^1_t 
(\dot{B}_{2,1}^{\frac{d}{2}})}\| n^+\|_{\widetilde{L}^{\infty}_t(\dot{B}_{2,1}^{\frac{d}{2}-1})},
		\end{aligned}
	\end{equation*}
	and
	\begin{equation*}
		\begin{aligned}
			&\|(\mathcal{C}(n^++1,\theta+\bar{P})-\mathcal{C}( 1, \bar{P}))\rho^+ u^-\cdot\nabla n^-\|^{\ell}_{L^1_t (\dot{B}_{2,1}^{\frac{d}{2}-2})}\\&\lesssim 
\|(n^{+},\theta)\|_{\widetilde{L}^{\infty}_t(\dot{B}_{2,1}^{\frac{d}{2}})}  (1+\|\theta\|_{\widetilde{L}^{\infty}_{t}(\dot{B}^{\frac{d}{2}}_{2,1})})\|u^-\|_{L^1_t 
(\dot{B}_{2,1}^{\frac{d}{2}})}\|( n^+,\theta)\|_{\widetilde{L}^{\infty}_t(\dot{B}_{2,1}^{\frac{d}{2}-1})}.
		\end{aligned}
	\end{equation*}
	Combining the above estimates of $F_1$, we arrive at
	\begin{align}
		\|F_1\|^{\ell}_{L^1_t( \dot{B}_{2,1}^{\frac{d}{2}-1})}\lesssim(1+\mathcal{X}(t)^2)\mathcal{X}(t)^2.\label{F1ell}
	\end{align}
	With regard to the term $F_2$, the product estimate \eqref{uv2} gives 
	\begin{equation}
		\begin{aligned}
			\|u^{\pm}\cdot\nabla u^{\pm}\|^{\ell}_{L^1_t (\dot{B}_{2,1}^{\frac{d}{2}-2})}&\lesssim\|u^{\pm}\|_{\widetilde{L}^{\infty}_t(\dot{B}_{2,1}^{\frac{d}{2}-2})}\|\nabla 
u^{\pm}\|_{L^1_t (\dot{B}_{2,1}^{\frac{d}{2}})}\lesssim\|u^{\pm}\|_{\widetilde{L}^{\infty}_t(\dot{B}_{2,1}^{\frac{d}{2}-2})}\| u^{\pm}\|_{L^1_t (\dot{B}_{2,1}^{\frac{d}{2}+1})}.
		\end{aligned}
	\end{equation}
	Employing the estimate \eqref{F0} for the composition functions $g_i^{\pm}(\theta)$ ($i=1,2,4$), we have
	\begin{equation}\nonumber
		\begin{aligned}
			\|g_4^{\pm}(\theta)\nabla \theta\|^{\ell}_{L^1_t (\dot{B}_{2,1}^{\frac{d}{2}-2})}&\lesssim \|g_4^{\pm}(\theta)\|_{L^1_t 
(\dot{B}_{2,1}^{\frac{d}{2}})}\|\nabla\theta\|_{\widetilde{L}^{\infty}_t (\dot{B}_{2,1}^{\frac{d}{2}-2})}&\lesssim\| \theta\|_{L^1_t (\dot{B}_{2,1}^{\frac{d}{2}})} \| \theta\|_{\widetilde{L}^{\infty}_t(\dot{B}_{2,1}^{\frac{d}{2}-1})},\nonumber
		\end{aligned}
	\end{equation}
	and
	\begin{align}
		&\|g_1^{\pm}(\theta)\Delta u^{\pm}\|^{\ell}_{L^1_t (\dot{B}_{2,1}^{\frac{d}{2}-2})}+\|g_2^{\pm}(\theta)\nabla\dive  u^{\pm}\|^{\ell}_{L^1_t (\dot{B}_{2,1}^{\frac{d}{2}-2})} \lesssim\| \theta\|_{\widetilde{L}^{\infty}_t(\dot{B}_{2,1}^{\frac{d}{2}-2})}\| u^{\pm}\|_{L^1_t (\dot{B}_{2,1}^{\frac{d}{2}})}\nonumber.
	\end{align}
	The remainder is to analyze the composite term
	$$
	\frac{\mu^\pm(\nabla u^\pm+\nabla^{\top} u^\pm) \cdot\nabla \alpha^\pm}{R^\pm}+\frac{\lambda^\pm\dive u^\pm\nabla \alpha^\pm}{R^\pm}.
	$$
	We only deal with $ \frac{\nabla u^\pm \cdot \nabla \alpha^\pm}{R^\pm}$ since the other is essentially the same. To this end, we decompose
	\begin{align}
		\frac{\nabla u^\pm \cdot \nabla \alpha^\pm}{R^\pm}=\nabla u^\pm\cdot \nabla h^\pm (n^+,\theta)\tfrac{n^\pm}{n^\pm+1}+\nabla u^\pm\cdot\nabla h^\pm(n^+,\theta),\label{decom++}
	\end{align}
	where $h^\pm(n^+,\theta)=\alpha^\pm(n^++1,\theta+\bar{P})-\alpha^\pm(1,\bar{P}).$
	Using $n^-=n^-(n^++1,\theta+\bar{P})$, \eqref{uv2}, \eqref{F0} and \eqref{F1} , one has
	\begin{align}
		&\quad\Big\| \frac{\nabla u^\pm \cdot \nabla \alpha^\pm}{R^\pm}\Big\|^{\ell}_{L^1_t (\dot{B}_{2,1}^{\frac{d}{2}-2})}\nonumber\\
		&\leq\|\nabla u^\pm\cdot \nabla h^\pm(n^+,\theta)\tfrac{n^\pm}{n^++1}\|^{\ell}_{L^1_t (\dot{B}_{2,1}^{\frac{d}{2}-2})}+\|\nabla u^\pm\cdot\nabla h^\pm (n^+,\theta)\|^{\ell}_{L^1_t 
(\dot{B}_{2,1}^{\frac{d}{2}-2})}\nonumber\\
		&\lesssim \Big(\|\nabla h^\pm (n^+,\theta)\tfrac{n^\pm}{n^\pm+1}\|_{\widetilde{L}^{\infty}_t(\dot{B}_{2,1}^{\frac{d}{2}-1})}+\|\nabla 
h^\pm (n^+,\theta)\|_{\widetilde{L}^{\infty}_t(\dot{B}_{2,1}^{\frac{d}{2}-1})}\Big)\|\nabla u^\pm\|_{L^1_t (\dot{B}_{2,1}^{\frac{d}{2}})}\nonumber\\
		&\lesssim \Big(\|(n^+,\theta)\|_{\widetilde{L}^{\infty}_t(\dot{B}_{2,1}^{\frac{d}{2}})}\|n^+\|_{\widetilde{L}^{\infty}_t(\dot{B}_{2,1}^{\frac{d}{2}})}+\|(n^+,\theta)\|_{\widetilde{L}^{\infty}_t(\dot{B}_{2,1}^{\frac{d}{2}})}\Big)\|(u^+,u^{-})\|_{L^1_t (\dot{B}_{2,1}^{\frac{d}{2}})}.\nonumber
	\end{align}
	Collecting the above estimates concerning $F_2$, we end up with
	\begin{align}
		\|F_2^+\|^{\ell}_{L^1_t (\dot{B}_{2,1}^{\frac{d}{2}-2})}\lesssim \mathcal{X}(t)^2+\mathcal{X}(t)^3.\label{F2ell}
	\end{align}
	Similarly, we get
	\begin{align}
		\|F_2^-\|^{\ell}_{L^1_t (\dot{B}_{2,1}^{\frac{d}{2}-2})}\lesssim (1+\mathcal{X}(t) )\mathcal{X}(t)^2.\label{F3ell}
	\end{align}
	The details are omitted. Substituting \eqref{F1ell}, \eqref{F2ell} and \eqref{F3ell} into \eqref{low:es}, we obtain \eqref{low} and complete the proof of Lemma \ref{lemma1}.

\end{proof}

\subsection{High-frequency estimates for for the dissipative components}\label{sub2}
In this subsection, we establish the corresponding a priori estimates in the high-frequency regime.
\begin{lemma}\label{lemma2}
	Let $T>0$ be any given time, and $(n^+, \theta,u^+,u^-)$ be any strong solution to the Cauchy problem \eqref{equation_R_P_u_pm} for $t\in(0,T)$. Then it holds that
	\begin{equation}\label{2.23}
		\begin{aligned}
			&\|\theta\|^h_{\widetilde{L}^{\infty}_t(\dot{B}_{2,1}^{\frac{d}{2}})}+\|(u^{+},u^{-})\|^h_{\widetilde{L}^{\infty}_t(\dot{B}_{2,1}^{\frac{d}{2}-1})}+\|\theta \|^h_{L^{1}_t 
(\dot{B}_{2,1}^{\frac{d}{2}})}+\|(u^{+},u^{-})\|^h_{L^{1}_t (\dot{B}_{2,1}^{\frac{d}{2}+1})}\\
			&\quad \lesssim \|\theta_0 \|^h_{\dot{B}_{2,1}^{\frac{d}{2}}}+\|(u_0^{+},u_0^-)\|^h_{_{ \dot{B}_{2,1}^{\frac{d}{2}-1}}}+(1+\mathcal{X}(t)^2)\mathcal{X}(t)^2,
		\end{aligned}
	\end{equation}
	where $\mathcal{X}(t)$ is defined in \eqref{Et}.	
\end{lemma}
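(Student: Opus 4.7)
The strategy mirrors the low-frequency analysis in Lemma \ref{lemma1} but uses a Lyapunov functional reweighted for the high-frequency regime, where $\theta$ is expected to behave as a damping variable at the same regularity level while $u^\pm$ enjoys genuine parabolic smoothing (two-order gain). Concretely, I apply $\dot{\Delta}_j$ to $\eqref{equation_R_P_u_pm}_2$--$\eqref{equation_R_P_u_pm}_4$ as in \eqref{Localization equation}, take the $L^2$ inner products with $\dot{\Delta}_j\theta$, $\tfrac{\beta_1^+}{\beta_2^+}\dot{\Delta}_j u^+$, $\tfrac{\beta_1^-}{\beta_2^-}\dot{\Delta}_j u^-$ to recover the basic identity \eqref{basicj}, and then couple it with the cross-term identity \eqref{crossj}. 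The difference with the low-frequency case is the choice of weight on the cross term: for $j\geq-1$ I would form
\[
\mathcal{E}_{2,j}(t)=\tfrac12\Big(\|\dot{\Delta}_j\theta\|_{L^2}^2+\tfrac{\beta_1^+}{\beta_2^+}\|\dot{\Delta}_j u^+\|_{L^2}^2+\tfrac{\beta_1^-}{\beta_2^-}\|\dot{\Delta}_j u^-\|_{L^2}^2\Big)+\eta_2\,2^{-2j}\Big((\dot{\Delta}_j u^+|\nabla\dot{\Delta}_j\theta)_{L^2}+(\dot{\Delta}_j u^-|\nabla\dot{\Delta}_j\theta)_{L^2}\Big),
\]
with $\eta_2>0$ small. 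The $2^{-2j}$ factor keeps $\mathcal{E}_{2,j}(t)\sim\|\dot{\Delta}_j(\theta,u^+,u^-)\|_{L^2}^2$ for $j\geq-1$ thanks to Bernstein's inequality, while after integration by parts of $(\nu_1^\pm\Delta\dot{\Delta}_ju^\pm|\nabla\dot{\Delta}_j\theta)$ the cross term produces a coercive contribution $\eta_2(\beta_2^++\beta_2^-)\|\dot{\Delta}_j\theta\|_{L^2}^2$ (no factor $2^{2j}$, because of the $2^{-2j}$ rescaling) together with controllable remainders that are absorbed via \eqref{nablaDeltau} and smallness of $\eta_2$.

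The resulting dyadic differential inequality reads
\[
\tfrac{d}{dt}\mathcal{E}_{2,j}(t)+c\Big(\|\dot{\Delta}_j\theta\|_{L^2}^2+2^{2j}\|\dot{\Delta}_j(u^+,u^-)\|_{L^2}^2\Big)\lesssim \|\dot{\Delta}_j(F_1,F_2^+,F_2^-)\|_{L^2}\sqrt{\mathcal{E}_{2,j}(t)},\qquad j\geq-1,
\]
which encodes simultaneously the damping of $\theta$ at the same regularity level and the parabolic gain of $u^\pm$ (two derivatives). Dividing by $(\mathcal{E}_{2,j}+\varepsilon_*^2)^{1/2}$, passing to the limit $\varepsilon_*\to0$ and integrating on $[0,t]$ gives
\[
\|\dot{\Delta}_j(\theta,u^+,u^-)\|_{L^\infty_t L^2}+\int_0^t\!\Big(\|\dot{\Delta}_j\theta\|_{L^2}+2^{2j}\|\dot{\Delta}_j(u^+,u^-)\|_{L^2}\Big)\,d\tau\lesssim \|\dot{\Delta}_j(\theta_0,u_0^+,u_0^-)\|_{L^2}+\|\dot{\Delta}_j(F_1,F_2^\pm)\|_{L^1_t L^2}.
\]
Multiplying by $2^{j d/2}$ (respectively $2^{j(d/2-1)}$) for the $\theta$-contributions (respectively the $u^\pm$-contributions) and summing over $j\geq-1$ produces the linear version of \eqref{2.23}, with source terms $\|F_1\|^h_{L^1_t(\dot B^{d/2}_{2,1})}+\|F_2^\pm\|^h_{L^1_t(\dot B^{d/2-1}_{2,1})}$ on the right.

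The delicate part, and the main obstacle, is the nonlinear estimate of $F_1$ at high frequency. The truly problematic contributions are the non-symmetric transport pieces $\mathcal{C}(n^++1,\theta+\bar P)\rho^\mp u^\pm\cdot\nabla n^\pm$: unlike in the low-frequency analysis, a one-derivative loss cannot be absorbed by the damping on $\theta$ (which lives at the same regularity level $d/2$). The plan is to rely on the product estimate $\dot B^{d/2}_{2,1}\times \dot B^{d/2}_{2,1}\hookrightarrow \dot B^{d/2}_{2,1}$ together with the decomposition of the coefficients via Lemmas \ref{Lemma5-5}--\ref{Lemma5-7} to write
\[
\|\mathcal{C}\rho^\mp u^\pm\cdot\nabla n^\pm\|^h_{L^1_t(\dot B^{d/2}_{2,1})}\lesssim \big(1+\|(n^+,\theta)\|_{\widetilde L^\infty_t(\dot B^{d/2}_{2,1})}\big)\|u^\pm\|_{L^1_t(\dot B^{d/2+1}_{2,1})}\|n^\pm\|_{\widetilde L^\infty_t(\dot B^{d/2+1}_{2,1})}\lesssim(1+\mathcal{X}(t))\mathcal{X}(t)^2,
\]
where it is crucial that the $L^1_t(\dot B^{d/2+1}_{2,1})$-integrability of $u^\pm$ (provided by the parabolic gain just proved) carries the loss of derivative on $n^\pm$, while the $\widetilde L^\infty_t(\dot B^{d/2+1}_{2,1})$-regularity of $n^\pm$ is supplied by $\mathcal{X}(t)$ (to be closed later in Lemma \ref{lemma4}). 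The companion terms $g_3^\pm(n^+,\theta)\dive u^\pm$ are softer and are handled by the composition estimate \eqref{F1} combined with $\dot B^{d/2}_{2,1}\times\dot B^{d/2}_{2,1}\hookrightarrow\dot B^{d/2}_{2,1}$. For $F_2^\pm$ at the level $d/2-1$, the convective part $u^\pm\cdot\nabla u^\pm$ is controlled through $\widetilde L^2_t(\dot B^{d/2}_{2,1})\times \widetilde L^2_t(\dot B^{d/2}_{2,1})\hookrightarrow L^1_t(\dot B^{d/2-1}_{2,1})$ (or equivalently by $\|u^\pm\|_{\widetilde L^\infty(\dot B^{d/2-1}_{2,1})}\|u^\pm\|_{L^1(\dot B^{d/2+1}_{2,1})}$), $g_4^\pm(\theta)\nabla\theta$ by the composition estimate \eqref{F0} with the product law, and the quasilinear viscosity terms $g_i^\pm(\theta)\Delta u^\pm$ as well as the $\alpha^\pm$-gradient pieces using \eqref{decom++} exactly as in Lemma \ref{lemma1}. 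Assembling the bounds yields $\|F_1\|^h_{L^1_t(\dot B^{d/2}_{2,1})}+\|F_2^\pm\|^h_{L^1_t(\dot B^{d/2-1}_{2,1})}\lesssim(1+\mathcal{X}(t)^2)\mathcal{X}(t)^2$, giving \eqref{2.23}.
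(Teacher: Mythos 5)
Your Lyapunov functional is set up so that $\mathcal{E}_{2,j}(t)\sim\|\dot{\Delta}_j(\theta,u^+,u^-)\|_{L^2}^2$, i.e.\ $\theta$ and $u^{\pm}$ sit at the \emph{same} regularity level. A single dyadic inequality of that form cannot deliver the asymmetric conclusion of the lemma ($\theta$ in $\dot{B}^{d/2}_{2,1}$ but $u^{\pm}$ in $\dot{B}^{d/2-1}_{2,1}$, with only $\|u_0^{\pm}\|^h_{\dot{B}^{d/2-1}_{2,1}}$ and $\|F_2^{\pm}\|^h_{L^1_t(\dot{B}^{d/2-1}_{2,1})}$ on the right): multiplying by $2^{jd/2}$ forces the data and $F_2^{\pm}$ to be measured one derivative too high, while multiplying by $2^{j(d/2-1)}$ loses the $\dot{B}^{d/2}_{2,1}$ control of $\theta$ that the statement (and the rest of the scheme, e.g.\ the Lipschitz bound on $\theta$) requires; you cannot apply different weights to different components of one coupled inequality. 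The paper resolves this by building the high-frequency functional on $(\nabla\theta,u^+,u^-)$ rather than $(\theta,u^+,u^-)$: it tests $\nabla(\text{$\theta$-equation})$ against $\nabla\dot{\Delta}_j\theta$ and chooses the cross-term weights $\beta_1^{\pm}/(\nu_1^{\pm}+\nu_2^{\pm})$ precisely so that the second-order viscous cross terms $\nu_1^{\pm}(\Delta\dot{\Delta}_ju^{\pm}|\nabla\dot{\Delta}_j\theta)+\nu_2^{\pm}(\nabla\dive\dot{\Delta}_ju^{\pm}|\nabla\dot{\Delta}_j\theta)$ cancel against $\beta_1^{\pm}(\nabla\dive\dot{\Delta}_ju^{\pm}|\nabla\dot{\Delta}_j\theta)$. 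Then $\mathcal{E}_{2,j}\sim\|\dot{\Delta}_j(\nabla\theta,u^+,u^-)\|_{L^2}^2$ and a single weight $2^{j(d/2-1)}$ gives exactly the asymmetric estimate, with the source measured as $\|(\nabla F_1,F_2^+,F_2^-)\|^h_{L^1_t(\dot{B}^{d/2-1}_{2,1})}$.

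The second, more serious gap is your claim that the $L^1_t(\dot{B}^{d/2+1}_{2,1})$ bound for $u^{\pm}$ drops out of the integrated dyadic inequality. From
$\frac{d}{dt}\mathcal{E}_{2,j}+c\,\mathcal{E}_{2,j}+c\,2^{2j}\|\dot{\Delta}_ju^{\pm}\|_{L^2}^2\lesssim G_j\sqrt{\mathcal{E}_{2,j}}$,
dividing by $\sqrt{\mathcal{E}_{2,j}}$ leaves the term $2^{2j}\|\dot{\Delta}_ju^{\pm}\|_{L^2}^2/\sqrt{\mathcal{E}_{2,j}}$, which does \emph{not} dominate $2^{2j}\|\dot{\Delta}_ju^{\pm}\|_{L^2}$ because the ratio $\|\dot{\Delta}_ju^{\pm}\|_{L^2}/\sqrt{\mathcal{E}_{2,j}}$ can be arbitrarily small; direct integration of the squared inequality only yields an $\widetilde{L}^2_t$-type bound on $2^{j}\dot{\Delta}_ju^{\pm}$. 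So the parabolic two-derivative gain in $L^1$-in-time is not obtained this way. The paper recovers it in a separate, essential step: it rewrites $\eqref{equation_R_P_u_pm}_3$--$\eqref{equation_R_P_u_pm}_4$ as the Lam\'e system \eqref{3.39} with source $-\beta_2^{\pm}\nabla\theta+F_2^{\pm}$ and applies the maximal regularity estimate of Lemma \ref{lame-system-estimate}, feeding in the $L^1_t(\dot{B}^{d/2}_{2,1})$ damping of $\theta$ established just before. This step is entirely missing from your argument, and your estimate of the non-symmetric term $\mathcal{C}\rho^{\mp}u^{\pm}\cdot\nabla n^{\pm}$ leans on the very $L^1_t(\dot{B}^{d/2+1}_{2,1})$ bound you have not yet proved (that particular circularity is avoidable — $\|u^{\pm}\|_{L^1_t(\dot{B}^{d/2}_{2,1})}$ paired with $\|n^{+}\|_{\widetilde{L}^{\infty}_t(\dot{B}^{d/2+1}_{2,1})}$ suffices, as in the paper — but the missing maximal-regularity step is not).
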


\begin{proof}
	The proof is based on the construction of the Lyapunov functional at high frequencies similarly as in Lemma \ref{lemma1}. 
	We rewrite \eqref{Localization equation} by
	\begin{equation}\label{Localization equation1}
		\left\{
		\begin{aligned}
			& \partial_t \dot{\Delta}_j\theta +\beta_1^+\dive \dot{\Delta}_ju^++\beta_1^-\dive \dot{\Delta}_ju^-=\dot{\Delta}_jF_1,\\
			&  \partial_t \dot{\Delta}_ju^+ +\beta_2^+\nabla\dot{\Delta}_j\theta- \nu_1^{+}\Delta \dot{\Delta}_ju^+-\nu_2^+\nabla\dive \dot{\Delta}_ju^+=\dot{\Delta}_jF_2^+,\\
			&  \partial_t\dot{\Delta}_j u^- +\beta_2^-\nabla\dot{\Delta}_j\theta- \nu_1^{-}\Delta \dot{\Delta}_ju^--\nu_2^-\nabla\dive \dot{\Delta}_ju^-=\dot{\Delta}_jF_2^-.
		\end{aligned}
		\right.
	\end{equation}
First, we take the $L^2$ inner product of    $\nabla(\ref{Localization equation1})_{2} $ with $\nabla\dot{\Delta}_j \theta$ to get
	\begin{equation}\label{high1}
		\begin{aligned}
			&\frac12\frac{d}{dt}\|\nabla\dot{\Delta}_j\theta\|^2_{L^2}+\beta_1^+(\nabla\dive u^+|\nabla\dot{\Delta}_j\theta)_{L^2}+\beta_1^-(\nabla\dive 
u^-|\nabla\dot{\Delta}_j\theta)_{L^2}=(\nabla\dot{\Delta}_j F_1|\nabla\dot{\Delta}_j\theta)_{L^2}.
		\end{aligned}
	\end{equation}
	Next, by taking the $L^2$ inner product of    $\nabla(\ref{Localization equation1})_{1} $, $ (\ref{Localization equation1})_{2} $, and $ (\ref{Localization equation1})_{3} $  with 
$\nabla\dot{\Delta}_j \theta$, $\dot{\Delta}_j u^+$, and $\dot{\Delta}_j u^-$, respectively, we get
		\begin{flalign}\label{high2}
	&\ \frac{d}{dt}(\nabla\dot{\Delta}_j\theta|\dot{\Delta}_ju^{+})_{L^2}-\beta_1^+\|\dive \dot{\Delta}_ju^{+}\|^2_{L^2}-\beta_1^-(\dive \dot{\Delta}_ju^{-}|\dive 
\dot{\Delta}_ju^{+})_{L^2}&\nonumber\\
			&\ \quad\quad +\beta_{2}^+\|\nabla\dot{\Delta}_j\theta\|^2_{L^2}-\nu_1^+(\Delta \dot{\Delta}_ju^+|\nabla\dot{\Delta}_j\theta)_{L^2}-\nu_2^+(\nabla\dive 
\dot{\Delta}_ju^+|\nabla\dot{\Delta}_j\theta)_{L^2}& \nonumber\\
			&\ \quad =(\nabla\dot{\Delta}_j F_1|\dot{\Delta}_ju^{+})_{L^2}+(\dot{\Delta}_j F_2^+|\nabla\dot{\Delta}_j\theta)_{L^2}, 
		\end{flalign}
	and
			\begin{flalign}\label{high3}
			&\ \frac{d}{dt}(\nabla\dot{\Delta}_j\theta|\dot{\Delta}_ju^{-})_{L^2}-\beta_1^+(\dive \dot{\Delta}_ju^{+}|\dive \dot{\Delta}_ju^{-})_{L^2}-\beta_1^-\|\dive 
\dot{\Delta}_ju^{+}\|^2_{L^2}& \nonumber\\
			&\ \quad \quad +\beta_{2}^-\|\nabla\dot{\Delta}_j\theta\|^2_{L^2}-\nu_1^-(\Delta \dot{\Delta}_ju^-|\nabla\dot{\Delta}_j\theta)_{L^2}-\nu_2^-(\nabla\dive 
\dot{\Delta}_ju^-|\nabla\dot{\Delta}_j\theta)_{L^2} & \nonumber\\
			&\ \quad  =(\nabla\dot{\Delta}_j F_1|\dot{\Delta}_ju^{-})_{L^2}+(\dot{\Delta}_j F_2^-|\nabla\dot{\Delta}_j\theta)_{L^2}. &
		\end{flalign}
Due to $(\nu_1^\pm+\nu_2^\pm)(\nabla\dive \dot{\Delta}_j u^\pm | \nabla\dot{\Delta}_j\theta)_{L^2}=\nu_1^\pm (\Delta \dot{\Delta}_j u^\pm | \nabla\dot{\Delta}_j \theta)_{L^2}+\nu_2^\pm ( \nabla\dive \dot{\Delta}_j u^\pm| \nabla\dot{\Delta}_j\theta)_{L^2}$, there are some cancellations of high order cross terms when we add \eqref{high1}, \eqref{high2} and \eqref{high3} with suitably coefficients. Indeed, we have
	\begin{flalign}
			&\  
\frac{d}{dt}\Big(\frac12\|\nabla\dot{\Delta}_j\theta\|^2_{L^2}+\frac{\beta_1^+}{\nu_1^++\nu_2^+}(\nabla\dot{\Delta}_j\theta|\dot{\Delta}_ju^{+})_{L^2}+\frac{\beta_1^-}{\nu_1^-+\nu_2^-}(\nabla\dot{\Delta}_j\theta|\dot{\Delta}_ju^{-})_{L^2}\Big)\\
&\ \quad\quad+\Big(\frac{\beta_1^+\beta_2^+}{\nu_1^++\nu_2^+} +\frac{\beta_1^-\beta_2^-}{\nu_1^-+\nu_2^-}\Big)\|\nabla\dot{\Delta}_j\theta\|^2_{L^2} 
& \nonumber\\
			&\ \quad\quad-\frac{({\beta_1^+})^2}{\nu_1^++\nu_2^+}\|\dive \dot{\Delta}_ju^{+}\|^2_{L^2}-\frac{\beta_1^+\beta_1^-}{\nu_1^++\nu_2^+}(\dive \dot{\Delta}_ju^{-}|\dive \dot{\Delta}_ju^{+})_{L^2} &\nonumber\\
			&\ \quad\quad-\frac{\beta_1^-\beta_1^+}{\nu_1^-+\nu_2^-}(\dive \dot{\Delta}_ju^{+}|\dive \dot{\Delta}_ju^{-})_{L^2}-\frac{(\beta_1^-)^2}{\nu_1^-+\nu_2^-}\|\dive \dot{\Delta}_ju^{+}\|^2_{L^2} & \nonumber\\
			&\ \quad =(\nabla\dot{\Delta}_j F_1|\nabla\dot{\Delta}_j\theta)_{L^2}+\frac{\beta_1^+}{\nu_1^++\nu_2^+}(\nabla\dot{\Delta}_j F_1|\dot{\Delta}_ju^{+})_{L^2}+\frac{\beta_1^+}{\nu_1^++\nu_2^+}(\dot{\Delta}_j 
F_2^+|\nabla\dot{\Delta}_j\theta)_{L^2}\nonumber\\
			&\ \quad\quad+\frac{\beta_1^-}{\nu_1^-+\nu_2^-}(\nabla\dot{\Delta}_j F_1|\dot{\Delta}_ju^{-})_{L^2}+\frac{\beta_1^-}{\nu_1^-+\nu_2^-}(\dot{\Delta}_j F_2^-|\nabla\dot{\Delta}_j\theta)_{L^2} .\label{high4}
		\end{flalign}
	Let
	\begin{align}
		\mathcal{E}_{2,j}(t)&=\frac{1}{2}\Big(\|\dot{\Delta}_j\theta\|^2_{L^2}+\frac{\beta_1^+}{\beta_2^+}\|\dot{\Delta}_ju^{+}\|^2_{L^2}+\frac{\beta_1^-}{\beta_2^-}\|\dot{\Delta}_ju^{-}\|^2_{L^2}\Big)\nonumber\\	
&+\eta_2\Big(\frac12\|\nabla\dot{\Delta}_j\theta\|^2_{L^2}+\frac{\beta_1^+}{\nu_1^++\nu_2^+}(\nabla\dot{\Delta}_j\theta|\dot{\Delta}_ju^{+})_{L^2}+\frac{\beta_1^-}{\nu_1^-+\nu_2^-}(\nabla\dot{\Delta}_j\theta|\dot{\Delta}_ju^{-})_{L^2}\Big),\label{E2}
	\end{align}
	and
	\begin{align}
		\mathcal{D}_{2,j}(t)&=\frac{\beta_1^+}{\beta_2^+}\Big(\nu_1^{+}\|\nabla\dot{\Delta}_ju^{+}\|^2_{L^2}+\nu_2^{+}\|\dive 
\dot{\Delta}_ju^{+}\|^2_{L^2}\Big)+\frac{\beta_1^-}{\beta_2^-}\Big( \nu_1^{-}\|\nabla\dot{\Delta}_ju^{-}\|^2_{L^2}+\nu_2^{-}\|\dive \dot{\Delta}_ju^{-}\|^2_{L^2}\Big)\nonumber\\
		&\ \quad+\eta_2\bigg( \Big(\frac{\beta_1^+\beta_2^+}{\nu_1^++\nu_2^+} +\frac{\beta_1^-\beta_2^-}{\nu_1^-+\nu_2^-}\Big)\|\nabla\dot{\Delta}_j\theta\|^2_{L^2} \nonumber\\
		&\ \quad\quad- \frac{({\beta_1^+})^2}{\nu_1^++\nu_2^+}\|\dive \dot{\Delta}_ju^{+}\|^2_{L^2}-\frac{\beta_1^+\beta_1^-}{\nu_1^++\nu_2^+}(\dive \dot{\Delta}_ju^{-}|\dive \dot{\Delta}_ju^{+})_{L^2} \nonumber\\
		&\ \quad\quad-\frac{({\beta_1^+})^2}{\nu_1^++\nu_2^+}\|\dive \dot{\Delta}_ju^{+}\|^2_{L^2}-\frac{\beta_1^+\beta_1^-}{\nu_1^++\nu_2^+}(\dive \dot{\Delta}_ju^{-}|\dive \dot{\Delta}_ju^{+})_{L^2} &\nonumber\\
		&\left.\ \quad\quad-\frac{\beta_1^-\beta_1^+}{\nu_1^-+\nu_2^-}(\dive \dot{\Delta}_ju^{+}|\dive \dot{\Delta}_ju^{-})_{L^2}-\frac{(\beta_1^-)^2}{\nu_1^-+\nu_2^-}\|\dive \dot{\Delta}_ju^{+}\|^2_{L^2}\right).\label{D2}
	\end{align}
	In light of \eqref{nablaDeltau}, \eqref{E2} and \eqref{D2}, we choose $\eta_2$ small enough and use Bernstein's lemma to deduce that, for any $j\geq -1$,
	\begin{align}
		\mathcal{E}_{2,j}(t)\sim \|\dot{\Delta}_j(\nabla\theta, u^{+},u^{-})\|^2_{L^2},\label{3.30}
	\end{align}
	and
	\begin{align}
		\mathcal{D}_{2,j}(t)\gtrsim \|\dot{\Delta}_j(\nabla\theta, u^{+},u^{-})\|^2_{L^2}.\label{3.31}
	\end{align}
Therefore, combining \eqref{basicj} and \eqref{high1}--\eqref{3.31} we have
	\begin{align}\label{Lyapunovhigh}
		\frac{d}{dt} \mathcal{E}_{2,j}(t)+ \mathcal{E}_{2,j}(t)\lesssim \|\dot{\Delta}_j(\nabla F_1, F_2^+, F_2^-)\|_{L^2}\sqrt{\mathcal{E}_{2,j}(t)},\quad j\geq -1.
	\end{align}
By similar argument  as in \eqref{low-frequency1}, the inequality \eqref{Lyapunovhigh} becomes
	\begin{equation}\label{high-frequency1}
		\begin{aligned}
			\|\dot{\Delta}_j(\nabla\theta, u^{+},u^{-})\|_{L^2}&+\int_0^t\|\dot{\Delta}_j(\nabla\theta, u^{+},u^{-})\|^2_{L^2}d\tau \\
			&\lesssim \|\dot{\Delta}_j(\nabla\theta_0, u_0^{+},u_0^{-})\|_{L^2}+\int_0^t \|\dot{\Delta}_j(\nabla F_1, F_2^+, F_2^-)\|_{L^2}d\tau.
		\end{aligned}
	\end{equation}
	Multiplying \eqref{high-frequency1} by $2^{j(\frac{d}{2}-1)}$,  taking the supremum on $[0,t]$, and then summing over $j\geq-1, $ we have
	\begin{equation}
		\begin{aligned}
			& \|(\nabla\theta, u^{\pm}\|^h_{\widetilde{L}^{\infty}_t(\dot{B}_{2,1}^{\frac{d}{2}-1})}+\|(\nabla\theta, u^{+},u^{-})\|^h_{L^{1}_t(\dot{B}_{2,1}^{\frac{d}{2}-1})}\\
			&\quad\lesssim \|(\nabla\theta_0, u_0^{+},u_0^{-})\|^h_{\dot{B}_{2,1}^{\frac{d}{2}-1}}+\|(\nabla F_1, F_2^+, F_2^-)\|^h_{L^{1}_t(\dot{B}_{2,1}^{\frac{d}{2}-1})}.\label{high1111}
		\end{aligned}
	\end{equation}
	We estimate the nonlinear terms as follows. First, as $\dot{B}^{\frac{d}{2}}_{2,1}$ is a algebra, one has
	\begin{equation*}
		\begin{aligned}
			\|g_3^+(n^{+},\theta)\dive u^{\pm}\|^h_{L^1_t (\dot{B}_{2,1}^{\frac{d}{2}})}&\lesssim \|g_3^+(n^{+},\theta)\|_{\widetilde{L}^{\infty}_t(\dot{B}_{2,1}^{\frac{d}{2}})} 
\|\dive u^{\pm}\|_{L^1_t (\dot{B}_{2,1}^{\frac{d}{2}})}\\
			&\lesssim \|(n^{+},\theta)\|_{\widetilde{L}^{\infty}_t(\dot{B}_{2,1}^{\frac{d}{2}})}\| u^{\pm}\|_{L^1_t(\dot{B}_{2,1}^{\frac{d}{2}+1 })}\lesssim\mathcal{X}^2(t).
		\end{aligned}
	\end{equation*}
	And similarly, it holds that
	\begin{equation*}
		\begin{aligned}
			&\|\rho^- u^+\cdot\nabla n^+\|^h_{L^1_t (\dot{B}_{2,1}^{\frac{d}{2}})}+\|\rho^+ u^-\cdot\nabla n^-\|^h_{L^1_t (\dot{B}_{2,1}^{\frac{d}{2}})}\\
			&\quad \lesssim (1+\|(\rho^+-\bar{\rho}^+,\rho^--\bar{\rho}^-)\|_{\widetilde{L}^{\infty}_{t}(\dot{B}^{\frac{d}{2}}_{2,1})})(\| u^+\cdot\nabla n^+\|_{L^1_t (\dot{B}_{2,1}^{\frac{d}{2}})}+\| u^-\cdot\nabla n^-\|_{L^1_t (\dot{B}_{2,1}^{\frac{d}{2}})})\\
			&\quad \lesssim (1+\|\theta\|_{\widetilde{L}^{\infty}_{t}(\dot{B}^{\frac{d}{2}}_{2,1})})\|(u^+,u^{-})\|_{L^1_t(\dot{B}_{2,1}^{\frac{d}{2}})}\| 
(n^+,\theta)\|_{\widetilde{L}^{\infty}_t(\dot{B}_{2,1}^{\frac{d}{2}+1})}\lesssim (1+\mathcal{X}(t))\mathcal{X}(t)^2,
		\end{aligned}
	\end{equation*}
	and
	\begin{equation*}
		\begin{aligned}
			&\|(\mathcal{C}(n^++1,\theta+\bar{P})-\mathcal{C}( 1, \bar{P}))\rho^- u^+\cdot\nabla n^+\|^h_{L_t^{1}(\dot{B}^{\frac{d}{2}}_{2,1})}\\
			&\quad+\|(\mathcal{C}(n^++1,\theta+\bar{P})-\mathcal{C}( 1, \bar{P}))\rho^+ u^-\cdot\nabla n^-\|^h_{L_t^{1}(\dot{B}^{\frac{d}{2}}_{2,1})}\\
			&\lesssim \|\mathcal{C}(n^++1,\theta+\bar{P})-\mathcal{C}( 1, \bar{P})\|_{\widetilde{L}^{\infty}_t(\dot{B}_{2,1}^{\frac{d}{2}})} 
(1+\|\rho^--\bar{\rho}^-\|_{\widetilde{L}^{\infty}_{t}(\dot{B}^{\frac{d}{2}}_{2,1})})\| u^+\cdot\nabla n^+\|_{L^1_t (\dot{B}_{2,1}^{\frac{d}{2}})}\\
			&\quad+\|\mathcal{C}(n^++1,\theta+\bar{P})-\mathcal{C}( 1, \bar{P})\|_{\widetilde{L}^{\infty}_t(\dot{B}_{2,1}^{\frac{d}{2}})} 
(1+\|\rho^+-\bar{\rho}^+\|_{\widetilde{L}^{\infty}_{t}(\dot{B}^{\frac{d}{2}}_{2,1})})\| u^-\cdot\nabla n^-\|_{L_t^{1}(\dot{B}^{\frac{d}{2}}_{2,1})}\\
			&\lesssim \|(n^{+},\theta)\|_{\widetilde{L}^{\infty}_t(\dot{B}_{2,1}^{\frac{d}{2}})}  (1+\|\theta\|_{\widetilde{L}^{\infty}_{t}(\dot{B}^{\frac{d}{2}}_{2,1})})\|(u^+,u^-)\|_{L^1_t 
(\dot{B}_{2,1}^{\frac{d}{2}})}\| (n^+,\theta)\|_{\widetilde{L}^{\infty}_t(\dot{B}_{2,1}^{\frac{d}{2}+1})}\\
			&\lesssim (1+\mathcal{X}(t))\mathcal{X}(t)^3.
		\end{aligned}
	\end{equation*}
	Note that the $\dot{B}^{\frac{d}{2}+1}_{2,1}$-norm of $n^+$ plays a key role in the above analysis. Thus, we have
	\begin{align}
		\|F_1\|^h_{L_t^{1}(\dot{B}^{\frac{d}{2}}_{2,1}))}\lesssim(1+\mathcal{X}(t)^2)\mathcal{X}(t)^2.\label{F1high}
	\end{align}
	For the term $F_2$, we first have
	\begin{align*}
		\|u^{\pm}\cdot\nabla u^{\pm}\|^h_{L^1_t( \dot{B}_{2,1}^{\frac{d}{2}-1})}\lesssim\|u^{\pm}\|_{\widetilde{L}^{\infty}_t(\dot{B}_{2,1}^{\frac{d}{2}-1})}\|u^{\pm}\|_{L^1_t 
(\dot{B}_{2,1}^{\frac{d}{2}+1})},
	\end{align*}
	and
	\begin{align*}
		\|g_4^{\pm}(\theta)\nabla \theta\|^h_{L^1_t( \dot{B}_{2,1}^{\frac{d}{2}-1})}&\lesssim\|g_4^{\pm}(\theta)\|_{\widetilde{L}^{\infty}_t(\dot{B}_{2,1}^{\frac{d}{2}})}\|\nabla \theta\|_{L^1_t( 
\dot{B}_{2,1}^{\frac{d}{2}-1})}\lesssim\| \theta\|_{\widetilde{L}^{\infty}_t(\dot{B}_{2,1}^{\frac{d}{2}})}\| \theta\|_{L^1_t (\dot{B}_{2,1}^{\frac{d}{2}})}.
	\end{align*}
	A similar argument yields
	\begin{align*}
		\|g_1^{\pm}(\theta)\Delta u^{\pm}\|^h_{L^1_t( \dot{B}_{2,1}^{\frac{d}{2}-1})}&\lesssim\|g_1^{\pm}(\theta)\|_{\widetilde{L}^{\infty}_t(\dot{B}_{2,1}^{\frac{d}{2}})}\|\Delta 
u^{\pm}\|_{L^1_t( \dot{B}_{2,1}^{\frac{d}{2}-1})}\lesssim\| \theta\|_{\widetilde{L}^{\infty}_t(\dot{B}_{2,1}^{\frac{d}{2}})}\| u^{\pm}\|_{L_t^{1}(\dot{B}^{\frac{d}{2}+1}_{2,1})},
	\end{align*}
	and
	\begin{align*}
		\|g_2^{\pm}(\theta)\nabla\dive  u^{\pm}\|^h_{L^1_t( \dot{B}_{2,1}^{\frac{d}{2}-1})}&\lesssim\|g_2^{\pm}(\theta)\|_{\widetilde{L}^{\infty}_t(\dot{B}_{2,1}^{\frac{d}{2}})}\|\nabla\dive  
u^{\pm}\|_{L^1_t( \dot{B}_{2,1}^{\frac{d}{2}-1})}\lesssim\| \theta\|_{\widetilde{L}^{\infty}_t(\dot{B}_{2,1}^{\frac{d}{2}})}\|u^{\pm}\|_{L_t^{1}(\dot{B}^{\frac{d}{2}+1}_{2,1})}.
	\end{align*}
	It suffices to analyze
	$$
	\frac{\mu^\pm(\nabla u^\pm+\nabla^{\top} u^\pm) \cdot\nabla \alpha^\pm}{R^\pm}+\frac{\lambda^\pm\dive u^\pm\nabla \alpha^\pm}{R^\pm}.
	$$
	We only deal with $ \frac{\nabla u^\pm \cdot \nabla \alpha^\pm}{R^\pm}$ since the other is essential the same. It holds by \eqref{decom++}, \eqref{uv2} and \eqref{F1} that
	\begin{align*}
		& \quad \Big\| \frac{\nabla u^\pm \cdot \nabla \alpha^\pm }{R^\pm}\Big\|^h_{L^1_t( \dot{B}_{2,1}^{\frac{d}{2}-1})}\\
		&\leq\|\nabla u^\pm \cdot \nabla h^\pm(n^+,\theta)\tfrac{n^\pm }{n^\pm +1}\|^h_{L^1_t( \dot{B}_{2,1}^{\frac{d}{2}-1})}+\|\nabla u^\pm \cdot\nabla h^\pm(n^+,\theta)\|^h_{L^1_t( 
\dot{B}_{2,1}^{\frac{d}{2}-1})}\nonumber\\
		&\lesssim \Big(\|\nabla h^\pm(n^+,\theta)\tfrac{n^\pm}{n^\pm+1}\|_{\widetilde{L}^{\infty}_t(\dot{B}_{2,1}^{\frac{d}{2}-1})}+\|\nabla 
h^\pm(n^+,\theta)\|_{\widetilde{L}^{\infty}_t(\dot{B}_{2,1}^{\frac{d}{2}-1})}\Big)\|\nabla u^\pm\|_{L^1_t (\dot{B}_{2,1}^{\frac{d}{2}})}\nonumber\\
		&\lesssim 
\Big(\|\nabla h^\pm(n^+,\theta)\|_{\widetilde{L}^{\infty}_t(\dot{B}_{2,1}^{\frac{d}{2}-1})}\|n^\pm\|_{\widetilde{L}^{\infty}_t(\dot{B}_{2,1}^{\frac{d}{2}})}+\|\nabla h^\pm(n^+,\theta)\|_{\widetilde{L}^{\infty}_t(\dot{B}_{2,1}^{\frac{d}{2}-1})}\Big)\|\nabla 
u^\pm\|_{L^1_t(\dot{B}_{2,1}^{\frac{d}{2}})}\nonumber\\
		&\lesssim 
\Big(\|(n^+,\theta)\|_{\widetilde{L}^{\infty}_t(\dot{B}_{2,1}^{\frac{d}{2}})}\|(n^+,\theta)\|_{\widetilde{L}^{\infty}_t(\dot{B}_{2,1}^{\frac{d}{2}})}+\|(n^+,\theta)\|_{\widetilde{L}^{\infty}_t( 
\dot{B}_{2,1}^{\frac{d}{2}})}\Big)\| u^\pm\|_{L_t^{1}(\dot{B}^{\frac{d}{2}+1}_{2,1})}.
	\end{align*}
	Therefore, we arrive at
	\begin{align}
		\|F_2^+\|^h_{L^1_t( \dot{B}_{2,1}^{\frac{d}{2}-1})}\lesssim \mathcal{X}(t)^2+\mathcal{X}(t)^3.\label{F2high}
	\end{align}
	Similarly, it follows that
	\begin{align}
		\|F_2^-\|^h_{L^1_t( \dot{B}_{2,1}^{\frac{d}{2}-1})}\lesssim \mathcal{X}(t)^2+\mathcal{X}(t)^3.\label{F3high}
	\end{align}
	Substituting \eqref{F1high}, \eqref{F2high} and \eqref{F3high} into \eqref{high1111}, we prove
	\begin{align}
		& \|(\nabla\theta, u^{\pm}\|^h_{\widetilde{L}^{\infty}_t(\dot{B}_{2,1}^{\frac{d}{2}-1})}+\|(\nabla\theta, u^{+},u^{-})\|^h_{L^{1}_t(\dot{B}_{2,1}^{\frac{d}{2}-1})}\lesssim \|(\nabla\theta_0, u_0^{+},u_0^{-})\|^h_{\dot{B}_{2,1}^{\frac{d}{2}-1}}+\mathcal{X}(t)^2+\mathcal{X}(t)^3.\label{high11111}
	\end{align}

	Finally, we establish the higher-order dissipation estimate of $u^{\pm}$. In fact, $\eqref{equation_R_P_u_pm}_3$--$\eqref{equation_R_P_u_pm}_4$ can be written by
	\begin{equation}\label{3.39}
		\begin{array}{ll}
			\partial_t u^\pm - \nu_1^\pm\Delta u^\pm-\nu_2^\pm\nabla\dive u^\pm=-\beta_2^\pm\nabla \theta+ F_2^\pm.
		\end{array}
	\end{equation}
	The maximal regularity estimate (see Lemma \ref{lame-system-estimate}) applied to \eqref{3.39}  gives rise to
	\begin{align}   &\|u^{\pm}\|^h_{_{\widetilde{L}^{\infty}_t(\dot{B}_{2,1}^{\frac{d}{2}-1})}}+ \|u^{\pm}\|^h_{_{L^{1}_t (\dot{B}_{2,1}^{\frac{d}{2}+1})}}\nonumber\\
		&\quad \lesssim \|(u_0^{+},u_0^-)\|^h_{_{ \dot{B}_{2,1}^{\frac{d}{2}-1}}}+\|\theta\|^h_{L^1_t (\dot{B}_{2,1}^{\frac{d}{2}})}+\|(F_2^+,F_2^-)\|^h_{L^1_t( \dot{B}_{2,1}^{\frac{d}{2}-1})}\nonumber\\
		&\quad \lesssim \|(\nabla\theta_0, u_0^{+},u_0^{-})\|^h_{\dot{B}_{2,1}^{\frac{d}{2}-1}}+\mathcal{X}(t)^2+\mathcal{X}(t)^3,
	\end{align}
	where one has used \eqref{F2high}, \eqref{F3high} and \eqref{high11111}.
	This completes the proof of \eqref{2.23}.
\end{proof}

\subsection{The non-dissipative component:  Low-order estimates}

The uniform $\widetilde{L}^{\infty}_t(\dot{B}_{2,1}^{d/2-1}\cap\dot{B}^{d/2}_{2,1})$ bounds of the non-dissipative component $n^{+}$ are derived based on the dissipation estimates of 
$u^+$ exhibited in \eqref{low} and \eqref{2.23}. However, it is not direct to derive the $\widetilde{L}^{\infty}_t(\dot{B}^{d/2+1}_{2,1})$ estimate for $n^{+}$ required in Lemma
\ref{lemma2} using the equations of $n^\pm$. This will be done in the next section based on a (almost) symmetric argument.

\begin{lemma}\label{lemman+}
	Let $T>0$ be any given time, and $(n^+, \theta,u^+,u^-)$ be any strong solution to the Cauchy problem \eqref{equation_R_P_u_pm} defined on $t\in(0,T)$ satisfying \eqref{priori1}. Then, 
we have
	\begin{align}
		\|n^{+}\|_{\widetilde{L}^{\infty}_t(\dot{B}_{2,1}^{\frac{d}{2}-1}\cap\dot{B}^{\frac{d}{2}}_{2,1})}\lesssim \|n_0^+\|_{\dot{B}^{\frac{d}{2}-1}_{2,1}\cap\dot{B}^{\frac{d}{2}}_{2,1}}+ (1+\mathcal{X}(t)^2)\mathcal{X}(t)^2,\label{3.43}
	\end{align}
	where $\mathcal{X}(t)$ and $\mathcal{X}_0$ are defined through \eqref{Et} and \eqref{E0},  respectively.
\end{lemma}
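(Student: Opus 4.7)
\textbf{Proof plan for Lemma~\ref{lemman+}.} The component $n^{+}$ satisfies the pure transport equation $\eqref{equation-for-new-term}_1$,
\[
\partial_{t} n^{+} + u^{+}\cdot\nabla n^{+} = -(1+n^{+})\dive u^{+},
\]
which carries no intrinsic dissipation. Since the target regularities $d/2-1$ and $d/2$ lie strictly below the Lipschitz threshold, the natural strategy is to apply the standard Besov transport estimate (in the Chemin--Lerner framework) and bound the source term via product laws. The preceding Lemmas~\ref{lemma1}--\ref{lemma2} have already produced the $L^{1}_{t}(\dot B^{d/2}_{2,1}\cap\dot B^{d/2+1}_{2,1})$ control of $u^{+}$, which is the crucial ingredient both for the transport drift and for the source, thus avoiding the $L^{1}$-smallness hypothesis of \cite{WYZ-MA}.

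Concretely, applying $\dot\Delta_{j}$ to the transport equation and using the commutator decomposition $\dot\Delta_{j}(u^{+}\cdot\nabla n^{+}) = u^{+}\cdot\nabla\dot\Delta_{j} n^{+} + [\dot\Delta_{j},u^{+}\cdot\nabla] n^{+}$, I would take the $L^{2}$ inner product with $\dot\Delta_{j} n^{+}$. Integration by parts in the convective term produces the harmless factor $\tfrac12\int(\dive u^{+})|\dot\Delta_{j} n^{+}|^{2}\,dx$; dividing by $\|\dot\Delta_{j} n^{+}\|_{L^{2}}$ and integrating in time yields
\[
\|\dot\Delta_{j} n^{+}\|_{L^{\infty}_{t}(L^{2})} \lesssim \|\dot\Delta_{j} n^{+}_{0}\|_{L^{2}} + \int_{0}^{t}\!\bigl(\|[\dot\Delta_{j},u^{+}\cdot\nabla] n^{+}\|_{L^{2}} + \|\dot\Delta_{j}((1+n^{+})\dive u^{+})\|_{L^{2}} + \|\dive u^{+}\|_{L^{\infty}}\|\dot\Delta_{j} n^{+}\|_{L^{2}}\bigr) d\tau.
\]
Multiplying by $2^{js}$ with $s\in\{d/2-1,d/2\}$, summing in $j$, invoking the standard commutator estimate $\|[\dot\Delta_{j},u^{+}\cdot\nabla] n^{+}\|_{L^{2}}\lesssim c_{j}2^{-js}\|\nabla u^{+}\|_{\dot B^{d/2}_{2,1}}\|n^{+}\|_{\dot B^{s}_{2,1}}$ (with $\sum c_{j}=1$), and using Gronwall together with $\int_{0}^{t}\|\nabla u^{+}\|_{\dot B^{d/2}_{2,1}}\,d\tau \lesssim \|u^{+}\|_{L^{1}_{t}(\dot B^{d/2+1}_{2,1})} \lesssim \mathcal{X}(t)\ll 1$ (so that $e^{CV(t)}$ is uniformly bounded), I would arrive at
\[
\|n^{+}\|_{\widetilde L^{\infty}_{t}(\dot B^{d/2-1}_{2,1}\cap\dot B^{d/2}_{2,1})} \lesssim \|n^{+}_{0}\|_{\dot B^{d/2-1}_{2,1}\cap\dot B^{d/2}_{2,1}} + \|(1+n^{+})\dive u^{+}\|_{L^{1}_{t}(\dot B^{d/2-1}_{2,1}\cap\dot B^{d/2}_{2,1})}.
\]

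The remaining task is to bound the source. I would split $(1+n^{+})\dive u^{+} = \dive u^{+} + n^{+}\dive u^{+}$. The pure $\dive u^{+}$ term is linearly controlled by $\|u^{+}\|_{L^{1}_{t}(\dot B^{d/2}_{2,1}\cap\dot B^{d/2+1}_{2,1})}$, which by Lemmas~\ref{lemma1}--\ref{lemma2} is already of the form $\mathcal{X}_{0} + (1+\mathcal{X}(t)^{2})\mathcal{X}(t)^{2}$ (the low frequencies of $u^{+}$ at $\dot B^{d/2+1}_{2,1}$ are dominated by the low frequencies at $\dot B^{d/2}_{2,1}$). The nonlinear product is handled via the product law $\dot B^{d/2-1}_{2,1}\cdot\dot B^{d/2}_{2,1}\hookrightarrow\dot B^{d/2-1}_{2,1}$ (valid for $d\ge 3$, since $d/2-1<d/2$ and $d/2>0$) and the algebra property of $\dot B^{d/2}_{2,1}$, respectively:
\[
\|n^{+}\dive u^{+}\|_{L^{1}_{t}(\dot B^{d/2-1}_{2,1}\cap\dot B^{d/2}_{2,1})} \lesssim \|n^{+}\|_{\widetilde L^{\infty}_{t}(\dot B^{d/2-1}_{2,1}\cap\dot B^{d/2}_{2,1})}\|u^{+}\|_{L^{1}_{t}(\dot B^{d/2+1}_{2,1})} \lesssim \mathcal{X}(t)^{2}.
\]
Assembling these pieces, applying the smallness $\mathcal{X}(t)\ll 1$ to absorb the factor $(1+\|n^{+}\|_{\widetilde L^{\infty}_{t}(\dot B^{d/2}_{2,1})})$ on the right, and collapsing the linear contribution of $\|u^{+}\|_{L^{1}_{t}(\dot B^{d/2+1}_{2,1})}$ via Lemmas~\ref{lemma1}--\ref{lemma2}, one recovers the claimed bound.

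The genuine difficulty avoided here is that the same direct transport estimate does \emph{not} yield the $\widetilde L^{\infty}_{t}(\dot B^{d/2+1}_{2,1})$ bound for $n^{+}$ needed elsewhere in the closure: the source $\dive u^{+}$ would then require $L^{1}_{t}(\dot B^{d/2+2}_{2,1})$ regularity of $u^{+}$, which is beyond $\mathcal{X}(t)$. That top-order estimate is what forces the quasilinear energy method of the next lemma, using the effective velocity $v$ to cancel the non-symmetric cross terms \eqref{crossd}. For the present lemma, however, the simple transport argument suffices.
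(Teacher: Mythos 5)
Your argument is correct in substance, but it takes a different (and slightly heavier) route than the paper. The paper works with the \emph{divergence form} of the equation as written in $\eqref{equation_R_P_u_pm}_1$, namely $\partial_t n^+ + \dive u^+ = -\dive(n^+u^+)$: applying $\dot\Delta_j$, pairing with $\dot\Delta_j n^+$ and integrating in time gives directly $\|\dot\Delta_j n^+\|_{L^\infty_t(L^2)}\lesssim \|\dot\Delta_j n_0^+\|_{L^2}+\int_0^t(2^j\|\dot\Delta_j u^+\|_{L^2}+\|\dive\dot\Delta_j(n^+u^+)\|_{L^2})\,d\tau$, and the nonlinear term is then absorbed by the product laws \eqref{uv1}--\eqref{uv2} at the $\dot B^{d/2}_{2,1}$ (high-frequency) and $\dot B^{d/2-1}_{2,1}$ (low-frequency) levels, exactly as in your source-term analysis. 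No commutator decomposition and no Gr\"onwall are needed. You instead use the quasilinear form $\eqref{equation-for-new-term}_1$, the standard transport commutator estimate and Gr\"onwall with $V(t)=\int_0^t\|\nabla u^+\|_{\dot B^{d/2}_{2,1}}d\tau$. This is a legitimate alternative, and your verification that the commutator estimate of Lemma \ref{lemcommutator} applies for $s\in\{d/2-1,d/2\}$, and that the source is controlled by $\|u^+\|_{L^1_t(\dot B^{d/2}_{2,1}\cap\dot B^{d/2+1}_{2,1})}$ plus a quadratic term, is accurate. The one caveat is that Gr\"onwall leaves you with a factor $e^{CV(t)}\leq e^{C\mathcal X(t)}$, and you dispose of it by invoking $\mathcal X(t)\ll1$ — which is \emph{not} among the hypotheses of the lemma (only the $L^\infty$ bound \eqref{priori1} is assumed; smallness of $\mathcal X$ is only established afterwards in the bootstrap). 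The paper's divergence-form route yields the polynomial bound $(1+\mathcal X^2)\mathcal X^2$ unconditionally, which is what makes Proposition \ref{propapriori} close cleanly; your version would either need to keep the exponential factor or be restated under an additional smallness hypothesis. In the bootstrap regime this is harmless, but it is worth being aware that the conservative form of the equation buys you a Gr\"onwall-free estimate here. Your closing remark about why this method cannot reach the $\dot B^{d/2+1}_{2,1}$ level for $n^+$ — and why the effective-velocity cancellation of \eqref{crossd} is needed there — matches the paper's own discussion.
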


\begin{proof}
	Applying $\dot{\Delta}_j$ to the equation of $n^{+}$ yields
	\begin{align}
	\partial_t \dot{\Delta}_j n^{+} +\dive \dot{\Delta}_j u^{+}=-\dive \dot{\Delta}_j(n^{+} u^{+}).\label{npm:e}
	\end{align}
	Taking the $L^2$ inner product of  $(\ref{npm:e})$ with $\dot{\Delta}_j n^+$  to get
	\begin{align}
		\frac{1}{2}\frac{d}{dt}\|\dot{\Delta}_j n^{+}\|^2_{L^2}=-(\dive \dot{\Delta}_j u^{+}|\dot{\Delta}_j n^{+})_{L^2}-(\dive \dot{\Delta}_j(n^{+} u^{+})|\dot{\Delta}_j 
n^{+})_{L^2}.
	\end{align}
	By using H\"older's inequality and integrating over the time interval $[0,t]$, we have
	\begin{align}\label{inequ-for-n2+}
		\|\dot{\Delta}_j n^{+}\|_{L^2}\lesssim\|\dot{\Delta}_j n_0^+\|^2_{L^2}+\int_0^t (2^{j}\|\dot{\Delta}_j u^{+}\|_{L^2}+\| \dive \dot{\Delta}_j(n^{+} u^{+})\|_{L^2})d\tau.
	\end{align}
	Then, multiplying \eqref{inequ-for-n2+} by $2^{j\frac{d}{2}}$, taking the supremum on $[0,t]$, and then summing over $j\geq-1,$ we have
	\begin{align*}
		\|n^{+}\|^h_{\widetilde{L}^{\infty}_t(\dot{B}_{2,1}^{\frac{d}{2}})}&\lesssim 
\|n_0^{+}\|^h_{\dot{B}_{2,1}^{\frac{d}{2}}}+\|u^{+}\|_{L^1_t(\dot{B}_{2,1}^{\frac{d}{2}+1})}^h+\|\dive  (n^{+}  u^{+})\|_{L^1_t(\dot{B}_{2,1}^{\frac{d}{2}})}.\nonumber
	\end{align*}
	Note that the linear term involving $u^+$  has been addressed in Lemma \ref{lemma2}. Due to the law \eqref{uv1} and the embedding 
$\dot{B}^{\frac{d}{2}}_{2,1}\hookrightarrow L^{\infty}$, one handles the nonlinear term by
	\begin{equation*}
		\begin{aligned}
			\|\dive  (n^{+}  u^{+})\|_{L^1_t(\dot{B}_{2,1}^{\frac{d}{2}})}&\lesssim \|n^{+}\|_{L^{\infty}_t(L^{\infty})}\| 
u^{+}\|_{L^1_t(\dot{B}_{2,1}^{\frac{d}{2}+1})}+\|u^{+}\|_{L^{\infty}_t(L^{\infty})}\| n^{+}\|_{L^1_t(\dot{B}_{2,1}^{\frac{d}{2}+1})}\\
			&\lesssim \|n^\pm\|_{L^{\infty}_t(\dot{B}^{\frac{d}{2}}_{2,1}\cap \dot{B}_{2,1}^{\frac{d}{2}+1})}\|u^\pm\|_{L^{1}_t(\dot{B}^{\frac{d}{2}}_{2,1}\cap 
\dot{B}_{2,1}^{\frac{d}{2}+1})}\\
			&\lesssim \mathcal{X}(t)^2.
		\end{aligned}
	\end{equation*}
	This, together with \eqref{2.23}, shows that
	\begin{align}
		&\|n^{+}\|^h_{\widetilde{L}^{\infty}_t(\dot{B}_{2,1}^{\frac{d}{2}})}\lesssim\mathcal{X}_0+ (1+\mathcal{X}(t)^2)\mathcal{X}(t)^2.\label{nd2}
	\end{align}
	Similarly, we multiply \eqref{inequ-for-n2+} by $2^{j(\frac{d}{2}-1)}$ and then sum over $j\leq0$ to obtain
	\begin{align}
		\|n^{+}\|^{\ell}_{\widetilde{L}^{\infty}_t(\dot{B}_{2,1}^{\frac{d}{2}-1})}&\lesssim 
\|n_0^{+}\|^{\ell}_{\dot{B}_{2,1}^{\frac{d}{2}-1}}+\|u^{+}\|^{\ell}_{L^1_t(\dot{B}_{2,1}^{\frac{d}{2}})}+\| \dive  (n^{+} 
u^{+})\|^{\ell}_{L^1_t(\dot{B}_{2,1}^{\frac{d}{2}-1})}\nonumber\\
		&\lesssim \|n_0^{+}\|^{\ell}_{\dot{B}_{2,1}^{\frac{d}{2}-1}}+\|u^{+}\|^{\ell}_{L^1_t(\dot{B}_{2,1}^{\frac{d}{2}})}+\|n^{+} \|_{L^\infty_t(\dot{B}_{2,1}^{\frac{d}{2}})}\| 
u^{+}\|_{L^1_t(\dot{B}_{2,1}^{\frac{d}{2}})}\nonumber\\
		&\lesssim  \|n_0^{+}\|^{\ell}_{\dot{B}_{2,1}^{\frac{d}{2}-1}}+(1+\mathcal{X}(t)^2)\mathcal{X}(t)^2,\label{nd21}
	\end{align}
	where \eqref{low} and \eqref{uv2} have been used. Combining \eqref{lhl}, \eqref{nd2} and \eqref{nd21}, we obtain \eqref{3.43}.
\end{proof}

\subsection{The non-dissipative components: Higher-order estimates}

We are now in a position to derive the estimate of the norm $\|(n^{+},\theta)\|_{\widetilde{L}^\infty_t(\dot{B}_{2,1}^{d/2+1})}$ required in Subsection \ref{sub2}. In order to overcome the 
difficulty caused by the lack of symmetry, we use the two-phase effective velocity
\begin{align*}
	v=(\nu_1^++\nu_2^+)u^+-(\nu_1^-+\nu_2^-)u^-
\end{align*}
to transfer a loss of derivative in \eqref{1.38}.

\begin{lemma}\label{lemma4}
	Let $T>0$ be any given time, and $(n^+, n^-,u^+,u^-)$ be any strong solution to the Cauchy problem \eqref{equation_R_P_u_pm} for $t\in(0,T)$. Then it holds that
	\begin{equation}\label{3.47}
		\begin{aligned}
			&\|(n^{+},\theta,u^{+},u^{-})\|_{\widetilde{L}_t^{\infty}(\dot{B}^{\frac{d}{2}+1}_{2,1})} +\|(u^{+},u^{-})\|_{\widetilde{L}_t^{2}(\dot{B}^{\frac{d}{2}+2}_{2,1})}\\
            &\quad\lesssim \mathcal{X}_0+ 
\mathcal{X}(t)^{\frac{3}{2}}+ \mathcal{X}(t)^2+\mathcal{X}(t)^4+\|\dive v\|_{L^1_t(\dot{B}^{\frac{d}{2}+1}_{2,1})},
		\end{aligned}
	\end{equation}
	where $\mathcal{X}(t)$ and $v$ are defined in \eqref{Et} and \eqref{effective},  respectively.
\end{lemma}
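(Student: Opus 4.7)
\medskip
\noindent\textbf{Proof proposal.} The plan is to work directly on the quasilinear reformulation \eqref{equation-for-new-term} and close the $\dot{B}^{d/2+1}_{2,1}$ estimate by a localized energy method with nonlinear weights, repairing the lack of symmetry through the effective-velocity identity \eqref{n+n-ddd}. First I would apply $\dot{\Delta}_j$ to the four equations of \eqref{equation-for-new-term}, producing the quasilinear localized system
\[
\partial_t \dot{\Delta}_j n^{\pm}+u^{\pm}\cdot\nabla \dot{\Delta}_j n^{\pm}+(1+n^{\pm})\dive \dot{\Delta}_j u^{\pm}=R_j^{\pm},
\]
\[
\partial_t \dot{\Delta}_j u^{\pm}+u^{\pm}\cdot\nabla \dot{\Delta}_j u^{\pm}+\tfrac{M_1}{\rho^{\pm}}\nabla \dot{\Delta}_j n^{+}+\tfrac{M_2}{\rho^{\pm}}\nabla \dot{\Delta}_j n^{-}-\nu_1^{\pm}\Delta \dot{\Delta}_j u^{\pm}-\nu_2^{\pm}\nabla\dive \dot{\Delta}_j u^{\pm}=\dot{\Delta}_j H^{\pm}+S_j^{\pm},
\]
where $R_j^{\pm},S_j^{\pm}$ denote the usual commutator remainders of type $[u^{\pm}\cdot\nabla,\dot{\Delta}_j]$, $[(1+n^{\pm}),\dot{\Delta}_j]\dive$, and $[\tfrac{M_i}{\rho^{\pm}},\dot{\Delta}_j]\nabla$. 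Testing the four equations, respectively, against $\tfrac{M_1}{\rho^{+}(1+n^{+})}\dot{\Delta}_j n^{+}$, $\tfrac{M_2}{\rho^{-}(1+n^{-})}\dot{\Delta}_j n^{-}$, $\dot{\Delta}_j u^{+}$, and $\dot{\Delta}_j u^{-}$, the diagonal pressure/divergence couplings cancel in pairs and the viscous terms supply the dissipation $\nu_1^{\pm}\|\nabla\dot{\Delta}_j u^{\pm}\|_{L^2}^2+\nu_2^{\pm}\|\dive\dot{\Delta}_j u^{\pm}\|_{L^2}^2$; however the two cross integrals \eqref{crossd} survive and break the would-be symmetry.

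The hard part is precisely the cancellation of these cross terms. The key idea, already flagged in the discussion around \eqref{n+n-ddd}, is to use \emph{inside the energy identity} the equivalent form \eqref{n+n-ddd} of the $n^{\pm}$ equations in place of the original transport equations: the ``wrong-fluid'' divergences $(1+n^{\pm})\dive u^{\mp}$ appearing on the left-hand sides of \eqref{n+n-ddd} produce exactly the counter-terms that annihilate \eqref{crossd}, at the price of leaving residues of the form $\int_{\mathbb{R}^d}\Phi^{\pm}(n^{+},\theta)\,\dive\dot{\Delta}_j v\,\dot{\Delta}_j n^{\pm}\,dx$ proportional to $\dive v$. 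These residues are harmlessly absorbed by the $\|\dive v\|_{L^1_t(\dot{B}^{d/2+1}_{2,1})}$ norm that appears on the right-hand side of \eqref{3.47}. The commutator remainders $R_j^{\pm},S_j^{\pm}$ and the nonlinear source $\dot{\Delta}_j H^{\pm}$ are controlled by standard paraproduct/commutator estimates in Besov spaces under the smallness assumption \eqref{priori1}, yielding the quadratic and quartic contributions $\mathcal{X}(t)^2+\mathcal{X}(t)^4$; the $\mathcal{X}(t)^{3/2}$ term in \eqref{3.47} arises from a Young splitting of an expression of the shape $\|\nabla u^{\pm}\|_{\dot{B}^{d/2}_{2,1}}^{1/2}\cdot\sqrt{\text{dissipation}}$ used to close the loop between the $\widetilde{L}^{\infty}_t$ and $\widetilde{L}^{2}_t$ norms of $u^{\pm}$.

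Putting everything together will give, for every $j\in\mathbb{Z}$, an inequality of the form
\[
\tfrac{d}{dt}\mathcal{L}_j(t)+c\bigl(\nu_1^{\pm}\|\nabla\dot{\Delta}_j u^{\pm}\|_{L^2}^2+\nu_2^{\pm}\|\dive\dot{\Delta}_j u^{\pm}\|_{L^2}^2\bigr)\lesssim \bigl(N_j(t)+\|\dive\dot{\Delta}_j v\|_{L^2}\bigr)\sqrt{\mathcal{L}_j(t)},
\]
where $\mathcal{L}_j(t)\sim\|\dot{\Delta}_j(n^{+},n^{-},u^{+},u^{-})\|_{L^2}^2$ and $N_j(t)$ gathers the nonlinear forcing. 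Dividing by $(\mathcal{L}_j+\varepsilon_*^2)^{1/2}$, letting $\varepsilon_*\to 0$, integrating in time, multiplying by $2^{j(d/2+1)}$ and summing over $j\in\mathbb{Z}$ produces the $\widetilde{L}^{\infty}_t(\dot{B}^{d/2+1}_{2,1})$ bound on $(n^{+},n^{-},u^{+},u^{-})$; meanwhile, taking the square root in time after integrating the viscous dissipation yields simultaneously the $\widetilde{L}^{2}_t(\dot{B}^{d/2+2}_{2,1})$ bound on $(u^{+},u^{-})$. Since $\theta=P-\bar{P}$ is a smooth function of $(n^{+},n^{-})$ via the implicit relations \eqref{given}, the bound on $\theta$ at the $\dot{B}^{d/2+1}_{2,1}$ level then follows from the $(n^{+},n^{-})$ bound by the composition lemma of Appendix A. The cornerstone of the argument, and the main obstacle, is the trade carried out in the second paragraph: swapping the transport equations for $n^{\pm}$ for their $v$-reformulation \eqref{n+n-ddd} so that the non-symmetric cross integrals become harmless $\dive v$-contributions at the cost of the higher-order dissipation of the effective velocity.
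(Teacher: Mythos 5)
Your proposal is correct and follows essentially the same route as the paper: localize \eqref{equation-for-new-term}, run a weighted $L^2$ energy estimate with the weights $\tfrac{M_1}{(1+n^+)\rho^+}$, $\tfrac{M_2}{(1+n^-)\rho^-}$, and kill the non-symmetric cross integrals \eqref{crossd} via a supplementary weighted estimate built on the effective-velocity form \eqref{n+n-ddd}, absorbing the residues into $\|\dive v\|_{L^1_t(\dot{B}^{d/2+1}_{2,1})}$ and recovering $\theta$ by composition. One small caution: the \eqref{n+n-ddd}-based estimate must be \emph{added to}, not substituted for, the one using the original transport equations, since the original form is still needed to cancel the diagonal couplings $\int \tfrac{M_1}{\rho^+}\nabla\dot{\Delta}_j n^+\cdot\dot{\Delta}_j u^+\,dx$ and $\int \tfrac{M_2}{\rho^-}\nabla\dot{\Delta}_j n^-\cdot\dot{\Delta}_j u^-\,dx$, while the $v$-form supplies only the counter-terms for the two off-diagonal ones.
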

\begin{proof}
	Initially, we apply  the operator $\dot{\Delta}_j$ to \eqref{equation-for-new-term} and write the resulting equations in terms of a localized quasi-linear system with some commutator 
terms:
	\begin{equation}\label{equation-for-new-term-local}
		\left\{
		\begin{array}{ll}
			\partial_t \dot{\Delta}_jn^++ u^+\cdot \nabla \dot{\Delta}_jn^+  +(1+n^+)\dive \dot{\Delta}_ju^+= T^1_j,\\
			\partial_t \dot{\Delta}_j n^- +u^-\cdot \nabla \dot{\Delta}_j n^- +(1+n^-)\dive \dot{\Delta}_j u^-= T_j^2,\\
			\partial_t \dot{\Delta}_j u^+ + u^+\cdot\nabla \dot{\Delta}_j u^++\tfrac{M_1^1}{\rho^+}\nabla \dot{\Delta}_j n^++\tfrac{M_2^1}{\rho^+}\nabla \dot{\Delta}_j n^- \\
			\quad\quad\quad\quad\quad\quad - \nu_1^+\Delta \dot{\Delta}_j u^+-\nu_2^+\nabla\dive \dot{\Delta}_j u^+=\dot{\Delta}_j  H^++T^3_j,\\
			\partial_t \dot{\Delta}_j u^- + u^-\cdot\nabla \dot{\Delta}_j u^-+\frac{M_1^1}{\rho^-}\nabla \dot{\Delta}_j n^++\tfrac{M_2^1}{\rho^-}\nabla \dot{\Delta}_j n^-\\
			\quad\quad\quad\quad\quad\quad- \nu_1^-\Delta \dot{\Delta}_j u^--\nu_2^-\nabla\dot{\Delta}_j\dive u^-=\dot{\Delta}_j H^-+T_j^4,
		\end{array}
		\right.
	\end{equation}
	where
	\begin{equation}
		\left\{
		\begin{aligned}
			T^1_j&= [u^+,\dot{\Delta}_j]\nabla n^++[n^+,\dot{\Delta}_j]\dive u^+ ,\\
			T^2_j&= [u^-,\dot{\Delta}_j]\nabla n^-+[n^-,\dot{\Delta}_j]\dive u^- ,\\
			T^3_j&= [u^+,\dot{\Delta}_j]\nabla u^+ +\Big[\tfrac{M_1^1}{\rho^+},\dot{\Delta}_j\Big]\nabla n^+ +\Big[\tfrac{M_2^1}{\rho^+},\dot{\Delta}_j\Big]\nabla n^-,\\
			T^4_j &= [u^-,\dot{\Delta}_j]\nabla u^- +\Big[\tfrac{M_1^1}{\rho^-},\dot{\Delta}_j\Big]\nabla n^+ +\Big[\tfrac{M_2^1}{\rho^-},\dot{\Delta}_j\Big]\nabla n^-.
		\end{aligned}
		\right.
	\end{equation}
	Taking the $L^2$ inner product of  $(\ref{equation-for-new-term-local})_{1}$, $(\ref{equation-for-new-term-local})_{2}$, $(\ref{equation-for-new-term-local})_{3}$ and 
$(\ref{equation-for-new-term-local})_{4}$ with $\tfrac{M_1}{(1+n^+)\rho^+}\dot{\Delta}_j n^+$, $\tfrac{M_2}{(1+n^-)\rho^+}\dot{\Delta}_j n^- $, $\dot{\Delta}_j u^+$ and $\dot{\Delta}_ju^-$,  
respectively, and integrating by parts, we have
	\begin{align}\label{inequ-foru+u-n+n-}
		\frac{1}{2}&\frac{d}{dt}\int_{\mathbb{R}^d} 
\Big(|\dot{\Delta}_ju^+|^2+|\dot{\Delta}_ju^-|^2+\tfrac{M_1}{(1+n^+)\rho^+}|\dot{\Delta}_jn^+|^2+\tfrac{M_2}{(1+n^-)\rho^-}|\dot{\Delta}_jn^-|^2\Big)dx\nonumber\\
		& \quad+\nu_1^{+}\int_{\mathbb{R}^d}|\nabla \dot{\Delta}_ju^{+}|^2 dx +\nu_2^{+}\int_{\mathbb{R}^d}|\dive  \dot{\Delta}_ju^+|^2 dx\nonumber\\
        &\quad+\nu_1^{-}\int_{\mathbb{R}^d}|\nabla \dot{\Delta}_ju^{-}|^2 dx +\nu_2^{-}\int_{\mathbb{R}^d}|\dive  \dot{\Delta}_ju^-|^2 dx\nonumber\\
        &\quad+\int_{\mathbb{R}^d}\Big(\tfrac{M_2 }{\rho^+}\nabla\dot{\Delta}_jn^-\cdot \dot{\Delta}_ju^++\tfrac{M_1 }{\rho^-}\nabla\dot{\Delta}_jn^+\cdot\dot{\Delta}_ju^-\Big)dx\nonumber\\
		&\lesssim \Big\|\partial_t\Big(\tfrac{M_1}{(1+n^+)\rho^+}\Big)\Big\|_{L^{\infty}} \|\dot{\Delta}_j n^+\|_{L^2}^2 +\Big\|\partial_t \Big(\tfrac{M_2}{(1+n^-)\rho^-} \Big)
\Big\|_{L^{\infty}}\|\dot{\Delta}_j n^-\|_{L^2}^2\nonumber\\
		&\quad+\Big\|\dive \Big( \tfrac{M_1u^+}{(1+n^+)\rho^+}\Big)\Big\|_{L^{\infty}}\|\dot{\Delta}_j n^+\|_{L^2}^2+\Big\|\dive 
\Big(\tfrac{M_2u^-}{(1+n^-)\rho^-}\Big)\Big\|_{L^{\infty}}\|\dot{\Delta}_j n^-\|_{L^2}^2\nonumber\\
		&\quad+\Big\|\tfrac{M_1}{(1+n^+)\rho^+}\Big\|_{L^{\infty}}\|T_j^1\|_{L^2}\|\dot{\Delta}_j n^+\|_{L^2}+ \Big\|\tfrac{M_2}{(1+n^-)\rho^-}\Big\|_{L^{\infty}}\|T_j^2\|_{L^2}\|\dot{\Delta}_j 
n^-\|_{L^2}\nonumber\\
		&\quad+\|\dot{\Delta}_jH^+\|_{L^2} \|\dot{\Delta}_j u^+\|_{L^2}+\|\dot{\Delta}_jH^-\|_{L^2} \|\dot{\Delta}_j u^- \|_{L^2}\nonumber\\
		&\quad+\|T_j^3\|_{L^2}\|\dot{\Delta}_j u^+\|_{L^2} +\|T_j^4\|_{L^2}\|\dot{\Delta}_j u^-\|_{L^2}\nonumber\\
		&\quad+\Big\|\nabla\Big(\tfrac{M_1}{\rho^+}\Big)\Big\|_{L^{\infty}}\|\dot{\Delta}_j 
n^+\|_{L^{2}}\|\dot{\Delta}_ju^+\|_{L^{2}}+\Big\|\nabla\Big(\tfrac{M_2}{\rho^-}\Big)\Big\|_{L^{\infty}}\|\dot{\Delta}_j n^-\|_{L^{2}}\|\dot{\Delta}_ju^-\|_{L^{2}}.
	\end{align}
	The challenge arises in overcoming the higher-order regularity in the cross terms
	\begin{align}
		\int_{\mathbb{R}^d} \tfrac{M_1 }{\rho^-}\nabla\dot{\Delta}_jn^+\cdot \dot{\Delta}_ju^-+\tfrac{M_2 }{\rho^+}\nabla\dot{\Delta}_jn^-\cdot \dot{\Delta}_ju^+ dx,\label{cross}
	\end{align}
	which corresponds to the non-symmetric part of the system. To eliminate the loss of derivatives of $\dot{\Delta}_jn^\pm$ in \eqref{cross}, we reformulate $(\ref{equation-for-new-term})_1$ and 
$(\ref{equation-for-new-term})_2$ by
	\begin{equation}\label{equation-for-v}
		\left\{
		\begin{array}{ll}
			\partial_t n^++ u^+\cdot \nabla n^+ +\tfrac{1+n^+}{\nu_1^++\nu_2^+}\dive v+ \tfrac{\nu_1^-+\nu_2^-}{\nu_1^++\nu_2^+}(1+n^+)\dive u^-=0,\\
			\partial_t n^- +u^-\cdot \nabla n^- -\frac{1+n^-}{\nu_1^-+\nu_2^-}\dive v+ \tfrac{\nu_1^++\nu_2^+}{\nu_1^-+\nu_2^-}(1+n^-)\dive u^-=0.
		\end{array}
		\right.
	\end{equation}
	Then  applying  the operator $\dot{\Delta}_j$ to \eqref{equation-for-v} to get
	\begin{equation}\label{equation-for-v-local}
		\left\{
		\begin{array}{ll}
			\partial_t \dot{\Delta}_jn^++ u^+\cdot \nabla\dot{\Delta}_j n^+ +\tfrac{1+n^+}{\nu_1^++\nu_2^+}\dive \dot{\Delta}_jv+ \tfrac{\nu_1^-+\nu_2^-}{\nu_1^++\nu_2^+}(1+n^+)\dive 
\dot{\Delta}_j u^-=T_j^5,\\
			\partial_t \dot{\Delta}_j n^- +u^-\cdot \nabla \dot{\Delta}_jn^- -\frac{1+n^-}{\nu_1^-+\nu_2^-}\dive \dot{\Delta}_jv+ \tfrac{\nu_1^++\nu_2^+}{\nu_1^-+\nu_2^-}(1+n^-)\dive 
\dot{\Delta}_j u^+=T_j^6,
		\end{array}
		\right.
	\end{equation}
	where
	\begin{equation}
		\left\{
		\begin{aligned}
			T^5_j&= [u^+,\dot{\Delta}_j]\nabla n^++\tfrac{1}{\nu_1^-+\nu_2^-}[n^+,\dot{\Delta}_j]\dive v  + \tfrac{\nu_1^-+\nu_2^-}{\nu_1^++\nu_2^+}[n^+,\dot{\Delta}_j]\dive u^+ ,\\
			T^6_j&= [u^-,\dot{\Delta}_j]\nabla n^--\tfrac{1}{\nu_1^-+\nu_2^-}[n^-,\dot{\Delta}_j]\dive v + \tfrac{\nu_1^++\nu_2^+}{\nu_1^-+\nu_2^-}[n^-,\dot{\Delta}_j]\dive u^-.
		\end{aligned}
		\right.
	\end{equation}
	Taking the $L^2$ inner product of  $(\ref{equation-for-v-local})_{1}$ and  $(\ref{equation-for-v-local})_{2}$  with
	$$
	\tfrac{\nu_1^++\nu_2^+}{\nu_1^-+\nu_2^-}\tfrac{M_1}{(1+n^+)\rho^-}\dot{\Delta}_j n^+\quad \text{and}\quad 
\tfrac{\nu_1^-+\nu_2^-}{\nu_1^++\nu_2^+}\tfrac{M_2}{(1+n^-)\rho^+}\dot{\Delta}_j n^- ,
	$$
	respectively, we have
	\begin{align}\label{inequforv}
		\frac{1}{2}\frac{d}{dt}&\int_{\mathbb{R}^d} 
\tfrac{\nu_1^++\nu_2^+}{\nu_1^-+\nu_2^-}\tfrac{M_1}{(1+n^+)\rho^-}|\dot{\Delta}_jn^+|^2+\tfrac{\nu_1^-+\nu_2^-}{\nu_1^++\nu_2^+}\tfrac{M_2}{(1+n^-)\rho^+}|\dot{\Delta}_jn^-|^2dx\nonumber\\
		&\quad+\int_{\mathbb{R}^d} \Big( \tfrac{M_1}{\rho^-}\dive \dot{\Delta}_ju^-\dot{\Delta}_jn^++\tfrac{M_2}{\rho^+}\dive \dot{\Delta}_ju^+\dot{\Delta}_jn^- \Big) dx\nonumber\\
		&\lesssim \Big\|\partial_t\Big(\tfrac{M_1}{(1+n^+)\rho^-}\Big)\Big\|_{L^{\infty}} \|\dot{\Delta}_j n^+\|_{L^2}^2 +\Big\|\partial_t \Big(\tfrac{M_2}{(1+n^-)\rho^+} 
\Big)\Big\|_{L^{\infty}}\|\dot{\Delta}_j n^-\|_{L^2}^2\nonumber\\
		&\quad+\Big\|\dive \Big( \tfrac{M_1u^+}{(1+n^+)\rho^-}\Big)\Big\|_{L^{\infty}}\|\dot{\Delta}_j n^+\|_{L^2}^2 +\Big\|\dive 
\Big(\tfrac{M_2u^-}{(1+n^-)\rho^+}\Big)\Big\|_{L^{\infty}}\|\dot{\Delta}_j n^-\|_{L^2}^2 \nonumber\\
		&\quad+\Big\|\tfrac{M_1}{(1+n^+)\rho^-}\Big\|_{L^{\infty}}\|\dive \dot{\Delta}_j v\|_{L^2}\|\dot{\Delta}_j n^+\|_{L^2}+\Big\| \tfrac{M_2}{(1+n^-)\rho^+}\Big\|_{L^{\infty}}\|\dive 
\dot{\Delta}_j v\|_{L^2}\|\dot{\Delta}_j n^-\|_{L^2}\nonumber\\
		&\quad+\Big\|  \tfrac{M_1}{(1+n^+)\rho^-}\Big\|_{L^{\infty}}\|T_j^5\|_{L^2}\|\dot{\Delta}_jn^+\|_{L^2} 
+\Big\|\tfrac{M_2}{(1+n^-)\rho^+}\Big\|_{L^{\infty}}\|T_j^6\|_{L^2}\|\dot{\Delta}_j n^-\|_{L^2}.
	\end{align}
	Note that $M_1, M_2$, $\rho^\pm$ and $\theta$ depend only on $n^\pm$ due to \eqref{given}, and while $n^-$ can be represented by $n^+$ and $n^-$. By the condition \eqref{priori1} with a suitably constant $\var_0$, one knows that $\|(n^+,n^-)\|_{L^{\infty}}$ is sufficiently small. Consequently, there exist two positive constants $c, C$ such that
	\begin{align}
		0<c\leq \tfrac{M_1}{(1+n^+)\rho^+}, \tfrac{M_2}{(1+n^-)\rho^-}\leq C<\infty.\label{positive}
	\end{align}
	By combining \eqref{inequ-foru+u-n+n-} with \eqref{inequforv} and \eqref{positive}, integrating   over $[0,t]$ and then taking the square root of both sides, we have
	\begin{align}\label{ineqfor-n+andu+1}
		&\|\dot{\Delta}_j(n^+,n^-,u^+,u^-)\|_{L_t^{\infty}(L^2)}+\|\nabla\dot{\Delta}_j (u^+,u^-)\|_{L_t^{2}(L^2)}\nonumber\\		
&\lesssim\|\dot{\Delta}_j(n_0^+,n_0^-,u_0^+,u_0^-)\|_{L^2}+\Big\|\partial_t\Big(\tfrac{M_1}{(1+n^+)\rho^-},\tfrac{M_2}{(1+n^-)\rho^+}\Big)\Big\|^{\frac12}_{L^1_t(L^{\infty})}\|\dot{\Delta}_j(n^+,n^-)\|_{L^{\infty}(L^2)}\nonumber\\
		&+\Big\|\dive \Big( \tfrac{M_1u^+}{(1+n^+)\rho^-},\tfrac{M_2u^-}{(1+n^-)\rho^+}\Big)\Big\|^{\frac12}_{L^1_t(L^{\infty})}\|\dot{\Delta}_j (n^+,n^-)\|_{L^{\infty}_{t}(L^2)} 
\nonumber\\
&+\Big\|\nabla\Big(\tfrac{M_1}{\rho^+},\tfrac{M_2}{\rho^-}\Big)\Big\|^{\frac12}_{L_t^{\infty}(L^{\infty})}\|\dot{\Delta}_j (n^+,n^-)\|^{\frac12}_{L_t^{\infty}(L^2)}\|\dot{\Delta}_j(u^+,u^-)\|^{\frac12}_{L_t^{1}(L^{2})}\nonumber\\
		&+\|\dot{\Delta}_j(H^+,H^-)\|^{\frac12}_{L_t^{2}(L^2)}\|\dot{\Delta}_j(u^+,u^-)\|^{\frac12}_{L_t^{2}(L^2)}+\|\dive \dot{\Delta}_j 
v\|^{\frac12}_{L_t^{1}(L^2)}\|\dot{\Delta}_j(n^+,n^-)\|^{\frac12}_{L_t^{\infty}(L^2)}\nonumber\\
		&+ \|(T_j^1,T_j^2)\|^{\frac12}_{L_t^{1}(L^{2})}\|\dot{\Delta}_j(n^+,n^-)\|^{\frac{1}{2}}_{L_t^{\infty}(L^2)}+\|(T_j^3,T_j^4)\|^{\frac12}_{L_t^{\infty}(L^2)}\|\dot{\Delta}_j(u^+,u^-)\|^{\frac{1}{2}}_{L_t^{1}(L^2)}\nonumber\\
		&+\|(T_j^5,T_j^6)\|^{\frac12}_{L_t^{1}(L^{2})}\|\dot{\Delta}_j(n^+,n^-)\|^{\frac{1}{2}}_{L_t^{\infty}(L^2)}.
	\end{align}
	Multiplying \eqref{ineqfor-n+andu+1} by $2^{j(\frac{d}{2}+1)}$,  taking the supremum on $[0,t]$, and then summing over $j\in\mathbb{Z}$ we have
	\begin{align}
		&\|(n^{+},n^{-},u^+,u^-)\|_{\widetilde{L}_t^{\infty}(\dot{B}^{\frac{d}{2}+1}_{2,1})}+\|(u^+,u^-)\|_{\widetilde{L}_t^{2}(\dot{B}^{\frac{d}{2}+2}_{2,1})}\nonumber\\
        &\quad\leq 
\|(n_0^{+},n_0^{-},u_0^{+},u_0^{-})\|_{\dot{B}^{\frac{d}{2}+1}_{2,1}} 
+\|\partial_t(n^{+},n^{-})\|^{\frac12}_{L^1_t(L^{\infty})}\|(n^{+},n^{-})\|_{\widetilde{L}^{\infty}(\dot{B}^{\frac{d}{2}+1}_{2,1})}\nonumber\\
		&\quad\quad+\|\nabla 
(n^{+},n^{-})\|^{\frac12}_{L_t^{\infty}(L^{\infty})}\|(n^{+},n^{-})\|^{\frac12}_{\widetilde{L}_t^{\infty}(\dot{B}^{\frac{d}{2}+1}_{2,1})}\|(u^+,u^-)\|^{\frac12}_{L_t^{1}(\dot{B}^{\frac{d}{2}+1}_{2,1})}\nonumber\\
&\quad\quad+\|(u^+,u^-)\|_{L^1_t(W^{1,\infty})}^{\frac{1}{2}}\|(n^+,n^-)\|_{\widetilde{L}^{\infty}(\dot{B}^{\frac{d}{2}+1}_{2,1})}\nonumber\\
		&\quad\quad+\| (H^+,H^-)\|^{\frac12}_{\widetilde{L}_t^{2}(\dot{B}^{\frac{d}{2}}_{2,1})}\|(u^+,u^-)\|^{\frac12}_{\widetilde{L}_t^{2}(\dot{B}^{\frac{d}{2}+2}_{2,1})}+\|\dive  v\|^{\frac12}_{L^1_t 
(\dot{B}^{\frac{d}{2}+1}_{2,1})}\| n^{\pm}\|^{\frac12}_{\widetilde{L}_t^{\infty}(\dot{B}^{\frac{d}{2}+1}_{2,1})}\nonumber\\
		&\quad\quad+\sum_{j\in\mathbb{Z}}\left(2^{j(\frac{d}{2}+1)}\|(T_j^1,T_j^2,T_j^5,T_j^6)\|_{L_t^{1}(L^{2})}\right)^{\frac12}\| (n^{+},n^{-})  
\|^{\frac{1}{2}}_{\widetilde{L}_t^{\infty}(\dot{B}^{\frac{d}{2}+1}_{2,1})}\nonumber\\
		&\quad\quad+ \sum_{j\in\mathbb{Z}}\left(2^{j(\frac{d}{2}+1)}\|(T_j^3,T_j^4)\|_{L_t^{1}(L^{2})}\right)^{\frac12}\|(u^+,u^-)\|^{\frac{1}{2}}_{L_t^{1}(\dot{B}^{\frac{d}{2}+1}_{2,1})}.\label{n10}
	\end{align}
	First, as $n^-=n^-(n^+,\theta)$ and $\theta=\theta(n^+,n^-)$, one has
    \begin{align}
    &\|n^-_0\|_{\dot{B}^{\frac{d}{2}+1}_{2,1}}\lesssim \|(n_0^+,\theta)\|_{\dot{B}^{\frac{d}{2}+1}_{2,1}}\lesssim \mathcal{X}_0,\label{n11}\\
   & \|n^-\|_{\widetilde{L}^{\infty}_{t}(\dot{B}^{\frac{d}{2}+1}_{2,1})}\lesssim \|(n^+,\theta)\|_{\widetilde{L}^{\infty}_{t}(\dot{B}^{\frac{d}{2}+1}_{2,1})}\lesssim \mathcal{X}(t),\label{n12}\\
   &\|\theta\|_{\widetilde{L}^{\infty}_{t}(\dot{B}^{\frac{d}{2}+1}_{2,1})}\lesssim \|(R^+,R^-)\|_{\widetilde{L}^{\infty}_{t}(\dot{B}^{\frac{d}{2}+1}_{2,1})}.\label{n13}
    \end{align}
    Due to  $\eqref{equation-for-new-term}_1$,  $\eqref{equation-for-new-term}_2$ and \eqref{n12}, we obtain
	\begin{align}
		\|\partial_tn^{\pm}\|_{L^1_t(L^{\infty})}&\lesssim\|\dive u^{\pm}\|_{L^1_t(L^{\infty})}+\|u^{\pm}\|_{L^1_t(L^{\infty})}\|\nabla n^{\pm}\|_{L^{\infty}_t(L^{\infty})}+\|\nabla 
u^{\pm}\|_{L^1_t(L^{\infty})}\|n^{\pm}\|_{L^{\infty}_t(L^{\infty})}\nonumber\\
		&\lesssim \| u^{\pm}\|_{L^1_t(\dot{B}^{\frac{d}{2}+1}_{2,1})}+\|u^{\pm}\|_{L^1_t( \dot{B}^{\frac{d}{2}+1}_{2,1})}\|  n^{\pm}\|_{\widetilde{L}^{\infty}_t( \dot{B}^{\frac{d}{2}+1}_{2,1})}+\|  
u^{\pm}\|_{L^1_t (\dot{B}^{\frac{d}{2}+1}_{2,1})}\|n^{\pm}\|_{\widetilde{L}^{\infty}_t( \dot{B}^{\frac{d}{2}+1}_{2,1})}\nonumber\\
		&\lesssim \mathcal{X}_0+ (1+\mathcal{X}(t)^2)\mathcal{X}(t)^2.
	\end{align}
	With regard to the nonlinear terms for $H^\pm$, we deduce that
	\begin{align}
		\| H^\pm\|_{\widetilde{L}_t^{2}(\dot{B}^{\frac{d}{2}}_{2,1})}&\lesssim\|\theta\|_{\widetilde{L}_t^{\infty}(\dot{B}^{\frac{d}{2} 
}_{2,1})}\|u^{\pm}\|_{\widetilde{L}_t^{2}(\dot{B}^{\frac{d}{2}+2}_{2,1})} 
+\|u^{\pm}\|_{\widetilde{L}_t^{2}(\dot{B}^{\frac{d}{2}}_{2,1})}\|n^{\pm}\|_{\widetilde{L}_t^{\infty}(\dot{B}^{\frac{d}{2}+1}_{2,1})}\lesssim \mathcal{X}(t)^2.
	\end{align}
	To bound the terms $T_j^{k}$ with $k=1,2,\cdots,6 $, we first consider $T_j^1$.
	Using \eqref{n12} and the commutator estimates in Lemma \ref{lemcommutator}, we have that
	\begin{align}\label{commutator1}
		&\sum_{j\in\mathbb{Z}}\left(2^{j(\frac{d}{2}+1)}\|(T_j^1,T_j^2,T_j^5,T_j^6)\|_{L_t^{1}(L^{2})}\right) \nonumber\\
        &\quad \lesssim \|(\theta,u^{+},u^-,\dive v)\|_{L^{1}_t (\dot{B}^{\frac{d}{2}+1}_{2,1})} \|(n^+,\theta,u^{+},u^-)\|_{\widetilde{L}^{\infty}_t 
(\dot{B}^{\frac{d}{2}+1}_{2,1})} \lesssim \mathcal{X}^2(t).
	\end{align}
	Similarly, we have
	\begin{align}
		\sum_{j\in\mathbb{Z}}\left(2^{j(\frac{d}{2}+1)}\|(T_j^3,T_j^4)\|_{L_t^{\infty}(L^2)}\right) \lesssim \|(n^{+},\theta,u^+,u^-)\|_{\widetilde{L}^{\infty}_t (\dot{B}^{\frac{d}{2}+1}_{2,1})}^2   \lesssim \mathcal{X}^2(t).\label{n100}
	\end{align}
	Collecting \eqref{n10}--\eqref{n100} and using Young's inequality, we have \eqref{3.47}.
\end{proof}

\subsection{The estimate of effective unknowns}\label{sub3.5}

In order to finish the proof, we shall establish a higher-order dissipation estimate of the two-phase effective velocity $v$ defined in \eqref{effective}. Note that by (\ref{equation_R_P_u_pm}), $v$ satisfies the 
following elliptic system
\begin{align}
	 \dive v=(-\Delta)^{-1}\dive(- \bar{\rho}^+ \partial_t u^++\bar{\rho}^-\partial_t u^-+\bar{\rho}^+ F_2^+-\bar{\rho}^- F_2^-).\label{v:eq}
\end{align}
Then, 
since the degree of $(-\Delta)^{-1}\dive$ is $-1$, we have 
\begin{align}\label{3.68}
	\|\dive v\|_{L^{1}_t(\dot{B}^{\frac{d}{2}+1}_{2,1})}&\lesssim \|\partial_t (u^{+},u^{-})\|_{L^{1}_t(\dot{B}^{\frac{d}{2}}_{2,1})}+\|(F_2^+,F_2^-)\|_{L^{1}_t(\dot{B}^{\frac{d}{2}}_{2,1})}.
\end{align}
Then, one needs to analyze the regularity of the time derivatives $u_t^{\pm}$. To this end, we introduce an effective viscous flux
$$
w^{\pm}=\beta_2^{\pm}\nabla \theta-\nu_1^{\pm}\Delta u^{\pm}-\nu_2^{\pm}\nabla\dive u^{\pm},
$$
which allows us to write $u_t= w^\pm$ up to some nonlinear terms. Compared with \cite{WYZ-MA}, this allows us to avoid the direct analysis of the initial data for $u_t^{\pm}$ requiring the 
compatibility condition.

\begin{lemma}\label{lemma3}
	Let $T>0$ be any given time, and $(n^+, n^-,u^+,u^-)$ be any strong solution to the Cauchy problem \eqref{equation_R_P_u_pm} for $t\in(0,T)$. Then it holds that
	\begin{equation}\label{3.70}
		\begin{aligned}
			\|\dive v\|_{L^1_t(\dot{B}^{\frac{d}{2}+1}_{2,1})}+\|(w^{+},w^{-},\partial_t u^+,\partial_t u^-)\|_{L_t^{1}(\dot{B}^{\frac{d}{2}}_{2,1})} \lesssim \mathcal{X}_0+ \mathcal{X}(t)^2+\mathcal{X}(t)^4,
		\end{aligned}
	\end{equation}
	where $\mathcal{X}(t)$ is defined in \eqref{Et}.
\end{lemma}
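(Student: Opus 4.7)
The plan is to exploit the effective-unknown decomposition $\partial_t u^\pm = F_2^\pm - w^\pm$, which is just the momentum equation $\eqref{equation_R_P_u_pm}_3$--$\eqref{equation_R_P_u_pm}_4$ rewritten via the definition of $w^\pm$. Thanks to this algebraic identity, the reduction \eqref{3.68} turns the control of $\|\dive v\|_{L^1_t(\dot{B}^{d/2+1}_{2,1})}$ and of $\|\partial_t u^\pm\|_{L^1_t(\dot{B}^{d/2}_{2,1})}$ into two separate tasks: bounding $F_2^\pm$ directly, and bounding $w^\pm$, both in $L^1_t(\dot{B}^{d/2}_{2,1})$. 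The direct bound on $F_2^\pm$ is the easier of the two: each summand in \eqref{1.46} can be estimated term by term with product laws and composition estimates, the only subtlety being the worst nonlinear contributions $g_i^\pm(\theta)\Delta u^\pm$, $g_i^\pm(\theta)\nabla\dive u^\pm$ and $u^\pm\cdot\nabla u^\pm$, which I would handle by a Hölder-in-time split exploiting the $\widetilde{L}^2_t(\dot{B}^{d/2+2}_{2,1})$ regularity of $u^\pm$ built into $\mathcal{X}(t)$ together with the interpolation $\|\theta\|_{L^2_t(\dot{B}^{d/2}_{2,1})} \lesssim \|\theta\|_{\widetilde{L}^\infty_t(\dot{B}^{d/2}_{2,1})}^{1/2}\|\theta\|_{L^1_t(\dot{B}^{d/2}_{2,1})}^{1/2}$. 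The resulting bound is $\|F_2^\pm\|_{L^1_t(\dot{B}^{d/2}_{2,1})} \lesssim \mathcal{X}(t)^2(1+\mathcal{X}(t)^2)$.

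For $w^\pm$, I would derive its Lamé evolution equation. Differentiating $w^\pm$ in time, substituting $\partial_t\theta$ from $\eqref{equation_R_P_u_pm}_2$ and $\partial_t u^\pm = F_2^\pm - w^\pm$ into $-\nu_1^\pm\Delta\partial_t u^\pm - \nu_2^\pm\nabla\dive\partial_t u^\pm$, and moving the resulting $\Delta w^\pm$ and $\nabla\dive w^\pm$ contributions to the left yields
\[
\partial_t w^\pm - \nu_1^\pm \Delta w^\pm - \nu_2^\pm \nabla\dive w^\pm = \beta_2^\pm \nabla F_1 - \beta_2^\pm \beta_1^+ \nabla\dive u^+ - \beta_2^\pm \beta_1^- \nabla\dive u^- - \nu_1^\pm \Delta F_2^\pm - \nu_2^\pm \nabla\dive F_2^\pm.
\]
Standard maximal regularity for the Lamé semigroup (gain of two derivatives from $\dot{B}^{d/2-2}_{2,1}$ to $L^1_t(\dot{B}^{d/2}_{2,1})$) then gives
\[
\|w^\pm\|_{L^1_t(\dot{B}^{d/2}_{2,1})} \lesssim \|w_0^\pm\|_{\dot{B}^{d/2-2}_{2,1}} + \|\text{RHS}\|_{L^1_t(\dot{B}^{d/2-2}_{2,1})}.
\]
The initial bound reduces to $\|w_0^\pm\|_{\dot{B}^{d/2-2}_{2,1}} \lesssim \|\theta_0\|_{\dot{B}^{d/2-1}_{2,1}} + \|u_0^\pm\|_{\dot{B}^{d/2}_{2,1}} \lesssim \mathcal{X}_0$ via interpolation between the norms in \eqref{E0}. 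The linear pieces $\nabla\dive u^\pm$ cost at most $\mathcal{X}(t)$; $\nabla F_1$ reduces to the $L^1_t(\dot{B}^{d/2-1}_{2,1})$ norm of $F_1$, controlled by interpolating the low- and high-frequency bounds obtained in Lemmas \ref{lemma1}--\ref{lemma2}; and the parabolic pieces $\Delta F_2^\pm$, $\nabla\dive F_2^\pm$ collapse to the $L^1_t(\dot{B}^{d/2}_{2,1})$ control of $F_2^\pm$ already treated above. Combining these estimates with \eqref{3.68} and $\partial_t u^\pm = F_2^\pm - w^\pm$ proves \eqref{3.70}.

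The main obstacle is that the source of the $w^\pm$-equation contains \emph{second-order} derivatives of $F_2^\pm$, which forces one to estimate $F_2^\pm$ at the regularity level $\dot{B}^{d/2}_{2,1}$ rather than at the $\dot{B}^{d/2-2}_{2,1}$ and $\dot{B}^{d/2-1}_{2,1}$ levels used in Lemmas \ref{lemma1}--\ref{lemma2}. This is precisely why the $\widetilde{L}^2_t(\dot{B}^{d/2+2}_{2,1})$ regularity of $u^\pm$ has been incorporated into $\mathcal{X}(t)$: it furnishes the minimal additional smoothness, only square-integrable in time, needed to absorb the worst nonlinear terms in $F_2^\pm$ via the Hölder-in-time trick. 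Beyond this device, the argument is a mechanical assembly of product laws, composition estimates, and the standard Lamé maximal regularity.
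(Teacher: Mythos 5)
Your proposal follows the paper's proof essentially step for step: the same effective flux $w^{\pm}$ with $\partial_t u^{\pm}=F_2^{\pm}-w^{\pm}$, the same Lam\'e system for $w^{\pm}$ obtained by substituting $\partial_t\theta$ from $\eqref{equation_R_P_u_pm}_2$, the same application of the maximal regularity estimate of Lemma \ref{lame-system-estimate} with the initial bound $\|w_0^{\pm}\|_{\dot{B}^{d/2-2}_{2,1}}\lesssim\|\theta_0\|_{\dot{B}^{d/2-1}_{2,1}}+\|u_0^{\pm}\|_{\dot{B}^{d/2}_{2,1}}$, the same H\"older-in-time/interpolation device (via the $\widetilde{L}^2_t(\dot{B}^{d/2+2}_{2,1})$ component of $\mathcal{X}$) for the worst terms of $F_2^{\pm}$ at the $\dot{B}^{d/2}_{2,1}$ level, and the same reduction \eqref{3.68} for $\dive v$.

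One step as written does not deliver the stated right-hand side. You bound the linear source $\beta_2^{\pm}\nabla(\beta_1^{+}\dive u^{+}+\beta_1^{-}\dive u^{-})$ by ``at most $\mathcal{X}(t)$''. That would leave a term \emph{linear} in $\mathcal{X}(t)$ in \eqref{3.70}, which is absent from the lemma and would break the closure of the a priori estimate in Proposition \ref{propapriori} (a right-hand side containing $C_0\mathcal{X}(t)$ cannot be absorbed when $C_0\geq1$). The paper instead substitutes the already-proved conclusions of Lemmas \ref{lemma1}--\ref{lemma2}, namely $\|u^{\pm}\|_{L^1_t(\dot{B}^{d/2}_{2,1})}\lesssim\mathcal{X}_0+(1+\mathcal{X}(t)^2)\mathcal{X}(t)^2$, so that this ``linear'' contribution is in fact $\mathcal{X}_0$ plus superlinear terms. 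You invoke exactly this mechanism for $\nabla F_1$ in the adjacent clause; the same substitution must be made for $\nabla\dive u^{\pm}$. With that correction the argument coincides with the paper's.
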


\begin{proof}
	Taking the time derivative of system $ (\ref{equation_R_P_u_pm})_3$ and $ (\ref{equation_R_P_u_pm})_4$,  we get the following Lam\'e system for $w^{\pm}$:
	\begin{equation} \label{equationforw}
		\begin{array}{ll}
			&\partial_t w^\pm - \nu_1^{\pm}\Delta w^\pm-\nu_2^\pm\nabla\dive w^\pm\\
            &\quad =\beta_2^\pm \nabla(-\beta_1^+\dive u^+-\beta_1^- \dive u^-+F_1)  -\nu_1^{\pm}\Delta F_2^\pm-\nu_2^{\pm}\nabla\dive F_2^\pm .
		\end{array}
	\end{equation}
	where $F_1$ and $F_2^\pm$ are given in \eqref{1.46}.  
Then, by making use of the maximal regularity estimate (see Lemma \ref{lame-system-estimate}) for \eqref{equationforw},   we have
	\begin{equation}
		\begin{aligned}\label{inequ-w+-}
		&\|w^{\pm}\| _{\widetilde{L}_t^{\infty}(\dot{B}^{\frac{d}{2}-2}_{2,1})}+	\|w^{\pm}\| _{L_t^{1}(\dot{B}^{\frac{d}{2}}_{2,1})}\\
        &\quad\lesssim \|\theta_0\| _{\dot{B}^{\frac{d}{2}-1}_{2,1}}+\|(u_0^{+},u_0^-)\| _{\dot{B}^{\frac{d}{2}}_{2,1}}+\|u^\pm\|_{L^1_t(\dot{B}^{\frac{d}{2}}_{2,1})}+\|F_1\| _{ L_t^{1}(\dot{B}^{\frac{d}{2}-1}_{2,1})}
			+\|F_2^\pm\| _{ L_t^{1}(\dot{B}^{\frac{d}{2}}_{2,1})}.
		\end{aligned}
	\end{equation}
With the estimates \eqref{low}, \eqref{F1ell},  \eqref{high1} and \eqref{F1high} at hand, one gets
\begin{align}
\|u^\pm\|_{L^1_t(\dot{B}^{\frac{d}{2}}_{2,1})}+\|F_1\| _{ L_t^{1}(\dot{B}^{\frac{d}{2}-1}_{2,1})}\lesssim \mathcal{E}_0+(1+\mathcal{E}(t)^2)\mathcal{E}(t)^2.\label{dggh}
\end{align}
	Next, we analyze the last nonlinear term on the right-hand side of \eqref{inequ-w+-}.
	Using \eqref{uv2} and \eqref{F1} yields
	\begin{align}\label{11p}
		&\|g_1^{\pm}(\theta)\Delta u^{\pm}\| _{ L_t^{1}(\dot{B}^{\frac{d}{2}}_{2,1})}+\|g_2^{\pm}(\theta)\nabla\dive u^{\pm}\| _{ L_t^{1}(\dot{B}^{\frac{d}{2}}_{2,1})}\nonumber\\
		&\quad\lesssim \|g_1^{\pm}(\theta)\|_{ L_t^{2}(\dot{B}^{\frac{d}{2}}_{2,1})}\|u^{\pm}\|_{ L_t^{2}(\dot{B}^{\frac{d}{2}+2}_{2,1})}+\|g_2^{\pm}(\theta)\|_{ 
L_t^{2}(\dot{B}^{\frac{d}{2}}_{2,1})}\|u^{\pm}\|_{ L_t^{2}(\dot{B}^{\frac{d}{2}+2}_{2,1})}\nonumber\\
		&\quad\lesssim \| \theta\|_{ L_t^{2}(\dot{B}^{\frac{d}{2}}_{2,1})}\|u^{+}\|_{ L_t^{2}(\dot{B}^{\frac{d}{2}+2}_{2,1})}\\
        &\quad \lesssim \| \theta\|_{ \widetilde{L}_t^{\infty}(\dot{B}^{\frac{d}{2}}_{2,1})}^{\frac{1}{2}} \| \theta\|_{ L_t^{1}(\dot{B}^{\frac{d}{2}}_{2,1})}^{\frac{1}{2}} \|u^{+}\|_{ L_t^{2}(\dot{B}^{\frac{d}{2}+2}_{2,1})} \lesssim \mathcal{X}(t)^2.
	\end{align}
	And similarly, we obtain
	\begin{align*}
		\|u^\pm\cdot\nabla &u^\pm\| _{ L_t^{1}(\dot{B}^{\frac{d}{2}}_{2,1})}+\|g_4^{\pm}(\theta)\nabla \theta\| _{L^1_t (\dot{B}_{2,1}^{\frac{d}{2} })}\nonumber\\
        &\lesssim \|u^\pm\|_{ \widetilde{L}_t^{\infty}(\dot{B}^{\frac{d}{2}}_{2,1})}\|u^\pm\|_{ 
        L_t^{1}(\dot{B}^{\frac{d}{2}+1}_{2,1})}+\|\nabla\theta\|_{\widetilde{L}_t^{\infty}(\dot{B}^{\frac{d}{2}}_{2,1})}  \|\theta\|_{L^1_t (\dot{B}_{2,1}^{\frac{d}{2} })}\lesssim 
        \mathcal{X}(t)^2.
	\end{align*}
	It is sufficient to deal with $ \frac{\nabla u^\pm \cdot \nabla \alpha^\pm}{R^\pm}$ since the other terms are essentially the same. Recalling \eqref{decom++}, we get
	\begin{align*}
		&\Big\|\tfrac{\nabla u^\pm  \cdot \nabla \alpha^\pm }{R^\pm }\Big\| _{ L_t^{1}(\dot{B}^{\frac{d}{2}}_{2,1})}\nonumber\\
		\lesssim& \Big\|\nabla u^\pm \cdot\nabla h^\pm(n^+,\theta)\cdot \tfrac{n^\pm}{n^\pm+1}\Big\| _{ L_t^{1}(\dot{B}^{\frac{d}{2}}_{2,1})}+\|\nabla u^\pm \nabla h^\pm(n^+,\theta) \| _{ 
L_t^{1}(\dot{B}^{\frac{d}{2}}_{2,1})}\nonumber\\
		\lesssim& (1+\|n^\pm \|_{ \widetilde{L}_t^{\infty}(\dot{B}^{\frac{d}{2}}_{2,1})})\|n^\pm \|_{ \widetilde{L}_t^{\infty}(\dot{B}^{\frac{d}{2}+1}_{2,1})}\|u^\pm \|_{ 
L_t^{1}(\dot{B}^{\frac{d}{2}+1}_{2,1})}\nonumber\\
		\lesssim  &\mathcal{X}(t)^2+\mathcal{X}(t)^3.
	\end{align*}
	Consequently, we have
	\begin{align}\label{inequforH1}
		\|F_2^\pm\| _{ L_t^{1}(\dot{B}^{\frac{d}{2}}_{2,1})}\lesssim  \mathcal{X}(t)^2+\mathcal{X}(t)^3.
	\end{align}
By \eqref{inequ-w+-}, \eqref{inequforH1} and \eqref{dggh}, it holds that
	\begin{align}\label{3.81}
		\|w^{\pm}\|_{\widetilde{L}_t^{\infty}(\dot{B}^{\frac{d}{2}-2}_{2,1})}+\|w^{\pm}\|_{L_t^{1}(\dot{B}^{\frac{d}{2}}_{2,1})} \lesssim \mathcal{X}_0+ 
\mathcal{X}(t)^2+\mathcal{X}(t)^4.
	\end{align}
	
	Finally, we recover the desired estimate of $\partial_t u^{\pm}$. Since
	\begin{align*}
		\partial_t u^\pm+w^\pm=F_2^\pm,
        \end{align*}
	we conclude from \eqref{inequforH1} and \eqref{3.81} that
	\begin{align}
		\|\partial_t u^\pm\|_{ L_t^{1}(\dot{B}^{\frac{d}{2}}_{2,1})}&\lesssim \|w^\pm\|_{ L_t^{1}(\dot{B}^{\frac{d}{2}}_{2,1})}+\|F_2^\pm\|_{ L_t^{1}(\dot{B}^{\frac{d}{2}}_{2,1})}\lesssim \mathcal{X}_0+ \mathcal{X}(t)^2+\mathcal{X}(t)^4.\label{3.84}
	\end{align}
	Then, we obtain \eqref{3.70} holds by combining \eqref{3.68}, \eqref{inequforH1}, \eqref{3.81} and \eqref{3.84}. This finishes the proof of Lemma \ref{lemma3}.
\end{proof}

\subsection{Global existence}
In this subsection, we construct a local Friedrichs approximation (cf. \cite{bahouri1}) and  extend the local approximate sequence to a global one by the uniform-in-time a priori estimates  
established in Subsections \ref{sub1}--\ref{sub2}. Then we show the convergence of the approximate sequence to the expected global solution to the Cauchy problem for the system  \eqref{system1}--\eqref{system5}. Define the Friedrichs projector $\dot{\mathbb{E}}_q$ by $\dot{\mathbb{E}}_q f:=\mathcal{F}^{-1}(\textbf{1}_{\mathcal{C}_q}\mathcal{F}f)$ for $ f\in L^2_q$, where $L^2_q$ is the  set of $L^2(\mathbb{R}^d)$ functions spectrally supported in the annulus $\mathcal{C}_q:=\{\xi\in\mathbb{R}^d|\frac{1}{q}\leq |\xi|\leq q\}$ endowed with the standard $L^2$ 
topology, and $\textbf{1}_{\mathcal{C}_q}$ is the characteristic function on $\mathcal{C}_q$.

We now consider the following approximate problems ($q\geq1$) for  \eqref{equation_R_P_u_pm}:
\begin{equation}\label{app1}
	\left\{
	\begin{array}{ll}
		\partial_t n^{+}+ \dot{\mathbb{E}}_q(u^+\cdot \nabla n^{+})+\dot{\mathbb{E}}_q ((1+n^{+})\dive u^{+})=0,\\
		\partial_t n^{-} +\dot{\mathbb{E}}_q(u^-\cdot \nabla n^{-})+\dot{\mathbb{E}}_q((1+n^{-})\dive u^{-})=0,\\
		\partial_t u^{+}+ \dot{\mathbb{E}}_q(u^{+}\cdot\nabla u^{+})+\dot{\mathbb{E}}_q(\tfrac{M_1}{\rho^+}\nabla n^{+}+\tfrac{M_2}{\rho^+}\nabla n^-) - \nu_1^+\Delta u^{+}-\nu_2^+ 
\nabla\dive u^{+}=\dot{\mathbb{E}}_qH^+,\\
		\partial_t u^{-} +\dot{\mathbb{E}}_q( u^{-}\cdot\nabla u^{-})+\dot{\mathbb{E}}_q(\tfrac{M_1}{\rho^-}\nabla n^++\tfrac{M_2}{\rho^-}\nabla n^-)- \nu_1^-\Delta 
u^{-}-\nu_2^-\nabla\dive u^{-}=\dot{\mathbb{E}}_q H^-,\\
		( n^{\pm}, u^{\pm})|_{t=0}=(\dot{\mathbb{E}}_q n_0^\pm, \dot{\mathbb{E}}_qu_0^\pm).
	\end{array}
	\right.
\end{equation}
Introducing the dissipative variable $\theta=P(n^+,n^-)-\bar{P}$, we can rewrite \eqref{app1} as
\begin{equation}\label{approximate-system}
	\left\{
	\begin{array}{ll}
		\partial_t n^{+} +\dot{\mathbb{E}}_q (u^{+} \cdot\nabla n^{+} ) +\dot{\mathbb{E}}_q \big((1+n^{+} )\dive u^{+}\big) =0,\\
		\partial_t \theta +\beta_1^+\dive \dot{\mathbb{E}}_q u^{+}+\beta_1^-\dive \dot{\mathbb{E}}_q u^{-}= F^q_1,\\
		\partial_t u^{+} +\beta_2^+\nabla\dot{\mathbb{E}}_q \theta- \nu_1^{+}\Delta \dot{\mathbb{E}}_q u^{+}-\nu_2^+\nabla\dive \dot{\mathbb{E}}_q u^{+}=\dot{\mathbb{E}}_q F_2^+,\\
		\partial_t u^{-} +\beta_2^-\nabla\dot{\mathbb{E}}_q \theta- \nu_1^{-}\Delta \dot{\mathbb{E}}_q u^{-}-\nu_2^-\nabla\dive  \dot{\mathbb{E}}_q u^{-}=\dot{\mathbb{E}}_q F_2^-,\\
		(n^{+},\theta,u^{+},u^{-})(x,0)=(\dot{\mathbb{E}}_q n^{+}_0,\dot{\mathbb{E}}_q\theta_0,\dot{\mathbb{E}}_q u^{+}_0,\dot{\mathbb{E}}_q u^{-}_0)(x).\ &
	\end{array}
	\right.
\end{equation}
where the nonlinear terms $\mathcal{H}^\pm$ and $\mathcal{F}^+$ are given by \eqref{1.43} and \eqref{1.46}, respectively, and
\begin{equation}
\begin{aligned}
F^q_1=-\mathcal{C} \rho^- \dot{\mathbb{E}}_q  \dive( (1+n^+) u^+)+\beta_1^+\dot{\mathbb{E}}_q  \dive u^+ -\mathcal{C} \rho^+ \dot{\mathbb{E}}_q \dive ( (1+n^-) u^-)+\beta_1^-\dot{\mathbb{E}}_q  \dive u^-.
\end{aligned}
\end{equation}
It is classical to show that $(\dot{\mathbb{E}}_q n^{+}_0,\dot{\mathbb{E}}_q\theta_0,\dot{\mathbb{E}}_q u^{+}_0,\dot{\mathbb{E}}_q u^{-}_0)$ satisfies \eqref{initialenergy} uniformly with 
respect to $q \geq  1$ and converges to  $( n^{+}_0, \theta_0,  u^{+}_0,  u^{-}_0)$ strongly in the sense \eqref{initialenergy}.  Since all the Sobolev norms are equivalent in 
\eqref{approximate-system} due to the Bernstein inequality, we can check that \eqref{app1} is a system of ordinary differential equations  in 
$L^2_q\times L^2_q\times L^2_q\times L^2_q $ and  locally Lipschitz with respect to the variable $(n^+,\theta, u^+, u^-)$ for every $q\geq 1$.  By virtue of the Cauchy-Lipschitz theorem 
(see \cite[Theorem 3.2]{bahouri1}),  there exists a maximal time $T^*_q>0$ such that the problem \eqref{app1} admits a unique solution $(n^{+,q},n^{-,q}, u^{+,q}, 
u^{-,q})\in C([0,T^*_q);L^2_q)$. Then, $(n^{+,q},\theta^{q}, u^{+,q}, 
u^{-,q})$ with $\theta^{q}=P(n^{+,q},n^{-,q})-\bar{P}$ solves \eqref{approximate-system} on $[0,T^*_q)\times \mathbb{R}^d$.

Define the maximal time
\begin{align}
	T_q:=\sup\{t\geq0~:~& \mathcal{X}^q(t)\leq 2C_0\mathcal{X}_{0}  \},
\end{align}
where $\mathcal{X}^q(t)$ is the exact same as \eqref{Et} but for  $(n^{\pm,q},\theta^q, u^{+,q}, u^{-,q})$, $\mathcal{X}_0$ is defined in \eqref{E0} independent of $q$, and $C_0$ is given by Proposition \ref{propapriori}. It is clear that $0<T_q\leq T^*_q.$

We claim $T_q= T^*_q$. Otherwise, we assume that $T_q< T^*_q$ holds. Since $(n^{+,q},n^{-,q}, u^{+,q}, u^{-,q})=\dot{\mathbb{E}}_q (n^{+,q},n^{-,q}, u^{+,q}, u^{-,q})$, the orthogonal
projector $\dot{\mathbb{E}}_q$ has no effect on the energy estimates for \eqref{app1}--\eqref{approximate-system} carried out in Lemmas \ref{lemma1}--\ref{lemma3}. Due to $\|(n^{+,q},\theta^q)\|_{L^{\infty}_t(L^{\infty})}\lesssim \mathcal{X}^q(t)$, one can let $\mathcal{X}_0\leq \var_0^*$ such that the condition \eqref{priori1} holds. Then, Proposition \ref{propapriori} ensures that
\begin{align}\label{inequ-for-E1}
	\mathcal{X}^q(t)\leq C_0\mathcal{X}_0 +C_0\left(1+\mathcal{X}^q(t)^2\right)\mathcal{X}^q(t)^2.
\end{align}
By \eqref{inequ-for-E1}, we take
$$\mathcal{X}_0\leq \varepsilon_0:=\Big\{\var_0^*,1,\frac{1}{6C_0}\Big\}
$$
to derive
\begin{align}\label{inequ-for-E2}
	\mathcal{X}^q(t)\leq\frac{3}{2} C_0\mathcal{X}_0, \quad 0<t<T_q.
\end{align}
By \eqref{inequ-for-E2} and the time continuity property, $T_q$ is not the maximal time such that $\mathcal{X}(t)\leq C_0\mathcal{X}_{0}$ holds.  This contradicts the definition of $T_q$.

If $T^*_q<\infty$, by \eqref{inequ-for-E2} and $T_q= T^*_q$, we can take $(n^{+,q},\theta^q, u^{+,q}, u^{-,q})(t)$ for $t$ sufficiently close to $T^*_q$ as the new initial data and
obtain the existence from $t$ to some $t+\eta^*>T^*_q$ with a sufficiently small constant $\eta^*>0$ by the Cauchy–Lipschitz theorem, which contradicts the definition of $T^*_q$. Therefore, we have $T^*_q=\infty$, and $\{(n^{+,q},\theta^q, u^{+,q}, u^{-,q})\}_{q\ge1}$  is a global-in-time approximate sequence.

Then, utilizing the uniform bound $\mathcal{X}^q(t)\leq C_0\mathcal{X}_{0} $ following a standard compactness argument (see \cite[Pages 458-459]{bahouri1}), one can show that as $q\rightarrow \infty$, $(n^{+,q},\theta^q, u^{+,q}, u^{-,q})$ converges strongly to a limit $(n^+,n^-,u^+,u^-)$ which is a global-in-time solution to the Cauchy problem 
\eqref{equation_R_P_u_pm} satisfying $\mathcal{X}(t)\lesssim \mathcal{X}_0$. Let $R^\pm:=n^\pm+1$. Due to \eqref{given} and the implicit function theorem, we can obtain the functions
$\alpha^\pm=\alpha^\pm(R^+,R^-)$, $\rho^\pm=\rho^\pm(R^+,R^-)$ and $P=P(R^+,R^-)$. It is not difficult to check that $(\alpha^\pm,\rho^\pm,u^\pm)$ is the desired global solution to the original Cauchy problem  \eqref{system1}--\eqref{system5} and fulfills the properties \eqref{r1} and \eqref{e1}. 
 To complete the proof of Theorem \ref{Thm1.1}, we finally prove the uniqueness issue below.

\subsection{Uniqueness}
 It suffices to consider the reformulated system \eqref{equation-for-new-term}. For any given time $T>0$, let $(n_1^+, n_1^-,u_1^+,u_1^-)$ and  $(n_2^+, n_2^-,u_2^+,u_2^-)$  be two solutions 
 to \eqref{equation-for-new-term} on $[0,T]\times\mathbb{R}^d$ subject to the same initial data $(n_0^+, n_0^-,u_0^+,u_0^-)$ and satisfying the regularities given by Theorem \ref{Thm1.1}.
The difference
$$
(\widetilde{n}^+,\widetilde{n}^-,\widetilde{u}^+,\widetilde{u}^-)=(n_1^+-n_2^+, n_1^--n_2^-,u_1^+-u_2^+,u_1^--u_2^-)
$$
solves
\begin{equation}\label{3.84e}
	\left\{
	\begin{array}{ll}
		\partial_t \widetilde{n}^++ u_1^+\cdot \nabla \widetilde{n}^+  +(1+n_1^+)\dive \widetilde{u}^+=\widetilde{S}_1,\\
		\partial_t \widetilde{n}^- +u_1^-\cdot \nabla \widetilde{n}^- +(1+n_1^-)\dive \widetilde{u}^-=\widetilde{S}_2,\\
		\partial_t \widetilde{u}^+ + u_1^+\cdot\nabla \widetilde{u}^++\tfrac{M_1^1}{\rho_1^+}\nabla \widetilde{n}^++\tfrac{M_2^1}{\rho_1^+}\nabla \widetilde{n}^- - \nu_1^+\Delta 
\widetilde{u}^+-\nu_2^+\nabla\dive \widetilde{u}^+=\widetilde{S}_3,\\
		\partial_t \widetilde{u}^- + u_1^-\cdot\nabla \widetilde{u}^-+\tfrac{M_1^1}{\rho_1^-}\nabla \widetilde{n}^++\tfrac{M_2^1}{\rho_1^-}\nabla \widetilde{n}^-- \nu_1^-\Delta 
\widetilde{u}^--\nu_2^-\nabla\dive \widetilde{u}^-=\widetilde{S}_4,
	\end{array}
	\right.
\end{equation}
where we denoted $\rho_1^\pm= \rho^\pm(n_1^+,n_1^-)$, $M_1^1=(\mathcal{C}\rho^-)(n_1^+,n_1^-)$, $M_2^1=(\mathcal{C}\rho^+)(n_1^+,n_1^-)$ and
\begin{equation}
	\left\{
	\begin{aligned}
		\widetilde{S}_1&=-\widetilde{u}^+\cdot\nabla n_2^+-\widetilde{n}^+\dive u_2^+,\\
		\widetilde{S}_2&=-\widetilde{u}^-\cdot\nabla n_2^--\widetilde{n}^-\dive u_2^-,\\
		\widetilde{S}_3&=-\widetilde{u}^+\cdot\nabla u_2^+-\left(\tfrac{M_1^1}{\rho_1^+}-\tfrac{M_1^2}{\rho_2^+}\right)\nabla n^+_2 
-\left(\tfrac{M_2^1}{\rho_1^+}-\tfrac{M_2^2}{\rho_2^+}\right)\nabla n^-_2\\
		&\quad+\left(\tfrac{\mu^+}{\rho_1^+}-\tfrac{\mu^+}{\rho_2^+}\right)\Delta u_2^++\left(\tfrac{\mu^++\lambda^+}{\rho_1^+}-\tfrac{\mu^++\lambda^+}{\rho_2^+}\right)\nabla\dive u_2^+\\
		&\quad+H^+(n_1^+,n_1^-,u_1^+,u_1^-)-H^+(n_2^+,n_2^-,u_2^+,u_2^-),\\
		\widetilde{S}_4&=-\widetilde{u}^-\cdot\nabla u_2^--\left(\tfrac{M_1^1}{\rho_1^-}-\tfrac{M_1^2}{\rho_2^-}\right)\nabla n^-_2 
-\left(\tfrac{M_2^1}{\rho_1^-}-\tfrac{M_2^2}{\rho_2^-}\right)\nabla n^-_2\\
		&\quad+\left(\tfrac{\mu^-}{\rho_1^-}-\tfrac{\mu^-}{\rho_2^-}\right)\Delta u_2^-+\left(\tfrac{\mu^-+\lambda^-}{\rho_1^-}-\tfrac{\mu^-+\lambda^-}{\rho_2^-}\right)\nabla\dive u_2^-\\
		&\quad +H^-(n_1^+,n_1^-,u_1^+,u_1^-)-H^-(n_2^+,n_2^-,u_2^+,u_2^-).
	\end{aligned}
	\right.
\end{equation}
Applying $\dot{\Delta}_j$ to \eqref{3.84} yields
\begin{equation}
	\left\{
	\begin{array}{ll}
		\partial_t \dot{\Delta}_j\widetilde{n}^++ u_1^+\cdot \nabla \dot{\Delta}_j\widetilde{n}^+  +(1+n_1^+)\dive 
\dot{\Delta}_j\widetilde{u}^+=\dot{\Delta}_j\widetilde{S}_1+\widetilde{T}^1_j,\\
		\partial_t \dot{\Delta}_j\widetilde{n}^- +u_1^-\cdot \nabla \dot{\Delta}_j\widetilde{n}^- +(1+n_1^-)\dive 
\dot{\Delta}_j\widetilde{u}^-=\dot{\Delta}_j\widetilde{S}_2+\widetilde{T}_j^2,\\
		\partial_t \dot{\Delta}_j\widetilde{u}^+ + u_1^+\cdot\nabla \dot{\Delta}_j\widetilde{u}^++\tfrac{M_1^1}{\rho_1^+}\nabla \dot{\Delta}_j\widetilde{n}^++\tfrac{M_2^1}{\rho_1^+}\nabla 
\dot{\Delta}_j\widetilde{n}^- \\
		\quad\quad\quad\quad\quad\quad - \nu_1^+\Delta \dot{\Delta}_j\widetilde{u}^+-\nu_2^+\nabla\dive \dot{\Delta}_j\widetilde{u}^+=\dot{\Delta}_j\widetilde{S}_3+\widetilde{T}^3_j,\\
		\partial_t \dot{\Delta}_j\widetilde{u}^- + u_1^-\cdot\nabla \dot{\Delta}_j\widetilde{u}^-+\tfrac{M_1^1}{\rho_1^-}\nabla \dot{\Delta}_j\widetilde{n}^++\tfrac{M_2^1}{\rho_1^-}\nabla 
\dot{\Delta}_j\widetilde{n}^-\\
		\quad\quad\quad\quad\quad\quad- \nu_1^-\Delta \dot{\Delta}_j\widetilde{u}^--\nu_2^-\nabla\dot{\Delta}_j\dive \widetilde{u}^-=\dot{\Delta}_j\widetilde{S}_4+\widetilde{T}_j^4,
	\end{array}
	\right.
\end{equation}
with  the commutator terms 
\begin{equation}
	\left\{
	\begin{aligned}
		\widetilde{T}^1_j&= [u_1^+,\dot{\Delta}_j]\nabla \widetilde{n}^++[n_1^+,\dot{\Delta}_j]\dive \widetilde{u}^+ ,\\
		\widetilde{T}^2_j&= [u_1^-,\dot{\Delta}_j]\nabla \widetilde{n}^-+[n_1^-,\dot{\Delta}_j]\dive \widetilde{u}^- ,\\
		\widetilde{T}^3_j&= [u_1^+,\dot{\Delta}_j]\nabla \widetilde{u}^+ +\Big[\tfrac{M_1^1}{\rho_1^+},\dot{\Delta}_j\Big]\nabla \widetilde{n}^+ 
+\Big[\tfrac{M_2^1}{\rho_1^+},\dot{\Delta}_j\Big]\nabla \widetilde{n}^-,\\
		\widetilde{T}^4_j &= [u_1^-,\dot{\Delta}_j]\nabla \widetilde{u}^- +\Big[\tfrac{M_1^1}{\rho_1^-},\dot{\Delta}_j\Big]\nabla \widetilde{n}^+ 
+\Big[\tfrac{M_2^1}{\rho_1^-},\dot{\Delta}_j\Big]\nabla \widetilde{n}^-.
	\end{aligned}
	\right.
\end{equation}
Through applying the weighted energy method as in \eqref{inequ-foru+u-n+n-}, we obtain
\begin{equation}\label{inequpm-npm}
	\begin{aligned}
		&\frac{d}{dt}\int_{\mathbb{R}^d} \frac{1}{2}\left( |\dot{\Delta}_j 
\widetilde{u}^{+}|^2+|\dot{\Delta}_j 
\widetilde{u}^{-}|^2+\tfrac{M_1^1}{(1+n^+_1)\rho_1^+}|\dot{\Delta}_j \widetilde{n}^+|^2+\tfrac{M_2^1}{(1+n^-_1)\rho_1^-}|\dot{\Delta}_j \widetilde{n}^-|^2\right)dx\\
&\quad\quad+\nu_1^{+}\int_{\mathbb{R}^d}|\nabla \dot{\Delta}_j\widetilde{u}^{+}|^2 dx +\nu_2^{+}\int_{\mathbb{R}^d}|\dive  \dot{\Delta}_j\widetilde{u}^+|^2 dx\nonumber\\
        &\quad\quad+\nu_1^{-}\int_{\mathbb{R}^d}|\nabla \dot{\Delta}_j\widetilde{u}^{-}|^2 dx +\nu_2^{-}\int_{\mathbb{R}^d}|\dive  \dot{\Delta}_j\widetilde{u}^-|^2 dx\nonumber\\
		&\quad\lesssim  \left\|\partial_t\left(\tfrac{M_1^1}{(1+n_1^+)\rho_1^+},\tfrac{M_2^1}{(1+n_1^+)\rho_1^+}\right),\dive \left( 
\tfrac{M_1^1u_1^+}{(1+n_1^+)\rho_1^+},\tfrac{M_2^1u_1^-}{(1+n_1^-)\rho_1^-}\right) \right\|_{L^{\infty}} \|\dot{\Delta}_j (\widetilde{n}^{+},\widetilde{n}^{-},\widetilde{u}^{+},\widetilde{u}^{-})\|_{L^2}^2\\
		&\quad\quad+\left\|\nabla\left(\tfrac{M_1^1}{\rho_1^+},\tfrac{M_2^1}{\rho_1^-},\tfrac{M_1^1}{\rho_1^-},\tfrac{M_2^1}{\rho_1^+}\right)\right\|_{L^{\infty}}\|\dot{\Delta}_j 
(\widetilde{n}^{+},\widetilde{n}^{-})\|_{L^2}\|\dot{\Delta}_j (\widetilde{u}^{+},\widetilde{u}^{-})\|_{L^2}\\
		&\quad\quad+\|\dive \dot{\Delta}_j (\widetilde{u}^{+},\widetilde{u}^{-})\|_{L^2}\|\dot{\Delta}_j 
(\widetilde{n}^{+},\widetilde{n}^{-})\|_{L^2}\\
		&\quad\quad+\left(\|(\widetilde{T}_j^1,\widetilde{T}_j^2,\widetilde{T}_j^3,\widetilde{T}_j^4)\|_{ L^{2}}+ \|\dot{\Delta}_j(\widetilde{S}_1, \widetilde{S}_2, 
\widetilde{S}_3,\widetilde{S}_4)\|_{L^2}\right)\|\dot{\Delta}_j (\widetilde{n}^{+},\widetilde{n}^{-},\widetilde{u}^{+},\widetilde{u}^{-})\|_{L^2},
	\end{aligned}
\end{equation}
Integrating \eqref{inequpm-npm} over $[0,t]$, taking the square root of both sides and summing the resulting estimate over $j\in Z$ with the weight $2^{j\frac{d}{2}}$, we get
\begin{equation*}
	\begin{aligned}
		&\|(\widetilde{n}^+,\widetilde{n}^-,\widetilde{u}^{+},\widetilde{u}^{-})\|_{\widetilde{L}^{\infty}_t(\dot{B}_{2,1}^{\frac{d}{2}})}+  \|(\widetilde{u}^{+},\widetilde{u}^{-})\|_{\widetilde{L}^{2}_t(\dot{B}_{2,1}^{\frac{d}{2}+1})} \\
		\lesssim&\left(\int_0^t \left(\|(\partial_t n_1^{+},\partial_t n_1^{-},\nabla n_1^+,\nabla n_1^-,\nabla u_1^{+},\nabla u_1^{-}) \|_{L^{\infty}}\| (\widetilde{n}^{+},\widetilde{n}^{-},\widetilde{u}^{+},\widetilde{u}^{-})\|_{ \dot{B}_{2,1}^{\frac{d}{2} } 
} \right.\right.+\|\dive  (\widetilde{u}^+,\widetilde{u}^{-})\|_{\dot{B}_{2,1}^{\frac{d}{2}}}\\
		&\quad+\sum_{j\in Z}2^{j\frac{d}{2}}\|(T_j^1,T_j^2,T_j^3,T_j^4)\|_{ L^{2}}
		+ \left.\left.\|(\widetilde{S}_1, \widetilde{S}_2, \widetilde{S}_3,\widetilde{S}_4)\|_{\dot{B}_{2,1}^{\frac{d}{2}}}\right)ds\right)^{\frac{1}{2}}\| (\widetilde{n}^{+},\widetilde{n}^{-},\widetilde{u}^{+},\widetilde{u}^{-})\|_{\widetilde{L}^{\infty}(\dot{B}_{2,1}^{\frac{d}{2} })}^{\frac{1}{2}},
	\end{aligned}
\end{equation*}
which, together with Young's inequality, implies
\begin{align}
		&\|(\widetilde{n}^+,\widetilde{n}^-,\widetilde{u}^{+},\widetilde{u}^{-})\|_{\widetilde{L}^{\infty}_t(\dot{B}_{2,1}^{\frac{d}{2}})}+  \|(\widetilde{u}^{+},\widetilde{u}^{-})\|_{\widetilde{L}^{2}_t(\dot{B}_{2,1}^{\frac{d}{2}+1})}  
\nonumber\\
		& \lesssim\int_0^t \left(\|(\partial_t n_1^{+},\partial_t n_1^{-},\nabla n_1^+,\nabla n_1^-,\nabla u_1^{+},\nabla u_1^{-}) \|_{L^{\infty}}\| (\widetilde{n}^+,\widetilde{n}^-,\widetilde{u}^{+},\widetilde{u}^{-})\|_{ \dot{B}_{2,1}^{\frac{d}{2} } } 
+\|(\widetilde{u}^{+},\widetilde{u}^{-})\|_{\dot{B}_{2,1}^{\frac{d}{2}+1}}\right.\nonumber \\
		&+ \sum_{j\in Z}2^{j\frac{d}{2}}\|(T_j^1,T_j^2,T_j^3,T_j^4)\|_{ L^{2}} \left.+\|(\widetilde{S}_1, \widetilde{S}_2, 
\widetilde{S}_3,\widetilde{S}_4)\|_{\dot{B}_{2,1}^{\frac{d}{2}}}\right)ds.\label{unrttt}
	\end{align}
Using the classical commutator estimates in Lemma \ref{lemcommutator}, we have
\begin{align}
	\sum_{j\in Z}2^{j\frac{d}{2}}\|(T_j^1,T_j^2,T_j^3,T_j^4)\|_{ L^{2}} \lesssim \|(n_1^+,u_1^{\pm})\|_{ \dot{B}_{2,1}^{\frac{d}{2}+1} }\|(\widetilde{n}^+,\widetilde{n}^-,\widetilde{u}^{+},\widetilde{u}^{-})\|_{ 
\dot{B}_{2,1}^{\frac{d}{2} } }.\label{unrttt1}
\end{align}
In addition, according to standard product laws and composite estimates in Lemmas \ref{Lemma5-5} and \ref{Lemma5-7}, the nonlinear terms can be handled as
\begin{align}
	\|(\widetilde{S}_1, \widetilde{S}_2, \widetilde{S}_3,\widetilde{S}_4)\|_{\dot{B}_{2,1}^{\frac{d}{2}}}\lesssim \|(n_1^+,n_1^-,u_1^+,u_1^-,,n_2^+,n_2^-,u_2^+,u_2^-)\|_{\dot{B}_{2,1}^{\frac{d}{2}+1}}\|(\widetilde{n}^+,\widetilde{n}^-,\widetilde{u}^{+},\widetilde{u}^{-})\|_{ \dot{B}_{2,1}^{\frac{d}{2} } }.\label{unrttt2}
\end{align}
Substituting \eqref{unrttt1} and \eqref{unrttt2} into \eqref{unrttt} and then employing Gr\"onwall's
inequality, we have
\begin{equation}
	\begin{aligned}
		&\|(\widetilde{n}^+,\widetilde{n}^-,\widetilde{u}^{+},\widetilde{u}^{-})\|_{\widetilde{L}^{\infty}_t(\dot{B}_{2,1}^{\frac{d}{2}})}+  
\|(\widetilde{u}^{+},\widetilde{u}^{-})\|_{\widetilde{L}^{2}_t(\dot{B}_{2,1}^{\frac{d}{2}+1})}\lesssim \sqrt{t}e^{Ct}\|(\widetilde{u}^{+},\widetilde{u}^{-})\|_{\widetilde{L}^{2}_t(\dot{B}_{2,1}^{\frac{d}{2}+1})}.
	\end{aligned}
\end{equation}
Taking $t\in[0,T_1]$ with a suitably small time $T_1\in(0,T]$, we deduce that
\begin{equation}
	\begin{aligned}
		&\|(\widetilde{n}^+,\widetilde{n}^-,\widetilde{u}^{+},\widetilde{u}^{-})\|_{\widetilde{L}^{\infty}_t(\dot{B}_{2,1}^{\frac{d}{2}})}+  \|(\widetilde{u}^{+},\widetilde{u}^{-})\|_{\widetilde{L}^{2}_t(\dot{B}_{2,1}^{\frac{d}{2}+1})}=0.
	\end{aligned}
\end{equation}
The above argument can be repeated on $[T_1,2T_1]$, $[2T_1,3T_1]$,$\cdots$, until the whole interval $[0,T]$ is exhausted. Therefore, the proof of uniqueness has been finished. \hfill 
$\Box$

\section{Time-decay rates (I): Proof of Theorem \ref{Thm1.3}}\label{sectiondecay1}

In this section, we show the optimal time decay rates of a strong solution to the Cauchy problem in the case that $\|(\theta_0,u_0^{+},u_0^{-})^\ell\|_{\dot{B}_{2,\infty}^{\sigma_0}}$ is only bounded.

\subsection{Weighted Lyapunov estimates}

Inspired by the works \cite{Guo-2012,Xin-2021} and the recent adaptation \cite{LiShou2023}, we have the time-weighted Lyapunov estimates of the solution $(R^+,\theta,u^{\pm})$ to the Cauchy 
problem \eqref{equation_R_P_u_pm}.

\begin{lemma}\label{lem-for-low-frequency}
	Let $d\geq 3$ and $\sigma_0\in[-\frac{d}{2},\frac{d}{2}-2)$, and let $(n^+, \theta,u^+,u^-)$ be the global classical solution to the Cauchy problem \eqref{equation_R_P_u_pm} given by
Theorem \ref{Thm1.1}. Then under the assumptions \eqref{a1} and \eqref{a2},  for any $A>\frac{1}{2}(\frac{d}{2}-\sigma_0)$, it holds that
	\begin{align}\label{weighteddecay}
		\|\tau^{A}\theta\|&_{\widetilde{L}^{\infty}_t(\dot{B}_{2,1}^{\frac{d}{2}-2}\cap 
\dot{B}_{2,1}^{\frac{d}{2}})}+\|\tau^{A}(u^{+},u^{-})\|_{\widetilde{L}^{\infty}_t(\dot{B}_{2,1}^{\frac{d}{2}-2}\cap \dot{B}_{2,1}^{\frac{d}{2}-1})}\nonumber\\
		&\quad+\|\tau^{A}\theta \|_{L^{1}_t(\dot{B}_{2,1}^{\frac{d}{2}})}+\|\tau^{A}(u^{+},u^{-})\|_{L^{1}_t (\dot{B}_{2,1}^{\frac{d}{2}}\cap\dot{B}^{\frac{d}{2}+1}_{2,1})}\nonumber \\
		&\lesssim t^{A-\frac{1}{2}(\frac{d}{2}-2-\sigma_0)}(\mathcal{E}_0+\|(\theta,u^{+},u^{-})^{\ell}\|_{L^{\infty}_{t}(\dot{B}^{\sigma_0}_{2,\infty})}),\quad\quad t>0.
	\end{align}
\end{lemma}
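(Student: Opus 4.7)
The proof builds on the two localized Lyapunov inequalities \eqref{Lyapunovlow} and \eqref{Lyapunovhigh} from Lemmas \ref{lemma1}--\ref{lemma2}. Dividing those inequalities by $\sqrt{\mathcal{E}_{i,j}+\varepsilon^2}$ and passing $\varepsilon\to 0$ yields scalar inequalities of the form
\[
\tfrac{d}{dt}\sqrt{\mathcal{E}_{1,j}}+c\,2^{2j}\sqrt{\mathcal{E}_{1,j}}\lesssim \|\dot{\Delta}_j(F_1,F_2^\pm)\|_{L^2}\ (j\le 0),\qquad \tfrac{d}{dt}\sqrt{\mathcal{E}_{2,j}}+c\,\sqrt{\mathcal{E}_{2,j}}\lesssim \|\dot{\Delta}_j(\nabla F_1,F_2^\pm)\|_{L^2}\ (j\ge -1),
\]
where $\mathcal{E}_{1,j}\sim\|\dot{\Delta}_j(\theta,u^\pm)\|_{L^2}^{2}$ and $\mathcal{E}_{2,j}\sim\|\dot{\Delta}_j(\nabla\theta,u^\pm)\|_{L^2}^{2}$. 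The plan is to multiply each inequality by $\tau^A$, integrate in time, and sum over $j$ with Besov weights matching the norms on the left of \eqref{weighteddecay}.

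For the low-frequency regime, the procedure with weight $2^{j(d/2-2)}$ produces
\[
\|\tau^A(\theta,u^\pm)\|^{\ell}_{\widetilde{L}^{\infty}_t(\dot{B}^{d/2-2}_{2,1})}+\|\tau^A(\theta,u^\pm)\|^{\ell}_{L^1_t(\dot{B}^{d/2}_{2,1})}\lesssim A\,\mathcal{I}(t)+\|\tau^A(F_1,F_2^\pm)\|^{\ell}_{L^1_t(\dot{B}^{d/2-2}_{2,1})},
\]
where the time-differentiation obstruction $\mathcal{I}(t):=\int_0^t\tau^{A-1}\|(\theta,u^\pm)\|^{\ell}_{\dot{B}^{d/2-2}_{2,1}}d\tau$ is the essential point. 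It is handled by interpolating against the assumed $L^{\infty}_t(\dot{B}^{\sigma_0}_{2,\infty})$-bound through
\[
\|\cdot\|^{\ell}_{\dot{B}^{d/2-2}_{2,1}}\lesssim\bigl(\|\cdot\|^{\ell}_{\dot{B}^{\sigma_0}_{2,\infty}}\bigr)^{\vartheta}\bigl(\|\cdot\|^{\ell}_{\dot{B}^{d/2}_{2,1}}\bigr)^{1-\vartheta},\qquad \vartheta:=\tfrac{2}{d/2-\sigma_0}\in(0,1),
\]
followed by Young's inequality $\tau^{A-1}X^{1-\vartheta}\le\epsilon\,\tau^A X+C_\epsilon\,\tau^{A-1/\vartheta}$: the first piece is absorbed into the low-frequency dissipation on the left, while $\int_0^t\tau^{A-1/\vartheta}d\tau\sim t^{A-\frac{1}{2}(d/2-2-\sigma_0)}$ delivers precisely the asserted power of $t$, the integrability at $\tau=0$ being guaranteed by $A>\tfrac{1}{2}(d/2-\sigma_0)$.

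For the high-frequency regime, summing with weight $2^{j(d/2-1)}$ yields the analogous bound on $\|\tau^A(\nabla\theta,u^\pm)\|^{h}_{\widetilde{L}^{\infty}_t(\dot{B}^{d/2-1}_{2,1})}$ and its $L^1_t$-companion; here the damping is frequency-independent, so the corresponding obstruction $A\int_0^t\tau^{A-1}\sqrt{\mathcal{E}_{2,j}}\,d\tau$ is dealt with by splitting $[0,t]=[0,2A/c]\cup[2A/c,t]$ -- on the first subinterval the integrand is controlled by the uniform estimate of Theorem \ref{Thm1.1} times a bounded power of $t$, and on the second $A\tau^{A-1}\le\tfrac{c}{2}\tau^A$ so the term is absorbed into the left-hand-side damping. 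The higher-order piece $\|\tau^A u^\pm\|^{h}_{L^1_t(\dot{B}^{d/2+1}_{2,1})}$ is then recovered, as in Lemma \ref{lemma2}, by applying the maximal regularity estimate (Lemma \ref{lame-system-estimate}) to the Lam\'e system \eqref{3.39} with source $\tau^A(-\beta_2^\pm\nabla\theta+F_2^\pm)-A\tau^{A-1}u^\pm$, the last term being controlled by the bounds just obtained.

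Finally, the weighted nonlinear sources $\|\tau^A(F_1,\nabla F_1,F_2^\pm)\|^{\ell,h}_{L^1_t(\dot{B}^{\bullet}_{2,1})}$ are estimated via the product and composition laws of the appendix exactly as in Lemmas \ref{lemma1}--\ref{lemma2}; each nonlinear term factorizes as (one weighted norm appearing on the left of \eqref{weighteddecay})$\times$(one unweighted norm controlled by $\mathcal{E}(t)\lesssim\mathcal{E}_0$), and the smallness $\mathcal{E}_0\le\varepsilon_0$ allows these contributions to be absorbed, closing the estimate. The hard step is the interpolation/Young argument for $\mathcal{I}(t)$ above: this is where the restrictions $\sigma_0<d/2-2$ (which makes $\vartheta<1$) and $A>\tfrac{1}{2}(d/2-\sigma_0)$ together become essential, and where the sharpness of the decay threshold $\tfrac{1}{2}(d/2-2-\sigma_0)$ emerges.
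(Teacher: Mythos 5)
Your proof follows essentially the same route as the paper's: weighted versions of the Lyapunov inequalities \eqref{Lyapunovlow} and \eqref{Lyapunovhigh}, interpolation of $\dot{B}^{d/2-2}_{2,1}$ between $\dot{B}^{\sigma_0}_{2,\infty}$ and $\dot{B}^{d/2}_{2,1}$ combined with Young's inequality to absorb the obstruction term $\int_0^t\tau^{A-1}\|\cdot\|\,d\tau$ into the weighted dissipation (your $\vartheta$ is the paper's $1-\eta_0$, and the exponent bookkeeping $A-1/\vartheta+1=A-\tfrac12(\tfrac d2-2-\sigma_0)$ is exactly the paper's), the factorization of the weighted nonlinearities into a small unweighted factor times a weighted dissipation norm, and maximal regularity for the weighted Lam\'e system \eqref{weightuaa}. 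The only deviation is at high frequencies, where you replace the paper's interpolation/Young absorption of $\int_0^t\tau^{A-1}\sqrt{\mathcal{E}_{2,j}}\,d\tau$ by the time-splitting $[0,2A/c]\cup[2A/c,t]$; both are standard for exponentially damped modes and yield the same bound, so this is a harmless variant.
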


\begin{proof}
	For any $j\leq 0$, after multiplying the Lyapunov type inequality \eqref{Lyapunovlow} by $t^{A}$, we show
	\begin{align*}
		\frac{d}{dt} \Big(t^{A}\mathcal{E}_{1,j}(t)\Big)+ 2^{2j}t^{A}\mathcal{E}_{1,j}(t)\lesssim t^{A-1}\mathcal{E}_{1,j}(t)+ t^{A}\|\dot{\Delta}_j( F_1, F_2^+, 
F_2^-)\|_{L^2}\sqrt{\mathcal{E}_{1,j}(t)},\quad j\leq 0.
	\end{align*}
	This gives rise to
	\begin{align}
		t^{A} \|\dot{\Delta}_j(\theta, u^{+},u^{-})\|_{L^2}&+2^{2j}\int_0^t\tau^{A}\|\dot{\Delta}_j(\theta, u^{+},u^{-})\|_{L^2}d\tau\nonumber\\
		&\lesssim \int_0^t\tau^{A-1}\|\dot{\Delta}_j(\theta, u^{+},u^{-})\|_{L^2}d\tau+\int_0^t\tau^{A} \|\dot{\Delta}_j(F_1, F_2
        ^+, F_2^-)\|_{L^2}d\tau.
	\end{align}
	Then, multiplying by $2^{j(\frac{d}{2}-2)}$, taking the supremum on $[0,t]$, and then summing over $j\leq0$, we get
	\begin{align}\label{time-weigh-for-low}
		\|\tau^{A}(\theta, u^{+},u^{-})\|^{\ell}_{\widetilde{L}^{\infty}_t(\dot{B}_{2,1}^{\frac{d}{2}-2})}&+\|\tau^{A}(\theta, u^{+},u^{-})\|^{\ell}_{L^{1}_t 
(\dot{B}_{2,1}^{\frac{d}{2}})}\nonumber\\
		&\lesssim \int_0^t\tau^{A-1}\|(\theta, u^{+},u^{-})\|^{\ell}_{\dot{B}_{2,1}^{\frac{d}{2}-2}}d\tau+\|\tau^{A}( F_1, F_2^+, F_2^-)\|^{\ell}_{L^{1}_t(\dot{B}_{2,1}^{\frac{d}{2}-2})}.
	\end{align}
	To control the first term on the right-hand side of \eqref{time-weigh-for-low}, we employ the interpolation argument between weighted dissipation norm and the 
$\dot{B}^{\sigma_0}_{2,\infty}$ norm. Let $\eta_0\in(0,1)$ be given by
	\begin{align}
		\frac{d}{2}-2=(1-\eta_0)\sigma_0+\eta_0\frac{d}{2},
	\end{align}
	and $\var_0>0$ be a constant to be chosen later. We deduce from the real interpolation inequality \eqref{inter}  that
	\begin{align*}
		&\quad\int_0^t\tau^{A-1}\|(\theta, u^{+},u^{-})^{\ell}\|_{\dot{B}_{2,1}^{\frac{d}{2}-2}}d\tau\nonumber\\
        &\lesssim 
\int_0^t\tau^{A-1}\|(\theta,u^{+},u^{-})^{\ell}\|^{1-\eta_0}_{\dot{B}_{2,\infty}^{\sigma_0}}\|(\theta, u^{+},u^{-})^{\ell}\|^{\eta_0}_{\dot{B}_{2,1}^{\frac{d}{2}}}d\tau\nonumber\\
		&\lesssim \left(\int_0^t\tau^{A-\frac{1}{1-\eta_0}}d\tau\right)^{1-\eta_0}\|(\theta,u^{+},u^{-})\|^{1-\eta_0}_{L^{\infty}_t(\dot{B}_{2,\infty}^{\sigma_0})}\|\tau^{A}(\theta, 
u^{\pm})\|^{\eta_0}_{L^1_t(\dot{B}_{2,1}^{\frac{d}{2}})}\nonumber
		\\
		&\lesssim \left(t^{A-\frac{1}{2}(\frac{d}{2}-2-\sigma_0)}\|(\theta,u^{+},u^{-})^{\ell}\|_{\widetilde{L}^{\infty}_t(\dot{B}_{2,\infty}^{\sigma_0})}\right)^{1-\eta_0}\|\tau^{A}(\theta, 
u^{\pm})^{\ell}\|^{\eta_0}_{L^1_t(\dot{B}_{2,1}^{\frac{d}{2}+1})}\nonumber\\
		&\lesssim \var_0^{-1}t^{A-\frac{1}{2}(\frac{d}{2}-2-\sigma_0)}\|(\theta,u^{+},u^{-})^{\ell}\|_{L^{\infty}_t(\dot{B}_{2,\infty}^{\sigma_0})}+ \var_0 \|\tau^{A}(\theta,u^{+},u^{-})^{\ell}\|_{L^1_t(\dot{B}_{2,1}^{\frac{d}{2}})}.
	\end{align*}
	On the other hand, taking advantage of the dissipative structure of $(\theta, u^{+},u^{-})$ for high frequencies and the cut-off properties of frequencies, it is easy to verify that
	\begin{align*}
		\int_0^t\tau^{A-1}\|\theta^h \|_{\dot{B}_{2,1}^{\frac{d}{2}-2}}^{\ell}d\tau&= 
\int_0^t\tau^{A-1}\|\theta^h\|^{1-\eta_0}_{\dot{B}_{2,1}^{\frac{d}{2}-2}}\|\theta^h\|^{\eta_0}_{\dot{B}_{2,1}^{\frac{d}{2}-2}}d\tau\nonumber\\
		&\lesssim 
\left(\int_0^t\tau^{A-\frac{1}{1-\eta_0}}d\tau\right)^{1-\eta_0}\|\theta^h\|^{1-\eta_0}_{\widetilde{L}^{\infty}_t(\dot{B}_{2,1}^{\frac{d}{2}-2})}\|\tau^{A}\theta^h\|^{\eta_0}_{L^1_t(\dot{B}_{2,1}^{\frac{d}{2}-2})}\nonumber
		\\
		&\lesssim \var_0^{-1}t^{A-\frac{1}{2}(\frac{d}{2}-2-\sigma_0)}\|\theta\|_{\widetilde{L}^{\infty}_t(\dot{B}_{2,1}^{\frac{d}{2}})}^h+ 
\var_0\|\tau^{A}\theta^h\|_{L^1_t(\dot{B}_{2,1}^{\frac{d}{2}})}^h,
	\end{align*}
	and
	\begin{align*}
		\int_0^t\tau^{A-1}\|(u^{\pm})^h \|_{\dot{B}_{2,1}^{\frac{d}{2}-2}}^{\ell}d\tau&\lesssim 
\int_0^t\tau^{A-1}\|(u^{\pm})^h\|^{1-\eta_0}_{\dot{B}_{2,1}^{\frac{d}{2}-2}}\|(u^{\pm})^h\|^{\eta_0}_{\dot{B}_{2,1}^{\frac{d}{2}-2}}d\tau\nonumber\\
		&\lesssim \var_0^{-1}t^{A-\frac{1}{2}(\frac{d}{2}-2-\sigma_0)}\|u^{\pm}\|_{\widetilde{L}^{\infty}_t(\dot{B}_{2,1}^{\frac{d}{2}-1})}^h+ 
\var_0\|\tau^{A}u^{\pm}\|_{L^1_t(\dot{B}_{2,1}^{\frac{d}{2}+1})}^h.
	\end{align*}
	Hence, we have for any constant $\var_0>0$ that
	\begin{align}\label{lowinter}
		&\quad\int_0^t\tau^{A-1}\|(\theta, u^{+},u^{-})\|_{\dot{B}_{2,1}^{\frac{d}{2}-2}}^{\ell}d\tau \nonumber\\
        &\lesssim  \int_0^t\tau^{A-1}\|(\theta, u^{+},u^{-})^{\ell}\|_{\dot{B}_{2,1}^{\frac{d}{2}-2}}d\tau 
+\int_0^t\tau^{A-1}\|(\theta, u^{+},u^{-})^{h}\|_{\dot{B}_{2,1}^{\frac{d}{2}-2}}d\tau \nonumber \\
		&\lesssim \var_0^{-1}\Big(\|(\theta,u^{+},u^{-})^{\ell}\|_{\widetilde{L}^{\infty}_t(\dot{B}_{2,\infty}^{\sigma_0})}+\mathcal{X}(t) \Big)t^{A-\frac{1}{2}(\frac{d}{2}-2-\sigma_0)}\\
		&\quad+ \var_0(\|\tau^{A}\theta\|_{L^{1}_t(\dot{B}_{2,1}^{\frac{d}{2}})}+\|\tau^{A}(u^{+},u^{-})\|_{L^{1}_t 
(\dot{B}_{2,1}^{\frac{d}{2}}\cap\dot{B}^{\frac{d}{2}+1}_{2,1})}).\nonumber
	\end{align}
	Concerning the nonlinear terms on the right-hand side of \eqref{time-weigh-for-low}, we claim that
	\begin{align}
		\|\tau^{A}(F_1,F_2^+,F_2^-)\|_{L^{1}_t(\dot{B}_{2,1}^{\frac{d}{2}-1})}\lesssim 
\mathcal{X}(t)(\|\tau^{A}\theta\|_{L^{1}_t(\dot{B}_{2,1}^{\frac{d}{2}})}+\|\tau^{A}(u^{+},u^{-})\|_{L^{1}_t 
(\dot{B}_{2,1}^{\frac{d}{2}}\cap\dot{B}^{\frac{d}{2}+1}_{2,1})}).\label{F1F2F3t}
	\end{align}
    Different from Lemma \ref{lemma1}, we are able to analyze some nonlinear terms, e.g., 
$g_1^\pm(n^+,\theta)\dive  u^{\pm}$ and $\rho^- u^+\cdot\nabla n^+$ in $L^1_t(\dot{B}^{d/2-2}_{2,1})$ as the bounds of $n^+, \theta$ and $u^+$ have been established in Theorem 
\ref{Thm1.1}. Arguing similarly as for proving \eqref{F1}, one may analyze $F_1$ as
	\begin{align*}
		\|\tau^{A}F_1\|_{L^{1}_t(\dot{B}_{2,1}^{\frac{d}{2}-2})}&\lesssim \|\tau^{A}g_3^\pm(n^{+},\theta)\dive u^{\pm}\|_{L^1_t (\dot{B}_{2,1}^{\frac{d}{2}-2})}\\
		&\quad+\|\tau^{A}\rho^- u^+\cdot\nabla n^+\|_{L^1_t (\dot{B}_{2,1}^{\frac{d}{2}-2})}+\|\tau^{A}\rho^+ u^-\cdot\nabla n^-\|_{L^1_t (\dot{B}_{2,1}^{\frac{d}{2}-2})}\\
		&\quad+\|\tau^{A}(\mathcal{C}(n^++1,\theta+\bar{P})-\mathcal{C}( 1, \bar{P}))\rho^- u^+\cdot\nabla n^+\|_{L^1_t (\dot{B}_{2,1}^{\frac{d}{2}-2})}\\
		&\quad+\|\tau^{A}(\mathcal{C}(n^++1,\theta+\bar{P})-\mathcal{C}( 1, \bar{P}))\rho^+ u^-\cdot\nabla n^-\|_{L^1_t (\dot{B}_{2,1}^{\frac{d}{2}-2})}\\
		&\lesssim \|(n^{+},\theta)\|_{\widetilde{L}^{\infty}_t(\dot{B}_{2,1}^{\frac{d}{2}-1})}\| \tau^{A}{\rm{\dive }\,} (u^{+},u^{-})\|_{L^1_t(\dot{B}_{2,1}^{\frac{d}{2}-1 })}\\
		&\quad+(1+\|\theta\|_{\widetilde{L}^{\infty}_t(\dot{B}^{\frac{d}{2}}_{2,1})})\|(n^+,\theta)\|_{\widetilde{L}^{\infty}_t(\dot{B}^{\frac{d}{2}-1}_{2,1}\cap 
\dot{B}^{\frac{d}{2}}_{2,1})}\|\tau^{A} (u^{+},u^{-})\|_{L^1_t(\dot{B}^{\frac{d}{2}}_{2,1})} \\
		&\lesssim \mathcal{X}(t) \| \tau^{A}(u^{+},u^{-})\|_{L_t^{1}(\dot{B}^{\frac{d}{2}}_{2,1})}.
	\end{align*}
	The other terms regarding $F_{2}$ and $F_3$ can be handled similarly, so we omit the details. Then, putting \eqref{lowinter} and \eqref{F1F2F3t} into \eqref{time-weigh-for-low} yields
	\begin{align}\label{time-weigh-for-low1}
		\|\tau^{A}(\theta, u^{+},u^{-})\|&^{\ell}_{\widetilde{L}^{\infty}_t(\dot{B}_{2,1}^{\frac{d}{2}-2})}+\|\tau^{A}(\theta, u^{+},u^{-})\|^{\ell}_{L^{1}_t 
(\dot{B}_{2,1}^{\frac{d}{2}})}\nonumber\\
		&\lesssim   \var_0^{-1}\Big(\|(\theta,u^{+},u^{-})^{\ell}\|_{\widetilde{L}^{\infty}_t(\dot{B}_{2,\infty}^{\sigma_0})}+\mathcal{X}(t) \Big)t^{A-\frac{1}{2}(\frac{d}{2}-2-\sigma_0)}\\
		&\quad+(\mathcal{X}(t)+\var_0)\Big(\|\tau^{A}\theta\|_{L^{1}_t(\dot{B}_{2,1}^{\frac{d}{2}})}+\|\tau^{A}(u^{+},u^{-})\|_{L^{1}_t 
(\dot{B}_{2,1}^{\frac{d}{2}}\cap\dot{B}^{\frac{d}{2}+1}_{2,1})}\Big).\nonumber
	\end{align}

	We then turn to carry out the weighted estimates in high frequencies. For any $j\geq -1$, we show after multiplying the Lyapunov type inequality \eqref{Lyapunovhigh} by $t^{A}$ that
	\begin{align*}
		\frac{d}{dt} \Big(t^{A}\mathcal{E}_{2,j}(t)\Big)+  t^{A}\mathcal{E}_{2,j}(t)\lesssim t^{A-1}\mathcal{E}_{2,j}(t)+t^{A}\|\dot{\Delta}_j(\nabla F_1, F_2^+, 
F_2^-)\|_{L^2}\sqrt{\mathcal{E}_{2,j}(t)},\quad j\geq -1,
	\end{align*}
	which implies
	\begin{align*}
		\|\tau^{A}\dot{\Delta}_j&(\nabla\theta, u^{+},u^{-})\|_{L^{\infty}_{t}(L^2)}+  \|\tau^{A}\dot{\Delta}_j(\nabla\theta, u^{+},u^{-})\|_{L^{\infty}_{t}(L^2)}\\
		&\lesssim \int_0^t \tau^{A-1}\|\dot{\Delta}_j(\nabla\theta, u^{+},u^{-})\|_{L^2}d\tau+\|\tau^{A}\dot{\Delta}_j(\nabla F_1, F_2^+, F_2^-)\|_{L^1_t(L^2)},\quad j\geq -1.
	\end{align*}
	Thence, summing the above inequality over $j\geq -1$ with the weight $2^{j(\frac{d}{2}-1)}$, we have
	\begin{equation}\label{time-weigh-for-high}
		\begin{aligned}
			&\|\tau^{A}(\nabla\theta, u^{+},u^{-})\|_{\widetilde{L}^{\infty}_t(\dot{B}_{2,1}^{\frac{d}{2}-1})}^h+\|\tau^{A}(\nabla\theta, 
u^{\pm})\|_{L^{1}_t(\dot{B}_{2,1}^{\frac{d}{2}-1})}^h\\
			&\quad \lesssim \int_0^t\tau^{A-1}\|(\nabla\theta, u^{+},u^{-})\|^h_{\dot{B}_{2,1}^{\frac{d}{2}-1}}d\tau+\|\tau^{A}(\nabla F_1, F_2^+, 
F_2^-)\|^h_{L^{1}_t(\dot{B}_{2,1}^{\frac{d}{2}-1})}.
		\end{aligned}
	\end{equation}
	By similar arguments as used in the high-frequency part of \eqref{lowinter}, we conclude that, for any constant $\var_0>0$,
	\begin{equation}\label{ineq_for-u_pm_h}
		\begin{aligned}
			&\quad \int_0^t\tau^{A-1}\|(\nabla\theta, u^{+},u^{-})\|^h_{\dot{B}_{2,1}^{\frac{d}{2}-1}}d\tau   \lesssim \var_0 \mathcal{X}(t)t^{A-\frac{1}{2}(\frac{d}{2}-2-\sigma_0)}+ \var_0 
\|\tau^{A}(u^{+},u^{-})\|_{L^{1}_t (\dot{B}^{\frac{d}{2}+1}_{2,1})}.
		\end{aligned}
	\end{equation}
	Also, the nonlinearities on the right-hand side of \eqref{time-weigh-for-high} can be estimated by
    \begin{align}\label{inequalityh_for_F_1}	
\|\tau^{A}F_1\|^h_{L_t^{1}(\dot{B}^{\frac{d}{2}}_{2,1}))}\lesssim \mathcal{X}(t)(\|\tau^{A}\theta\|_{L^{1}_t(\dot{B}_{2,1}^{\frac{d}{2}})}+\|\tau^{A}u^{\pm}\|_{L^{1}_t 
(\dot{B}^{\frac{d}{2}+1}_{2,1})}),
	\end{align}
	and similarly,
	\begin{align}\label{inequalityh_for_F_3}
		\|\tau^{A}F_2^\pm\|^h_{L^1_t( \dot{B}_{2,1}^{\frac{d}{2}-1})}\lesssim 
\mathcal{X}(t)(\|\tau^{A}\theta\|_{L^{1}_t(\dot{B}_{2,1}^{\frac{d}{2}})}+\|\tau^{A}u^{\pm}\|_{L^{1}_t (\dot{B}^{\frac{d}{2}+1}_{2,1})}).
	\end{align}
	Therefore, it follows that
	\begin{equation}\label{tAhigh1}
		\begin{aligned}
			&\|\tau^{A}(\nabla\theta, u^{+},u^{-})\|_{\widetilde{L}^{\infty}_t(\dot{B}_{2,1}^{\frac{d}{2}-1})}^h+\|\tau^{A}(\nabla\theta, 
u^{\pm})\|_{L^{1}_t(\dot{B}_{2,1}^{\frac{d}{2}-1})}^h\\
			& \quad \lesssim \var_0^{-1} \mathcal{X}(t)t^{A-\frac{1}{2}(\frac{d}{2}-2-\sigma_0)}+ (\var_0+\mathcal{X}(t)) 
(\|\tau^{A}\theta\|_{L^{1}_t(\dot{B}_{2,1}^{\frac{d}{2}})}+\|\tau^{A}(u^{+},u^{-})\|_{L^{1}_t (\dot{B}^{\frac{d}{2}+1}_{2,1})}).
		\end{aligned}
	\end{equation}

	Furthermore, we are able to improve the regularity estimate of $u^\pm$. Multiplying $\eqref{equation_R_P_u_pm}_3$ and $\eqref{equation_R_P_u_pm}_4$ by $t^{A}$, we get
	\begin{align}
		\partial_t (t^{A}u^\pm) - \nu_1^{+}\Delta (t^{A} u^\pm)-\nu_2^+\nabla\dive  (t^{A}u^\pm)=\theta t^{A-1}u^\pm-\beta_2^\pm \nabla (t^{A}\theta)+t^{A}F_2^{\pm}\label{weightuaa}
	\end{align}
	with the initial data $t^{A}u^{\pm}|_{t=0}=0.$
	By virtue of Lemma \ref{lame-system-estimate}, we have
	\begin{align}
		\|\tau^{A}u^{\pm}\|^h_{L^{1}_t (\dot{B}_{2,1}^{\frac{d}{2}+1})}\lesssim \int_0^t\tau^{A-1}\| u^{\pm}\|^h_{ \dot{B}_{2,1}^{\frac{d}{2}-1}}d\tau 
+\|\tau^{A}\theta\|^h_{L^{1}_t(\dot{B}_{2,1}^{\frac{d}{2}-1})}+\|\tau^{A}F_2^\pm\|^h_{L^{1}_t(\dot{B}_{2,1}^{\frac{d}{2}-1})},
	\end{align}
	which, together with \eqref{ineq_for-u_pm_h},  \eqref{inequalityh_for_F_3} and \eqref{tAhigh1}, gives rise to
	\begin{align}
		\|\tau^{A}u^{\pm}&\|^h_{L^{1}_t (\dot{B}_{2,1}^{\frac{d}{2}+1})}\lesssim \var_0^{-1} \mathcal{X}(t)t^{A-\frac{1}{2}(\frac{d}{2}-2-\sigma_0)}+ \var_0 (\|\tau^A\theta\|_{L^1_t(\dot{B}^{\frac{d}{2}}_{2,1})}+\|\tau^A u\|_{L^1_t(\dot{B}^{\frac{d}{2}+1}_{2,1})}).\label{timeuhigher}
	\end{align}
	
Finally, combining \eqref{time-weigh-for-low1}, \eqref{tAhigh1} and \eqref{timeuhigher}, taking a suitable small constant $\var_0$ and recalling $\mathcal{X}(t)\lesssim \mathcal{X}_0<<1$, we conclude \eqref{weighteddecay} and complete the proof of Lemma \ref{lem-for-low-frequency}.
\end{proof}

\subsection{Uniform evolution of the low-frequency regularity}

In order to bound the term in \eqref{weighteddecay} and derive decay estimates, we need to establish the uniform $\dot{B}^{\sigma_0}_{2,\infty}$ regularity evolution of the solution for
low frequencies. This is stated in the following lemma.

\begin{lemma}\label{lemma_for_E_L_sigma_0}
	Let $(n^+, \theta,u^+,u^-)$ be the global solution to the Cauchy problem \eqref{equation_R_P_u_pm} by Theorem \ref{Thm1.1}.  Then, under the assumptions of Theorem \ref{Thm1.3}, the following inequality 
holds:
	\begin{align}\label{sigma_0-functional}
		\|(\theta, u^{+},u^{-})\|_{L^{\infty}_t( \dot{B}_{2,\infty}^{\sigma_0})}+\|(\theta, u^{+},u^{-})\|_{\widetilde{L}^{1}_t (\dot{B}_{2,\infty}^{\sigma_0+2})}\leq C\delta_0,
	\end{align}
	for $ \delta_0:=\|(\theta_0, u_0^{+},u_0^-)^{\ell}\|_{\dot{B}^{\sigma_0}_{2,\infty}}+ \mathcal{X}_0$ .
\end{lemma}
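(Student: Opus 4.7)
The plan is to propagate the $\dot{B}^{\sigma_0}_{2,\infty}$ regularity of the three dissipative components by revisiting the localized Lyapunov argument of Lemma \ref{lemma1}, but now with a weaker $\ell^{\infty}$ summation over the low-frequency indices; the high-frequency part will be absorbed into the bounds already furnished by Theorem \ref{Thm1.1}. More precisely, I would first apply the localized Lyapunov inequality \eqref{Lyapunovlow} for each $j\leq 0$ and proceed exactly as in the derivation of \eqref{low-frequency1} to obtain
\begin{equation*}
\|\dot{\Delta}_j(\theta, u^{+},u^{-})\|_{L^{\infty}_t(L^2)}+2^{2j}\|\dot{\Delta}_j(\theta, u^{+},u^{-})\|_{L^1_t(L^2)}\lesssim \|\dot{\Delta}_j(\theta_0, u_0^{+},u_0^{-})\|_{L^2}+\|\dot{\Delta}_j(F_1,F_2^+,F_2^-)\|_{L^1_t(L^2)},
\end{equation*}
for every $j\leq 0$. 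Multiplying by $2^{j\sigma_0}$ and taking $\sup_{j\leq 0}$ yields the low-frequency part of the desired norms $\widetilde{L}^{\infty}_t(\dot{B}^{\sigma_0}_{2,\infty})$ and $\widetilde{L}^{1}_t(\dot{B}^{\sigma_0+2}_{2,\infty})$ in terms of $\|(\theta_0, u_0^{+},u_0^{-})^\ell\|_{\dot{B}^{\sigma_0}_{2,\infty}}$ and the low-frequency piece of $\|(F_1,F_2^+,F_2^-)\|_{\widetilde{L}^1_t(\dot{B}^{\sigma_0}_{2,\infty})}$.

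For the high-frequency part I would invoke the embedding $\|\cdot\|^{h}_{\dot{B}^{\sigma_0}_{2,\infty}}\lesssim \|\cdot\|^{h}_{\dot{B}^{d/2-2}_{2,1}}$ and the analogous embedding at the dissipation level, both of which are legitimate since $\sigma_0<d/2-2$. This reduces the high-frequency contribution to quantities already controlled by $\mathcal{X}_0\lesssim \delta_0$ through Theorem \ref{Thm1.1}.

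The main obstacle is the estimation of the nonlinear sources $F_1, F_2^{\pm}$ in $\widetilde{L}^1_t(\dot{B}^{\sigma_0}_{2,\infty})$, since $\sigma_0$ may be as low as $-d/2$. Here I would exploit the product law
\begin{equation*}
\|fg\|_{\dot{B}^{\sigma_0}_{2,\infty}}\lesssim \|f\|_{\dot{B}^{\frac{d}{2}}_{2,1}}\|g\|_{\dot{B}^{\sigma_0}_{2,\infty}}, \qquad \sigma_0\in\Big[-\tfrac{d}{2},\tfrac{d}{2}\Big),
\end{equation*}
(together with its $\widetilde{L}^{\varrho}_t$-version), as well as the composition estimates collected in the appendix. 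Structurally, each nonlinear term in $F_1$ and $F_2^{\pm}$ (see \eqref{1.46}) is at least quadratic, with at least one factor sitting in a space controlled by $\mathcal{E}(t)\lesssim \mathcal{X}_0$; therefore the scheme gives bounds of the form
\begin{equation*}
\|(F_1,F_2^+,F_2^-)\|_{\widetilde{L}^1_t(\dot{B}^{\sigma_0}_{2,\infty})}\lesssim \mathcal{X}(t)\Big(\|(\theta,u^+,u^-)\|_{\widetilde{L}^{1}_t(\dot{B}^{\sigma_0+2}_{2,\infty})}+\|(\theta,u^+,u^-)\|_{\widetilde{L}^{\infty}_t(\dot{B}^{\sigma_0}_{2,\infty})}\Big).
\end{equation*}

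Finally, combining the low- and high-frequency bounds and using the smallness $\mathcal{X}(t)\lesssim \mathcal{X}_0\ll 1$ from Theorem \ref{Thm1.1}, the nonlinear contributions can be absorbed into the left-hand side, yielding \eqref{sigma_0-functional}. I expect the delicate step to be the product-law analysis of the transport-type terms $\mathcal{C}\rho^{\pm}u^{\mp}\cdot\nabla n^{\mp}$ in $F_1$ at the endpoint $\sigma_0=-d/2$, where one must be careful to place $\nabla n^{\pm}$ in $\dot{B}^{d/2}_{2,1}$ (using $\mathcal{E}(t)$) and to distribute the $\dot{B}^{\sigma_0}_{2,\infty}$ regularity onto $u^{\pm}$; a decomposition into low and high frequencies of each factor, combined with Bony's paraproduct calculus, should close this step.
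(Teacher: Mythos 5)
Your proposal follows essentially the same route as the paper's proof: low-frequency propagation via the Lyapunov inequality \eqref{low-frequency1} weighted by $2^{j\sigma_0}$ with $\ell^\infty$ summation, high frequencies absorbed through embeddings into the norms controlled by Theorem \ref{Thm1.1}, and the nonlinearities handled by the product law \eqref{uv3} placing one factor in $\dot{B}^{d/2}_{2,1}$-type spaces and the other in $\dot{B}^{\sigma_0+k}_{2,\infty}$, followed by absorption via $\mathcal{X}(t)\lesssim\mathcal{X}_0\ll1$. The endpoint $\sigma_0=-d/2$ is already covered by \eqref{uv3} (which requires only $s_2\geq -d/2$ and $s_1+s_2\geq 0$), so no additional paraproduct analysis is needed there.
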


\begin{proof}
	Since
	\begin{equation}\label{highsigma0}
		\begin{aligned}
			&\|(\theta, u^{+},u^{-})\|_{L^{\infty}_t( \dot{B}_{2,\infty}^{\sigma_0})}^h+\|(\theta, u^{+},u^{-})\|_{\widetilde{L}^{1}_t (\dot{B}_{2,\infty}^{\sigma_0+2})}^h\\
			&\quad\lesssim \|(\theta, u^{+},u^{-})\|_{L^{\infty}_t( \dot{B}_{2,1}^{\frac{d}{2}+1})}^h+\|(\theta, u^{+},u^{-})\|_{L^{1}_t (\dot{B}_{2,1}^{\frac{d}{2}})}^h\lesssim \mathcal{X}_0,
		\end{aligned}
	\end{equation}
	it suffices to bound the low-frequency norms. Recall that the Lyapunov-type inequality \eqref{low-frequency1} holds. 
Multiplying \eqref{low-frequency1} by $2^{j\sigma_0}$ and taking the supremum on both $[0,t]$ and $j\leq 0$, we get
	\begin{align}
		&\|(\theta, u^{+},u^{-})\|^{\ell}_{L^{\infty}_t( \dot{B}_{2,\infty}^{\sigma_0})}+\|(\theta, u^{+},u^{-})\|^{\ell}_{\widetilde{L}^{1}_t (\dot{B}_{2,\infty}^{\sigma_0+2})}\nonumber\\
        &\quad\lesssim 
\|(\theta_0, u_0^{+},u_0^{-})\|^{\ell}_{\dot{B}_{2,\infty}^{\sigma_0}}+\|( F_1, F_2^+, F_2^-)\|^{\ell}_{\widetilde{L}^{1}_t(\dot{B}_{2,\infty}^{\sigma_0})}.\label{sigma1111}
	\end{align}
	We first analyze $\|F_1\|^{\ell}_{\widetilde{L}^{1}_t(\dot{B}_{2,\infty}^{\sigma_0})}$. It follows from \eqref{uv3} and \eqref{F1} that
	\begin{equation*}
		\begin{aligned}
			\|g_3^\pm(n^{+},\theta)\dive u^{\pm}\|^{\ell}_{\widetilde{L}^1_t (\dot{B}_{2,\infty}^{\sigma_0})}&\lesssim 
\|g_3^\pm(n^{+},\theta)\|_{\widetilde{L}^{\infty}_t(\dot{B}_{2,1}^{\frac{d}{2}-1})} \|\dive u^{\pm}\|_{\widetilde{L}^1_t( \dot{B}_{2,\infty}^{\sigma_0+1})} \\
			&\lesssim \| u^{\pm}\|_{\widetilde{L}^1_t(\dot{B}_{2,\infty}^{\sigma_0+2})} \|(n^{+},\theta)\|_{L^{\infty}_t(\dot{B}_{2,1}^{\frac{d}{2}-1})} .
		\end{aligned}
	\end{equation*}
	Along the same line of the above estimate, we have
	\begin{equation*}
		\begin{aligned}
			&\|\rho^- u^+\cdot\nabla n^+\|^{\ell}_{\widetilde{L}^1_t (\dot{B}_{2,\infty}^{\sigma_0})}+\|\rho^+ u^-\cdot\nabla n^-\|^{\ell}_{\widetilde{L}^1_t 
(\dot{B}_{2,\infty}^{\sigma_0})}\\
			&\quad \lesssim (1+\|(\rho^+-\bar{\rho}^+,\rho^--\bar{\rho}^-)\|_{\widetilde{L}^{\infty}_{t}(\dot{B}^{\frac{d}{2}}_{2,1})})\| (u^+\cdot\nabla n^+,u^-\cdot\nabla n^-)\|_{\widetilde{L}^1_t( 
\dot{B}_{2,\infty}^{\sigma_0})}\\
			&\lesssim (1+\|\theta\|_{\widetilde{L}^{\infty}_{t}(\dot{B}^{\frac{d}{2}}_{2,1})})\|(u^+,u^-)\|_{\widetilde{L}^1_t(\dot{B}_{2,\infty}^{\sigma_0+2})}\| (n^+,\theta)\|_{\widetilde{L}^{\infty}_t 
(\dot{B}_{2,1}^{\frac{d}{2}-1})},
		\end{aligned}
	\end{equation*}
	and
	\begin{equation*}
		\begin{aligned}
			&\|(\mathcal{C}(n^++1,\theta+\bar{P})-\mathcal{C}( 1, \bar{P}))\rho^- u^+\cdot\nabla n^+\|^{\ell}_{\widetilde{L}^1_t (\dot{B}_{2,\infty}^{\sigma_0})}\\
			&\quad+\|(\mathcal{C}(n^++1,\theta+\bar{P})-\mathcal{C}( 1, \bar{P}))\rho^+ u^-\cdot\nabla n^-\|^{\ell}_{\widetilde{L}^1_t (\dot{B}_{2,\infty}^{\sigma_0})}\\
			&\lesssim \|(n^{+},\theta)\|_{\widetilde{L}^{\infty}_t(\dot{B}_{2,1}^{\frac{d}{2}})}  
(1+\|\theta\|_{\widetilde{L}^{\infty}_{t}(\dot{B}^{\frac{d}{2}}_{2,1})})\|(u^+,u^-)\|_{\widetilde{L}^1_t (\dot{B}_{2,\infty}^{\sigma_0+2})}\| 
(n^+,\theta)\|_{\widetilde{L}^{\infty}_t(\dot{B}_{2,1}^{\frac{d}{2}-1})}.
		\end{aligned}
	\end{equation*}
	Hence, by \eqref{e1} we have
	\begin{align}
		\|F_1\|^{\ell}_{\widetilde{L}^1_t (\dot{B}^{\sigma_0}_{2,\infty})}\lesssim \mathcal{X}(t) \| u^{\pm}\|_{L^1_t(\dot{B}_{2,\infty}^{\sigma_0+2})} .\label{F1sigma1}
	\end{align}
	With regard to the terms in $\|F_2\|^{\ell}_{L^1_t \dot{B}^{\sigma_0}_{2,\infty}}$, we also deduce that
	\begin{align*}
		& \|u^{\pm}\cdot\nabla u^{\pm}\|^{\ell}_{\widetilde{L}^1_t (\dot{B}_{2,\infty}^{\sigma_0})}+\|g_4^{\pm}(\theta)\nabla \theta\|^{\ell}_{\widetilde{L}^1_t (\dot{B}_{2,\infty}^{\sigma_0})}\\
		&\quad\lesssim \|u^{\pm}\|_{\widetilde{L}^{\infty}_t(\dot{B}_{2,1}^{\frac{d}{2}-1})}\| \nabla u^{\pm}\|_{\widetilde{L}^1_t 
(\dot{B}_{2,\infty}^{\sigma_0+1})}+\|g_4^{\pm}(\theta)\|_{\widetilde{L}^{\infty}_t(\dot{B}_{2,1}^{\frac{d}{2}-1})} 
\|\nabla\theta\|_{\widetilde{L}^1_t(\dot{B}_{2,\infty}^{\sigma_0+1})}\nonumber\\
		&\quad\lesssim \mathcal{X}(t)\|(\theta,u^{\pm})\|_{\widetilde{L}^1_t (\dot{B}_{2,\infty}^{\sigma_0+2})}
	\end{align*}
	and
	\begin{align*}
		&\|g_1^{\pm}(\theta)\Delta u^{\pm}\|^{\ell}_{\widetilde{L}^1_t (\dot{B}_{2,\infty}^{\sigma_0})}+ \|g_2^{\pm}(\theta)\nabla\dive  u^{\pm}\|^{\ell}_{\widetilde{L}^1_t 
(\dot{B}_{2,\infty}^{\sigma_0})}\\
		&\quad\lesssim\|g_1^{\pm}(\theta)\|_{\widetilde{L}^{\infty}_t( \dot{B}_{2,1}^{\frac{d}{2}})}\|\Delta 
u^{\pm}\|_{\widetilde{L}^1_t(\dot{B}_{2,\infty}^{\sigma_0})}+\|g_2^{\pm}(\theta)\|_{\widetilde{L}^{\infty}_t( \dot{B}_{2,1}^{\frac{d}{2}})}\|\nabla\dive  
u^{\pm}\|_{\widetilde{L}^1_t(\dot{B}_{2,\infty}^{\sigma_0})}\nonumber\\
		&\quad\lesssim\mathcal{X}(t)\| u^{\pm}\|_{\widetilde{L}^1_t(\dot{B}_{2,\infty}^{\sigma_0+2})}.
	\end{align*}
	In addition, it is easy to verify that
	\begin{align}
		&\quad  \| \frac{\nabla u^\pm \cdot \nabla \alpha^\pm}{R^\pm}\|^{\ell}_{\widetilde{L}^1_t (\dot{B}_{2,\infty}^{\sigma_0})}\nonumber\\
		&\leq\|\nabla u^\pm\cdot\nabla h^\pm(n^+,\theta)\tfrac{n^\pm}{n^\pm+1}\|^{\ell}_{\widetilde{L}^1_t (\dot{B}_{2,\infty}^{\sigma_0})}+\|\nabla u^\pm\cdot\nabla h^\pm(n^+,\theta)\|^{\ell}_{\widetilde{L}^1_t 
(\dot{B}_{2,\infty}^{\sigma_0})}\nonumber\\
		&\lesssim 
(\|(n^+,\theta)\|_{\widetilde{L}^{\infty}_t(\dot{B}_{2,1}^{\frac{d}{2}})}^2+\|(n^+,\theta)\|_{\widetilde{L}^{\infty}_t(\dot{B}_{2,1}^{\frac{d}{2}})})\| 
u^\pm\|_{\widetilde{L}^1_t(\dot{B}_{2,\infty}^{\sigma_0+2})}\nonumber\\
		&\lesssim \mathcal{X}(t)\| u^\pm\|_{\widetilde{L}^1_t(\dot{B}_{2,\infty}^{\sigma_0+2})}\nonumber.
	\end{align}
	Thus, we have
	\begin{align}
		\|F_2^\pm\|^{\ell}_{\widetilde{L}^1_t (\dot{B}_{2,\infty}^{\sigma_0})}\lesssim \mathcal{X}(t)\| 
u^\pm\|_{\widetilde{L}^1_t(\dot{B}_{2,\infty}^{\sigma_0+2})}.\label{F3sigma1}
	\end{align}
	Substituting \eqref{F1sigma1} and \eqref{F3sigma1} into \eqref{sigma1111} and using \eqref{highsigma0}, we have
	\begin{equation*}
		\begin{aligned}
			&\|(\theta, u^{+},u^{-})\|_{L^{\infty}_t( \dot{B}_{2,\infty}^{\sigma_0})}+\|(\theta, u^{+},u^{-})\|_{\widetilde{L}^{1}_t (\dot{B}_{2,\infty}^{\sigma_0+2})}\\
            &\quad \lesssim \|(\theta_0, 
u_0^{+},u_0^{-})\|_{\dot{B}_{2,\infty}^{\sigma_0}}+\mathcal{X}(t)\| (\theta,u^+,u^-)\|_{\widetilde{L}^1_t(\dot{B}_{2,\infty}^{\sigma_0+2})}.
		\end{aligned}
	\end{equation*}
	Making use of $\mathcal{X}(t)\lesssim\mathcal{X}_0\ll1$ and $\|(\theta_0,u_0^{+},u_0^{-})\|_{\dot{B}_{2,\infty}^{\sigma_0}}+\mathcal{X}_0\sim \delta_0$, we prove \eqref{sigma_0-functional}. 
The proof of Lemma \ref{lemma_for_E_L_sigma_0} is finished.
	
\end{proof}

\subsection{The gain of decay rates} Let the assumptions of Theorem \ref{Thm1.1} hold and $(n^+, \theta,u^+,u^-)$ be the global solution to the Cauchy problem \eqref{equation_R_P_u_pm} 
given by Theorem \ref{Thm1.1}. For any $A>>1$, it follows by \eqref{weighteddecay} and \eqref{sigma_0-functional} that
\begin{align}\label{priorestimate-forEtheta}
	\|\tau^{A}\theta\|&_{\widetilde{L}^{\infty}_t(\dot{B}_{2,1}^{\frac{d}{2}-2}\cap 
\dot{B}_{2,1}^{\frac{d}{2}})}+\|\tau^{A}(u^{+},u^{-})\|_{\widetilde{L}^{\infty}_t(\dot{B}_{2,1}^{\frac{d}{2}-2}\cap \dot{B}_{2,1}^{\frac{d}{2}-1})}\nonumber\\
	&\quad+\|\tau^{A}\theta\|_{L^{1}_t(\dot{B}_{2,1}^{\frac{d}{2}})}+\|\tau^{A}(u^{+},u^{-})\|_{L^{1}_t (\dot{B}_{2,1}^{\frac{d}{2}}\cap\dot{B}^{\frac{d}{2}+1}_{2,1})}\nonumber \\
	&\lesssim \delta_0 t^{A-\frac{1}{2}(\frac{d}{2}-2-\sigma_0)}.
\end{align}
This implies that, for all $t>0$,
\begin{align}
	\|\theta(t)\|&_{\dot{B}_{2,1}^{\frac{d}{2}-2}\cap \dot{B}_{2,1}^{\frac{d}{2}}}+\|(u^{+},u^{-})(t)\|_{\dot{B}_{2,1}^{\frac{d}{2}-2}\cap \dot{B}_{2,1}^{\frac{d}{2}-1}}\lesssim \delta_0 
t^{-\frac{1}{2}(\frac{d}{2}-2-\sigma_0)}.\label{above111}
\end{align}
On the other hand, the left-hand side of \eqref{above111} is uniformly bounded by $\mathcal{E}_0\lesssim \delta_0$ due to \eqref{r1}. Thus, we have
\begin{align}\label{mmmmmmmc}
	\|\theta(t)\|&_{\dot{B}_{2,1}^{\frac{d}{2}-2}\cap \dot{B}_{2,1}^{\frac{d}{2}}}+\|(u^{+},u^{-})(t)\|_{\dot{B}_{2,1}^{\frac{d}{2}-2}\cap \dot{B}_{2,1}^{\frac{d}{2}-1}}\lesssim \delta_0 
(1+t)^{-\frac{1}{2}(\frac{d}{2}-2-\sigma_0)},\quad t>0,
\end{align}
which in particular indicates that
\begin{align}
	&\|\theta(t)\|_{\dot{B}^{\sigma}_{2,1}}\lesssim \delta_0 (1+t)^{-\frac{1}{2}(\frac{d}{2}-2-\sigma_0)},\quad \sigma\in[\frac{d}{2}-2,\frac{d}{2}],\label{4.27}\\
	&\|u^{\pm}(t)\|_{\dot{B}_{2,1}^{\sigma}}\lesssim   \delta_0 (1+t)^{-\frac{1}{2}(\frac{d}{2}-2-\sigma_0)},\quad \sigma\in[\frac{d}{2}-2,\frac{d}{2}-1].\label{4.28}
\end{align}
For any $\sigma\in(\sigma_0,\frac{d}{2}-2]$, it follows from \eqref{sigma_0-functional}, \eqref{mmmmmmmc}, and the interpolation inequality \eqref{inter} that
\begin{equation}
	\begin{aligned}
		\|(\theta,u^{\pm})(t)\|_{ \dot{B}_{2,1}^{\sigma }}&\lesssim\|(\theta,u^{\pm})(t)\|^{\theta}_{\dot{B}_{2,\infty}^{\sigma_0}} 
\|(\theta,u^{\pm})(t)\|^{1-\theta}_{\dot{B}_{2,1}^{\frac{d}{2}-1}}\\
		&\lesssim \delta_0(1+t)^{-\frac{1}{2}(\sigma-\sigma_0)},\quad \sigma\in\left(\sigma_0,\frac{d}{2}-2\right).
	\end{aligned}
\end{equation}

Finally, we establish the decay of higher-order regularity norms for $u^{\pm}$. By applying the maximal regularity estimate for \eqref{weightuaa}, we have
\begin{align}
	\|\tau^{A}u^{\pm}\|_{\widetilde{L}^{\infty}_t(\dot{B}_{2,1}^{\frac{d}{2}+1})}&\lesssim \| \tau^{A-1} u\|_{\widetilde{L}^{\infty}_t( \dot{B}_{2,1}^{\frac{d}{2}-1})}+\|\tau^{A}\theta\|_{\widetilde{L}^{\infty}_t( \dot{B}_{2,1}^{\frac{d}{2}-1})}+\|\tau^{A}F_2^\pm\|_{\widetilde{L}^{\infty}_t(\dot{B}_{2,1}^{\frac{d}{2}-1})}\nonumber.
\end{align}
Here we have
\begin{equation*}
	\begin{aligned}
		\| \tau^{A-1} u^\pm\|_{\widetilde{L}^{\infty}_t( \dot{B}_{2,1}^{\frac{d}{2}-1})}&\lesssim \Big(\|u^\pm\|_{\widetilde{L}^{\infty}_t( \dot{B}_{2,1}^{\frac{d}{2}-1})}\Big)^{\frac{A-1}{A}} 
\Big(\|\tau^{A}u^\pm\|_{\widetilde{L}^{\infty}_t( \dot{B}_{2,1}^{\frac{d}{2}-1})}\Big)^{\frac{1}{A}} \\
		&\lesssim \mathcal{X}_0^{\frac{A-1}{A}} \Big( (1+t)^{A-\frac{1}{2}(\frac{d}{2}-2-\sigma_0)}\delta_0\Big)^{\frac{1}{A}}\lesssim  
(1+t)^{A-\frac{1}{2}(\frac{d}{2}-2-\sigma_0)}\delta_0,
	\end{aligned}
\end{equation*}
and
\begin{align*}
	\|\tau^{A}\theta\|_{\widetilde{L}^{\infty}_t( \dot{B}_{2,1}^{\frac{d}{2}-1})}+\|\tau^{A}F_2^\pm\|_{\widetilde{L}^{\infty}_t(\dot{B}_{2,1}^{\frac{d}{2}-1})}\lesssim  
(1+t)^{A-\frac{1}{2}(\frac{d}{2}-2-\sigma_0)}\delta_0,
\end{align*}
due to \eqref{inequalityh_for_F_3} and \eqref{priorestimate-forEtheta}. This implies that
\begin{align}
	\| u^{\pm}(t)\|^h_{ \dot{B}_{2,1}^{\frac{d}{2}+1}}\lesssim \delta_0(1+t)^{-\frac{1}{2}(\frac{d}{2}-2-\sigma_0)}.
\end{align}
from which and \eqref{4.28} we arrive at
\begin{align}
	\| u^{\pm}(t)\|_{ \dot{B}_{2,1}^{\sigma}}\lesssim \| u^{\pm}(t)\|_{ \dot{B}_{2,1}^{\frac{d}{2}-2}}^{\ell}+\| u^{\pm}(t)\|_{ \dot{B}_{2,1}^{\frac{d}{2}+1}}^{h}\lesssim 
\delta_0(1+t)^{-\frac{1}{2}(\frac{d}{2}-2-\sigma_0)},\label{4300}
\end{align}
for all $\sigma\in(\frac{d}{2}-2,\frac{d}{2}+1]$. By \eqref{mmmmmmmc}--\eqref{4300}, we complete the proof of Theorem \ref{Thm1.3}.  \hfill $\Box$

\section{Time-decay rates (II): Proof of Theorem \ref{Thm1.4}}\label{sectiondecay2}

In this section, our aim is to improve the time decay estimate obtained in Section \ref{sectiondecay1}. This approach is developed by Danchin and Xu \cite{danchin5} pertaining to the 
compressible Navier-Stokes equations. More precisely, we will capture the maximal decay properties and establish optimal rates of the higher-order $\dot{B}^{\sigma}_{2,1}$-norms 
($d/2-2<\sigma<d/2$) of $(\theta, u^\pm)$. This improvement allows us to further study the large time convergence of the non-dissipative components $R^\pm$ toward their non-trivial 
equilibrium states.


Define the time-weighted energy functional
\begin{align}\label{time-energy-forZ}
	\mathcal{Z}(t)&=\sup_{\sigma\in[\sigma_0+\varepsilon,\frac{d}{2}-\var]}\|\langle\tau\rangle^{\frac{\sigma-\sigma_0}{2}}(\theta, u^{+},u^{-})\|_{L^{\infty}(\dot{B}_{2,1}^{\sigma})}^{\ell}\nonumber\\
    &\quad+ 
\|\langle \tau \rangle^{\alpha}\theta \|_{\widetilde{L}^{\infty}(\dot{B}_{2,1}^{\frac{d}{2}})}^{h}+\|\langle\tau\rangle^{\alpha}(u^{+},u^{-}) 
\|_{\widetilde{L}^{\infty}(\dot{B}_{2,1}^{\frac{d}{2}+1})}^{h}
\end{align}
with $\langle t\rangle:=(1+t^2)^{\frac{1}{2}}$ and $\alpha:= \frac{1}{2}(\frac{d}{2}-\sigma_0-\var)$.
In what follows, we need the following inequality repeatedly:
\begin{align}
	\int_0^t \langle t-\tau\rangle^{-\frac{1}{2}(\sigma-\sigma_0')}\langle\tau\rangle^{-\sigma^{*}}d\tau\lesssim\langle t\rangle^{-\frac{1}{2}(\sigma-\sigma_0')}, \mbox{ for } 0\leq 
\frac{1}{2}(\sigma-\sigma_0')\leq \sigma^{*},\ \sigma^{*}>1.\label{inteq}
\end{align}
\subsection{Bounds for the low frequencies}

We first address the low-frequency estimates as follows.
\begin{lemma}
	Let $(n^+, \theta,u^+,u^-)$ be the global classical solution to the Cauchy problem \eqref{equation_R_P_u_pm} given by Theorem \ref{Thm1.1}. Then under the assumptions of Theorem 
\ref{Thm1.4}, for any $\sigma_0'\in[\sigma_0,\frac{d}{2}-2)$, it holds that
	\begin{equation}\label{5.3}
		\begin{aligned}
			 \sup_{\sigma\in[\sigma_0'+\varepsilon,\frac{d}{2}-\var]}\|\langle\tau\rangle^{\frac{\sigma-\sigma_0'}{2}}(\theta, 
u^{+},u^{-})\|_{L^{\infty}(\dot{B}_{2,1}^{\sigma})}^{\ell}&\lesssim
			\|(\theta_0, u_0^{+},u_0^{-})\|_{\dot{B}^{\sigma_0}_{2,\infty}}^{\ell}\\
            &\quad\quad+ \|(n^+,\theta,u^{+},u^{-})\|_{\dot{B}^{\sigma_0+1}_{2,\infty}}\mathcal{Z}(t).
		\end{aligned}
	\end{equation}
\end{lemma}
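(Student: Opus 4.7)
The plan is to revisit the low-frequency analysis of Lemma~\ref{lemma1} and upgrade the Lyapunov inequality \eqref{Lyapunovlow} into a genuine Duhamel-type semigroup bound. Using the equivalences \eqref{E1sim}--\eqref{D1sim} and the same division-integration trick that yielded \eqref{low-frequency1}, I would derive, for every $j\leq 0$, the representation
\begin{equation*}
\|\dot{\Delta}_j(\theta,u^+,u^-)(t)\|_{L^2}\lesssim e^{-c_02^{2j}t}\|\dot{\Delta}_j(\theta_0,u_0^+,u_0^-)\|_{L^2}+\int_0^t e^{-c_02^{2j}(t-\tau)}\|\dot{\Delta}_j(F_1,F_2^+,F_2^-)(\tau)\|_{L^2}\,d\tau,
\end{equation*}
with a constant $c_0>0$ independent of $j$. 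This reduces the task to estimating a heat-type semigroup acting on $\dot{B}^{\sigma_0}_{2,\infty}$ initial data and on the nonlinear forcing $(F_1,F_2^{\pm})$ specified in \eqref{1.46}.

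For the free evolution part, a standard computation gives, for any $\sigma>\sigma_0$,
\begin{equation*}
\sum_{j\leq 0}2^{j\sigma}e^{-c_02^{2j}t}\|\dot{\Delta}_j(\theta_0,u_0^+,u_0^-)\|_{L^2}\lesssim\langle t\rangle^{-(\sigma-\sigma_0)/2}\|(\theta_0,u_0^+,u_0^-)\|_{\dot{B}^{\sigma_0}_{2,\infty}}^{\ell}.
\end{equation*}
Because $\sigma_0'\geq\sigma_0$, multiplying by $\langle t\rangle^{(\sigma-\sigma_0')/2}$ leaves the harmless factor $\langle t\rangle^{(\sigma_0-\sigma_0')/2}\leq 1$, so this part is immediately dominated by the first term on the right-hand side of \eqref{5.3}.

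For the nonlinear contribution, the two key inputs are the time-convolution inequality \eqref{inteq} specialized to exponents ensuring integrability, together with the Besov product and composition estimates collected in Appendix~A. Each summand of $F_1$ and $F_2^{\pm}$ in \eqref{1.46} is either a product of a dissipative variable $(\theta,u^{\pm})$ with a composite factor of $(n^+,\theta)$, or a convection-type term $u^{\pm}\cdot\nabla n^{\pm}$. I will split every such product so that one factor is absorbed into the uniform prefactor $\|(n^+,\theta,u^+,u^-)\|_{\dot{B}^{\sigma_0+1}_{2,\infty}}$, while the other is paired with a $\dot{B}^{d/2}_{2,1}$ or $\dot{B}^{d/2+1}_{2,1}$ norm whose time decay of rate $\langle\tau\rangle^{-\alpha}$ with $\alpha=\tfrac{1}{2}(d/2-\sigma_0-\varepsilon)$ is supplied by the functional $\mathcal{Z}(t)$ defined in \eqref{time-energy-forZ}. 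Composition estimates collapse the smooth coefficients $g_i^{\pm}$, $\mathcal{C}$, $\rho^{\pm}$, $\alpha^{\pm}$ to controlled $\dot{B}^s_{2,1}$-bounds of $(n^+,\theta)$ via the smallness guaranteed by Theorem~\ref{Thm1.1}.

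The principal obstacle is the non-dissipative variable $n^+$ in the convection terms $u^{\pm}\cdot\nabla n^{\pm}$ of $F_1$: since $n^+$ carries no intrinsic temporal decay, it must always be absorbed into the uniform prefactor, and the decay for such terms must come entirely from $u^{\pm}$. The essential arithmetic check is therefore that $\alpha>1$, which is equivalent to $\varepsilon<d/2-2-\sigma_0$ and is precisely the hypothesis $0<\varepsilon\ll d/2-2-\sigma_0$ imposed in Theorem~\ref{Thm1.4}; this guarantees the bound $\int_0^t\langle t-\tau\rangle^{-(\sigma-\sigma_0)/2}\langle\tau\rangle^{-\alpha}\,d\tau\lesssim\langle t\rangle^{-(\sigma-\sigma_0')/2}$, which closes the bootstrap. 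Taking the supremum over $\sigma\in[\sigma_0'+\varepsilon,d/2-\varepsilon]$ after multiplying by $\langle t\rangle^{(\sigma-\sigma_0')/2}$ then yields \eqref{5.3}.
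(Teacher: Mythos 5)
Your proposal is correct and follows essentially the same route as the paper: a Gr\"onwall/Duhamel representation from the low-frequency Lyapunov inequality \eqref{Lyapunovlow}, heat-kernel decay acting on the $\dot{B}^{\sigma_0}_{2,\infty}$ data combined with the trivial low-frequency bound to replace $t$ by $\langle t\rangle$, product and composition estimates placing one factor of each nonlinearity in $\dot{B}^{\sigma_0+1}_{2,\infty}$ and the other in the $\langle\tau\rangle^{-\alpha}$-decaying norms controlled by $\mathcal{Z}(t)$, and the convolution inequality \eqref{inteq} with $\alpha>1$. The handling of general $\sigma_0'\geq\sigma_0$ via the harmless factor $\langle t\rangle^{(\sigma_0-\sigma_0')/2}\leq 1$ is also consistent with the paper's argument.
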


\begin{proof}
	By applying Gr\"onwall's inequality to \eqref{Lyapunovlow}, we get
	\begin{equation}\nonumber
		\begin{aligned}
			&\|\dot{\Delta}_j (\theta, u^{+},u^{-})\|_{L^2}\lesssim e^{-c2^{2j}t}\|\dot{\Delta}_j (\theta_0, u^{+}_0,u^{-}_0)\|_{L^2}+\int_0^t 
e^{-c2^{2j}(t-\tau)}\|\dot{\Delta}_j(F_1,F_2^+,F_2^-)\|_{L^2}  d\tau,
		\end{aligned}
	\end{equation}
    which implies that, for any $\sigma>\sigma_0$,
    	\begin{equation}\label{4.6}
		\begin{aligned}
			\|(\theta, u^{+},u^{-})\|_{\dot{B}^{\sigma}_{2,1}}^{\ell}&\lesssim t^{-\frac{1}{2}(\sigma-\sigma_0)}\|(\theta_0, u^{+}_0,u^{-}_0)\|_{\dot{B}^{\sigma_0}_{2,\infty}}^{\ell}+\int_0^t (t-\tau)^{-\frac{1}{2}(\sigma-\sigma_0)}
\|(F_1,F_2^+,F_2^-)\|_{\dot{B}^{\sigma_0}_{2,\infty}}^{\ell}  d\tau,
		\end{aligned}
	\end{equation}
where we used the fact that
	\begin{align*}
		\sup_{t\geq0}\sum_{j\in\mathbb{Z}}  t^{\frac{\sigma'}{2}} 2^{j\sigma'} e^{-C' 2^{2j}t}<\infty\quad \text{for all}~\sigma',C'>0.
	\end{align*}
On the other hand, the low-frequency cut-off property indicates that
	\begin{equation}\label{4.8}
		\begin{aligned}
			\|(\theta, u^{+},u^{-})\|_{\dot{B}^{\sigma}_{2,1}}^{\ell}&\lesssim \|(\theta_0, u^{+}_0,u^{-}_0)\|_{\dot{B}^{\sigma}_{2,1}}^{\ell}+\int_0^t 
\|(F_1,F_2^+,F_2^-)\|_{\dot{B}^{\sigma}_{2,1}}^{\ell}  d\tau\\
			&\lesssim \|(\theta_0, u^{+}_0,u^{-}_0)\|_{\dot{B}^{\sigma_0}_{2,\infty}}^{\ell}+\int_0^t \|(F_1,F_2^+,F_2^-)\|_{\dot{B}^{\sigma_0}_{2,\infty}}^{\ell}  d\tau.
		\end{aligned}
	\end{equation}
	Combining \eqref{4.6} and \eqref{4.8}, we deduce that, for any $\sigma>\sigma_0$,
	\begin{equation}\label{5.8}
		\begin{aligned}
			\|(\theta, u^{+},u^{-})\|_{\dot{B}^{\sigma}_{2,1}}^{\ell}&\lesssim (1+t)^{-\frac{1}{2}(\sigma-\sigma_0)}\|(\theta_0, u_0^{+},u_0^{-})\|_{\dot{B}^{\sigma_0}_{2,\infty}}^{\ell}\\
			&\quad+\int_0^t(1+t-\tau)^{-\frac{1}{2}(\sigma-\sigma_0)} \|(F_1,F_2^+,F_2^-)\|_{\dot{B}^{\sigma_0}_{2,\infty}}^{\ell} d\tau.
		\end{aligned}
	\end{equation}
	In view of \eqref{uv3} and \eqref{F1}, we have
	\begin{equation}\label{5.9}
		\begin{aligned}
			\|F_1\|_{\dot{B}^{\sigma_0}_{2,\infty}}^{\ell}&\lesssim \|g_3^\pm(n^{+},\theta)\dive u^{\pm}\|_{\dot{B}^{\sigma_0}_{2,\infty}}^{\ell}\\
			&\quad+\|\rho^- u^+\cdot\nabla n^+\|_{\dot{B}^{\sigma_0}_{2,\infty}}^{\ell}+\|\rho^+ u^-\cdot\nabla n^-\|_{\dot{B}^{\sigma_0}_{2,\infty}}^{\ell}\\
			&\quad+\|(\mathcal{C}(n^++1,\theta+\bar{P})-\mathcal{C}( 1, \bar{P}))\rho^- u^+\cdot\nabla n^+\|_{\dot{B}^{\sigma_0}_{2,\infty}}^{\ell}\\
			&\quad+\|(\mathcal{C}(n^++1,\theta+\bar{P})-\mathcal{C}( 1, \bar{P}))\rho^+ u^-\cdot\nabla n^-\|_{\dot{B}^{\sigma_0}_{2,\infty}}^{\ell}\\
			&\lesssim (1+\|(n^+,\theta)\|_{\dot{B}^{\frac{d}{2}}_{2,1}})\|n^+\|_{\dot{B}^{\sigma_0+1}_{2,\infty}}\|(u^+,u^-)\|_{\dot{B}^{\frac{d}{2}}_{2,1}\cap \dot{B}^{\frac{d}{2}+1}_{2,1}}.
		\end{aligned}
	\end{equation}
	Similar computations show that
	\begin{equation}\label{5.10}
		\begin{aligned}
			&\|F_2^+\|_{\dot{B}^{\sigma_0}_{2,\infty}}^{\ell}+\|F_2^-\|_{\dot{B}^{\sigma_0}_{2,\infty}}^{\ell}\\
			&\quad\lesssim (1+\|(n^+,\theta)\|_{\dot{B}^{\frac{d}{2}}_{2,1}})\|(n^+,\theta,u)\|_{\dot{B}^{\sigma_0+1}_{2,\infty}} 
(\|\theta\|_{\dot{B}^{\frac{d}{2}}_{2,1}}+\|(u^+,u^-)\|_{\dot{B}^{\frac{d}{2}}_{2,1}\cap\dot{B}^{\frac{d}{2}+1}_{2,1}}).
		\end{aligned}
	\end{equation}
	According to the definition \eqref{time-energy-forZ} of $\mathcal{Z}(t)$, it follows that
	\begin{equation}\label{5.11}
		\begin{aligned}
			\|\theta\|_{\dot{B}^{\frac{d}{2}}_{2,1}}+\|(u^+,u^-)\|_{\dot{B}^{\frac{d}{2}}_{2,1}\cap \dot{B}^{\frac{d}{2}+1}_{2,1}}&\lesssim 
\|(\theta,u^{+},u^{-})\|_{\dot{B}^{\frac{d}{2}-\var}_{2,1}}^{\ell}+\|\theta\|_{\dot{B}^{\frac{d}{2}}_{2,1}}^h+\|(u^+,u^-)\|_{\dot{B}^{\frac{d}{2}+1}_{2,1}}^h\\
			&\lesssim \langle t\rangle^{-\alpha}\mathcal{Z}(t).
		\end{aligned}
	\end{equation}
	Since for suitably small $0<\var<<1$ and any $\sigma_0<\sigma\leq \frac{d}{2}-\var$,
	\begin{align}
		\alpha=\frac{1}{2}\left(\frac{d}{2}-\sigma_0-\var\right)>1,\quad 0\leq \frac{1}{2}(\sigma-\sigma_0)\leq \alpha,
	\end{align}
	we conclude from \eqref{r1}, \eqref{inteq} and \eqref{5.9}--\eqref{5.11} that
	\begin{equation}
		\begin{aligned}
			&\int_0^t(1+t-\tau)^{-\frac{1}{2}(\sigma-\sigma_0)} \|(F_1,F_2^+,F_2^-)\|_{\dot{B}^{\sigma_0}_{2,\infty}}^{\ell} d\tau\\
			&\quad \lesssim  \|(n^+,\theta,u^+,u^-)\|_{L^{\infty}_t(\dot{B}^{\sigma_0+1}_{2,\infty})} \mathcal{Z}(t)\int_0^t (1+t-\tau)^{-\frac{1}{2}(\sigma-\sigma_0)} (1+ 
\tau)^{-\alpha}\,d\tau\\
			&\quad \lesssim \|(n^+,\theta,u^+,u^-)\|_{\dot{B}^{\sigma_0+1}_{2,\infty}}\mathcal{Z}(t) (1+t)^{-\frac{1}{2}(\sigma-\sigma_0)}.
		\end{aligned}
	\end{equation}
	This, together with \eqref{5.8}, gives rise to \eqref{5.3}.
\end{proof}

\subsection{Bounds for the high frequencies}
\begin{lemma}\label{lemma5.2}
	Let $(n^+, \theta,u^+,u^-)$ be the global classical solution to the Cauchy problem \eqref{equation_R_P_u_pm} given by Theorem \ref{Thm1.1}. Then under the assumptions of Theorem 
\ref{Thm1.4}, it holds that, for $\alpha=\frac{1}{2}(\frac{d}{2}- \sigma_0 )$,
	\begin{align}\label{5.13}
		\|\langle \tau \rangle^{\alpha}\theta \|_{\widetilde{L}^{\infty}(\dot{B}_{2,1}^{\frac{d}{2}})}^{h}&+\|\langle 
\tau\rangle^{\alpha}(u^+,u^-)\|_{\widetilde{L}^{\infty}(\dot{B}_{2,1}^{\frac{d}{2}+1})}^{h}\lesssim
		\|\theta_0\|_{\dot{B}^{\frac{d}{2}}_{2,1}}^{h}+\| (u_0^{+},u_0^{-})\|_{\dot{B}^{\frac{d}{2}-1}_{2,1}}^{h}+\mathcal{X}(t)\mathcal{Z}(t).
	\end{align}
\end{lemma}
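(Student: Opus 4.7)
The plan is to run the high-frequency Lyapunov argument of Lemma \ref{lemma2} with the time weight $\langle \tau \rangle^{\alpha}$, in the spirit of the weighted low-frequency estimates already carried out in Lemma \ref{lem-for-low-frequency}. Starting from \eqref{Lyapunovhigh}, dividing by $\sqrt{\mathcal{E}_{2,j}(t)+\varepsilon_{\ast}^2}$, letting $\varepsilon_{\ast}\to 0$ and then multiplying by $\langle t\rangle^{\alpha}$, I obtain for every $j \geq -1$
\begin{equation*}
\frac{d}{dt}\bigl(\langle t\rangle^{\alpha}\sqrt{\mathcal{E}_{2,j}}\bigr) + c\langle t\rangle^{\alpha}\sqrt{\mathcal{E}_{2,j}} \lesssim \langle t\rangle^{\alpha-1}\sqrt{\mathcal{E}_{2,j}} + \langle t\rangle^{\alpha}\|\dot{\Delta}_{j}(\nabla F_{1}, F_{2}^{+}, F_{2}^{-})\|_{L^{2}}.
\end{equation*}
The lower-order term $\langle t\rangle^{\alpha-1}\sqrt{\mathcal{E}_{2,j}}$ is absorbed into the damping on the left: it is uniformly bounded by the initial data on any finite interval $[0,T_{0}]$, while on $[T_{0},t]$ the factor $\langle\tau\rangle^{-1}$ is small enough to be absorbed in $c\langle\tau\rangle^{\alpha}\sqrt{\mathcal{E}_{2,j}}$. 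Integrating in time, multiplying by $2^{j(d/2-1)}$, and summing over $j \geq -1$ (using the equivalences \eqref{3.30}--\eqref{3.31}) yields
\begin{equation*}
\|\langle\tau\rangle^{\alpha}(\nabla\theta, u^{+}, u^{-})\|^{h}_{\widetilde{L}^{\infty}_{t}(\dot{B}^{d/2-1}_{2,1})} + \|\langle\tau\rangle^{\alpha}(\nabla\theta, u^{+}, u^{-})\|^{h}_{L^{1}_{t}(\dot{B}^{d/2-1}_{2,1})} \lesssim \|\theta_{0}\|^{h}_{\dot{B}^{d/2}_{2,1}} + \|u_{0}^{\pm}\|^{h}_{\dot{B}^{d/2-1}_{2,1}} + \|\langle\tau\rangle^{\alpha}(\nabla F_{1}, F_{2}^{\pm})\|^{h}_{L^{1}_{t}(\dot{B}^{d/2-1}_{2,1})}.
\end{equation*}
Since at high frequencies $\|\nabla\theta\|_{\dot{B}^{d/2-1}_{2,1}}^{h} \sim \|\theta\|_{\dot{B}^{d/2}_{2,1}}^{h}$, the first summand already delivers the target control of $\|\langle\tau\rangle^{\alpha}\theta\|^{h}_{\widetilde{L}^{\infty}_{t}(\dot{B}^{d/2}_{2,1})}$.

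To promote $u^{\pm}$ from $\dot{B}^{d/2-1}_{2,1}$ to the target $\dot{B}^{d/2+1}_{2,1}$, I would apply Lemma \ref{lame-system-estimate} to the weighted Lam\'e system derived from \eqref{3.39}:
\begin{equation*}
\partial_{t}(\langle\tau\rangle^{\alpha}u^{\pm}) - \nu_{1}^{\pm}\Delta(\langle\tau\rangle^{\alpha}u^{\pm}) - \nu_{2}^{\pm}\nabla\dive(\langle\tau\rangle^{\alpha}u^{\pm}) = \alpha\langle\tau\rangle^{\alpha-1}u^{\pm} - \beta_{2}^{\pm}\nabla(\langle\tau\rangle^{\alpha}\theta) + \langle\tau\rangle^{\alpha}F_{2}^{\pm}.
\end{equation*}
At $s = d/2-1$ this gives $\langle\tau\rangle^{\alpha}u^{\pm}$ in $L^{1}_{t}(\dot{B}^{d/2+1}_{2,1})$ in terms of the already controlled quantity $\|\langle\tau\rangle^{\alpha}\theta\|^{h}_{L^{1}_{t}(\dot{B}^{d/2}_{2,1})}$, and then the $\widetilde{L}^{\infty}_{t}(\dot{B}^{d/2+1}_{2,1})$ bound at high frequencies is obtained by a Duhamel representation at the $\dot{B}^{d/2+1}_{2,1}$ level (where the high-frequency heat-type dissipation forces exponential decay of the linear propagator) combined with the uniform bound $\|u^{\pm}\|^{h}_{\widetilde{L}^{\infty}(\dot{B}^{d/2+1}_{2,1})} \lesssim \mathcal{X}_{0}$ from Theorem \ref{Thm1.1}. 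For the nonlinear sources, I would follow the pattern of Lemma \ref{lemma2}, placing the weight $\langle\tau\rangle^{\alpha}$ on whichever factor (low- or high-frequency piece, via \eqref{lhl}) is controlled by $\mathcal{Z}(t)$ and keeping the complementary factor in the globally $L^{1}_{t}$-integrable norms supplied by \eqref{e1}, invoking the product and composition laws of Appendix~A. This produces the claimed bound of the form $\mathcal{X}(t)\mathcal{Z}(t)$, thanks to the smallness $\mathcal{X}(t) \lesssim \mathcal{X}_{0} \ll 1$.

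The main obstacle is the careful bookkeeping of the time weight in the nonlinear sources: since $\mathcal{Z}(t)$ encodes weighted $L^{\infty}_{t}$ information rather than $L^{1}_{t}$, for each bilinear product arising from $F_{1}$ or $F_{2}^{\pm}$ one must attach $\langle\tau\rangle^{\alpha}$ to the factor whose decay is actually captured by $\mathcal{Z}(t)$ at the relevant frequency range, while verifying that the complementary factor lies in a norm with $L^{1}_{t}$ integrability from the global energy bound. A secondary technical point is the weight mismatch between the exponent $\alpha = \tfrac{1}{2}(\tfrac{d}{2}-\sigma_{0})$ declared in the statement and the exponent $\tfrac{1}{2}(\tfrac{d}{2}-\sigma_{0}-\varepsilon)$ appearing inside \eqref{time-energy-forZ}; this $\varepsilon/2$-gap is bridged by an interpolation between two adjacent Besov indices when estimating the low-frequency pieces of the convective-type products $u^{\pm}\cdot\nabla u^{\pm}$ and $\rho^{\pm}u^{\mp}\cdot\nabla n^{\mp}$.
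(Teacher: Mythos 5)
Your overall architecture (weighted high-frequency Lyapunov functional, then weighted Lam\'e maximal regularity to lift $u^{\pm}$ to $\dot{B}^{d/2+1}_{2,1}$) matches the paper's, but there is a genuine gap in how you treat the nonlinear sources. By integrating the weighted differential inequality in time, you are led to control $\|\langle\tau\rangle^{\alpha}(\nabla F_1,F_2^{\pm})\|^{h}_{L^1_t(\dot{B}^{d/2-1}_{2,1})}$, and this quantity is \emph{not} bounded by $\mathcal{X}(t)\mathcal{Z}(t)$. The functional $\mathcal{Z}(t)$ is purely of weighted $L^{\infty}_t$ type, so it only yields pointwise-in-time decay $\|(\theta,u^{\pm})(\tau)\|\lesssim\langle\tau\rangle^{-\alpha}\mathcal{Z}(t)$; inserting this into a time integral of the weighted source gives $\int_0^t\langle\tau\rangle^{\alpha}\cdot\langle\tau\rangle^{-\alpha}\,d\tau\sim t$, which grows. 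Concretely, for the term $\rho^{\mp}u^{\pm}\cdot\nabla n^{\pm}$ in $\nabla F_1$ the weight must sit on $u^{\pm}$ (the non-dissipative $n^{\pm}$ has no decay), and $\|\langle\tau\rangle^{\alpha}u^{\pm}\|_{L^1_t(\dot{B}^{d/2+1}_{2,1})}$ is controlled neither by $\mathcal{Z}(t)$ nor by $\mathcal{X}(t)$; even the weighted $L^1_t$ estimate of Lemma \ref{lem-for-low-frequency} with $A=\alpha$ only gives a bound of order $t^{\alpha-\frac12(\frac d2-2-\sigma_0)}\approx t$. So your scheme does not close, and absorbing the high-frequency piece into the left-hand dissipation does not rescue the low-frequency piece of that factor.

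The correct route, and the one the paper takes, is to keep the exponential kernel produced by the high-frequency damping: applying Gr\"onwall to \eqref{Lyapunovhigh} gives
\begin{equation*}
\|\dot{\Delta}_j(\nabla\theta,u^{+},u^{-})\|_{L^2}\lesssim e^{-ct}\|\dot{\Delta}_j(\nabla\theta_0,u_0^{+},u_0^{-})\|_{L^2}+\int_0^t e^{-c(t-\tau)}\|\dot{\Delta}_j(\nabla F_1,F_2^{+},F_2^{-})\|_{L^2}\,d\tau,
\end{equation*}
and the elementary inequality $\int_0^t e^{-c(t-\tau)}\langle\tau\rangle^{-\alpha}\,d\tau\lesssim\langle t\rangle^{-\alpha}$ converts the time convolution into a supremum, so that only $\|\langle\tau\rangle^{\alpha}(\nabla F_1,F_2^{\pm})\|^{h}_{\widetilde{L}^{\infty}_t(\dot{B}^{d/2-1}_{2,1})}$ is needed; this \emph{is} of the form $\mathcal{X}(t)\mathcal{Z}(t)$ by the product laws, with the weight always placed on $\theta$ or $u^{\pm}$ and never on $n^{\pm}$. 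The same remark applies to your Lam\'e step: it must be run in the $\widetilde{L}^{\infty}_t$ framework ($\varrho_1=\varrho_2=\infty$ in Lemma \ref{lame-system-estimate}), not in $L^1_t$. Finally, you correctly spotted the mismatch between the exponent $\alpha=\frac12(\frac d2-\sigma_0)$ in the lemma statement and $\alpha=\frac12(\frac d2-\sigma_0-\var)$ in \eqref{time-energy-forZ}; the proof is carried out with the latter value (one needs $\alpha>1$ and compatibility with $\mathcal{Z}$), so no interpolation bridge is actually required there.
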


\begin{proof}
	
	By applying Gr\"onwall's inequality to \eqref{Lyapunovhigh}, we get the damping structure in high frequencies for $j\geq-1$:
	\begin{equation}
		\begin{aligned}
			&\|\dot{\Delta}_j (\nabla \theta, u^{+},u^{-})\|_{L^2}\lesssim e^{-c t}\|\dot{\Delta}_j (\nabla\theta_0, u^{+}_0,u^{-}_0)\|_{L^2}+\int_0^t e^{- c(t-\tau)}\|\dot{\Delta}_j(\nabla 
F_1,F_2^+,F_2^-)\|_{L^2}  d\tau,
		\end{aligned}
	\end{equation}
	where $c>0$ is a uniform constant. Therefore, we get
	\begin{equation}\label{equ4-1}
		\begin{aligned}
			&\|\langle\tau\rangle^{\alpha}(\nabla \theta, 
u^{+},u^{-})\|^h_{\widetilde{L}^{\infty}_t(\dot{B}_{2,1}^{\frac{d}{2}-1})}\\
&\quad\lesssim\sum_{j\geq-1}\sup_{\tau\in[0,t]}\langle\tau\rangle^{\alpha} e^{-c\tau} \|(\nabla \theta_0, u_0^{+},u_0^{-})\|_{ 
\dot{B}_{2,1}^{\frac{d}{2}-1}}^h\\
			&\quad\quad+\sum_{j\geq-1}\sup_{\tau\in[0,t]}\langle\tau\rangle^{\alpha}\int_0^{\tau}e^{-(\tau-w)}2^{j(\frac{d}{2}-1)}\|\dot{\Delta}_j(\nabla F_1,F_2^+,F_2^-)\|_{L^2}dw\\
			&\quad\lesssim \|(\nabla \theta_0, u_0^{+},u_0^{-})\|^h_{ \dot{B}_{2,1}^{\frac{d}{2}-1}}+\|\langle\tau\rangle^{\alpha}(\nabla 
F_1,F_2^+,F_2^-)\|_{\widetilde{L}^{\infty}_{t}(\dot{B}^{\frac{d}{2}-1}_{2,1})}^h,
		\end{aligned}
	\end{equation}
	where one has used
	$$
	\int_0^t e^{-c(t-\tau)} \langle \tau\rangle^{-\alpha}d\tau\lesssim \langle t\rangle^{-\alpha}.
	$$
	To control the nonlinear terms  $(\nabla F_1,F_2^+,F_2^-)$, remembering the uniform bounds in \eqref{r1} obtained in Theorem \ref{Thm1.1}, we have
	\begin{equation*}
		\begin{aligned}
			\|\langle\tau\rangle^{\alpha}  F_1\|_{ \widetilde{L}^{\infty}_t(\dot{B}_{2,1}^{\frac{d}{2}})}^h &\lesssim \|\langle\tau\rangle^{\alpha}  g_3^\pm(n^{+},\theta)\dive u^{\pm}\|_{ 
\widetilde{L}^{\infty}_t(\dot{B}_{2,1}^{\frac{d}{2}})}^h+\|\langle\tau\rangle^{\alpha}  \rho^{\mp}u^{\pm}\cdot\nabla n^{\pm}\|_{ \widetilde{L}^{\infty}_t(\dot{B}_{2,1}^{\frac{d}{2}})}^h\\
			&+\|\langle\tau\rangle^{\alpha}(\mathcal{C}(n^++1,\theta+\bar{P})-\mathcal{C}( 1, \bar{P}))\rho^+ u^-\cdot\nabla n^-\|_{ 
\widetilde{L}^{\infty}_t(\dot{B}_{2,1}^{\frac{d}{2}})}^h\\
			&+\|\langle\tau\rangle^{\alpha}(\mathcal{C}(n^++1,\theta+\bar{P})-\mathcal{C}( 1, \bar{P}))\rho^- u^+\cdot\nabla n^+\|_{ 
\widetilde{L}^{\infty}_t(\dot{B}_{2,1}^{\frac{d}{2}})}^h\\
			&\lesssim \|(n^+,\theta)\|_{ \widetilde{L}^{\infty}_t(\dot{B}_{2,1}^{\frac{d}{2}})}\|\langle\tau\rangle^{\alpha} u^{\pm}\|_{ 
\widetilde{L}^{\infty}_t(\dot{B}_{2,1}^{\frac{d}{2}})}\\
			&+(1+\|\theta\|_{ \widetilde{L}^{\infty}_t(\dot{B}_{2,1}^{\frac{d}{2}})})\|n^+\|_{ \widetilde{L}^{\infty}_t(\dot{B}_{2,1}^{\frac{d}{2}+1})}\|\langle\tau\rangle^{\alpha} 
u^{\pm}\|_{ \widetilde{L}^{\infty}_t(\dot{B}_{2,1}^{\frac{d}{2}+1})}\\
			&+\|(n^+,\theta)\|_{ \widetilde{L}^{\infty}_t(\dot{B}_{2,1}^{\frac{d}{2}})}(1+\|\theta\|_{ \widetilde{L}^{\infty}_t(\dot{B}_{2,1}^{\frac{d}{2}})})\|n^+\|_{ 
\widetilde{L}^{\infty}_t(\dot{B}_{2,1}^{\frac{d}{2}+1})}\|\langle\tau\rangle^{\alpha} u^{\pm}\|_{ \widetilde{L}^{\infty}_t(\dot{B}_{2,1}^{\frac{d}{2}+1})}\\
			&\lesssim \mathcal{X}(t)\|\langle\tau\rangle^{\alpha} (u^{+},u^{-})\|_{ \widetilde{L}^{\infty}_t(\dot{B}_{2,1}^{\frac{d}{2}}\cap \dot{B}_{2,1}^{\frac{d}{2}+1})}.
		\end{aligned}
	\end{equation*}
One observes that
 $$
    \|\langle\tau\rangle^{\alpha} u^{\pm}\|_{ 
\widetilde{L}^{\infty}_t(\dot{B}_{2,1}^{\frac{d}{2}})}^\ell\leq \|\langle\tau\rangle^{\alpha} u^{\pm}\|_{ 
\widetilde{L}^{\infty}_t(\dot{B}_{2,\infty}^{\frac{d}{2}-\var})}^\ell=\|\langle\tau\rangle^{\alpha} u^{\pm}\|_{ 
L^{\infty}_t(\dot{B}_{2,\infty}^{\frac{d}{2}-\var})}^\ell\leq \|\langle\tau\rangle^{\alpha} u^{\pm}\|_{ 
L^{\infty}_t(\dot{B}_{2,1}^{\frac{d}{2}})}^\ell.
$$
Thus, we have
\begin{align*}
		\|\langle\tau\rangle^{\alpha} u^{\pm}\|_{ \widetilde{L}^{\infty}_t(\dot{B}_{2,1}^{\frac{d}{2}})}
		&\lesssim \|\langle\tau\rangle^{\alpha} u^{\pm}\|_{  L^{\infty}_t(\dot{B}_{2,1}^{\frac{d}{2}-\var})}^{\ell}+\|\langle\tau\rangle^{\alpha} u^{\pm}\|^h_{ 
\widetilde{L}^{\infty}_t(\dot{B}_{2,1}^{\frac{d}{2}})}\lesssim\mathcal{Z}(t).
	\end{align*}
Consequently, it holds that
	\begin{align}
		\|\langle\tau\rangle^{\alpha}  F_1\|_{ \widetilde{L}^{\infty}_t(\dot{B}_{2,1}^{\frac{d}{2}})}^h\lesssim \mathcal{X}(t)\mathcal{Z}(t).\label{F1hightime}
	\end{align}
	Arguing similarly as above, one deduces that
	\begin{align}
		&\quad\|\langle\tau\rangle^{\alpha}  (F_2^+,F_2^-)\|_{ \widetilde{L}^{\infty}_t(\dot{B}_{2,1}^{\frac{d}{2}-1})}^h\nonumber\\
		&\lesssim 
(1+\|(n^+,\theta)\|_{\widetilde{L}^{\infty}_{t}(\dot{B}^{\frac{d}{2}}_{2,1})})(\|(n^+,\theta)\|_{\widetilde{L}^{\infty}_{t}(\dot{B}^{\frac{d}{2}}_{2,1})}+\|(u^{+},u^{-})\|_{\widetilde{L}^{\infty}_{t}(\dot{B}^{\frac{d}{2}-1}_{2,1})})\nonumber\\
		&\quad\times (\|\langle \tau\rangle^{\alpha}\theta\|_{\widetilde{L}^{\infty}_{t}(\dot{B}^{\frac{d}{2}}_{2,1})}+\|\langle 
\tau\rangle^{\alpha}(u^{+},u^{-})\|_{\widetilde{L}^{\infty}_{t}(\dot{B}^{\frac{d}{2}}_{2,1}\cap \dot{B}^{\frac{d}{2}+1}_{2,1})}\nonumber\\
		&\lesssim  \mathcal{X}(t)\mathcal{Z}(t)\label{F2F3high}.
	\end{align}
	Substituting \eqref{F1hightime}--\eqref{F2F3high} into \eqref{equ4-1}, we obtain
	\begin{equation}\label{equ4-5}
		\begin{aligned}
			\|\langle\tau\rangle^{\alpha}&(\nabla \theta,u^{+},u^{-})\|_{\widetilde{L}^{\infty}_t(\dot{B}_{2,1}^{\frac{d}{2}-1})}\lesssim \|(\nabla \theta_0, u_0^{+},u_0^{-})\|_{ 
\dot{B}_{2,1}^{\frac{d}{2}-1}}^h+\mathcal{X}(t)\mathcal{Z}(t).
		\end{aligned}
	\end{equation}
	
	Next we will establish the gain of regularity and  decay  for the high frequencies of $u^{\pm}$.  It follows from $\eqref{equation_R_P_u_pm}_3$--$\eqref{equation_R_P_u_pm}_4$  
that
	\begin{align}\label{equ-alpha-upm}
		\partial_t (\langle t\rangle^{\alpha}u^\pm) - \nu_1^{+}\Delta (\langle t\rangle^{\alpha} u^\pm)-\nu_2^+\nabla\dive  (\langle t\rangle^{\alpha}u^\pm)=\alpha \langle 
t\rangle^{\alpha-2}tu^\pm-\beta_2^\pm \nabla (\langle t\rangle^{\alpha}\theta)+\langle t\rangle^{\alpha}F_{2}^\pm.
	\end{align}
	Then, by employing Lemma \ref{lame-system-estimate} for \eqref{equ-alpha-upm} with $\alpha>1$,
	we have
	\begin{equation}\label{5.200}
		\begin{aligned}
			\|\langle \tau\rangle^{\alpha}u^{\pm} \|_{\widetilde{L}^{\infty}(\dot{B}_{2,1}^{\frac{d}{2}+1})}^{h}&\lesssim \|\langle \tau\rangle^{\alpha-1}u^{\pm} 
\|_{\widetilde{L}^{\infty}(\dot{B}_{2,1}^{\frac{d}{2}-1})}^{h}+\|\nabla (\langle \tau\rangle^{\alpha}\theta) \|_{\widetilde{L}^{\infty}(\dot{B}_{2,1}^{\frac{d}{2}-1})}^{h}\\
			&\quad+\|\langle \tau\rangle^{\alpha}(F_{2}^+,F_{2}^-)\|_{\widetilde{L}^{\infty}(\dot{B}_{2,1}^{\frac{d}{2}-1})}^{h}.
		\end{aligned}
	\end{equation}
	Owing to \eqref{equ4-5}, it is easy to see that
	\begin{equation}\label{5.20}
		\begin{aligned}
			\|\langle \tau\rangle^{\alpha-1}u^{\pm} \|_{\widetilde{L}^{\infty}(\dot{B}_{2,1}^{\frac{d}{2}-1})}^{h}\lesssim \|\langle\tau\rangle^{\alpha }u^{\pm} 
\|_{\widetilde{L}^{\infty}(\dot{B}_{2,1}^{\frac{d}{2}-1})}^{h}\lesssim \|(\nabla \theta_0, u_0^{+},u_0^{-})\|_{ \dot{B}_{2,1}^{\frac{d}{2}-1}}^h+\mathcal{X}(t)\mathcal{Z}(t),
		\end{aligned}
	\end{equation}
	and
	\begin{equation}\label{5.22}
		\begin{aligned}
			\|\nabla (\langle \tau\rangle^{\alpha}\theta)\|_{\widetilde{L}^{\infty}(\dot{B}_{2,1}^{\frac{d}{2}-1})}^{h}\lesssim \|\langle\tau\rangle^{\alpha }\theta 
\|_{\widetilde{L}^{\infty}(\dot{B}_{2,1}^{\frac{d}{2} })}^{h}\lesssim \|(\nabla \theta_0, u_0^{+},u_0^{-})\|_{ \dot{B}_{2,1}^{\frac{d}{2}-1}}^h+\mathcal{X}(t)\mathcal{Z}(t).
		\end{aligned}
	\end{equation}
	Using the same argument for the proof of \eqref{F1high}--\eqref{F2F3high}, we obtain
	\begin{equation}\label{5.23}
		\begin{aligned}
			\|\langle \tau\rangle^{\alpha}F_{2}^\pm\|_{\widetilde{L}^{\infty}(\dot{B}_{2,1}^{\frac{d}{2}-1})}^{h}\lesssim \mathcal{X}(t)\mathcal{Z}(t).
		\end{aligned}
	\end{equation}
	It holds from \eqref{5.20}--\eqref{5.22} that
	\begin{equation}\label{equ4-6}
		\begin{aligned}
			\|\langle \tau\rangle^{\alpha}u^{\pm} \|_{\widetilde{L}^{\infty}(\dot{B}_{2,1}^{\frac{d}{2}+1})}^{h}\lesssim  \|\theta_0\|_{\dot{B}^{\frac{d}{2}}_{2,1}}^{h}+\| 
u_0^{\pm}\|_{\dot{B}^{\frac{d}{2}-1}_{2,1}}^{h}+\mathcal{X}(t)\mathcal{Z}(t).
		\end{aligned}
	\end{equation}
	Combining \eqref{equ4-5} with \eqref{equ4-6}, we end up with \eqref{5.13} and complete the proof of Lemma \ref{lemma5.2}.	
\end{proof}

\subsection{Proof of Theorem 1.4}
From the definition of $\mathcal{Z}(t)$, one knows that, for any $\sigma_0<\sigma\leq \frac{d}{2}-\var$,
\begin{equation}
	\begin{aligned}
		\|(\theta,u^{+},u^{-})(t)\|_{\dot{B}^{\sigma}_{2,1}}&\lesssim 
\|(\theta,u^{+},u^{-})(t)\|_{\dot{B}^{\sigma}_{2,1}}^{\ell}+\|\theta(t)\|_{\dot{B}^{\frac{d}{2}}_{2,1}}^h+\|u^\pm(t)\|_{\dot{B}^{\frac{d}{2}+1}_{2,1}}\\
		&\lesssim \Big(\langle t\rangle^{-\frac{1}{2}(\sigma-\sigma')}+\langle t\rangle^{-\alpha}\Big)\mathcal{Z}(t)\lesssim \langle t\rangle^{-\frac{1}{2}(\sigma-\sigma_0)}\mathcal{Z}(t).
	\end{aligned}
\end{equation}
In addition, it also follows that
\begin{align}
	\|\theta(t)\|_{\dot{B}^{\frac{d}{2}}_{2,1}}&\lesssim \|\theta(t)\|_{\dot{B}^{\frac{d}{2}-\var}_{2,1}}^{\ell}+\|\theta(t)\|_{\dot{B}^{\frac{d}{2}}_{2,1}}^h\lesssim \langle 
t\rangle^{-\frac{1}{2}(\frac{d}{2}-\sigma_0-\var)}\mathcal{Z}(t),\label{5.26}\\
	\|u^\pm(t)\|_{\dot{B}^{\frac{d}{2}}_{2,1}\cap \dot{B}^{\frac{d}{2}+1}_{2,1}}&\lesssim 
\|u^\pm(t)\|_{\dot{B}^{\frac{d}{2}-\var}_{2,1}}^{\ell}+\|u^\pm(t)\|_{\dot{B}^{\frac{d}{2}}_{2,1}}^h\lesssim \langle 
t\rangle^{-\frac{1}{2}(\frac{d}{2}-\sigma_0-\var)}\mathcal{Z}(t).\label{5.27}
\end{align}
Therefore, in order to justify the desired decay estimates, it suffices to bound the functional $\mathcal{Z}(t)$. To this end, collecting the weighted estimates \eqref{5.3} and 
\eqref{5.13} and recalling the definition \eqref{time-energy-forZ}, we obtain
\begin{equation}
	\begin{aligned}
		\mathcal{Z}(t)\lesssim \delta_0 +(\|(n^+,\theta,u^\pm)\|_{\dot{B}^{\sigma_0+1}_{2,\infty}}+\mathcal{X}(t))\mathcal{Z}(t).
	\end{aligned}\label{masggg}
\end{equation}
Due to \eqref{a1} and \eqref{r1}, one knows that $\mathcal{X}(t)\lesssim \mathcal{X}_0<<1$.  To derive the $\widetilde{L}^{\infty}_t(\dot{B}_{2,\infty}^{\sigma_0+1})$ bound of $n^\pm$, we have to require the smallness of $\delta_1$. Indeed, it follows from \eqref{inequ-for-n2+}, \eqref{sigma_0-functional} and \eqref{uv3} that
\begin{equation}
	\begin{aligned}
		\|n^{+}\|_{L^{\infty}_t(\dot{B}_{2,\infty}^{\sigma_0+1})}&\lesssim 
\|n_0^{+}\|_{\dot{B}_{2,\infty}^{\sigma_0+1}}+\|u^{+}\|_{L^1_t(\dot{B}_{2,\infty}^{\sigma_0+2})}+\|n^{\pm}  u^{+}\|_{L^1_t(\dot{B}_{2,\infty}^{\sigma_0+2})}\nonumber\\
		&\lesssim\|n_0^{+}\|_{\dot{B}_{2,\infty}^{\sigma_0+1}}+\|u^{+}\|_{L^1_t(\dot{B}_{2,\infty}^{\sigma_0+2})}+\| n^{+} \|_{\widetilde{L}^{\infty}_t(\dot{B}_{2,1}^{\frac{d}{2}})}\| 
u^{+}\|_{L^1_t(\dot{B}_{2,\infty}^{\sigma_0+2})}\nonumber\\
		&\lesssim (1+\mathcal{X}_0) \delta_1\lesssim \delta_1<<1.
	\end{aligned}
\end{equation}
On the other hand, according to \eqref{r1}, \eqref{sigma_0-functional} and the interpolation \eqref{inter}, one discovers that
\begin{align*}
	\|(\theta, u^+,u^-)\|_{L^{\infty}_{t}(\dot{B}^{\sigma_0+1}_{2,\infty})}&\lesssim \|(\theta, u^+,u^-)\|_{L^{\infty}_{t}(\dot{B}^{\sigma_0}_{2,\infty})}+\|(\theta, 
u^+,u^-)\|_{L^{\infty}_{t}(\dot{B}^{\frac{d}{2}+1}_{2,1})}\lesssim \delta_1<<1.
\end{align*}
Thus, we prove
\begin{align}
	\|(n^+,\theta,u^+,u^-)\|_{\dot{B}^{\sigma_0+1}_{2,\infty}}<<1.\label{5.29}
\end{align}
By \eqref{masggg} and \eqref{5.29}, the uniform bound of $\mathcal{Z}(t)$ is obtained:
\begin{align}
	\mathcal{Z}(t)\lesssim \delta_1.\label{zbound}
\end{align}
In light of \eqref{5.26}, \eqref{5.27} and \eqref{zbound}, \eqref{decay4} and \eqref{decay5} are proved.

Finally, we investigate the large-time behavior of the non-dissipation components $\alpha^\pm \rho^\pm=R^{\pm}=n^{\pm}+1$. We look at the equations of $\alpha^\pm \rho^\pm$:
$$
\partial_t (\alpha^{\pm}\rho^{\pm}) +\dive (\alpha^{\pm}\rho^{\pm}u^{\pm})=0.
$$
Integrating this in time yields
\begin{align}
	\alpha^{\pm}\rho^{\pm}=\alpha_0^{\pm}\rho_0^{\pm}-\int_0^{t }\dive (\alpha^{\pm}\rho^{\pm}u^{\pm})\,dt.\label{5.310}
\end{align}
Subtracting \eqref{5.310} from \eqref{barrho}, we observe
\begin{align}
	\alpha^{\pm}\rho^{\pm}-R_{\infty}^\pm=-\int_{t}^{\infty }\dive (\alpha^{\pm}\rho^{\pm}u^{\pm})\,dt.\label{5.32}
\end{align}
The integration term in \eqref{5.32} decays in time. In fact, making use of \eqref{r1}, \eqref{decay5} and \eqref{uv1}, we obtain
\begin{equation}\label{5.36}
	\begin{aligned}
		&\quad\|\dive (\alpha^{\pm}\rho^{\pm}u^{\pm})\|_{\dot{B}^{\frac{d}{2}}_{2,1}}\\
		&\lesssim \|u^{\pm}\|_{\dot{B}^{\frac{d}{2}+1}_{2,1}}+\|(\alpha^{\pm}\rho^{\pm}-1)u^{\pm}\|_{\dot{B}^{\frac{d}{2}+1}_{2,1}}\\
		&\lesssim 
\|u^{\pm}\|_{\dot{B}^{\frac{d}{2}+1}_{2,1}}+\|\alpha^{\pm}\rho^{\pm}-1\|_{\dot{B}^{\frac{d}{2}}_{2,1}}\|u^{\pm}\|_{\dot{B}^{\frac{d}{2}+1}_{2,1}}+\|\alpha^{\pm}\rho^{\pm}-1\|_{\dot{B}^{\frac{d}{2}+1}_{2,1}}\|u^{\pm}\|_{\dot{B}^{\frac{d}{2}}_{2,1}}\\
		&\lesssim \delta_1 \langle t\rangle^{-\frac{1}{2}(\frac{d}{2}-\sigma_0-\var)}.
	\end{aligned}
\end{equation}
The combination of \eqref{5.32} and \eqref{5.36} gives rise to
\begin{equation}
	\begin{aligned}
		\|(\alpha^{\pm}\rho^{\pm}-R_{\infty}^\pm)(t)\|_{\dot{B}^{\frac{d}{2}}_{2,1}}&\lesssim \int_t^{\infty}\|\dive 
(\alpha^{\pm}\rho^{\pm}u^{\pm})(\tau)\|_{\dot{B}^{\frac{d}{2}}_{2,1}}\,d\tau\\
		&\lesssim \delta_1\int_t^{\infty} \langle \tau\rangle^{-\frac{1}{2}(\frac{d}{2}-\sigma_0-\var)}d\tau\\
		&\lesssim \delta_1 \langle t\rangle^{-\frac{1}{2}(\frac{d}{2}-2-\sigma_0-\var)},
	\end{aligned}
\end{equation}
which gives \eqref{decay6}. Therefore, we have completed the proof of Theorem \ref{Thm1.4}.  \hfill $\Box$

\appendix
\section{Technical lemmas}

We recall some basic properties of Besov spaces and product estimates which will be used repeatedly in this paper. Remark that all the properties remain true for the Chemin--Lerner type 
spaces whose time exponent has to behave according to the H${\rm{\ddot{o}}}$lder inequality for the time variable.

The first lemma is the Bernstein inequalities, which in particular imply that $\dot{\Delta}_{j}u$ is smooth for every $u$ in any Besov spaces so that we can take direct calculations on 
linear equations after applying the operator $\dot{\Delta}_{j}$.
\begin{lemma}
	Let $0<r<R$, $1\leq p\leq q\leq \infty$ and $k\in \mathbb{N}$. For any $u\in L^p$ and $\lambda>0$, it holds
	\begin{equation}\notag
		\left\{
		\begin{aligned}
			&{\rm{Supp}}~ \mathcal{F}(u) \subset \{\xi\in\mathbb{R}^{d}~| ~|\xi|\leq \lambda R\}\Rightarrow \|D^{k}u\|_{L^q}\lesssim\lambda^{k+d(\frac{1}{p}-\frac{1}{q})}\|u\|_{L^p},\\
			&{\rm{Supp}}~ \mathcal{F}(u) \subset \{\xi\in\mathbb{R}^{d}~|~ \lambda r\leq |\xi|\leq \lambda R\}\Rightarrow \|D^{k}u\|_{L^{p}}\sim\lambda^{k}\|u\|_{L^{p}}.
		\end{aligned}
		\right.
	\end{equation}
\end{lemma}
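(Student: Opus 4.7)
The plan is to reduce both inequalities to Young's convolution inequality after representing $u$ (or $D^k u$) as a convolution with a rescaled Schwartz kernel, exploiting the spectral localization hypothesis. Fix an auxiliary smooth radial function $\widetilde{\chi}\in C^\infty_c(\mathbb{R}^d)$ with $\widetilde{\chi}\equiv 1$ on $\{|\xi|\le R\}$ and supported in a slightly larger ball; and, for the annulus case, a function $\widetilde{\varphi}\in C^\infty_c(\mathbb{R}^d)$ with $\widetilde{\varphi}\equiv 1$ on $\{r\le|\xi|\le R\}$ and supported in $\{r/2\le|\xi|\le 2R\}$. Denote $h:=\mathcal{F}^{-1}\widetilde{\chi}$ and $g:=\mathcal{F}^{-1}\widetilde{\varphi}$, which are Schwartz.

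For the first inequality, the spectral support assumption gives the identity $\widehat{u}(\xi)=\widetilde{\chi}(\xi/\lambda)\,\widehat{u}(\xi)$, hence $u=h_\lambda * u$ with $h_\lambda(x):=\lambda^d h(\lambda x)$. Differentiating $k$ times and taking the convolution outside derivatives yields
\begin{equation*}
D^k u = (D^k h_\lambda)*u = \lambda^{d+k}(D^k h)(\lambda\,\cdot)*u.
\end{equation*}
Young's inequality with the exponent relation $1+\tfrac{1}{q}=\tfrac{1}{s}+\tfrac{1}{p}$, where $s$ is the Lebesgue exponent of the kernel, then gives
\begin{equation*}
\|D^k u\|_{L^q}\le \|(D^k h_\lambda)\|_{L^s}\|u\|_{L^p}=\lambda^{k+d(1-\frac{1}{s})}\|D^k h\|_{L^s}\|u\|_{L^p}=\lambda^{k+d(\frac{1}{p}-\frac{1}{q})}\|D^k h\|_{L^s}\|u\|_{L^p},
\end{equation*}
after the usual change of variables $y=\lambda x$ in the $L^s$ norm. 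Since $D^k h\in L^s(\mathbb{R}^d)$ for every $s\in[1,\infty]$, this yields the desired bound.

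For the second (equivalence) inequality in the annular case, the upper bound $\|D^k u\|_{L^p}\lesssim \lambda^k\|u\|_{L^p}$ is a special instance of the first inequality (take $q=p$). For the reverse bound, the annular support lets us write $\widehat{u}(\xi)=|\xi|^{-k}\widetilde{\varphi}(\xi/\lambda)\,\widehat{D^k u}(\xi)/|i\xi|^k$-style multiplier; concretely, define $m(\xi):=|\xi|^{-k}\widetilde{\varphi}(\xi)$ (smooth and compactly supported since $r/2\le|\xi|\le 2R$ on the support), so that $u$ can be recovered from $D^k u$ via convolution with $\lambda^{d-k}(\mathcal{F}^{-1}m)(\lambda\,\cdot)$ acting on suitable components of $D^k u$. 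Applying Young's inequality with $s=1$ then gives $\|u\|_{L^p}\lesssim \lambda^{-k}\|D^k u\|_{L^p}$, completing the equivalence.

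The argument is entirely classical and presents no substantive obstacle; the only point requiring a little care is the precise reconstruction step in the annular case, where one must express $u$ in terms of $D^k u$ through a Fourier multiplier that is well-defined precisely because the support stays bounded away from the origin. Once that is set up, both bounds follow from scaling and Young's inequality, and the proof is essentially a one-page computation reproduced in Chapter 2 of \cite{bahouri1}.
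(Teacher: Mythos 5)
The paper does not prove this lemma; it is recalled verbatim from the standard reference (Chapters 2--3 of \cite{bahouri1}), and your argument is precisely the classical proof given there: spectral reproduction via a rescaled cut-off, Young's inequality with $1+\tfrac1q=\tfrac1s+\tfrac1p$, and a compactly supported multiplier vanishing near the origin for the reverse annular bound. Your computation is correct; the only point worth tightening is the reconstruction step, where one should write $\widehat u(\xi)=\sum_{|\alpha|=k}c_\alpha\,\tfrac{(-i\xi)^{\alpha}}{|\xi|^{2k}}\widetilde\varphi(\xi/\lambda)\,\widehat{\partial^{\alpha}u}(\xi)$ componentwise rather than a single scalar multiplier $|\xi|^{-k}\widetilde\varphi$, but this is exactly the "little care" you flag and does not affect the validity of the argument.
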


Due to the Bernstein inequalities, the Besov spaces have the following properties.
\begin{lemma}
	The following properties hold{\rm:}
	\begin{itemize}
		\item{} For $s\in\mathbb{R}$, $1\leq p_{1}\leq p_{2}\leq \infty$ and $1\leq r_{1}\leq r_{2}\leq \infty$, it holds
		\begin{equation}\notag
			\begin{aligned}
				\dot{B}^{s}_{p_{1},r_{1}}\hookrightarrow \dot{B}^{s-d(\frac{1}{p_{1}}-\frac{1}{p_{2}})}_{p_{2},r_{2}}.
			\end{aligned}
		\end{equation}
		\item{} For $1\leq p\leq q\leq\infty$, we have the following chain of continuous embedding:
		\begin{equation}\nonumber
			\begin{aligned}
				\dot{B}^{0}_{p,1}\hookrightarrow L^{p}\hookrightarrow \dot{B}^{0}_{p,\infty}\hookrightarrow \dot{B}^{\sigma}_{q,\infty},\quad \sigma=-d(\frac{1}{p}-\frac{1}{q})<0.
			\end{aligned}
		\end{equation}
		\item{} If $p<\infty$, then $\dot{B}^{d/p}_{p,1}$ is continuously embedded in the set of continuous functions decaying to 0 at infinity;
		\item{} The following real interpolation property is satisfied for $1\leq p\leq\infty$, $s_{1}<s_{2}$ and $\theta\in(0,1)$:
		\begin{equation}
			\begin{aligned}
				&\|u\|_{\dot{B}^{\theta s_{1}+(1-\theta)s_{2}}_{p,1}}\lesssim \frac{1}{\theta(1-\theta)(s_{2}-s_{1})}\|u\|_{\dot{B}^{ 
s_{1}}_{p,\infty}}^{\theta}\|u\|_{\dot{B}^{s_{2}}_{p,\infty}}^{1-\theta},\label{inter}
			\end{aligned}
		\end{equation}
		which in particular implies for any $\varepsilon>0$ that
		\begin{equation}\nonumber
			\begin{aligned}
				H^{s+\varepsilon}\hookrightarrow \dot{B}^{s}_{2,1}\hookrightarrow \dot{H}^{s}.
			\end{aligned}
		\end{equation}
		\item{}
		Let $\Lambda^{\sigma}$ be defined by $\Lambda^{\sigma}=(-\Delta)^{\frac{\sigma}{2}}u:=\mathcal{F}^{-}\big{(} |\xi|^{\sigma}\mathcal{F}(u) \big{)}$ for $\sigma\in \mathbb{R}$ and 
$u\in\dot{S}^{'}_{h}$, then $\Lambda^{\sigma}$ is an isomorphism from $\dot{B}^{s}_{p,r}$ to $\dot{B}^{s-\sigma}_{p,r}$;
		\item{} Let $1\leq p_{1},p_{2},r_{1},r_{2}\leq \infty$, $s_{1}\in\mathbb{R}$ and $s_{2}\in\mathbb{R}$ satisfy
		$$
		s_{2}<\frac{d}{p_{2}}\quad\text{\text{or}}\quad s_{2}=\frac{d}{p_{2}}~\text{and}~r_{2}=1.
		$$
		The space $\dot{B}^{s_{1}}_{p_{1},r_{1}}\cap \dot{B}^{s_{2}}_{p_{2},r_{2}}$ endowed with the norm $\|\cdot 
\|_{\dot{B}^{s_{1}}_{p_{1},r_{1}}}+\|\cdot\|_{\dot{B}^{s_{2}}_{p_{2},r_{2}}}$ is a Banach space and has the weak compact and Fatou properties$:$ If $u_{n}$ is a uniformly bounded 
sequence of $\dot{B}^{s_{1}}_{p_{1},r_{1}}\cap \dot{B}^{s_{2}}_{p_{2},r_{2}}$, then an element $u$ of $\dot{B}^{s_{1}}_{p_{1},r_{1}}\cap \dot{B}^{s_{2}}_{p_{2},r_{2}}$ and a subsequence 
$u_{n_{k}}$ exist such that $u_{n_{k}}\rightarrow u $ in $\mathcal{S}'$ and
		\begin{equation}\nonumber
			\begin{aligned}
	\|u\|_{\dot{B}^{s_{1}}_{p_{1},r_{1}}\cap \dot{B}^{s_{2}}_{p_{2},r_{2}}}\lesssim \liminf_{n_{k}\rightarrow \infty} \|u_{n_{k}}\|_{\dot{B}^{s_{1}}_{p_{1},r_{1}}\cap 
\dot{B}^{s_{2}}_{p_{2},r_{2}}}.
			\end{aligned}
		\end{equation}
	\end{itemize}
\end{lemma}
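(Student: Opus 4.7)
\medskip
\noindent\textbf{Proof plan.} The proof follows the standard Littlewood--Paley approach to Besov spaces (see Chapter 2 of \cite{bahouri1}), and the key input everywhere is Bernstein's inequality. For the first embedding $\dot{B}^{s_1}_{p_1,r_1}\hookrightarrow \dot{B}^{s_1-d(1/p_1-1/p_2)}_{p_2,r_2}$, I would apply Bernstein's lemma to each dyadic block: since $\dot{\Delta}_j u$ is spectrally supported in an annulus of size $2^j$, one has $\|\dot{\Delta}_j u\|_{L^{p_2}}\lesssim 2^{jd(1/p_1-1/p_2)}\|\dot{\Delta}_j u\|_{L^{p_1}}$. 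Multiplying by $2^{j(s_1-d(1/p_1-1/p_2))}$ and using the trivial inclusion $\ell^{r_1}\hookrightarrow\ell^{r_2}$ closes the estimate. The chain $\dot{B}^{0}_{p,1}\hookrightarrow L^{p}\hookrightarrow\dot{B}^{0}_{p,\infty}$ follows from absolute convergence of the Littlewood--Paley series in $L^p$ for the first inclusion and from $\|\dot{\Delta}_j u\|_{L^p}\lesssim \|h\|_{L^1}\|u\|_{L^p}$ (a consequence of Young's convolution inequality applied to $\dot{\Delta}_j u = 2^{jd}h(2^j\cdot)\star u$) for the second, and the last embedding into $\dot{B}^\sigma_{q,\infty}$ comes again from Bernstein. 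The embedding $\dot{B}^{d/p}_{p,1}\hookrightarrow \mathcal{C}_0$ for $p<\infty$ follows by noting that Bernstein gives $\|\dot{\Delta}_j u\|_{L^\infty}\lesssim 2^{jd/p}\|\dot{\Delta}_j u\|_{L^p}$, so the series $\sum_j \dot{\Delta}_j u$ converges normally in $L^\infty$ and each $\dot{\Delta}_j u$ is continuous and decays at infinity.

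For the real interpolation inequality \eqref{inter}, the standard trick is to split the sum defining $\|u\|_{\dot{B}^{\theta s_1+(1-\theta)s_2}_{p,1}}$ at a level $J\in\mathbb{Z}$ to be chosen: bounding the low-frequency part using $2^{j(\theta s_1+(1-\theta)s_2-s_1)}$ weights by $\|u\|^{\,}_{\dot{B}^{s_1}_{p,\infty}}$ and the high-frequency part by $\|u\|^{\,}_{\dot{B}^{s_2}_{p,\infty}}$ yields two geometric series whose sums are controlled by $(1-\theta)^{-1}(s_2-s_1)^{-1}$ and $\theta^{-1}(s_2-s_1)^{-1}$ respectively. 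Optimizing the choice of $J$ so as to balance the two contributions produces the factor $\|u\|^\theta_{\dot{B}^{s_1}_{p,\infty}}\|u\|^{1-\theta}_{\dot{B}^{s_2}_{p,\infty}}$, and the claim $H^{s+\varepsilon}\hookrightarrow \dot{B}^s_{2,1}\hookrightarrow \dot{H}^s$ is then a direct consequence of interpolation together with Plancherel (which identifies $\dot{B}^s_{2,2}$ with $\dot{H}^s$).

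The isomorphism property of $\Lambda^\sigma$ is essentially tautological: since $\mathcal{F}(\Lambda^\sigma u)(\xi)=|\xi|^\sigma\widehat{u}(\xi)$ and $\dot{\Delta}_j$ localizes to $|\xi|\sim 2^j$, one has $\|\dot{\Delta}_j\Lambda^\sigma u\|_{L^p}\sim 2^{j\sigma}\|\dot{\Delta}_j u\|_{L^p}$ up to a multiplier estimate (a $1$-homogeneous multiplier argument, or alternatively a direct convolution with a suitably rescaled kernel), so the $\dot{B}^{s-\sigma}_{p,r}$-norm of $\Lambda^\sigma u$ equals the $\dot{B}^s_{p,r}$-norm of $u$ up to a constant.

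The only nontrivial point is the Fatou / weak compactness statement, which is where the scale-invariant nature of homogeneous Besov spaces makes care necessary. The plan is to extract a subsequence of the bounded sequence $(u_n)$ converging in $\mathcal{S}'$ to some limit $u$: by the uniform bound in $\dot{B}^{s_2}_{p_2,r_2}$ with $s_2<d/p_2$ (or equality with $r_2=1$), the sequence is bounded in a space that embeds into a suitable $L^\infty_{\mathrm{loc}}$ or $\mathcal{C}_0$-type space via the previous item, ensuring that the limit lies in $\mathcal{S}_h'$ rather than merely modulo polynomials, so that its dyadic blocks $\dot{\Delta}_j u$ are well-defined. For each fixed $j$, $\dot{\Delta}_j u_{n_k}\rightharpoonup \dot{\Delta}_j u$ weakly in $L^{p_1}$ and $L^{p_2}$ (after a diagonal extraction), and lower semicontinuity of the $L^p$ norm under weak convergence together with Fatou's lemma for the discrete $\ell^{r_i}$ sums over $j$ yields the claimed liminf bound on $\|u\|_{\dot{B}^{s_1}_{p_1,r_1}\cap\dot{B}^{s_2}_{p_2,r_2}}$. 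The main technical obstacle is exactly this step: ensuring that the weak-$\mathcal{S}'$ limit is indeed an element of $\mathcal{S}_h'$ (so that one does not lose information modulo polynomials), for which the condition $s_2<d/p_2$ or $s_2=d/p_2,\, r_2=1$ is crucial.
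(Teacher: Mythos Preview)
Your proof plan is correct and follows the standard Littlewood--Paley arguments one finds in Chapters~2--3 of \cite{bahouri1}. Note, however, that the paper does not actually prove this lemma: it is stated in the appendix as a collection of technical facts with an implicit reference to \cite{bahouri1}, so there is no ``paper's own proof'' to compare against. Your sketch is precisely the textbook argument the authors are citing.
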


The following Morse-type product estimates in Besov spaces play a fundamental role in the analysis of nonlinear terms:
\begin{lemma}\label{Lemma5-5}
	The following statements hold:
	\begin{itemize}
		\item{} Let $s>0$ and $1\leq p,r\leq \infty$. Then $\dot{B}^{s}_{p,r}\cap L^{\infty}$ is a algebra and
		\begin{equation}\label{uv1}
			\|uv\|_{\dot{B}^{s}_{p,r}}\lesssim \|u\|_{L^{\infty}}\|v\|_{\dot{B}^{s}_{p,r}}+ \|v\|_{L^{\infty}}\|u\|_{\dot{B}^{s}_{p,r}}.
		\end{equation}
		\item{}
		Let $s_{1}$, $s_{2}$ and $p$ satisfy $2\leq p\leq \infty$, $-\frac{d}{p}<s_{1},s_{2}\leq \frac{d}{p}$ and $s_{1}+s_{2}>0$. Then we have
		\begin{equation}\label{uv2}
			\|uv\|_{\dot{B}^{s_{1}+s_{2}-\frac{d}{p}}_{p,1}}\lesssim \|u\|_{\dot{B}^{s_{1}}_{p,1}}\|v\|_{\dot{B}^{s_{2}}_{p,1}}.
		\end{equation}
		\item{} Assume that $s_{1}$, $s_{2}$ and $p$ satisfy $2\leq p\leq \infty$, $-\frac{d}{p}<s_{1}\leq \frac{d}{p}$, $-\frac{d}{p}\leq s_{2}<\frac{d}{p}$ and $s_{1}+s_{2}\geq0$. Then 
it holds
		\begin{equation}\label{uv3}
			\|uv\|_{\dot{B}^{s_{1}+s_{2}-\frac{d}{p}}_{p,\infty}}\lesssim \|u\|_{\dot{B}^{s_{1}}_{p,1}}\|v\|_{\dot{B}^{s_{2}}_{p,\infty}}.
		\end{equation}
	\end{itemize}
\end{lemma}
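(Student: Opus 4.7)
The plan is to use Bony's paradifferential decomposition
\begin{equation*}
uv = T_uv + T_vu + R(u,v),
\end{equation*}
where $T_uv:=\sum_{j}\bigl(\sum_{k\leq j-2}\dot{\Delta}_k u\bigr)\,\dot{\Delta}_j v$ is the paraproduct and $R(u,v):=\sum_{|j-k|\leq 1}\dot{\Delta}_j u\,\dot{\Delta}_k v$ is the remainder, and then to invoke the standard continuity properties of these bilinear operators in Besov spaces collected in Chapter 2 of \cite{bahouri1}. The three building blocks I rely on are: (i) $T:L^\infty\times \dot{B}^s_{p,r}\to\dot{B}^s_{p,r}$ is bounded for every $s\in\mathbb{R}$ and $1\leq p,r\leq\infty$; (ii) the negative-index paraproduct $T:\dot{B}^{-s_0}_{\infty,\infty}\times\dot{B}^s_{p,r}\to\dot{B}^{s-s_0}_{p,r}$ holds for every $s_0>0$; and (iii) the remainder satisfies $R:\dot{B}^{s_1}_{p,r_1}\times\dot{B}^{s_2}_{p,r_2}\to\dot{B}^{s_1+s_2-d/p}_{p,r}$ for $p\geq 2$ and $1/r=1/r_1+1/r_2$, provided either $s_1+s_2>0$, or $s_1+s_2=0$ with $r=\infty$.

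For \eqref{uv1}, I would bound the two paraproducts directly via (i), obtaining $\|T_uv\|_{\dot{B}^s_{p,r}}\lesssim\|u\|_{L^\infty}\|v\|_{\dot{B}^s_{p,r}}$ and the symmetric estimate for $T_vu$. The remainder is handled by (iii) applied with indices $(0,s)$, using the embedding $L^\infty\hookrightarrow\dot{B}^0_{\infty,\infty}$ and the hypothesis $s>0$; summing the three contributions yields the algebra inequality. For \eqref{uv2}, the same decomposition is combined with the embedding $\dot{B}^{s_i}_{p,1}\hookrightarrow\dot{B}^{s_i-d/p}_{\infty,1}$, valid under the standing hypothesis $s_i\leq d/p$ (the restriction $r_i=1$ being crucial precisely at the $s_i=d/p$ endpoint). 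Then (ii) yields $\|T_uv\|_{\dot{B}^{s_1+s_2-d/p}_{p,1}}\lesssim\|u\|_{\dot{B}^{s_1}_{p,1}}\|v\|_{\dot{B}^{s_2}_{p,1}}$ and symmetrically for $T_vu$, while the assumption $s_1+s_2>0$ activates (iii) for $R(u,v)$; the condition $p\geq 2$ ensures $2/p\leq 1$, which is what makes the relevant chain of embeddings close on $\dot{B}^{\cdot}_{p,1}$.

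The third estimate \eqref{uv3} is the delicate one and the place where I expect the main obstacle, because the target space has $r=\infty$: the $\ell^\infty$ summation in $j$ forces me to place exactly one factor in an $\ell^1$-summable Besov space and the other in an $\ell^\infty$-summable one, and then to verify that the remaining Besov indices are not pushed to a forbidden endpoint. Concretely, for $T_uv$ I would estimate $\|T_uv\|_{\dot{B}^{s_1+s_2-d/p}_{p,\infty}}\lesssim\|u\|_{\dot{B}^{s_1-d/p}_{\infty,1}}\|v\|_{\dot{B}^{s_2}_{p,\infty}}$, which is legal under $s_1\leq d/p$ (the endpoint being precisely why $u$ is kept in $\dot{B}^{s_1}_{p,1}$); for $T_vu$ the symmetric bound uses the strict hypothesis $s_2<d/p$. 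For $R(u,v)$, the borderline case $s_1+s_2=0$ is admissible because the $\ell^\infty$ summation does not demand absolute convergence of the dyadic pieces in $j$. Assembling these three bounds produces \eqref{uv3} and closes the proof.
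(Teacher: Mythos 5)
Your proposal is correct, and it follows the standard route: the paper states this lemma without proof, recalling it from the cited reference \cite{bahouri1}, where exactly this Bony-decomposition argument (paraproduct continuity in $L^\infty$ and in negative H\"older indices, plus the remainder estimate with the $p\geq 2$ embedding $\dot{B}^{s_1+s_2}_{p/2,r}\hookrightarrow\dot{B}^{s_1+s_2-d/p}_{p,r}$) is used. Your bookkeeping at the endpoints is right — the third summability index $1$ on $u$ at $s_1=d/p$, the strict inequality $s_2<d/p$ for $T_vu$, and the condition $1/r_1+1/r_2\geq 1$ that admits the borderline $s_1+s_2=0$ remainder in $\dot{B}^{\cdot}_{p,\infty}$ — so nothing further is needed.
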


The following commutator estimates will be used to control some nonlinearities in high frequencies.
\begin{lemma}\label{lemcommutator}
	Let $p\in [1,\infty )$ and $-\frac{d}{p}-1\leq s\leq \frac{d}{p}+1$. Then it holds that
	\begin{align}
		\sum_{j\in \mathbb{Z}}2^{js}\|[v,\dot{\Delta}_j]\partial_i u\|_{L^p}\lesssim\|\nabla v\|_{\dot{B}^{\frac{d}{p}}_{p,1}}\|u\|_{\dot{B}^{s}_{p,1}}, \ i=1,2,\cdots,d,
	\end{align}
	for the commutator $[A,B]=AB-BA.$
\end{lemma}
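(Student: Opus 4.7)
The plan is to prove Lemma~\ref{lemcommutator} by the standard paraproduct argument. Using Bony's decomposition $v\,\partial_i u = T_v \partial_i u + T_{\partial_i u} v + R(v,\partial_i u)$ and the analogous decomposition of $v\,\dot{\Delta}_j \partial_i u$, the commutator splits into three pieces:
\begin{align*}
[v,\dot{\Delta}_j]\partial_i u &= [T_v,\dot{\Delta}_j]\partial_i u + \bigl(T_{\dot{\Delta}_j \partial_i u} v - \dot{\Delta}_j T_{\partial_i u} v\bigr) \\
&\qquad + \bigl(R(v,\dot{\Delta}_j \partial_i u) - \dot{\Delta}_j R(v,\partial_i u)\bigr).
\end{align*}
Here, $T$ and $R$ are the usual paraproduct and remainder operators, and only the first piece is a genuine commutator; the other two are differences that will be shown small by the cut-off properties of $\dot{\Delta}_j$.

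For the genuine commutator $[T_v,\dot{\Delta}_j]\partial_i u = \sum_{|k-j|\leq N_0} [\dot{S}_{k-1} v,\dot{\Delta}_j]\dot{\Delta}_k \partial_i u$ (with $\dot{S}_{k-1}$ the usual low-frequency cutoff), I would exploit the kernel representation $\dot{\Delta}_j w(x) = 2^{jd}\int h(2^j(x-y))\,w(y)\,dy$. A first-order Taylor expansion $\dot{S}_{k-1}v(y)-\dot{S}_{k-1}v(x) = (y-x)\cdot\int_0^1 \nabla \dot{S}_{k-1}v(x+\tau(y-x))\,d\tau$ produces an extra factor $(y-x)$, which combines with $h_j$ to yield a convolution kernel of size $2^{-j}\|\nabla \dot{S}_{k-1}v\|_{L^\infty}$. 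After integrating by parts to transfer $\partial_i$ off $u$, one arrives at the pointwise bound $\|[T_v,\dot{\Delta}_j]\partial_i u\|_{L^p} \lesssim \|\nabla v\|_{L^\infty}\sum_{|k-j|\leq N_0}\|\dot{\Delta}_k u\|_{L^p}$, which, when multiplied by $2^{js}$ and summed in $j$, is controlled by $\|\nabla v\|_{\dot{B}^{d/p}_{p,1}}\|u\|_{\dot{B}^{s}_{p,1}}$ via the embedding $\dot{B}^{d/p}_{p,1}\hookrightarrow L^\infty$.

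For the remaining two pieces, I would combine the standard continuity of paraproducts and remainders on Besov spaces (Chapter~2 of \cite{bahouri1}) with the quasi-orthogonality $\dot{\Delta}_j\dot{\Delta}_k = 0$ for $|j-k|\geq 2$, which collapses both $T_{\dot{\Delta}_j\partial_i u}v - \dot{\Delta}_j T_{\partial_i u}v$ and $R(v,\dot{\Delta}_j \partial_i u) - \dot{\Delta}_j R(v,\partial_i u)$ into finite sums of frequency-localized blocks whose $L^p$ norms can then be absorbed into $\|\nabla v\|_{\dot{B}^{d/p}_{p,1}}\|u\|_{\dot{B}^{s}_{p,1}}$. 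The main point to monitor, rather than a deep obstacle, is the endpoint behaviour at $s = \pm(d/p+1)$, where the $j$-summation of the remainder piece sits at the borderline of convergence; it is there that the third index $1$ of $\dot{B}^{s}_{p,1}$ and of $\dot{B}^{d/p}_{p,1}$ is essential, providing exactly the $\ell^1$-summability needed. Since the whole argument reproduces a well-known result (e.g.\ Lemma~2.100 of \cite{bahouri1}), I would conclude by invoking that reference once the splitting above is set up.
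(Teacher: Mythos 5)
The paper states this lemma without proof, as a standard result recalled from the Littlewood--Paley toolbox (it is essentially Lemma 2.100 of \cite{bahouri1}), and your Bony-decomposition argument is exactly the classical proof of that result, so the two approaches coincide. One minor imprecision: the piece $T_{\dot{\Delta}_j\partial_i u}v-\dot{\Delta}_jT_{\partial_i u}v$ does not collapse into a \emph{finite} sum of blocks but into an infinite sum over $k\gtrsim j$, and it is precisely the convergence of that sum (together with the remainder piece) that enforces the restrictions on $s$ --- an endpoint issue you do correctly flag and resolve via the $\ell^1$ summation index.
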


We state the following result concerning the continuity for composition functions.
\begin{lemma}\label{Lemma5-7}
	Let $G:I\to \mathbb{R}$ be a  smooth function satisfying $G(0)=0.$ For any $1\leq p\leq \infty,$ $s>0$, and $g\in \dot{B}^{s}_{2,1}\cap L^{\infty}$, it holds that $G(g)\in 
\dot{B}^{s}_{p,r}\cap L^{\infty}$ and
	\begin{align}
		\|G(g)\|_{\dot{B}^{s}_{p,r}}\leq C_g\|g\|_{\dot{B}^{s}_{p,r}},\label{F0}
	\end{align}
	where the constant $C_g>0$ depends only on $\|g\|_{L^{\infty}}$, $G'$, $s, $and $d$.
	
	In addition, if $g_{1}, g_{2}\in \dot{B}^{s}_{p,1}\cap L^{\infty}$, then it holds
	\begin{align}
		&\|G(g_{1})-G(g_{2})\|_{\dot{B}^{s}_{p,1}}\leq C_{g_{1},g_{2}}(1+\|(g_{1},g_{2})\|_{\dot{B}^{\frac{d}{2}}_{2,1}})\|g_{1}-g_{2}\|_{\dot{B}^{s}_{p,1}},\quad ~s\in 
(-\frac{d}{2},\frac{d}{2}],\label{F2}
	\end{align}
	where the constant $C_{g_{1},g_{2}}>0$ depends only on $\|(g_{1},g_{2})\|_{L^{\infty}}$, $G'$, $s$, $p$ and $d$.
\end{lemma}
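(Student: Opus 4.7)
The plan is to use the standard paraproduct/telescoping machinery of Bahouri--Chemin--Danchin. For the first inequality \eqref{F0}, the starting point will be the telescoping identity
\begin{equation*}
G(g) \;=\; \sum_{j\in\mathbb{Z}}\bigl(G(S_{j+1}g)-G(S_j g)\bigr) \;=\; \sum_{j\in\mathbb{Z}} m_j\,\dot{\Delta}_j g,\qquad m_j:=\int_0^1 G'(S_j g+\tau\dot{\Delta}_j g)\,d\tau,
\end{equation*}
which uses $G(0)=0$ and the convergence $S_j g\to 0$ (resp.\ $g$) as $j\to-\infty$ (resp.\ $+\infty$) in $\mathcal{S}'_h$. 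Because $g\in L^\infty$ and $G$ is smooth, the multipliers $m_j$ are uniformly bounded in $L^\infty$ by a constant $C_g$ depending only on $\|g\|_{L^\infty}$ and derivatives of $G$. I would then apply $\dot{\Delta}_{j'}$, exploit the spectral localization of $\dot{\Delta}_j g$ (so that after a standard paraproduct split only $|j-j'|\lesssim 1$ indices contribute in the high-frequency piece, while the low-frequency piece gives a well-controlled rest), and conclude $\|\dot{\Delta}_{j'}G(g)\|_{L^p}\lesssim \sum_{|j-j'|\leq N_0}\|\dot{\Delta}_j g\|_{L^p}$. Taking the $\ell^r(2^{j's})$ summation with $s>0$ then yields \eqref{F0}.

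For the difference estimate \eqref{F2}, I plan to reduce to a product estimate via the integral representation
\begin{equation*}
G(g_1)-G(g_2) \;=\; (g_1-g_2)\int_0^1 G'\bigl(\tau g_1+(1-\tau)g_2\bigr)\,d\tau \;=:\; (g_1-g_2)\,H(g_1,g_2).
\end{equation*}
Applying the Morse-type inequality \eqref{uv1} (for $s>0$) or \eqref{uv3} (for $s\leq 0$) from Lemma \ref{Lemma5-5} will give
\begin{equation*}
\|G(g_1)-G(g_2)\|_{\dot{B}^s_{p,1}}\lesssim \bigl(\|H\|_{L^\infty}+\|H\|_{\dot{B}^{d/2}_{2,1}}\bigr)\,\|g_1-g_2\|_{\dot{B}^s_{p,1}}.
\end{equation*}
The $L^\infty$ bound for $H$ is immediate from $\|g_1\|_{L^\infty}+\|g_2\|_{L^\infty}<\infty$ and the smoothness of $G'$, while the $\dot{B}^{d/2}_{2,1}$ bound for $H$ follows by applying the first part of the lemma to the composition of $G'-G'(0)$ with the convex combination $\tau g_1+(1-\tau) g_2$, giving a factor $C_{g_1,g_2}(1+\|(g_1,g_2)\|_{\dot{B}^{d/2}_{2,1}})$ as needed for \eqref{F2}.

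The main obstacle will be the negative-regularity range $s\in(-d/2,0]$ in \eqref{F2}, where the simple Morse bound \eqref{uv1} no longer applies. In that regime I would invoke the full Bony decomposition $(g_1-g_2)H = T_{g_1-g_2}H + T_H(g_1-g_2) + R(g_1-g_2,H)$ and check each piece separately: the paraproducts $T_H$ and $T_{g_1-g_2}$ are handled via the $L^\infty$-type multiplier estimates (using the embedding $\dot{B}^{d/2}_{2,1}\hookrightarrow L^\infty$ for $H$), while the remainder $R$ requires the strict condition $s+d/2>0$, which is precisely the constraint imposed in the statement. The endpoint $s=d/2$ together with the interaction of the exponent $p$ with the $L^2$-based Besov spaces in which $H$ is measured will require careful index-tracking, but no ingredient beyond the standard paraproduct toolbox and the telescoping argument of the first part is needed.
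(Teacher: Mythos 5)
The paper does not actually prove this lemma: it is recalled in Appendix A as a standard composition estimate, with the references \cite{bahouri1} (Theorem 2.61 there) and \cite{runst1} serving as the source. Your plan is essentially the classical proof from those references — Meyer's first linearization $G(g)=\sum_j m_j\dot{\Delta}_j g$ for \eqref{F0}, and the factorization $G(g_1)-G(g_2)=(g_1-g_2)H(g_1,g_2)$ combined with product laws and the first part applied to $H-G'(0)$ for \eqref{F2} — so the route is the right one and the second half of your argument is sound, including your correct identification that the remainder term in Bony's decomposition is what forces $s+\frac d2>0$.

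There is, however, one step in the first part that is stated incorrectly and, taken literally, would prove too much. The functions $m_j=\int_0^1 G'(S_jg+\tau\dot{\Delta}_jg)\,d\tau$ are \emph{not} spectrally localized, so it is false that only the indices $|j-j'|\le N_0$ contribute to $\dot{\Delta}_{j'}G(g)$; the bound $\|\dot{\Delta}_{j'}G(g)\|_{L^p}\lesssim\sum_{|j-j'|\le N_0}\|\dot{\Delta}_jg\|_{L^p}$ does not hold, and if it did the conclusion \eqref{F0} would follow for every $s\in\mathbb{R}$, contradicting the necessity of $s>0$. The correct argument splits the sum over $j$ into two genuinely infinite tails: for $j\ge j'-N_0$ one uses the crude bound $\|\dot{\Delta}_{j'}(m_j\dot{\Delta}_jg)\|_{L^p}\le\|m_j\|_{L^\infty}\|\dot{\Delta}_jg\|_{L^p}$ and sums the resulting factor $2^{(j'-j)s}$ precisely because $s>0$; for $j<j'-N_0$ one writes $\|\dot{\Delta}_{j'}u\|_{L^p}\lesssim 2^{-j'k}\|\nabla^k u\|_{L^p}$ with $k>s$ and uses the chain rule together with $\|\nabla^\ell S_jg\|_{L^\infty}\lesssim 2^{j\ell}\|g\|_{L^\infty}$ to get $\|\nabla^k(m_j\dot{\Delta}_jg)\|_{L^p}\lesssim C_g2^{jk}\|\dot{\Delta}_jg\|_{L^p}$, producing the convergent factor $2^{(j-j')k}$. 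The conclusion is then a convolution inequality in $j$, giving an $\ell^r$ sequence rather than a finite sum. With that correction your outline matches the standard proof, and the rest of the plan (in particular the treatment of \eqref{F2}) goes through.
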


We recall the following lemma about the continuity of the composition of multicomponent functions (see \cite{runst1}[Pages 387-388]). 

\begin{lemma}\label{lemma64}
	Let $m\in \mathbb{N}$, $1\leq p,r\leq \infty$, $s>0$, and $G\in C^{\infty}(\mathbb{R}^{m})$ satisfy $G(0,...,0)=0$. Then for any $f_{i}\in\dot{B}_{p,r}^{s}\cap L^{\infty}$ 
$(i=1,...,m)$, there exists a constant $C_{f}>0$ depending on $\sum_{i=1}^{m}\|f_{i}\|_{L^{\infty}}$, $F$, $s$, $m$ and $d$ such that
	\begin{equation}
		\begin{aligned}
			\|G(f_{1},...,f_{m})\|_{\dot{B}^{s}_{p,r} }\leq C_{f}\sum_{i=1}^{m}\|f_{i}\|_{\dot{B}^{s}_{p,r} }.\label{F1}
		\end{aligned}
	\end{equation}
	Furthermore, for any $f^{1}_{i}, f^{2}_{i}\in\dot{B}^{s}\cap \dot{B}^{\frac{d}{2}}$, we have
	\begin{equation}\label{F3}
		\begin{aligned}
			&\|G(f^{1}_{1},...,f^{1}_{m})-G(f^{1}_{1},...,f^{1}_{m})\|_{\dot{B}^{s}_{p,r} }\leq C^{*}_{f}\big(1+\sum_{i=1}^{m}\|f_{i}\|_{\dot{B}^{s}_{p,r}\cap\dot{B}^{\frac{d}{2}}_{p,r}} 
\big)\sum_{i=1}^{m}\|f_{i}^{1}-f_{i}^{2}\|_{\dot{B}^{s}_{p,r}\cap\dot{B}^{\frac{d}{p}}_{p,r}}.
		\end{aligned}
	\end{equation}
	Here $C^{*}_{f}>0$ depends on $\sum_{i=1}^{m}\|(f^{1}_{i},f^{2}_{i})\|_{L^{\infty}}$, $F$, $s$, $m$ and $d$.
\end{lemma}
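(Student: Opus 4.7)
}
The plan is to reduce the multicomponent composition estimate to a para\-linearisation argument, in the spirit of the one--variable version (Lemma \ref{Lemma5-7}), and then to derive the Lipschitz bound via a parametrised mean value identity together with the product law \eqref{uv1}. Throughout I write $\mathbf{f}=(f_1,\dots,f_m)$ and use $\dot{S}_j u:=\sum_{k\leq j-1}\dot{\Delta}_k u$.

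For \eqref{F1} the idea is telescoping dyadic approximations. Since $G(\mathbf{0})=0$ and $\dot{S}_j\mathbf{f}\to 0$ in $\mathcal{S}'_h$ as $j\to-\infty$ while $\dot{S}_j\mathbf{f}\to\mathbf{f}$ as $j\to+\infty$ for $\mathbf{f}\in\dot{B}^s_{p,r}\cap L^\infty_m$, one writes
\begin{equation*}
G(\mathbf{f})=\sum_{j\in\mathbb{Z}}\bigl(G(\dot{S}_{j+1}\mathbf{f})-G(\dot{S}_j\mathbf{f})\bigr)
=\sum_{j\in\mathbb{Z}}\sum_{i=1}^{m} m_{i,j}\,\dot{\Delta}_j f_i,
\end{equation*}
where $m_{i,j}:=\int_0^1 (\partial_i G)\bigl(\dot{S}_j\mathbf{f}+t\,\dot{\Delta}_j\mathbf{f}\bigr)\,dt$. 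The smoothness of $G$ combined with the uniform bound $\|\dot{S}_j\mathbf{f}+t\dot{\Delta}_j\mathbf{f}\|_{L^\infty}\lesssim \sum_i\|f_i\|_{L^\infty}$ yields $\|m_{i,j}\|_{L^\infty}\leq C_f$ with a constant depending only on $G$ and $\|\mathbf{f}\|_{L^\infty}$. Each summand $m_{i,j}\dot{\Delta}_j f_i$ is spectrally localised in an annulus of size $\sim 2^j$, so a standard almost-orthogonality argument (Bernstein inequality plus the Fefferman--Stein inequality, or equivalently the estimate for ``generalised dyadic blocks'' of Chapter~2 in \cite{bahouri1}) gives
\begin{equation*}
\|G(\mathbf{f})\|_{\dot{B}^s_{p,r}}\lesssim \Bigl\|\bigl\{2^{js}\sum_{i}\|m_{i,j}\dot{\Delta}_j f_i\|_{L^p}\bigr\}_j\Bigr\|_{\ell^r}\leq C_f\sum_{i=1}^{m}\|f_i\|_{\dot{B}^s_{p,r}},
\end{equation*}
which is \eqref{F1}. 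To justify that each $m_{i,j}$ acts as an admissible multiplier at scale $\sim 2^j$ one controls the derivatives $\nabla^k m_{i,j}$ by $\|\nabla\mathbf{f}\|_{L^\infty}\cdot 2^{jk}$ using $\|\nabla \dot{S}_j f_i\|_{L^\infty}\lesssim 2^j\|f_i\|_{L^\infty}$; this is the main technical point and is handled exactly as in the one-variable proof in Runst--Sickel (cited above Lemma \ref{lemma64}).

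For the Lipschitz-type estimate \eqref{F3} I would start from the integral representation
\begin{equation*}
G(\mathbf{f}^1)-G(\mathbf{f}^2)=\sum_{i=1}^{m}(f_i^1-f_i^2)\int_0^1 (\partial_i G)\bigl(\mathbf{f}^2+t(\mathbf{f}^1-\mathbf{f}^2)\bigr)\,dt=:\sum_{i=1}^{m}(f_i^1-f_i^2)\,\widetilde{G}_i(\mathbf{f}^1,\mathbf{f}^2).
\end{equation*}
Since $\partial_i G\in C^\infty$ and $\partial_i G(\mathbf{0})$ is a constant, I apply \eqref{F1} to the function $\partial_i G-\partial_i G(\mathbf{0})$ (which vanishes at the origin) to get $\|\widetilde{G}_i(\mathbf{f}^1,\mathbf{f}^2)-\partial_i G(\mathbf{0})\|_{\dot{B}^{d/2}_{p,r}}\lesssim \sum_k\|(f_k^1,f_k^2)\|_{\dot{B}^{d/2}_{p,r}}$ with a constant depending on $\|(\mathbf{f}^1,\mathbf{f}^2)\|_{L^\infty}$. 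Then the product laws in Lemma \ref{Lemma5-5} applied to the endpoint range $s\in(-d/2,d/2]$ (with the extra $L^\infty$ contribution absorbed through $\dot{B}^{d/2}_{p,r}\hookrightarrow L^\infty$ when $p=2$, $r=1$) yield
\begin{equation*}
\|(f_i^1-f_i^2)\,\widetilde{G}_i(\mathbf{f}^1,\mathbf{f}^2)\|_{\dot{B}^s_{p,r}}\lesssim \bigl(1+\textstyle\sum_k\|(f_k^1,f_k^2)\|_{\dot{B}^{d/2}_{p,r}}\bigr)\|f_i^1-f_i^2\|_{\dot{B}^s_{p,r}\cap\dot{B}^{d/p}_{p,r}},
\end{equation*}
summing over $i$ gives \eqref{F3}.

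The main obstacle in the whole argument is the passage from a scalar composition (Lemma \ref{Lemma5-7}) to the vector-valued one: one has to check carefully that the Taylor-type multipliers $m_{i,j}$ remain spectrally harmless under the vectorial expansion, and that the derivatives $\partial_i G$ evaluated at a dyadic truncation inherit the right $L^\infty$-type bounds uniformly in $j$ and $t$. Once this multiplier control is in place, both inequalities follow from the standard machinery of Bony's decomposition combined with the product laws of Lemma \ref{Lemma5-5}; no further smallness assumption is required.
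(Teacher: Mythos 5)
The paper does not actually prove Lemma \ref{lemma64}: it is quoted from Runst--Sickel \cite{runst1} (see the sentence immediately preceding the statement), so there is no in-text argument to compare against. Your sketch reconstructs the standard proof --- Meyer's first linearization $G(\mathbf f)=\sum_j\bigl(G(\dot S_{j+1}\mathbf f)-G(\dot S_j\mathbf f)\bigr)$ with the mean-value multipliers $m_{i,j}$ for \eqref{F1}, and the integral form of the mean value theorem combined with \eqref{F1} and the product laws for \eqref{F3} --- which is exactly the route taken in the cited reference and in \cite{bahouri1} (Theorem 2.61 there, for $m=1$). The only genuinely new ingredient relative to the scalar case is bookkeeping over the $m$ components, and you correctly identify that the uniform $L^\infty$ and derivative bounds on the $m_{i,j}$ are where all the work lies.

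One sentence in your write-up is literally false, although you supply the repair yourself immediately afterwards: $m_{i,j}\,\dot\Delta_j f_i$ is \emph{not} spectrally supported in an annulus of size $\sim 2^j$, since $m_{i,j}$ is a nonlinear function of the $f_k$'s and its Fourier support is unrestricted. What saves the argument is precisely the bound $\|\nabla^k m_{i,j}\|_{L^\infty}\leq C_f\,2^{jk}$ (Bernstein applied to $\dot S_j f_i$ together with the Fa\`a di Bruno formula), which lets one treat $m_{i,j}\dot\Delta_j f_i$ as a generalised block at scale $2^j$ and sum the series in $\dot B^s_{p,r}$ for $s>0$; the justification is the multiplier estimate, not spectral localisation, so the sentence should be rephrased accordingly. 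In the Lipschitz part, note also that the embedding $\dot B^{d/p}_{p,r}\hookrightarrow L^\infty$ requires $r=1$; for general $r$ one must instead invoke the assumed $L^\infty$ bounds on $f^1_i,f^2_i$ (consistent with $C^*_f$ being allowed to depend on them), and the product step via \eqref{uv2} implicitly restricts $s$ to $(-d/p,d/p]$ --- the same restriction already present in the scalar analogue \eqref{F2}. With these clarifications your argument is complete.
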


Finally, we also have the maximal regularity estimates for the Lam\'e system.
\begin{lemma}\label{lame-system-estimate}
	Let $T>0,$ $\mu>0$, $2\mu+\lambda>0$, $s\in \mathbb{R},$ $1\leq p,r\leq \infty,$ and $1\leq \varrho_2\leq\varrho_1\leq\infty$.  Assume that $u_0\in \dot{B}^s_{p,r}$ and $f\in 
\widetilde{L}^{\varrho_2}(0,T;\dot{B}^{s-2+\frac{2}{\varrho_2}}_{p,r})$ hold. If $u$ is a solution of
	$$\begin{cases}
		\partial_t u-\mu\Delta u-(\mu+\lambda)\nabla\dive u=f, \quad &x\in\mathbb{R}^d,\quad t\in(0,T), \\
		u(x,0)=u_0(x),\quad &x\in\mathbb{R}^d,
	\end{cases}
	$$
	then $u$ satisfies
	\begin{align*}
		\min\{\mu,2\mu+\lambda\}^{\frac{1}{\varrho_1}}\|u\|_{\widetilde{L}_T^{\varrho_1}(\dot{B}^{s 
+\frac{2}{\varrho_1}}_{p,r})}\lesssim\|u_0\|_{\dot{B}^{s}_{p,r}}+\min\{\mu,2\mu+\lambda\}^{\frac{1}{\varrho_2}-1}\|f\|_{\widetilde{L}_T^{\varrho_2}(\dot{B}^{s-2 +\frac{2}{\varrho_2}}_{p,r})}.
	\end{align*}
\end{lemma}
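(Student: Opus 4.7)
The plan is to reduce the Lamé system to two decoupled heat equations via the Helmholtz decomposition, and then invoke the standard maximal regularity estimate for the heat equation at the dyadic level. First, I decompose $u=\mathcal{P}u+\mathcal{Q}u$, where $\mathcal{P}=\mathrm{Id}-\nabla(-\Delta)^{-1}\dive$ is the Leray projector and $\mathcal{Q}=\nabla(-\Delta)^{-1}\dive$. Since $\mathcal{P}$ and $\mathcal{Q}$ are homogeneous Fourier multipliers of order zero, they commute with $\dot{\Delta}_j$ and are continuous on every $\dot{B}^s_{p,r}$. Applying them to the equation and using $\dive\mathcal{P}u=0$ and $\mathcal{Q}u=\nabla\Phi$ for some $\Phi$, we get
\begin{equation*}
\partial_t \mathcal{P}u-\mu\Delta\mathcal{P}u=\mathcal{P}f,\qquad \partial_t\mathcal{Q}u-(2\mu+\lambda)\Delta\mathcal{Q}u=\mathcal{Q}f,
\end{equation*}
with initial data $\mathcal{P}u_0$ and $\mathcal{Q}u_0$ respectively. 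Thus the problem splits into two heat equations with viscosity $\mu$ and $2\mu+\lambda$, both bounded below by $\nu_0:=\min\{\mu,2\mu+\lambda\}>0$ thanks to \eqref{system2-2}.

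Next I would reduce to a scalar heat estimate. For $\partial_t w-\nu\Delta w=g$ with $w(0)=w_0$, applying $\dot{\Delta}_j$ and using Duhamel yields
\begin{equation*}
\dot{\Delta}_j w(t)=e^{\nu t\Delta}\dot{\Delta}_j w_0+\int_0^t e^{\nu(t-\tau)\Delta}\dot{\Delta}_j g(\tau)\,d\tau.
\end{equation*}
Because $\dot{\Delta}_j w$ is spectrally localized on an annulus $\{|\xi|\sim 2^j\}$, the Mikhlin-type estimate gives $\|e^{\nu t\Delta}\dot{\Delta}_j w\|_{L^p}\lesssim e^{-c\nu 2^{2j}t}\|\dot{\Delta}_j w\|_{L^p}$ for some $c>0$ independent of $j$ and $\nu$. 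Taking the $L^{\varrho_1}(0,T)$ norm in time and applying Young's convolution inequality in time with exponents $(\varrho_1,\varrho_2,\varrho^{*})$ satisfying $1+\tfrac{1}{\varrho_1}=\tfrac{1}{\varrho_2}+\tfrac{1}{\varrho^{*}}$, the kernel $e^{-c\nu 2^{2j}t}$ contributes a factor of $(\nu 2^{2j})^{-1/\varrho^{*}}$, yielding the dyadic bound
\begin{equation*}
(\nu 2^{2j})^{1/\varrho_1}\|\dot{\Delta}_j w\|_{L^{\varrho_1}_T(L^p)}\lesssim \|\dot{\Delta}_j w_0\|_{L^p}+(\nu 2^{2j})^{1/\varrho_2-1}2^{2j}\|\dot{\Delta}_j g\|_{L^{\varrho_2}_T(L^p)}.
\end{equation*}
Multiplying by $2^{js}$ and summing in $j$ with the $\ell^r$ norm yields the Chemin--Lerner maximal regularity
\begin{equation*}
\nu^{1/\varrho_1}\|w\|_{\widetilde{L}^{\varrho_1}_T(\dot{B}^{s+2/\varrho_1}_{p,r})}\lesssim \|w_0\|_{\dot{B}^{s}_{p,r}}+\nu^{1/\varrho_2-1}\|g\|_{\widetilde{L}^{\varrho_2}_T(\dot{B}^{s-2+2/\varrho_2}_{p,r})}.
\end{equation*}

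Applying this estimate separately to $(\mathcal{P}u,\mathcal{P}f)$ with viscosity $\mu$ and to $(\mathcal{Q}u,\mathcal{Q}f)$ with viscosity $2\mu+\lambda$, using that $\mathcal{P},\mathcal{Q}$ are $\dot{B}^s_{p,r}$-bounded, and recalling that $\nu_0=\min\{\mu,2\mu+\lambda\}$ is monotone in each viscosity (so $\nu_0^{1/\varrho_1}\le \mu^{1/\varrho_1},(2\mu+\lambda)^{1/\varrho_1}$ and $\mu^{1/\varrho_2-1},(2\mu+\lambda)^{1/\varrho_2-1}\le \nu_0^{1/\varrho_2-1}$ since $\tfrac{1}{\varrho_2}-1\le 0$), gives the claimed bound after summing the two contributions $\|u\|\le \|\mathcal{P}u\|+\|\mathcal{Q}u\|$.

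The main obstacle I expect is the careful bookkeeping of the viscosity powers so that the powers of $\nu_0$ on the two sides match the statement; this requires the right choice of Young exponent $\varrho^{*}$ with $1+1/\varrho_1=1/\varrho_2+1/\varrho^{*}$ (legitimate since $\varrho_2\le\varrho_1$), together with the monotonicity of $x\mapsto x^{\alpha}$ in opposite directions depending on the sign of $\alpha$. A secondary technical point is the switch between $\widetilde{L}^\varrho_T(\dot{B}^s_{p,r})$ (Chemin--Lerner) and usual $L^\varrho_T(\dot{B}^s_{p,r})$ norms, which is handled by performing the $\ell^r$-in-$j$ summation \emph{after} the $L^\varrho_T$-in-$t$ norm, as built into the Duhamel block estimate above.
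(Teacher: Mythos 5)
Your argument is correct and is the standard proof of this maximal regularity estimate; the paper itself states the lemma without proof (it is classical, cf.\ Bahouri--Chemin--Danchin and Danchin's handbook chapter), and your route --- Helmholtz decomposition into a heat equation with viscosity $\mu$ for $\mathcal{P}u$ and one with viscosity $2\mu+\lambda$ for $\mathcal{Q}u$, followed by the dyadic Duhamel estimate with Young's inequality in time and the sign-dependent monotonicity of $\nu\mapsto\nu^{1/\varrho_1}$ versus $\nu\mapsto\nu^{1/\varrho_2-1}$ --- is exactly the expected one. One slip: in your dyadic bound the Duhamel contribution should read $(\nu 2^{2j})^{1/\varrho_2-1}\|\dot{\Delta}_j g\|_{L^{\varrho_2}_T(L^p)}$ without the extra factor $2^{2j}$; indeed $\|e^{-c\nu 2^{2j}\cdot}\|_{L^{\varrho^*}(0,\infty)}\sim(\nu 2^{2j})^{-1/\varrho^*}$ with $1/\varrho^*=1+1/\varrho_1-1/\varrho_2$ already produces the stated power, and with your spurious $2^{2j}$ the summation in $j$ would land $f$ in $\dot{B}^{s+2/\varrho_2}_{p,r}$ rather than $\dot{B}^{s-2+2/\varrho_2}_{p,r}$. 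Since the summed Chemin--Lerner estimate you then write down is the correct one, this is a typo rather than a gap.
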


\section*{Acknowledgments}
Ling-Yun Shou is supported by National Natural Science Foundation of China $\#$12301275.  Lei Yao's research is
partially supported by National Natural Science Foundation of China
$\#$12171390,  and the Fundamental Research Funds for the Central Universities under Grant: G2023KY05102.  Yinghui Zhang' research is partially supported by National Natural Science 
Foundation of China
$\#$12271114, Guangxi Natural Science Foundation $\#$2024GXNSFDA010071, $\#$2019JJG110003, Science and Technology Project of Guangxi $\#$GuikeAD21220114, the Innovation Project of Guangxi 
Graduate Education $\#$JGY2023061, Center for Applied Mathematics of Guangxi (Guangxi Normal University) and the Key Laboratory of Mathematical Model and Application (Guangxi Normal 
University), Education Department of Guangxi Zhuang Autonomous Region.
\smallskip

{\bf A conflict of interest statement} On behalf of all authors, the corresponding author states that there is no conflict of interest.
\smallskip

{\bf Data availability statement} Data sharing is not applicable to this article as no datasets were generated or
analyzed during the current study.

\end{document}